\newtheorem{theorem}{Theorem}[section]
\newtheorem{lemma}[theorem]{Lemma} 
\newtheorem{remark}[theorem]{Remark} 
\newtheorem{definition}[theorem]{Definition} 
\newtheorem{corollary}[theorem]{Corollary} 
\newtheorem{claim}[theorem]{Claim} 
\newtheorem{example}[theorem]{Example}
\def\da{\downarrow}
\def\nda{\not\downarrow}
\def\A{\mathcal{A}}
\def\B{\mathcal{B}}
\def\C{\mathcal{C}}
\def\D{\mathcal{D}}
\def\K{\mathcal{K}}
\def\M{\mathbb{M}}
\def\a{\alpha}
\def\g{\gamma}
\def\o{\omega}
\def\raj{\upharpoonright}
\title{Quasiminimal structures, groups and Zariski-like geometries}
\begin{document}
  
\thanks{Research of the first author was partially supported by grant 40734 of the Academy of Finland}
\thanks{Research of the second author was supported by Finnish National Doctoral Programme in Mathematics and its Applications}

\maketitle

\begin{center}
Tapani Hyttinen \\
Department of Mathematics, University of Helsinki \\
P.O. Box 68, 00014, Finland \\
tapani.hyttinen@helsinki.fi\\
\vspace{5 mm}
 Kaisa Kangas \\
Department of Mathematics, University of Helsinki \\
P.O. Box 68, 00014, Finland \\ 
kaisa.kangas@helsinki.fi \\
+358 40 561 1543
\vspace{5 mm}
\end{center}

\begin{abstract}
We generalize Hrushovski's Group Configuration Theorem to quasiminimal classes.
As an application, we present Zariski-like structures, a generalization of Zariski geometries, and show that a group can be found there if the pregeometry obtained from the bounded closure operator is non-trivial.
\end{abstract}
 
\vspace{5 mm}
\begin{center}
Mathematics Subject Classification: 03C45, 03C48, 03C50, 03C98 \\
Key words: AECs, strong minimality, group configuration 
\end{center}
 
\tableofcontents
 
\section{Introduction}

We will study quasiminimal classes, i.e. abstract elementary classes (AECs) that arise from a quasiminimal pregeometry structure. 
Abstract elementary classes were introduced to model theory by Shelah \cite{shelahaec}, and they provide the standard framework for the study of non-elementary classes. 
Quasiminimal classes originate from Zilber's \cite{Ziex} work, 
where he introduced quasiminimal excellent classes in order to prove categoricity of the non-elementary theory of pseudoexponential fields.

A quasiminimal class is an AEC that can be constructed from a infinite dimensional quasiminimal pregeometry structure (see \cite{monet} for details).
Quasiminimal pregeometry structures provide an analogue to the strongly minimal first order setting.
In the latter, a canonical pregeometry is obtained from the model theoretic algebraic closure operator, and ranks can be calculated as dimensions in the pregeometry.
In the quasiminimal case, a pregeometry is obtained from the bounded closure operator (defined as Galois types having only boundedly many realizations, denoted bcl).
A good exposition on quasiminimal pregeometries can be found in \cite{kir} and another one in \cite{baldwin}. 

Quasiminimal classes are uncountably categorical and have arbitrarily large models.
They have both AP and JEP and thus a universal model homogeneous monster model.
They are also excellent in the sense of Zilber (this is weaker than the original notion of excellence due to Shelah, see below).
 
Our main result, Theorem \ref{groupmain}, is a generalization of Hrushovski's group configuration theorem to this setting.
Hrushovski proved the group configuration theorem as a part of his Ph.D. thesis \cite{hrthesis}.  
It has been the source of many applications of model theory to other fields of mathematics.
The theorem holds for stable first order theories.
It states that whenever there is a certain configuration of elements in a model, a group can be interpreted there.
The proof can be found in e.g. \cite{pi}.
In the present paper, we generalize it to the non-elementary setting of quasiminimal classes.
 
For the proof, we need an independence calculus that works in our context, so we develop it in Chapter 2.
Independence in AECs has been previously studied under various assumptions. 
For instance, Hyttinen and Lessman \cite{HyLess} developed an independence notion for excellent (in the sense of Shelah) classes and showed that under the further assumption of simplicity, it satisfies the usual properties of non-forking.
Hyttinen and Kes\"al\"a \cite{meeri1, meeri2} developed an independence calculus under the related assumption that an AEC be finitary. 
There are also various other approaches to independence in AECs, some of them under very general assumptions (see e.g. \cite{aecmonet}).
 
In the most general approaches, independence is considered mainly over models, usually rather saturated ones.
However, we take a different direction and develop our independence notion with independence over finite tuples in  mind.
Indeed, this is what is needed for proving the group configuration theorem. 
For this purpose, we introduce FUR-classes (``Finite $U$-Rank").
The main reason for introducing these classes is to have a context in which certain variants of quasiminimal classes can be treated uniformly.
We note that the previous approaches from \cite{meeri1} and \cite{HyLess} don't suffice for this, since all quasiminimal classes are not excellent in the sense of Shelah (as assumed in \cite{HyLess}) nor finitary in the sense of \cite{meeri1}. 
A counterexample for both criteria can be found in Example \ref{equivalence}.
In the elementary context, a model class of a first order $\o$-stable theory with finite Morley rank (with $\preccurlyeq$ the first order elementary submodel relation) provides an example of a FUR-class.
  
When developing the theory of independence, we use ideas from \cite{HyLess} and \cite{HyLess2}, and thus also from \cite{Hy2}.
However, the classes treated there are excellent in the sense of Shelah, and quasiminimal classes don't satisfy all the hypothesis made there, so we cannot directly apply the results.
The basic idea is that we first assume that we have a class with some kind of independence relation (given in terms of non-splitting) and then show that we actually also have another independence notion (given in terms of Lascar splitting, see Definition \ref{freedom}) that has all the properties we could expect from and independence notion (Theorem \ref{main}).  
  
Hrushovski's group configuration theorem yields a group that consists of imaginaries (equivalence classes in definable equivalence relations), and canonical bases are used in the proof. 
Thus, in addition to having a theory of independence in the monster model $\M$, we need to construct $\M^{eq}$ and show that the theory can be applied there as well. 
We do this in section 2.4.
In our context, $\M^{eq}$ cannot be constructed so that it is both $\o$-stable (in the sense of AECs) and has elimination of imaginaries. 
Since $\o$-stability is vital for our arguments, we build the theory (using ideas from \cite{HyLess2}) so that we can always move from $\M$ to $\M^{eq}$ and then, if needed, to $(\M^{eq})^{eq}$ etc.
We then show that if $\M$ is a monster model for a FUR-class, then so is $\M^{eq}$.
At the end of section 2.4, we show that we have canonical bases and that they have the usual properties one would expect.
 
The main reason for presenting FUR-classes is to prove that quasiminimal classes have a perfect theory of independence, and in section 2.5 we show that every quasiminimal class is indeed a FUR-class.
Moreover, we note that in quasiminimal classes, ranks can be calculated as dimensions in the pregeometry obtained from the bounded closure operator.
However, we also give an example of a non-elementary class that is a FUR-class but does not arise from a quasiminimal pregeometry structure (Example \ref{suorasumma}).

 
In Chapter 3, we generalize Hrushovski's group configuration theorem to our setting.
The original theorem states that if there is a certain configuration of six tuples $a,b,c,x,y,z$ in a model, then the model interprets a group. 
The idea is roughly that the tuples $x$, $y$, and $z$ are seen as points on a plane, and the tuples $a$, $b$, and $c$ as transformation functions that move the points around.
Hrushovski shows that (after making some small modifications) $y$ and $z$ are interdefinable over $b$, 
and then treats $b$ as a function sending $y$ to $z$.
In our setting, it is much more difficult to prove the interdefinability than in the first order context, so the construction becomes more complicated. 
Another difference is that quite soon in the beginning of the proof, Hrushovski replaces the tuples $x$, $y$, and $z$ with their finite sets of conjugates.
However, in our context, the conjugates are defined in terms of bounded closure, and unlike in the first order case, the set of conjugates can be (countably) infinite.
Since we cannot take arbitrary countable sets as elements of $\M^{eq}$, Hrushovski's trick cannot be used.
To overcome the problem, we move from the pregometry to the canonical geometry associated to it and work there.
 
In Chapter 4, we look at possibilities of generalizing Zariski geometries (see \cite{HrZi}) to our setting.
After presenting a generalization, \emph{Zariski-like structures}, we apply our group configuration theorem to show that a group can be found there as long as the canonical pregeometry obtained from the bounded closure operator is non-trivial (Theorem \ref{grouptheorem}).
 
Zariski geometries were introduced by Hrushovski and Zilber \cite{HrZi93, HrZi}, and Hrushovski \cite{mordelllang}
made use of them in his model theoretic proof of the geometric Mordell-Lang conjecture.
Zariski geometries generalize the notion of an algebraically closed field $k$ together with the Zariski topologies on each $k^n$.
An expository introduction to Zariski geometries can be found in \cite{MarZar}.

According to Zilber \cite{Zi}, the main motivation behind this notion was to identify all classes where Zilber's trichotomy principle holds.
In \cite{HrZi}, Hrushovski and Zilber showed that it holds for Zariski geometries: there every non locally modular strongly minimal set interprets an algebraically closed field.
The proof is carried out by first formulating Hrushovski's group configuration theorem in terms of certain kind of infinite arrays that are called \emph{indiscernible arrays}, then using it to find a $1$-dimensional group, and finally using the group elements to find a field.  
In finding the group and the field, families of plane curves play an important role.

In Chapter 4 of the present paper, we follow the lines of the first part of Hrushovski's and Zilber's proof to find a group in a Zariski-like structure.
In 4.1, we present a notion of plane curve that works in our setting, in 4.2 we reformulate our group configuration theorem in terms of indiscernible arrays (Lemma \ref{groupexists}), and finally in 4.3 we adopt Hrushovski's and Zilber's proof to our setting and show that a suitable plane curve can be used to find the group configuration.
The work is continued in a forthcoming paper by the second author \cite{oma}, where it is shown that an analogue for Zilber's trichotomy holds in a Zariski-like structure: a non locally modular Zariski-like structure interprets either an algebraically closed field or a non-classical group (see \cite{HLS}).
   
In \cite{HrZi}, a Zariski geometry is defined as a set $D$ together with Noetherian topologies on each $D^n$, satisfying four axioms labeled (Z0)-(Z3).
It follows from the axioms that the set $D$ must be strongly minimal.
In this elementary framework, there is a canonical way of getting a class of structures from this single structure.
However, a similar approach cannot be used for AECs.
Thus, since quasiminimality provides a natural analogue for strong minimality, we work with quasiminimal classes and formulate our axioms in this context without using a single structure as a starting point.
  
Instead of assuming  the presence of a topology, we define a Zariski-like structure as a quasiminimal pregeometry structure together with a collection $\mathcal{C}$ of Galois definable sets satisfying nine axioms.
We call these sets ``irreducible" since they generalize the properties of irreducible closed sets in the Zariski geometry context. 
Indeed, Zariski geometries satisfy our axioms if the collection $\mathcal{C}$ is chosen to consist of the irreducible $\emptyset$-closed sets.
The more general notion of a closed set could also be useful, as can be seen in the example of a cover of the multiplicative group of an algebraically closed field (see \cite{cover}).
However, as the example demonstrates, the natural topologies do not seem to be Noetherian, 
and thus it is hard to find a natural notion of an irreducible set.
Hence, we don't feel our insight is strong enough to formulate the axioms for arbitrary closed sets.
In \cite{Zi}, Zilber has given one axiomatization for closed sets in a non-elementary case, which he calls analytic Zariski structures, but we have chosen a somewhat different route.

Partially because of not using the more general concept of a closed set, some of our axioms come from Assumptions 6.6. in \cite{HrZi} rather than from the axiomatization (Z0)-(Z3) for Zariski geometries given in the very beginning of \cite{HrZi}.
In our axiomatization, axioms (ZL1)-(ZL6) give meaning to the key axioms (ZL7)-(ZL9).
If, in (ZL9), we take $\kappa$ to be finite and choose $S=\{\kappa\}$, then we get just the axiom (Z3) of Zariski geometries (the dimension theorem).
In the elementary case, (ZL9) is the immediate consequence of (Z3) and Compactness.
The axioms (ZL7) and (ZL8) come from Assumptions 6.6. in \cite{HrZi}.

The main result in \cite{HrZi} is that every very ample Zariski geometry arises from the Zariski topology on some algebraic curve over an algebraically closed field.
One  example of a Zariski-like geometry is the cover of an algebraically closed field of characteristic zero, treated in \cite{cover}.
An analogue to the result from \cite{HrZi} might be that all non-trivial Zariski-like structures in some sense resemble this example.
This would mean that on the level of the canonical geometry we would be in some sense back in the elementary case
(inside the closure of a single point the structure may be complicated).  
A result like this would be in line with the existing studies of geometries in non-elementary cases.
However, since the existence of a non-classical group (see \cite{HLS} and \cite{Hy3} for locally modular cases) is still open, to prove something like this seems very difficult, and if it turns out that there are non-classical groups, the playground is completely open.
Another open question is whether B. Zilber's pseudo-exponentiation and quantum algebras (not at a root of unity) satisfy our axioms for Zariski-like geometries.

All results presented can also be found in \cite{lisuri}, and we refer the reader there for omitted details.
  
\section{Independence in Abstract Elementary Classes}
 
In this chapter, we will develop an independence notion within the context of abstract elementary classes satisfying certain requirements.  
The ideas used originate from \cite{HyLess} and \cite{Hy2}.
  
We suppose that $(\mathcal{K} ,\preccurlyeq )$ is an AEC with AP and JEP and with arbitrarely large structures, $LS(\mathcal{K} )=\omega$ and $\mathcal{K}$ does not contain finite models.
Let $\M$ be a monster model for $\mathcal{K}$, i.e. a $\delta$- model homogeneous and $\delta$-universal model of $\mathcal{K}$ for a large enough cardinal $\delta$.
  
We will list six axioms (AI-AVI) and show that if these axioms hold for $\mathcal{K}$, then Lascar non-splitting will satisfy the usual properties of an independence notion.
We will then give the name \emph{FUR-classes} (``Finite $U$-Rank") to AECs satisfying these requirements.
The axioms are designed with quasiminimal classes in mind, and their main purpose is to obtain a full independence calculus.

In the AEC setting, Galois types, defined as orbits of automorphisms, provide a natural analogue for first order types.
However, as our main notion of type, we will use weak types.

\begin{definition}
Suppose $A \subset \mathbb{M}.$
We denote by $\textrm{Aut}(\mathbb{M}/A)$ the subgroup of the automorphism group of $\mathbb{M}$ consisting of those automorphisms $f$ that satisfy $f(a)=a$ for each $a \in A$.

We say that $a$ and $b$ have the same \emph{Galois type} over $A$ if there is some $f \in \textrm{Aut}(\mathbb{M}/A)$ such that $f(a)=b$.
We write $t^{g}(a/A)=t^{g}(a/A;\mathbb{M} )$ for the Galois-type of $a$ over $A$.

We say that $a$ and $b$ have the same $\emph{weak type}$ over $A$ if for all finite subsets $B \subseteq A$, it holds that $t^{g}(a/B)=t^{g}(b/B)$.
We write $t(a/A)$ for the weak type of $a$ over $A$.
\end{definition}

We first define independence in terms of non-splitting.
In section 2.1, we will give the axioms AI-AVI for a FUR-class and look at the properties that non-splitting independence has under these axioms.
We will see that types over models have unique free extensions, that symmetry and transitivity hold over models, that the setting is $\o$-stable (in the sense of AECs), that weak types over models determine Galois types, and that there are no infinite splitting chains of models.
We then define $U$-ranks over models and finite sets in terms of non-splitting (Definition \ref{Urank}), and show that $U$-rank is preserved in free extensions over models.

However, in section 2.3 we will define another independence notion in terms of Lascar splitting (see Definition \ref{freedom}) and extend our definition of $U$-rank to cover ranks over arbitrary sets.
This will be our main independence notion, and the properties of non-splitting independence are only used to show that the main independence notion has all the usual properties of non-forking (listed in Theorem \ref{main}).

\begin{definition}
Let $A$ and $B$ be sets such that $A \subseteq B$ and $A$ is finite.
We say that $t(a/B)$ \emph{splits} over $A$ if there are $b,c \in B$ such that $t(b/A)=t(c/A)$ but $t(ab/A) \neq t(ac/A)$.

We write $a\da^{ns}_{B}C$ if there is
some finite $A\subseteq B$ such that $t(a/B \cup C )$ does not split
over $A$. By $A\da^{ns}_{B}C$ we mean that $a\da^{ns}_{B}C$
for each $a\in A$. 
\end{definition}

We note that if $A \subseteq B \subseteq C$ for some finite $B$, and $t(a/C)$ does not split over $A$, then it is easy to see $t(a/C)$ does not split over $B$ either.

\subsection{FUR-classes} 
 
In this section, we will introduce FUR-classes and study the properties of non-splitting independence there.
The main reason for introducing the notion is to show that quasiminimal classes (see section 2.5) have a perfect theory of independence.
Indeed, we will use the properties of non-splitting independence to show that in FUR-classes, an independence notion obtained from Lascar splitting (see Definition \ref{freedom}) has all the usual properties of non-forking (Theorem \ref{main}).
It will then turn out that every quasiminimal class is a FUR-class (Lemma \ref{qmfur}). 

A FUR-class will be defined using six axioms, AI-AVI.
For the sake of readability, instead of first presenting all the definitions needed and then giving the axioms in the form of a simple list, we will start listing the axioms and give the related definitions, lemmas and remarks in midst of them.

The reader can check that the model class of any $\o$-stable first order theory with finite Morley rank satisfies the axioms.
We also give the following two non-elementary examples.
The first one is also an example of a quasiminimal class (see section 2.5), and it was originally presented in \cite{baldwin}.
The second example, on the other hand, is a FUR-class that does not arise from a quasiminimal pregeometry.
It is easy to check that the examples satisfy our axioms, but the details for Example \ref{equivalence} can be found in \cite{lisuri}.
 sucthe
\begin{example}\label{equivalence}
Let $\K$ be the class of all models $M=(M,E)$ such that $E$ is an equivalence relation on $M$ with infinitely many classes, each of size $\aleph_0$.
For any set $X$, we define the closure of $X$ to be 
$$\textrm{cl}(X)=\bigcup\{x/E \, | \, x \in XÊ\}.$$
We define $\preccurlyeq$ so that $\A \preccurlyeq \B$ if $\A \subseteq \B$ and $\A=\textrm{cl}(\A)$.
\end{example} 
  
\begin{example}\label{suorasumma}
Let $\K$ be the class of infinite direct sums of the Abelian groups $(\mathbb{Z}/p\mathbb{Z})^{(\lambda_p)}$, 
where $p$ ranges over all primes, and $\lambda_p$ is some cardinal associated to the prime $p$.
Define $\preccurlyeq$ so that $\A \preccurlyeq \B$ if $\A$ is a subgroup of $\B$.

Note that this class is not first order definable.
Indeed, all the models contain only elements of finite order, but since there are elements of arbitrarily large order in $\mathbb{G}$, Compactness would imply the existence of non-torsion elements.
\end{example}

The first axiom states that models are $\aleph_0$- Galois saturated.

\textbf{AI:
Every countable model $\A \in \K$ is s-saturated, i.e. for any $b \in \M$ and any finite $A \subseteq \A$, there is $a \in \A$ such that $t(a/A)=t(b/A)$.}
  
We now apply AI to show that for non-splitting independence, free extensions of types over models are unique.  
\begin{lemma}\label{vaplaajykskas}
Let $\B$ be a model.
If $a \da^{ns}_\B A$, $b \da^{ns}_\B A$ and $t(a/\B)=t(b/\B)$, then  $t(a/A)=t(b/A)$.
\end{lemma}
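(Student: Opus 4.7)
The plan is to use axiom AI to pull an arbitrary finite subset $D\subseteq A$ into $\B$, and then exploit non-splitting together with the hypothesis $t(a/\B)=t(b/\B)$ to match Galois types. First I would produce a single finite $A_0\subseteq \B$ that witnesses non-splitting simultaneously for $a$ and $b$: pick finite $A_a,A_b\subseteq \B$ with $t(a/\B\cup A)$ non-splitting over $A_a$ and $t(b/\B\cup A)$ non-splitting over $A_b$, and set $A_0=A_a\cup A_b$. By the remark immediately following the definition of splitting, neither $t(a/\B\cup A)$ nor $t(b/\B\cup A)$ splits over the larger finite set $A_0$.

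To establish $t(a/A)=t(b/A)$, by definition of weak type it suffices to fix an arbitrary finite tuple $D\subseteq A$ and show $t^g(a/D)=t^g(b/D)$. Applying AI coordinatewise (at each step realizing the next coordinate in $\B$ over $A_0$ together with the already-realized coordinates, after composing with an automorphism of $\M$ fixing $A_0$) I would produce a tuple $D'\subseteq \B$ with $t^g(D'/A_0)=t^g(D/A_0)$. Since $t(a/\B\cup A)$ does not split over $A_0$ and both $D,D'\subseteq \B\cup A$ have the same Galois type over $A_0$, the tuple version of non-splitting delivers $t^g(aD/A_0)=t^g(aD'/A_0)$, and symmetrically $t^g(bD/A_0)=t^g(bD'/A_0)$.

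It then remains to check the middle equality $t^g(aD'/A_0)=t^g(bD'/A_0)$. But $A_0\cup D'$ is a finite subset of $\B$, so the hypothesis $t(a/\B)=t(b/\B)$ gives $t^g(a/A_0D')=t^g(b/A_0D')$, which is exactly what is needed. Chaining the three Galois-type equalities yields $t^g(aD/A_0)=t^g(bD/A_0)$, which specializes to $t^g(a/D)=t^g(b/D)$, completing the argument. The main point requiring care is the passage from the single-element formulations of AI and of splitting to their natural formulations for finite tuples; both extensions are routine but must be carried out explicitly, since the whole point of the lemma is that we work over arbitrary (possibly finite) sets, where weak types and Galois types coincide.
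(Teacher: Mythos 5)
Your proof is correct and follows essentially the same route as the paper's: a single finite $A_0\subseteq\B$ witnessing non-splitting for both $a$ and $b$, AI to copy the tuple from $A$ into $\B$, and then the chain of three Galois-type equalities (non-splitting for $a$, the hypothesis $t(a/\B)=t(b/\B)$ on the copied tuple, non-splitting for $b$). The only difference is that you keep $A_0$ in the base while the paper phrases the chain over $\emptyset$, and you spell out the routine tuple versions of AI and of splitting, which the paper leaves implicit.
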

 
\begin{proof}
Let $c \in A$ be arbitrary.
We show that $t(ac/\emptyset)=t(bc/\emptyset)$.
Let $B \subset \B$ be a finite set such that neither $t(a/ \B \cup A)$ nor $t(b/ \B \cup A)$ splits over $B$.
 By AI, there is some $d \in \B$ such that $t(d/B)=t(c/B)$. 
We have
$$t(ac/\emptyset)=t(ad/\emptyset)=t(bd/\emptyset)=t(bc/\emptyset).$$
\end{proof}
 
\begin{lemma}\label{vaplaajol}
Suppose  $\A$ and $\B$ are countable models, $t(a/\A )$ does not split over some finite $A\subseteq\A$,
and   $\A \subseteq \B$.
Then there is
some $b$ such that  $t(b/\A )=t(a/\A )$ and $b\da^{ns}_{A} \B$.
\end{lemma}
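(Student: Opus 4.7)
The plan is to transport $a$ to the desired $b$ via an automorphism of $\M$ that fixes $A$ and carries $\B$ onto $\A$. Both $\A$ and $\B$ are countable models containing the finite set $A$, and by AI both are s-saturated. A back-and-forth argument --- at each step extending a finite partial type-preserving bijection by one element of $\A$ or $\B$, using AI on whichever model contributes the new element in order to find the appropriate image (or preimage) over the finite set of pairs already assigned, and keeping $A$ fixed throughout --- produces a $\K$-isomorphism $\sigma : \B \to \A$ with $\sigma \upharpoonright A = \textrm{id}_A$. Model-homogeneity of $\M$ then lifts $\sigma$ to an automorphism $\hat g \in \textrm{Aut}(\M/A)$ satisfying $\hat g(\B) = \A$.

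Setting $b := \hat g^{-1}(a)$, both required properties fall out of the non-splitting hypothesis. For $t(b/\A) = t(a/\A)$: take any finite tuple $\alpha$ from $\A$. Applying $\hat g$ gives $t^g(b\alpha/\emptyset) = t^g(a\hat g(\alpha)/\emptyset)$. Since $\A \subseteq \B$ forces $\hat g(\A) \subseteq \hat g(\B) = \A$, and since $\hat g$ fixes $A$, we have $\hat g(\alpha) \in \A$ and $t(\hat g(\alpha)/A) = t(\alpha/A)$; non-splitting of $t(a/\A)$ over $A$ then delivers $t(a\alpha/A) = t(a\hat g(\alpha)/A)$, hence $t^g(b\alpha/\emptyset) = t^g(a\alpha/\emptyset)$. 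For $b \da^{ns}_A \B$: given $c, c' \in \B$ with $t(c/A) = t(c'/A)$, their images $\hat g(c), \hat g(c') \in \A$ still have the same type over $A$, so by non-splitting $t(a\hat g(c)/A) = t(a\hat g(c')/A)$, and applying $\hat g^{-1}$ (which fixes $A$) gives $t(bc/A) = t(bc'/A)$. Thus $t(b/\B)$ does not split over $A$, and since $A$ is finite we conclude $b \da^{ns}_A \B$.

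I expect the main obstacle to be the first step, namely verifying that the back-and-forth really produces a $\K$-isomorphism $\sigma$ --- rather than merely a bijection preserving Galois types of finite tuples --- so that model-homogeneity of $\M$ can be applied. This is precisely where having AI available on \emph{both} $\A$ and $\B$ (not just on $\A$) is essential: it supplies the back moves that turn the construction into an honest isomorphism between the two countable s-saturated models over the finite parameter set $A$. Once $\hat g$ is in hand, the verification of the two clauses is a routine translation of the non-splitting hypothesis along the automorphism.
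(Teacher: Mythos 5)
Your proposal is correct and is essentially the paper's own argument: the paper likewise uses AI and a back-and-forth over the finite set $A$ to produce an automorphism of $\M$ fixing $A$ that carries $\A$ onto $\B$ (it takes $f(\A)=\B$ and $b=f(a)$, the mirror image of your $\hat g$ and $b=\hat g^{-1}(a)$), and your explicit verification of $t(b/\A)=t(a/\A)$ and $b\da^{ns}_{A}\B$ from non-splitting is just the routine part the paper leaves implicit. The only nitpick is the phrasing ``AI on whichever model contributes the new element'': in the forth step (new element from $\B$) one applies AI to $\A$ to find its image, and in the back step (new element from $\A$) one applies AI to $\B$ to find its preimage, which is clearly what you intend.
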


\begin{proof} 
As both $\A$ and $\B$ are countable and contain $A$, we can, using AI and back-and-forth methods, construct an automorphism $f\in Aut(\M /A)$ such that $f(\A )=\B$.
Choose $b=f(a).$
\end{proof}

Axioms AII and AIII together will guarantee that unique prime models exist.
To be able to state AII, we need the notion of $s$-primary models.

\begin{definition}
We say that a model $\B =\A a\cup\bigcup_{i<\o}a_{i}$, where $a_i$ is a singleton for each $i$,
is \emph{$s$-primary} over $\A a$ if for all $n<\o$, there is
a finite $A_{n}\subset \A$
such that for all $(a', a_0', \ldots, a_n') \in \M$ such that $t(a'/\A)=t(a/\A)$, $t(a', a_0', \ldots, a_n'/A_{n})=t(a, a_0, \ldots, a_n/A_{n})$
implies  
$t(a', a_0', \ldots, a_n'/\A)=t(a, a_0, \ldots, a_n/\A)$
\end{definition}
 
\textbf{AII: For all $a$ and countable $\A$, there is
an $s$-primary model
$\B =\A a\cup\bigcup_{i<\o}a_{i}$ ($\le\M$) over $\A a$}.

We denote a countable $s$-primary model $\B= \A a\cup\bigcup_{i<\o}a_{i}$ over $\A a$ that is as above by $\A [a]$.

We will use AII to show that weak types over countable models determine Galois types.
For this, we need the following lemma.
 
 \begin{lemma}\label{primary}
 Let $\A$ be a countable model, and let $t(b/\A )=t(a/\A )$. 
Then, there is an isomorphism $f:\A[a] \rightarrow\A[b]$ such that $f\raj\A =id$ and $f(a)=b$.
\end{lemma}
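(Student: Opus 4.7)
The plan is to construct $f$ as the union of an increasing chain of finite partial maps $f_0 \subseteq f_1 \subseteq \cdots$ obtained by a standard back-and-forth between the countable submodels $\A[a] = \A a \cup \{a_i : i<\o\}$ and $\A[b] = \A b \cup \{b_i : i<\o\}$ provided by AII. Start from $f_0 = \{(a,b)\}$ and preserve the invariant that $\textrm{dom}(f_n)$ and $\textrm{range}(f_n)$ realise the same weak type over $\A$; equivalently, that for every finite $\bar{d} \subseteq \A$ there exists $\phi \in \textrm{Aut}(\M/\bar{d})$ extending $f_n$. The hypothesis $t(a/\A)=t(b/\A)$ is exactly the $n=0$ instance of this invariant. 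Whenever a new element of $\A$ is enumerated we map it to itself, which preserves the invariant trivially; at the remaining stages we alternately adjoin the next $a_i \in \A[a]$ to the domain and the next $b_i \in \A[b]$ to the range.

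For the nontrivial forward step, suppose $f_n$ sends $(a, a_{r_1}, \ldots, a_{r_k}) \mapsto (b, v_1, \ldots, v_k)$ and we want to adjoin a pair $(a_j, c)$ with $c \in \A[b]$ yet to be chosen. Pick $N \geq \max(j, r_1, \ldots, r_k)$ and let $A_N \subseteq \A$ be the finite set witnessing s-primariness of $\A[a]$ for the initial segment $(a, a_0, \ldots, a_N)$. The invariant produces some $\phi \in \textrm{Aut}(\M/A_N)$ extending $f_n$, and we set $c' = \phi(a_j) \in \M$. Since by AI the countable model $\A[b]$ is s-saturated, we obtain $c \in \A[b]$ with the same Galois type as $c'$ over the finite set $A_N \cup \{b, v_1, \ldots, v_k\} \subseteq \A[b]$. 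Then $(a, a_{r_1}, \ldots, a_{r_k}, a_j)$ and $(b, v_1, \ldots, v_k, c)$ have matching Galois types over $A_N$; letting an $A_N$-automorphism that realises this equality act on the remaining $a_s$ with $s \leq N$, $s \notin \{r_1, \ldots, r_k, j\}$, produces on the $\A[b]$-side a full $(N+2)$-tuple matching $(a, a_0, \ldots, a_N)$ over $A_N$. AII now upgrades the match to weak type equality over all of $\A$, which restricts to the invariant for $f_{n+1}$. The backward step is symmetric, using s-primariness of $\A[b]$ and AI applied to $\A[a]$.

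The limit $f = \bigcup f_n$ is a bijection $\A[a] \to \A[b]$ with $f \raj \A = id$ and $f(a)=b$; the invariant ensures every finite restriction of $f$ is realised by an element of $\textrm{Aut}(\M)$, so $f$ preserves the vocabulary and the $\K$-structure and is therefore a $\K$-isomorphism. The delicate point, and what I expect to be the main obstacle, is the bookkeeping around AII: the s-primariness clause is stated only for the specific initial segments $(a, a_0, \ldots, a_n)$ of the enumeration fixed during the construction of $\A[a]$, whereas the back-and-forth naturally introduces arbitrary finite subsets of $\A[a] \setminus \A a$ into the domain, intermingled with preimages fed in from the backward direction. The remedy is to always enlarge the current finite tuple to a genuine initial segment of the chosen enumeration before invoking AII — legitimate because every element of $\A[a] \setminus \A a$ is some $a_r$ — but one has to verify that the auxiliary coordinates added do not disturb the weak type of the subtuple actually appearing in $f_{n+1}$.
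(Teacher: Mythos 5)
Your proposal is correct and takes essentially the same route as the paper: the paper's own proof carries out exactly your forward step (the isolation set $A_0$ from $s$-primariness, an automorphism matching the tuples over it, AI to pull the image into $\A[b]$, then the isolation clause to upgrade to weak type over $\A$) and leaves the remaining back-and-forth, including the initial-segment bookkeeping you flag, implicit. The worry you raise resolves just as you expect, since weak-type equality over $\A$ of the enlarged tuples restricts to the subtuple actually recorded in $f_{n+1}$.
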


\begin{proof}
Let $\A[a]=\A a\cup\bigcup_{i<\o}a_{i}$ and $\A[b]=\A b\cup\bigcup_{i<\o}b_{i}$.
Now there is some finite $A_0 \subset \A$ such that it holds for any $a', a_0'$ that if $t(a/\A)=t(a'/\A)$ and $t(a',a_0'/A_0)=t(a,a_0/A_0)$, then $t(a,a_0/\A)=t(a',a_0'/\A)$.
As $t(b/\A )=t(a/\A )$, there is an automorphism $F \in \textrm{Aut}(\mathbb{M}/\A)$ such that $F(a)=b$.
Let $a_0'=F(a_0)$.
By AI, there is some $i$ such that $t(b_i/A_0 b)=t(a_0'/A_0 b)$, and in particular $t(b_i, b/A_0)=t(a_0, a/A_0)$.
Thus, $t(b_i, b / \A)=t(a_0, a/ \A)$.
Now we can construct $f$ using back and forth methods.
\end{proof} 
 
\begin{corollary}
If $\A$ is a countable model, then $t(a/\A )$ determines $t^{g}(a/\A )$.
\end{corollary}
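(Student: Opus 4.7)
The plan is to reduce the statement to Lemma \ref{primary} together with the model homogeneity of the monster. Assume $t(a/\A) = t(b/\A)$, i.e.\ $a$ and $b$ have the same weak type over $\A$. I want to produce an automorphism of $\M$ fixing $\A$ pointwise and sending $a$ to $b$.

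First I apply Lemma \ref{primary} directly to the hypothesis: since the hypothesis of that lemma is phrased in terms of weak types, there is an isomorphism
\[
f : \A[a] \longrightarrow \A[b]
\]
with $f\raj \A = \mathrm{id}_{\A}$ and $f(a) = b$. By AII, both $\A[a]$ and $\A[b]$ are $s$-primary models sitting as $\preccurlyeq$-substructures of $\M$, and by construction each of them is countable.

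The second step is to promote $f$ to an automorphism of $\M$. Since $\M$ is a $\delta$-model homogeneous and $\delta$-universal monster for some sufficiently large $\delta$, any partial $\preccurlyeq$-isomorphism between small $\preccurlyeq$-submodels extends to a full automorphism of $\M$. Applying this to $f : \A[a] \to \A[b]$, I obtain $F \in \mathrm{Aut}(\M)$ extending $f$. Then $F$ fixes $\A$ pointwise and $F(a) = b$, so $F \in \mathrm{Aut}(\M/\A)$ and it witnesses $t^g(a/\A) = t^g(b/\A)$.

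There is essentially no obstacle: all the heavy lifting has been done in Lemma \ref{primary}, whose proof used AI (to absorb each $b_i$ into the right weak type over a finite base) and AII (to guarantee that $\A[a]$ and $\A[b]$ exist and control types over $\A$ via finite approximations). The only thing to check here is that countability of $\A[a]$ and $\A[b]$ is compatible with the degree of homogeneity of $\M$, which is immediate from the choice of $\delta$.
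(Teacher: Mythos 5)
Your proposal is correct and is essentially the argument the paper intends: the corollary is stated immediately after Lemma \ref{primary} precisely because one applies that lemma to get the isomorphism $f:\A[a]\to\A[b]$ over $\A$ with $f(a)=b$, and then extends it to an automorphism of $\M$ fixing $\A$ pointwise by the $\delta$-model homogeneity of the monster (both $s$-primary models being countable, hence small). No gap.
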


\begin{definition}
We say $a$ \emph{dominates} $B$ over $A$ if the following holds for all $C$: If there is a finite $A_0\subseteq A$ such that $t(a/AC)$ does not split over $A_0$, then $B \da^{ns}_{A}C$.
\end{definition}

\begin{lemma}\label{domination}
If $\A$ is a countable model, then the element $a$ dominates $\A[a]$ over $\A$. 
\end{lemma}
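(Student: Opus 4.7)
The plan is to unwind the definitions and use the $s$-primary property together with AI. Let $C$ be arbitrary and assume $t(a/\A C)$ does not split over some finite $A_0 \subseteq \A$. I need to show that for every (tuple) $\bar{b}\in\A[a]$, there is a finite $A'\subseteq\A$ such that $t(\bar{b}/\A\cup C)$ does not split over $A'$. Writing $\A[a]=\A a\cup\bigcup_{i<\omega}a_i$, any such $\bar{b}$ involves finitely many $a_i$'s (say up to index $n$), possibly $a$, and finitely many elements of $\A$. Since elements of $\A$ can be absorbed into the witness set (they are fixed by any automorphism pinning them down), it suffices to show that $t((a,a_0,\ldots,a_n)/\A\cup C)$ does not split over some finite subset of $\A$.

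The candidate witness is $A^{*}:=A_0\cup A_n$, where $A_n\subset\A$ is the finite set supplied by the $s$-primary property for the initial segment $(a,a_0,\ldots,a_n)$. By the monotonicity remark following the definition of splitting, $t(a/\A\cup C)$ still fails to split over $A^{*}$. Fix $b,c\in\A\cup C$ with $t(b/A^{*})=t(c/A^{*})$; by symmetry it is enough to produce an automorphism $\sigma\in\mathrm{Aut}(\M/A^{*})$ fixing $(a,a_0,\ldots,a_n)$ and sending $b$ to some element of $\A$ with the same $A^{*}$-type (and then repeat with $c$). The trick is to pass through a parameter in $\A$: using AI for the countable model $\A$ and the finite set $A^{*}$, choose $b'\in\A$ with $t(b'/A^{*})=t(b/A^{*})$; I will show $t((a,a_0,\ldots,a_n,b)/A^{*})=t((a,a_0,\ldots,a_n,b')/A^{*})$.

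Non-splitting of $t(a/\A\cup C)$ over $A^{*}$, applied to $b,b'\in\A\cup C$, yields $F\in\mathrm{Aut}(\M/A^{*})$ with $F(a)=a$ and $F(b)=b'$. Set $a_i':=F(a_i)$. Since $F$ fixes $A_n\subseteq A^{*}$ and $a$, the tuples $(a,a_0,\ldots,a_n)$ and $(a,a_0',\ldots,a_n')$ have the same type over $A_n$. The $s$-primary property (applied with $a'=a$, so the hypothesis $t(a'/\A)=t(a/\A)$ is trivial) then produces $H\in\mathrm{Aut}(\M/\A)$ sending $(a,a_0',\ldots,a_n')$ to $(a,a_0,\ldots,a_n)$. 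Because $b'\in\A$, we have $H(b')=b'$, and $H\circ F$ fixes $A^{*}$ pointwise while mapping $(a,a_0,\ldots,a_n,b)$ to $(a,a_0,\ldots,a_n,b')$. Repeating with $c$ in place of $b$ gives $t(\bar{a}b/A^{*})=t(\bar{a}b'/A^{*})=t(\bar{a}c/A^{*})$, where $\bar{a}=(a,a_0,\ldots,a_n)$, completing the proof.

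The main obstacle is bridging two kinds of automorphisms: non-splitting of $t(a/\A\cup C)$ over $A_0$ controls only how $a$ interacts with parameters in $\A\cup C$, whereas the $s$-primary property controls how the $a_i$'s move, but only along automorphisms that fix $\A$ pointwise. The key idea is to use AI to replace the outside parameter $b$ by an internal one $b'\in\A$, which then makes the $s$-primary automorphism $H$ automatically fix $b'$ and allows the two movements to be composed.
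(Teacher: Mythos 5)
Your proof is correct, and it organizes the argument differently from the paper. The paper argues by contradiction with a minimal counterexample: assuming $n$ is least with $a,a_0,\ldots,a_n \nda^{ns}_{\A}B$, it uses the (minimality-supplied) non-splitting of the initial segment $(a,a_0,\ldots,a_{n-1})$ over a finite $C\subset\A$, internalizes one splitting witness into $\A$ via AI, and applies the $s$-primary isolation only to the last coordinate $a_n$ to reach a contradiction. You instead give a direct argument with an explicit witness: you show that $A_0\cup A_n$ itself witnesses non-splitting of the whole tuple $(a,a_0,\ldots,a_n)$, by composing the automorphism $F$ coming from non-splitting of $t(a/\A\cup C)$ (which fixes $a$ and $A^*$) with a "repair" map coming from the $s$-primary isolation applied to the entire tuple with $a'=a$ (which fixes $\A$, hence the internalized parameter $b'$). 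The shared key trick — using AI to replace the external parameter by one inside $\A$ so that the $\A$-fixing map automatically fixes it — appears in both proofs, but your version avoids the induction and names the non-splitting base explicitly, which is arguably cleaner. One small remark: producing a genuine automorphism $H\in\textrm{Aut}(\M/\A)$ from the $s$-primary conclusion uses the corollary (already available at this point) that weak types over countable models determine Galois types; in fact you do not even need it, since the weak-type equality $t(a,a_0',\ldots,a_n'/\A)=t(a,a_0,\ldots,a_n/\A)$ restricted to the finite set $A^*\cup\{b'\}\subset\A$ already yields $t(\bar{a}b'/A^*)=t(a,a_0',\ldots,a_n',b'/A^*)$, which combined with $F$ gives the desired equality of types over $A^*$.
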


\begin{proof}
Let $A \subset \A$ be finite, and let $B$ be such that $t(a/\A B)$ does not split over $A$.
It suffices to show that for each $n$, it holds that
$$a, a_0, \ldots, a_n \da^{ns}_{\A} B.$$  
We make a counterassumption and suppose that $n$ is the least number such that
$$a, a_0, \ldots, a_n \nda^{ns}_{\A} B.$$
Let $C \subset \A$ be a finite set so that $A \subseteq C$, $A_\gamma  \subseteq C$ for each $\g \le n$, and $t(a, a_0, \ldots, a_{n-1}/ \A B)$ does not split over $C$.
By the counterassumption, there are  $c,d \in \A \cup B$ such that $t(c/C)=t(d/C)$ but $t(c, a, a_0, \ldots, a_n/C) \neq t(d, a, a_0, \ldots, a_n/C)$.
By AI, there is some $d' \in \A$ so that $t(d'/C)=t(d/C)$.
Then, either  $t(d', a, a_0, \ldots, a_n/C) \neq t(d, a, a_0, \ldots, a_n/C)$ or $t(d', a, a_0, \ldots, a_n/C) \neq t(c, a, a_0, \ldots, a_n/C)$. 
We may without loss suppose the latter.
Since $t(a, a_0, \ldots, a_{n-1}/\A B)$ does not split over $C$, we have that 
$$t(c/C, a, a_0, \ldots, a_{n-1})=t(d'/C, a, a_0, \ldots, a_{n-1}).$$
Thus, there is some $f \in \textrm{Aut}(\M/C, a, a_0, \ldots, a_{n-1})$ such that $f(c)=d'$.
Denote $a_n'=f(a_n)$.
Then, $t(a_n', a, a_0, \ldots, a_{n-1}/A_n)=t(a_n,a, a_0, \ldots, a_{n-1}/A_n)$, and thus $t(a_n'/\A, a, a_0, \ldots, a_{n-1})=t(a_n/\A,a, a_0, \ldots, a_{n-1})$.
In particular, $$t(a_n,d' / C,a, a_0, \ldots, a_{n-1})=t(a_n', d'/ C,a, a_0, \ldots, a_{n-1}),$$
as $d' \in \A$.
But 
$$t(a_n, d'/C,a, a_0, \ldots, a_{n-1}) \neq t(a_n,c/C,a, a_0, \ldots, a_{n-1})=t(a_n', d'/C,a, a_0, \ldots, a_{n-1}),$$  
a contradiction.
\end{proof} 
 
To be able to state AIII, we need the notion of a prime model.
It is defined in terms of weakly elementary maps. 
 
\begin{definition}
Let $\alpha$ be a cardinal and $\A_i \preccurlyeq \M$ for $i<\alpha$, and let $A=\bigcup_{i<\alpha} \A_i$.
We say that $f:A\rightarrow\M$ is \emph{weakly elementary} with respect to the sequence $(\A_i)_{i<\alpha}$
if for all $a\in A$, $t(a/\emptyset )=t(f(a)/\emptyset )$ and for all $i<\alpha$, $f(\A_i) \preccurlyeq \M$.
\end{definition}

\begin{definition}
We say a model $\A$ is \emph{$s$-prime} over $A=\bigcup_{i<\alpha} \A_i$, where $\alpha$ is a cardinal and $\A_i$ is a model for each $i$,
if for every model $\B$ and
every map $f:A\rightarrow\B$ that is weakly elementary with respect to $(\A_i)_{i<\alpha}$,  
there is an elementary embedding $g:\A\rightarrow\B$ such that
$f\subseteq g$.
\end{definition}

\textbf{AIII: Let $\A, \B, \C$ be models.
If $\A\da^{ns}_{\B}\C$ and $\B=\A\cap\C$,
then there is a unique (not only up to isomorphism)
$s$-prime model $\D$ over $\A\cup\C$. Furthermore,
if $\C'$ is such that $\C \subseteq \C'$ and $\A\da_{\B}\C'$, then
$\D\da_{\C}\C'$.}

It follows that
if also $\A',\B', \C'$ and $\D'$ are as in AIII,
$f:\A\rightarrow\A'$ and $g:\C\rightarrow\C'$
are isomorphisms and $f\raj\B =g\raj\B$, then there is an
isomorphism $h:\D\rightarrow\D'$ such that
$f\cup g\subseteq h$.

The axiom AIV is given in terms of player II having a winning strategy in the game $GI(a, A, \A)$, defined below.
The motivation behind this is to be able to show that there are no infinite splitting chains of models (Lemma \ref{eiaaretketjuja}) so that a well behaving notion of rank can be developed. 
In the beginning of the game, $t(a/A)$ is considered.
The idea is that player I has to show that $t(a/A)$ does not imply $t(a/\A)$, and player II tries to isolate $t(a/\A)$ 
by enlarging the set $A$.
If the game is played in a setting with a well behaving rank, player II will be able to win in finitely many moves since there are no infinite descending chains of ordinals.

\begin{definition}
Let $\A$ be a model, $A\subseteq\A$ finite and
$a\in\M$. The game $GI(a,A,\A )$ is played as follows:
The game starts at the position $a_{0}=a$ and $A_{0}=A$.
At each move $n$, player I first chooses $a_{n+1}\in\M$ and
a finite subset $A'_{n+1}\subseteq\A$ such that
$t(a_{n+1}/A_{n})=t(a_{n}/A_{n})$, $A_{n}\subseteq A'_{n+1}$
and $t(a_{n+1}/A'_{n+1})\ne t(a_{n}/A'_{n+1})$.
Then player II chooses a finite subset $A_{n+1}\subseteq\A$
such that $A'_{n+1}\subseteq A_{n+1}$.
Player II wins if player I can no longer make a move.
 \end{definition}

Axiom AIV now states that for every tuple, there is some finite number $n$ so that player II has a winning strategy in $n$ moves.

\textbf{AIV: For each $a \in \M$, there is a number $n<\o$ such that for any countable model $\A$ and any finite subset $A \subset \A$, player II has a winning strategy in $GI(a,A,\A )$ in $n$ moves.}

As an example, consider a model class of a first order $\o$-stable theory.
There, types have only finitely many free extensions, so player II can always enlarge the set $A'_n$ to some $A_n$ such that $t(a_n/A_n)$ has a unique free extension.
After this, player I has no choice but to play some $a_{n+1}$ and $A_{n+1}'$ so that $MR(a_{n+1}/A_{n+1})<MR(a_n/A_n)$ (here, $MR$ stands for Morley rank).
Hence, AIV is satisfied (with $n=MR(a/A)$).
 
We now apply AIV to prove that any tuple is free over a model from the model itself and that the number of weak types over a model equals the cardinality of the model.
 
\begin{lemma}\label{ansA}
Let $a \in \M$ be arbitrary, and let $\A$ be a model.
Then, $a \da^{ns}_\A \A$.
\end{lemma}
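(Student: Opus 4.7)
The plan is to argue by contradiction: assume $a \nda^{ns}_\A \A$, i.e., $t(a/\A)$ splits over every finite subset of $\A$. If $\A$ is uncountable, I would first reduce to the countable case. Using $LS(\K) = \o$, build a chain $\A_0^{(0)} \preccurlyeq \A_0^{(1)} \preccurlyeq \cdots$ of countable submodels of $\A$, where $\A_0^{(k+1)}$ absorbs, for every finite subset $A$ of $\A_0^{(k)}$, witnesses $b, c \in \A$ to the splitting of $t(a/\A)$ over $A$. The union $\A_0' = \bigcup_k \A_0^{(k)}$ is a countable model $\preccurlyeq \A$ with $a \nda^{ns}_{\A_0'} \A_0'$, so it suffices to derive a contradiction assuming $\A$ itself is countable.

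Assuming $\A$ countable, apply AIV to obtain $n < \o$ such that player II has a winning strategy in $GI(a, A_0, \A)$ in $n$ moves for any finite $A_0 \subseteq \A$. I would contradict this by exhibiting a play in which player I moves legally past stage $n$. Player I's strategy proceeds by induction on the stage $k$, maintaining the invariant $a_k \nda^{ns}_\A \A$; the base case holds since $a_0 = a$. At stage $k$, with position $(a_k, A_k)$, the invariant yields $b_k, c_k \in \A$ with $t(b_k/A_k) = t(c_k/A_k)$ and $t(a_k b_k/A_k) \ne t(a_k c_k/A_k)$. Using AI and back-and-forth exactly as in the proof of Lemma \ref{vaplaajol}, construct an automorphism $h_k$ of $\A$ fixing $A_k$ pointwise with $h_k(b_k) = c_k$, and extend it via model-homogeneity of $\M$ to $\hat h_k \in \textrm{Aut}(\M / A_k)$ with $\hat h_k(\A) = \A$ setwise. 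Player I plays $a_{k+1} = \hat h_k(a_k)$ and $A'_{k+1} = A_k \cup \{c_k\}$: since $\hat h_k$ fixes $A_k$ pointwise we have $t(a_{k+1}/A_k) = t(a_k/A_k)$, and the computation $t(a_{k+1} c_k/A_k) = t(\hat h_k(a_k)\,\hat h_k(b_k)/A_k) = t(a_k b_k/A_k) \ne t(a_k c_k/A_k)$ gives the required inequality $t(a_{k+1}/A'_{k+1}) \ne t(a_k/A'_{k+1})$.

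The main obstacle is the preservation of the invariant from stage $k$ to stage $k + 1$. Because $\hat h_k$ is chosen to restrict to a bijection of $\A$ onto itself, the splitting of $t(a_k/\A)$ over every finite $A \subseteq \A$ transfers through $\hat h_k$ to splitting of $t(a_{k+1}/\A)$ over every finite subset of $\A$; without the setwise preservation $\hat h_k(\A) = \A$, the splitting would drift to $\hat h_k(\A) \ne \A$ and the induction would collapse. The combination of the saturation axiom AI (enabling the back-and-forth construction of $h_k$ inside $\A$) and the model-homogeneity of $\M$ (extending $h_k$ to $\hat h_k$) is exactly what makes this preservation possible, and this is also why the initial reduction to countable $\A$ is needed.
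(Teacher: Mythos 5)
Your proposal is correct and follows essentially the same route as the paper: reduce to countable $\A$ by absorbing splitting witnesses into a countable submodel, then contradict AIV by having player I survive indefinitely, at each stage applying an automorphism fixing $A_k$ pointwise and $\A$ setwise that swaps the two splitting witnesses. The only difference is that you spell out the back-and-forth construction (via AI and model homogeneity) of the automorphism, which the paper simply asserts.
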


\begin{proof}
It suffices to show that there is a finite $A\subseteq\A$ such that $t(a/\A )$ does not split over $A$.
Suppose not.
Assume first that $\A$ is countable.
We claim that then player I can survive $\o$ moves in $GI(a,A,\A )$ for any finite subset $A \subset \A$, which contradicts AIV.
Suppose we are at move $n$ and that $t(a_n/ \A)$ splits over every finite subset of $\A$ containing $A_n$.
In particular, it splits over $A_n$.
Let $b,c$ be tuples witnessing this splitting.
Let $f \in \textrm{Aut}(\M/A_n)$ be such that $f(b)=c$ and $f(\A)=\A$.
Now player I chooses $a_{n+1}=f(a_n)$ and $A_{n+1}=A_n \cup \{c\}$.
Then,  $t(a_n/A_n)=t(a_{n+1}/A_n)$ but $t(a_{n+1} c/ A_n)=t(a_n b/A_n) \neq t(a_n c/A_n)$ and thus $t(a_{n+1}/A_{n+1}) \neq t(a_{n}/A_{n+1})$.
As $t(a_n/\A)$ splits over every finite subset of $\A$ containing $A_n$, the same is true for $t(a_{n+1}/ \A)$.
 
Let now $\A$ be arbitrary and suppose that $t(a/\A)$ splits over every finite $A \subset \A$.
Let $\B$ be a countable submodel of $\A$.
Then, $\B$ contains only countably many finite subsets.
For each finite $B \subset \B$, we find some tuples $b, c \in \A$ witnessing the splitting of $t(a/\A)$ over $B$.
We now enlarge $\B$ into a countable submodel of $\A$ containing all these tuples.
After repeating the process $\omega$ many times we have obtained a countable counterexample.
\end{proof}

\begin{lemma}\label{notypes}
For all models $\A$, the number of weak types
$t(a/\A )$ for $a\in\M$, is $\vert\A\vert$.
\end{lemma}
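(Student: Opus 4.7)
The lower bound is immediate: for distinct $a, b \in \A$, no automorphism fixing $b$ can send $a$ to $b$ (it would fail to be injective), so $t^g(a/\{b\}) \neq t^g(b/\{b\})$ and hence $t(a/\A) \neq t(b/\A)$.

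For the upper bound, my plan is to first reduce to the countable case and then handle countable models via AIV. Write $|\A| = \kappa$, and by L\"owenheim-Skolem express $\A$ as the union of an increasing chain of countable elementary submodels $(\A_i)_{i < \kappa}$. For each $a \in \M$, Lemma \ref{ansA} supplies a finite $A_a \subseteq \A$ over which $t(a/\A)$ does not split; pick $i(a) < \kappa$ with $A_a \subseteq \A_{i(a)}$, so $a \da^{ns}_{\A_{i(a)}} \A$. Lemma \ref{vaplaajykskas}, applied with the model $\B = \A_{i(a)}$ and with the role of ``$A$'' played by $\A$, then shows that $t(a/\A)$ is determined by the pair $(i(a), t(a/\A_{i(a)}))$: indeed, if $b$ also satisfies $b \da^{ns}_{\A_{i(a)}} \A$ and $t(b/\A_{i(a)}) = t(a/\A_{i(a)})$, then $t(b/\A) = t(a/\A)$. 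Counting, this gives at most $\kappa \cdot \aleph_0 = \kappa$ weak types over $\A$ once the countable case yields $\aleph_0$ per $\A_i$.

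For the countable case, I would argue by contradiction using AIV. By AI, each finite $B \subseteq \A$ admits at most $\aleph_0$ weak types (each realized inside any countable s-saturated model containing $B$). Call a pair $(B, s)$, with $B \subseteq \A$ finite and $s$ a weak type over $B$, \emph{fat} if $s$ has uncountably many extensions to weak types over $\A$. Assuming for contradiction that there are uncountably many weak types over $\A$, the trivial pair is fat. The key combinatorial observation is that for each fat $(B, s)$, uncountably many of its extensions to weak types over $\A$ are \emph{fat throughout}, meaning that every finite restriction of them is again fat. This is because each non-fat pair $(B', s')$ contributes at most $\aleph_0$ ``bad'' extensions, and there are only $\aleph_0$ non-fat pairs in total, so the excluded set has size $\aleph_0$, leaving uncountably many fat-throughout extensions in the uncountable pool. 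Using this, player I can survive $\omega$ moves of $GI(a_0, \emptyset, \A)$ while maintaining the invariant that $(A_n, t(a_n/A_n))$ is fat: at move $n$, pick two distinct fat-throughout extensions of $t(a_n/A_n)$, which differ on some finite $A_{n+1}' \supseteq A_n$, and have $a_{n+1}$ realize whichever one differs from $a_n$'s weak type over $A_{n+1}'$. The resulting infinite play contradicts AIV.

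The main obstacle I anticipate is the fat-throughout construction and verifying that the fatness invariant is preserved under player II's responses. The crucial point is that ``fat throughout'' is an absolute property of the weak type over all of $\A$, so any further finite refinement by player II automatically lands on a fat pair and hence does not spoil player I's strategy.
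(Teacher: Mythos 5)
Your argument is essentially correct, and the two halves deserve separate comments. The reduction to the countable case is the same idea as the paper's, but your decomposition of $\A$ is technically wrong as stated: an increasing chain of \emph{countable} submodels can only have union of size at most $\aleph_1$ (every proper initial stage of the chain is countable, so the attained ``least stages'' of elements have order type at most $\o_1$), so no such chain $(\A_i)_{i<\kappa}$ exists when $\vert\A\vert\ge\aleph_2$. The fix is exactly what the paper does and costs you nothing: you never use that the family is a chain, only that it is cofinal among finite subsets, so index the countable submodels by the finite subsets of $\A$ (for each finite $A\subset\A$ pick a countable $\A_A\preccurlyeq\A$ with $A\subset\A_A$). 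With that change, your use of Lemma \ref{ansA} to get $a\da^{ns}_{\A_{A_a}}\A$ and of Lemma \ref{vaplaajykskas} (with $\B=\A_{A_a}$ and the role of ``$A$'' played by $\A$) to see that the pair $(\A_{A_a},t(a/\A_{A_a}))$ determines $t(a/\A)$, and the count $\vert\A\vert\cdot\aleph_0=\vert\A\vert$, coincide with the paper's proof. Your lower bound observation is fine (the paper leaves it implicit).

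For the countable case your route is genuinely different from the paper's, and it works. The paper fixes $\o_1$ elements with pairwise distinct weak types, uses the pigeonhole principle to make the AIV bound $n$ and the types over $\emptyset$ uniform, and then plays $\o_1$ games simultaneously, repeatedly thinning to an uncountable index set at each move so that player I survives more than $n$ moves in some game. You instead run a single game $GI(a_0,\emptyset,\A)$ and maintain the invariant that $(A_n,t(a_n/A_n))$ is fat: your counting argument (countably many finite $B\subseteq\A$, countably many weak types over each by AI, each non-fat pair excluding only countably many global types) correctly shows every fat pair has uncountably many fat-throughout extensions, and since weak types over $\A$ are by definition realized, player I can always play a realization of a fat-throughout extension differing from $t(a_n/\cdot)$ on some finite $A'_{n+1}\supseteq A_n$; fatness of the next position is then automatic whatever finite $A_{n+1}$ player II chooses. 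Surviving $\o$ moves against any strategy of player II contradicts AIV for $a_0$. Your version avoids the uniformization of $n$ across uncountably many elements and the repeated thinning bookkeeping, at the price of introducing the fat/fat-throughout bookkeeping; both are legitimate, and yours is arguably the cleaner single-game formulation.
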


\begin{proof}
We prove this first for countable models.
Suppose, for the sake of contradiction, that there is a countable model $\A$ and elements $a_i \in \M$, $i<\o_1$ so that $t(a_i/\A) \neq t(a_j/\A)$ if $i \neq j$.
As countable models are $s$-saturated, there are only countably many types over a finite set.
In particular, by the pigeonhole principle, we find an uncountable set $J \subseteq \o_1$ so that $t(a_i/\emptyset)$ is constant for $i \in J$.
After relabeling, we may set $J=\o_1$.
For each $i$, there is a number $n<\o$ such that player II wins  $GI(a_i, \emptyset, \A)$ in $n$ moves.
Using again the pigeonhole principle, we may assume that the number $n$ is constant for all $i<\o_1$.

Now we start playing $GI(a_i, \emptyset, \A)$ simultaneously for all $i< \o_1$.
Since the $a_i$ have different weak types over $\A$, for each $i$ of the form $i=2 \alpha$ for some $\alpha< \o_1$, we can find a finite set $A_\alpha \subset \A$ such that $t(a_{2 \alpha} /A_\alpha) \neq t(a_{2\alpha+1}/A_\alpha)$.
We write $A_0^i=A_\alpha$ for $i=2 \alpha$ and $i=2 \alpha+1$.
As there are only countably many finite subsets of $\A$, we find an uncountable $I \subseteq \o_1$ so that for all $i \in I$, $A_0^i=A$ for some fixed, finite $A \subset \A$.
In $GI(a_i, \emptyset, \A)$ for $i \in I$, on his first move player I plays $a_{2\alpha+1}$ and $A$ if $i=2 \alpha$ for some $\alpha<\o_1$, and $a_{2\alpha}$ and $A$ if $i=2 \alpha+1$ for some $\alpha <\o_1$.
All the rest of the games he gives up. 
Now, in each game $GI(a_i, \emptyset, \A)$ player II plays some finite $A_1^i \subset \A$ such that $A \subseteq A_1^i$.
Again, there is an uncountable $I_1' \subseteq I$ such that for $i \in I_1'$, we have $A_1^i=A_1$ for some fixed, finite $A_1$.
As there are only countably many types over $A_1$, we find an uncountable $I_1 \subset I_1'$ so that $t(a_i/A_1)=t(a_j/A_1)$ for all $i,j \in I_1$.
Again, player I gives up on all the games except for those indexed by elements of $I_1$.
Continuing like this, he can survive more than $n$ moves in uncountably many games.
This contradicts AIV.

Suppose now $\A$ is arbitrary.
Denote $X=\mathcal{P}_{<\o}(\A)$.
Then, $\vert X \vert= \vert \A \vert$.
For each $A \in X$, choose a countable model $\A_A \preccurlyeq \A$ such that $A \subset \A_A$.
By Lemma \ref{ansA}, for each weak type $p=t(a/\A)$, there is some $A_p \in X$ so that $a \da_{A_p}^{ns} \A$, and hence also $a \da_{\A_{A_p}}^{ns} \A$.
By Lemma \ref{vaplaajykskas}, $t(a/\A_{A_p})$ determines $t(a/\A)$ uniquely.
As there are only countably many types over countable models, the number of weak types over $\A$ is
$$\vert X \vert \cdot \omega=\vert \A \vert.$$
\end{proof}

\begin{lemma}\label{typegalois}
For any $a \in \M$ and any model $\A$, the weak type $t(a/\A )$ determines the Galois type $t^{g}(a/\A )$.
\end{lemma}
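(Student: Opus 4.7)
The plan is to extend the countable-model result—the corollary after Lemma \ref{primary}—to an arbitrary model $\A$ by induction on $|\A|$, using chains of smaller submodels together with monster-model homogeneity.

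Suppose $t(a/\A) = t(b/\A)$; I aim to produce $f \in \textrm{Aut}(\M/\A)$ with $f(a) = b$. By Lemma \ref{ansA} applied to each of $a$ and $b$, there is a finite $C \subseteq \A$ over which neither $t(a/\A)$ nor $t(b/\A)$ splits. The base case $|\A| = \aleph_0$ is the corollary after Lemma \ref{primary}: for any countable $\A_0 \preccurlyeq \A$ containing $C$, the restriction $t(a/\A_0) = t(b/\A_0)$ yields $g \in \textrm{Aut}(\M/\A_0)$ with $g(a) = b$.

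For the inductive step on $\kappa = |\A|$, write $\A = \bigcup_{\alpha < \mathrm{cof}(\kappa)} \A_\alpha$ as a continuous increasing union of proper submodels of size less than $\kappa$, with $C \subseteq \A_0$. By the induction hypothesis applied to each $\A_\alpha$, one has $t^g(a/\A_\alpha) = t^g(b/\A_\alpha)$, so there is an automorphism of $\M$ fixing $\A_\alpha$ pointwise and sending $a$ to $b$. Assemble these into a continuous chain of partial maps $h_\alpha \colon \A_\alpha \cup \{a\} \to \A_\alpha \cup \{b\}$, identity on $\A_\alpha$ with $h_\alpha(a) = b$, each of which restricts some automorphism of $\M$; coherence at successor and limit stages below $\mathrm{cof}(\kappa)$ follows directly from the induction hypothesis.

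The main obstacle is the final step: lifting the coherent union $h = \bigcup_\alpha h_\alpha$—a map from $\A \cup \{a\}$ to $\A \cup \{b\}$ that is the identity on $\A$ and sends $a$ to $b$—to a single automorphism of $\M$. This is carried out via the $\delta$-model-homogeneity of $\M$ for $\delta > |\A|$: one realizes the coherent chain as an isomorphism between two strong submodels of $\M$ of size $|\A|$, one containing $\A \cup \{a\}$ and the other $\A \cup \{b\}$, and extends this isomorphism to an automorphism $f$ of $\M$. Then $f|_\A = \mathrm{id}$ and $f(a) = b$, whence $t^g(a/\A) = t^g(b/\A)$.
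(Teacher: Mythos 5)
There is a genuine gap, and it sits exactly where you locate "the main obstacle." Your induction reduces the problem to the following claim: if $\A=\bigcup_{\alpha<\mathrm{cof}(\kappa)}\A_\alpha$ is a continuous increasing union and $t^{g}(a/\A_\alpha)=t^{g}(b/\A_\alpha)$ for every $\alpha$, then $t^{g}(a/\A)=t^{g}(b/\A)$. This locality of Galois types over unions of chains is not a general fact about AECs (it can fail), it is not proved anywhere in the paper before this lemma, and in this setting it is essentially equivalent to the statement you are trying to prove (weak types over models are exactly the data of Galois types over small pieces). The appeal to $\delta$-model homogeneity does not close the gap: homogeneity extends isomorphisms between $\preccurlyeq$-submodels of $\M$ of size $<\delta$, whereas what you have after taking the union is only the set map $\mathrm{id}_\A\cup\{a\mapsto b\}$ on $\A\cup\{a\}$, which is not an isomorphism between strong submodels. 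The sentence "one realizes the coherent chain as an isomorphism between two strong submodels, one containing $\A\cup\{a\}$ and the other $\A\cup\{b\}$" is precisely the nontrivial construction that is missing: the maps $h_\alpha$ carry no information beyond fixing $\A_\alpha$ and sending $a$ to $b$, so they do not assemble into an isomorphism of models containing $a$ and $b$ respectively. (The same problem already appears at limit stages inside your chain: agreement of $t^{g}(a/\A_\alpha)$ and $t^{g}(b/\A_\alpha)$ for $\alpha<\lambda$ does not by itself give agreement over $\A_\lambda$.)

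The paper's proof supplies exactly the machinery your argument lacks, and never performs an induction on $|\A|$. It picks a countable $\B\preccurlyeq\A$ with $a\da^{ns}_\B\A$ and $b\da^{ns}_\B\A$ (Lemma \ref{ansA}), uses Lemma \ref{primary} to get $f\in\mathrm{Aut}(\M/\B)$ with $f(\B[a])=\B[b]$, $f(a)=b$, then uses domination (Lemma \ref{domination}) and uniqueness of free extensions (Lemma \ref{vaplaajykskas}) to check that $(f\raj\B[a])\cup\mathrm{id}_\A$ is weakly elementary, and finally invokes AIII: the unique $s$-prime models over $\B[a]\cup\A$ and $\B[b]\cup\A$ let this weakly elementary map be extended to an automorphism of $\M$ fixing $\A$ pointwise and sending $a$ to $b$. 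In other words, the passage from "agreement over a small submodel" to "agreement over all of $\A$" is bought with the non-splitting calculus and the $s$-primary/$s$-prime axioms AII and AIII, none of which appear in your proposal; without some substitute for them, the union step cannot be justified.
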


\begin{proof}
Suppose $t(a/\A)=t(b/\A)$.
By Lemma \ref{ansA}, we can find a countable submodel $\B$ of $\A$ so that $a \da^{ns}_\B \A$ and $b \da^{ns}_\B \A$.
By Lemma \ref{primary}, there is some $f \in \textrm{Aut}(\M/ \B)$ such that $f(\B[a])=\B[b]$ and $f(a)=b$.
Moreover, by Lemma \ref{domination}, $\B[a] \da^{ns}_\B \A$ and $\B[b] \da^{ns}_\B \A$.
Now the map $g=(f \raj \B[a]) \cup \textrm{id}_\A$ is weakly elementary.
For this, it suffices to show that $t(c/\A)=t(f(c)/\A)$ for every $c \in \B[a]$.
But $t(c/\B)=t(f(c)/\B)$, $c \da_{\B}^{ns} \A$, and $f(c) \da_{\B}^{ns} \A.$
Thus, by Lemma \ref{vaplaajykskas}, $t(c/\A)=t(f(c)/\A)$.
 
By AIII, there are unique $s$-prime models $\D_a$ and $\D_b$, over $\B[a] \cup \A$ and $\B[b] \cup \A$, respectively.
The map $g$ extends to an automorphism $h \in \textrm{Aut}(\M/\A)$ so that $h(\D_a) \subseteq \D_b$. 
The $s$-prime models are unique and preserved by automorphisms, thus we must have $h(\D_a)=\D_b$.
Since $h(a)=b$, we have $t^g(a/\A)=t^g(b/\A)$.
\end{proof}

Note that it follows from lemmas \ref{ansA} and \ref{typegalois} that the class $\mathcal{K}$ is $\o$-stable (in the sense of AECs).

Axiom AV is a weak form of symmetry (over models).
We will use it to show that symmetry actually holds over models.
  
\textbf{AV: If $\A$ and $\B$ are countable models,
$\A\subseteq\B$ and $a\in\M$,
and $\B\da^{ns}_{\A}a$, then $a\da^{ns}_{\A}\B$.}
 
\begin{lemma}\label{symlemma}
Let $A,C\subseteq\M$ and let $\B\subseteq A\cap C$ be a model.
If $A\da_{\B}^{ns}C$, then $C\da^{ns}_{\B}A$.
\end{lemma}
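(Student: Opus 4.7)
The plan is to fix an arbitrary finite tuple $c$ from $C$ and show $c\da^{ns}_\B A$; by the element-wise definition this reduces the lemma to a one-tuple claim. By Lemma~\ref{ansA} choose a finite $D_0\subseteq\B$ witnessing non-splitting of $t(c/\B)$ over $D_0$. I claim that this same $D_0$ already witnesses non-splitting of $t(c/A)$, and I would argue this by contradiction.

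Suppose tuples $b,b'\in A$ witness splitting of $t(c/A)$ over $D_0$, i.e.\ $t(b/D_0)=t(b'/D_0)$ but $t(bc/D_0)\ne t(b'c/D_0)$. Set $a:=bb'$, which is a finite tuple from $A$; the hypothesis $A\da^{ns}_\B C$ supplies a finite $B_1\subseteq\B$ such that $t(a/C)$, and in particular $t(a/\B c)$, does not split over $B_1$. Pick (via L\"owenheim--Skolem inside $\K$) a countable $\B_0\preccurlyeq\B$ containing $D_0\cup B_1$; then $a\da^{ns}_{\B_0}c$. By AII form the countable $s$-primary model $\B_0[a]$, and use Lemma~\ref{domination}: the element $a$ dominates $\B_0[a]$ over $\B_0$, so the above non-splitting upgrades to $\B_0[a]\da^{ns}_{\B_0}c$. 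Axiom AV, applied to the countable pair $\B_0\preccurlyeq\B_0[a]$, then flips sides to yield $c\da^{ns}_{\B_0}\B_0[a]$, with a finite witness $D'\subseteq\B_0$; using the monotonicity remark right after the definition of $\da^{ns}$, I would enlarge $D'$ so that $D_0\subseteq D'$.

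The main obstacle is the mismatch between the scale at which $b,b'$ have been chosen to agree (namely $D_0$) and the scale of the non-splitting produced by AV (namely $D'$): a priori $t(b/D')\ne t(b'/D')$. I would overcome this by invoking AI inside the countable $s$-saturated model $\B_0$ to obtain tuples $b^*,b''\in\B_0$ with $t(b^*/D')=t(b/D')$ and $t(b''/D')=t(b'/D')$. Restricting to $D_0\subseteq D'$ yields $t(b^*/D_0)=t(b/D_0)=t(b'/D_0)=t(b''/D_0)$ with $b^*,b''\in\B$, so non-splitting of $t(c/\B)$ over $D_0$ gives $t(b^*c/D_0)=t(b''c/D_0)$. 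Meanwhile, since $a\in\B_0[a]$ so both $b$ and $b'$ lie in $\B_0[a]$, non-splitting of $t(c/\B_0[a])$ over $D'$ applied to the pairs $(b^*,b)$ and $(b'',b')$ gives $t(b^*c/D')=t(bc/D')$ and $t(b''c/D')=t(b'c/D')$, hence also the same identities over $D_0$. Chaining the three equalities forces $t(bc/D_0)=t(b'c/D_0)$, contradicting the choice of $b,b'$. Therefore $t(c/A)$ does not split over $D_0$, so $c\da^{ns}_\B A$, and since $c$ was arbitrary this yields $C\da^{ns}_\B A$.
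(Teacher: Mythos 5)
Your proof is correct, and its engine is the same as the paper's: from $a\da^{ns}_{\B_0}c$ you pass to $\B_0[a]\da^{ns}_{\B_0}c$ via Lemma \ref{domination} and then flip with AV to get $c\da^{ns}_{\B_0}\B_0[a]$, which is exactly how the paper handles the countable case. Where you genuinely diverge is in the surrounding reduction. The paper proceeds in three stages: finite-tuple symmetry over countable models, then transfer of a putative counterexample into a countable submodel of an arbitrary $\B$, then reduction of the set statement to a single finite witness $a\in A$ with $c\nda^{ns}_\B a$. You instead commit at the outset to the base $D_0$ given by Lemma \ref{ansA} for $t(c/\B)$, and refute an arbitrary splitting pair $b,b'\in A$ directly, treating $bb'$ as one tuple to which the hypothesis $A\da^{ns}_\B C$ applies (this matches the paper's own convention that $a\in A$ ranges over finite tuples, as its step three shows). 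The extra ingredient you need is the AI-saturation of $\B_0$ to produce copies $b^*,b''$ over the AV-base $D'$, which bridges the mismatch between $D_0$ and $D'$; the resulting chain of type equalities is checked correctly. What this buys you is that you never have to argue that a failure of non-splitting (which quantifies over all finite subsets of $\B$) survives passage to a countable submodel, nor that $c\nda^{ns}_\B A$ is witnessed by a single finite tuple of $A$ — two reductions the paper states rather tersely — at the cost of a slightly longer parameter-matching computation at the end.
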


\begin{proof}  
We note first that for any finite tuples $a, c \in \M$,  and for any countable model $\B$ it holds that if $a \da_\B^{ns} c$, then $c \da_\B^{ns} a$. 
Indeed, then by dominance in $s$-primary models, it holds that $\B[a] \da_\B^{ns} c$, and thus by AV, $c \da_\B^{ns} \B[a]$, and in particular,  $c \da_\B^{ns} a$.

Let now $\B$ be arbitrary, and suppose $a \da_\B^{ns} c$ but $c \not\da_\B^{ns} a$.
Then, there is some finite $B \subset \B$ so that $t(a/\B c)$ does not split over $B$.
However, $t(c/ \B a)$ splits over $B$.
Let $b, d \in \B a$ be tuples witnessing this.
If $\B' \preccurlyeq \B$ is a countable model containing $B$, $b \cap \B$ and $d \cap \B$, then $a \da_{\B'}^{ns} c$ but 
$c \not\da_{\B'}^{ns} a$, which contradicts what we have just proved.

Suppose now $A\da_{\B}^{ns}C$ but $C\not\da^{ns}_{\B}A$.
Then, there is some $c \in C$ so that $c\not\da^{ns}_{\B}A$, and this is witnessed by some finite $a \in A$, i.e. $c\not\da^{ns}_{\B}a$.
But we have $a\da_{\B}^{ns}C$ and hence $a\da_{\B}^{ns}c$, a contradiction.
 \end{proof}

\begin{remark}\label{symremark}
Note that from Lemma \ref{symlemma} it  follows that for any $a, b \in \M$ and any model $\A$, it holds that $a \da_\A^{ns} b$ if and only if $b \da_\A^{ns} a$. 
\end{remark}

Axiom AVI states the existence of free extensions.

\textbf{AVI: For all models $\A, \B$ and $\D$ such that $ \A \subseteq \B \cap \D$,
there is a model $\C$
such that $t(\C/\A )=t(\B/\A )$ and $\C\da^{ns}_{\A}\D$.}

It follows that AVI holds also without the assumption
that $\B$ and $\D$ are models, as we can always find models extending these sets.
 
We now show that a form of transitivity holds for non-splitting independence.
 
\begin{lemma}\label{transitivity}
If $\B$ is a model, $A \subseteq \B$ and
$\B \subseteq C$,
then $a\da^{ns}_{A}C$ if and only if $a\da^{ns}_{A}\B$ and
$a\da^{ns}_{\B}C$.
\end{lemma}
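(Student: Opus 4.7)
The plan is to unpack the definition and transfer witnesses between $\B$ and $C$ using the approximate saturation of models supplied by AI. Since $A\subseteq\B\subseteq C$, the three conditions unfold as: $a\da^{ns}_{A}C$ says some finite $A'\subseteq A$ witnesses that $t(a/C)$ does not split over $A'$; $a\da^{ns}_{A}\B$ says the same for $t(a/\B)$; and $a\da^{ns}_{\B}C$ says some finite $B'\subseteq\B$ witnesses that $t(a/C)$ does not split over $B'$. The forward direction is then immediate: a finite $A'\subseteq A$ witnessing $a\da^{ns}_{A}C$ also witnesses $a\da^{ns}_{A}\B$ (restrict the parameter set from $C$ down to $\B$), and since $A'\subseteq A\subseteq\B$, the same $A'$ witnesses $a\da^{ns}_{\B}C$ as well.

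For the converse, fix finite $A'\subseteq A$ with $t(a/\B)$ not splitting over $A'$ and finite $B'\subseteq\B$ with $t(a/C)$ not splitting over $B'$, and set $D=A'\cup B'\subseteq\B$. By the remark following the definition of splitting, $t(a/C)$ does not split over $D$ either. I will show that $t(a/C)$ actually does not split over the smaller set $A'$, which is the desired conclusion. To this end, take arbitrary $b,c\in C$ with $t(b/A')=t(c/A')$. Using AI applied inside a countable elementary submodel of $\B$ containing $D$, produce $b',c'\in\B$ with $t(b'/D)=t(b/D)$ and $t(c'/D)=t(c/D)$. Non-splitting of $t(a/C)$ over $D$ then yields $t(ab'/D)=t(ab/D)$ and $t(ac'/D)=t(ac/D)$. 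Restricting these equalities to $A'\subseteq D$ gives in particular $t(b'/A')=t(b/A')=t(c/A')=t(c'/A')$, so, since $b',c'\in\B$, non-splitting of $t(a/\B)$ over $A'$ applies and delivers $t(ab'/A')=t(ac'/A')$. Chaining, $t(ab/A')=t(ab'/A')=t(ac'/A')=t(ac/A')$, as required.

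The only step needing any care is obtaining $b',c'\in\B$ with the prescribed weak types over the finite set $D$. AI is stated for countable models, but it transfers to the possibly uncountable $\B$ by passing to a countable $\B_0\preccurlyeq\B$ with $D\subseteq\B_0$ (available since $LS(\K)=\omega$) and applying AI inside $\B_0$. Beyond this routine move, no real obstacle is anticipated; the rest is a short bookkeeping of type equalities under restriction.
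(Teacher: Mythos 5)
Your proof is correct and follows essentially the same route as the paper: pull the witnesses from $C$ down into $\B$ via AI (s-saturation) over a finite set containing both witnesses of non-splitting, use non-splitting of $t(a/C)$ over that larger set, and then invoke non-splitting of $t(a/\B)$ over the finite subset of $A$. The only cosmetic differences are that you argue directly rather than by contradiction, take the union $A'\cup B'$ instead of choosing $B_0\supseteq A_0$ from the start, and make explicit the routine passage to a countable submodel when applying AI.
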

 
\begin{proof}
If $a\da^{ns}_{A}C$, then $a\da^{ns}_{A}\B$ and
$a\da^{ns}_{\B}C$ follow by monotonicity.

Suppose now $a\da^{ns}_{A}\B$ and
$a\da^{ns}_{\B}C$.
Let $A_0 \subset A$ and $B_0 \subset \B$ be finite sets so that $A_0 \subseteq B_0$, $t(a/\B)$ does not split over $A_0$ and $t(a/C)$ does not split over $B_0$.
Suppose $a\not\da^{ns}_{A}C$.
Then, $t(a/C)$ splits over $A_0$.
Let $b, c \in C$ witness the splitting, i.e. $t(b/A_0)=t(c/A_0)$ but $t(ab/A_0) \neq t(ac/A_0)$.
By AI, there are $b', c' \in \B$ so that $t(b'/B_0)=t(b/B_0)$ and $t(c'/B_0)=t(c/B_0)$.
Since $t(a/C)$ does not split over $B_0$, we have $t(ab'/B_0)=t(ab/B_0)$ and $t(ac'/B_0)=t(ac/B_0)$.
Thus,
$$t(ab'/A_0)=t(ab/A_0)\neq t(ac/A_0)=t(ac'/A_0),$$
a contradiction since $t(a/\B)$ does not split over $A_0$.
\end{proof} 

Next, we prove a stronger version of Lemma \ref{vaplaajol}.

\begin{lemma}\label{vaplaajol2}
Suppose $\A$ is a model, $t(a/\A)$ does not split over some finite $A \subset \A$ and $B$ is such that $\A \subseteq B$.
Then, there is some $b$ such that $t(b/\A)=t(a/\A)$ and $b \da_A^{ns} B$.
\end{lemma}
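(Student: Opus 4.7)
The plan is to reduce to the case where $B$ is a model and then combine AVI with the $s$-saturation provided by AI. First, since $\M$ is a monster model, extend $B$ to a model $\D \preccurlyeq \M$ with $B \subseteq \D$. Non-splitting is preserved when the right-hand side shrinks, so it suffices to produce $b$ with $t(b/\A) = t(a/\A)$ and $t(b/\D)$ not splitting over $A$; the conclusion $b \da^{ns}_A B$ will then follow automatically.

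Apply AVI, in the extended form of the remark that drops the model hypothesis on $\B$, to the set $\A \cup \{a\}$ and the model $\D$ over $\A$. This yields some $b$ with $t(b/\A) = t(a/\A)$ and $b \da^{ns}_{\A} \D$, so that $t(b/\D)$ does not split over some finite $A^{*} \subseteq \A$. Replacing $A^{*}$ by $A \cup A^{*}$ if necessary, and using that non-splitting over a smaller finite set implies non-splitting over any larger finite subset of the base, we may assume $A \subseteq A^{*}$.

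It then remains to bridge from non-splitting over $A^{*}$ to non-splitting over $A$ itself. Two facts do the work. First, $t(b/\A)$ does not split over $A$: given $c, d \in \A$ with $t(c/A) = t(d/A)$, the finite set $A \cup c \cup d$ sits inside $\A$, so from $t(a/\A) = t(b/\A)$ one gets an automorphism in $\textrm{Aut}(\M / A \cup c \cup d)$ sending $a$ to $b$, and this transports the non-splitting of $t(a/\A)$ over $A$ to $t(b/\A)$ over $A$. Second, given arbitrary $c, d \in \D$ with $t(c/A) = t(d/A)$, pick a countable submodel $\A_{0} \preccurlyeq \A$ with $A^{*} \subseteq \A_{0}$ and use AI to obtain $c', d' \in \A_{0}$ with $t(c'/A^{*}) = t(c/A^{*})$ and $t(d'/A^{*}) = t(d/A^{*})$. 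Non-splitting of $t(b/\D)$ over $A^{*}$ yields $t(bc/A) = t(bc'/A)$ and $t(bd/A) = t(bd'/A)$, and non-splitting of $t(b/\A)$ over $A$ applied to $c', d' \in \A$ (which satisfy $t(c'/A) = t(d'/A)$) yields $t(bc'/A) = t(bd'/A)$. Chaining these three equalities delivers $t(bc/A) = t(bd/A)$, as required.

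The main obstacle is precisely this last transfer: AVI only guarantees a witness $A^{*}$ that may strictly contain $A$, and the mechanism for shrinking the base back down to $A$ is to use $s$-saturation of a countable submodel of $\A$ to pull would-be splitting witnesses from $\D$ into $\A$, where the inherited non-splitting of $t(b/\A)$ over $A$ can take over and close the chain.
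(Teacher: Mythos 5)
Your proof is correct and follows essentially the same route as the paper: extend $B$ to a model, apply the extended form of AVI over $\A$ to produce $b$ with $t(b/\A)=t(a/\A)$ and $b\da^{ns}_{\A}\D$, and then shrink the non-splitting base from $\A$ down to $A$. The only difference is that your final transfer step (pulling would-be splitting witnesses from $\D$ into $\A$ via AI, together with the observation that $t(b/\A)$ inherits non-splitting over $A$ from $t(a/\A)$) is precisely the content of Lemma \ref{transitivity}, which the paper simply cites at this point instead of reproving.
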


\begin{proof}
Let $\B$ be a model such that $B \subseteq \B$.
Let $\C$ be a model containing $\A a$.
By AVI, there is a model $\C'$ such that $t(\C/\A)=t(\C'/\A)$ and $\C' \da^{ns}_\A \B$.
In particular, there is some $b \in \C'$ such that $t(b/\A)=t(a/\A)$ and $b \da^{ns}_ \A \B$.
Let $A' \subseteq \A$ be a finite set such that $A \subseteq A'$ and $b \da^{ns}_{A'} B$.
Then, by Lemma \ref{transitivity}, $b \da_A^{ns} B$.
\end{proof}

We now apply AIV to show that there are no infinite descending chains of models.
This will guarantee that $U$-ranks (see Definition \ref{Urank}) will be finite.
 
\begin{lemma}\label{eiaaretketjuja}
For all $a\in\M$, there is a number $n<\o$ such that
there are no models $\A_{0}\subseteq\A_{1}\subseteq ...\subseteq \A_{n}$
so that for all $i<n$, $a\nda^{ns}_{\A_{i}}\A_{i+1}$.
\end{lemma}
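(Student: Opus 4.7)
The plan is to derive the lemma by contradiction using AIV. Suppose that for some $a \in \M$ no such bound $n$ works; I will produce a play of $GI(a, \emptyset, \B)$ for a suitable countable model $\B$ in which player I survives more moves than the AIV-bound $m$ for $a$, the desired contradiction.

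First I reduce to countable models. Given any chain $\A_0 \subseteq \A_1 \subseteq \ldots$ with $a \nda^{ns}_{\A_i} \A_{i+1}$, I inductively build a countable subchain $\B_i \preccurlyeq \A_i$, $\B_i \subseteq \B_{i+1}$, that preserves the splittings: each countable $\B_i$ has only countably many finite subsets, and for each such $A \subseteq \B_i$ some pair $b^A, c^A \in \A_{i+1}$ witnesses splitting of $t(a/\A_{i+1})$ over $A$, so by downward L\"owenheim--Skolem take $\B_{i+1} \preccurlyeq \A_{i+1}$ countable containing $\B_i$ and all these witnesses. To upgrade from ``arbitrarily long finite chains'' to an infinite chain, use Lemma \ref{notypes}: weak types over countable models are countable, so the tree of chain extensions has countable branching, and a diagonal extraction yields a single infinite chain. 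Put $\B = \bigcup_i \B_i$, still a countable model by the AEC union axiom.

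Player I's strategy in $GI(a, \emptyset, \B)$ maintains, by induction on the move number $k$, an automorphism $h_k \in \textrm{Aut}(\M)$ with $h_k(a) = a_k$ that fixes every $\B_i$ setwise for $i$ above a current level $j^*_k < \omega$. At move $k$, letting $j_k$ be the least index with $A_k \subseteq \B_{j_k}$ and setting $j = \max(j^*_k, j_k)$, the invariant transfers $a \nda^{ns}_{\B_j} \B_{j+1}$ to $a_k \nda^{ns}_{\B_j} \B_{j+1}$, so $t(a_k / \B_{j+1})$ splits over $A_k$ with witnesses $b, c \in \B_{j+1}$. The key technical point is to build $f \in \textrm{Aut}(\M/A_k)$ with $f(b) = c$ and $f(\B_i) = \B_i$ for all $i \ge j+1$: by $s$-saturation (AI) of the countable $\B_{j+1}$, find an automorphism of $\B_{j+1}/A_k$ sending $b$ to $c$, extend it successively through $\B_{j+2}, \B_{j+3}, \ldots$ by back-and-forth (each $\B_{j+\ell}$ is countable and $s$-saturated), and lift to $\M$ via its model homogeneity. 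Setting $a_{k+1} = f(a_k)$, $A'_{k+1} = A_k \cup \{c\}$, and $h_{k+1} = f \circ h_k$ verifies the game rules and carries the invariant to step $k+1$.

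The main obstacle is handling player II's response: player II may include in $A_{k+1}$ elements from an arbitrarily high chain level, which could derail any naive ``fixed-level'' strategy. This is absorbed in the infinite-chain case because $A_{k+1}$ is still finite, hence contained in some $\B_{j_{k+1}}$ with $j_{k+1} < \omega$, and the chain continues to provide splittings at all higher levels, so the updated level $\max(j^*_{k+1}, j_{k+1})$ is again finite and player I can continue. Consequently player I survives $\omega$ moves in $GI(a, \emptyset, \B)$, contradicting AIV.
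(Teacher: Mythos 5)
Your game strategy itself is sound and is essentially the paper's idea in a different coordinate system (you keep the chain in place and transport $a$ upwards by automorphisms fixing all high levels setwise, whereas the paper transports the chain down into $\A_0$), but the way you set up the contradiction contains a genuine gap: the passage from ``arbitrarily long finite splitting chains'' to ``one infinite splitting chain''. The negation of the lemma only gives you, for each $n$, some chain of length $n$; your claim that ``the tree of chain extensions has countable branching, and a diagonal extraction yields a single infinite chain'' is not a valid argument. K\"onig's lemma requires \emph{finite} branching: a countably branching tree of height $\o$ can have arbitrarily long finite branches and no infinite branch (attach to a root, for each $n$, a chain of length $n$). It is also not clear that the branching here is countable in any useful sense --- Lemma \ref{notypes} counts weak types over a countable model, not extensions of the chain by new countable models. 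So the infinite chain $\B_0\subseteq\B_1\subseteq\cdots$ on which your whole play of $GI(a,\emptyset,\B)$ takes place has not been produced.

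This gap is not a harmless detour, because your strategy genuinely needs the chain to be infinite. You play the game in $\B=\bigcup_i\B_i$, and you absorb player II's moves by observing that any finite $A_{k+1}\subset\B$ sits inside some $\B_{j_{k+1}}$ with a level $\B_{j_{k+1}+1}$ still available above it; with only a finite chain $\B_0\subseteq\cdots\subseteq\B_N$, player II defeats this immediately by putting points of $\B_N\setminus\B_{N-1}$ into her first set, after which there is no higher level supplying splitting witnesses (and you cannot instead play in $\B_0$, since your witnesses $b,c$ live in $\B_{j+1}$, outside the game's model). The paper's proof avoids the infinite chain altogether: it plays in $\A_0$ and, at each move, composes with an automorphism over the current finite base (over which $t(a_k/\A_0)$ does not split, by Lemma \ref{ansA}) that maps the next model of the chain \emph{onto} $\A_0$, pulling both the element and the splitting witnesses down into the game's model. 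That way a chain of length $n$ already lets player I survive $n$ moves, so the bound from AIV directly bounds the possible chain lengths, and no limit object is needed. Your argument can be repaired by adopting that down-mapping device (or some equivalent way of confining player II's choices to a model below the unused part of the chain); as written, the key reduction is unjustified.
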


\begin{proof} 
Suppose models $\A_i$, $i \le n$, as in the statement of the lemma, exist.
We note first that if $a \nda^{ns}_{\A_{i}} \A_{i+1}$, then $a \nda^{ns}_{\A_{i}'} \A_{i+1}$ for every countable submodel $\A_{i'} \subset \A_i$.
Since a countable set only has countably many finite subsets, all the tuples witnessing the splittings are contained in some countable submodel $\A_{i+1}' \subset \A_{i+1}$.
Then, $a \nda_{\A_i'} \A_{i+1}'$.
Thus, we may assume each $\A_i$ is countable.
 We will show that player I can survive $n$ moves in $GI(a,\emptyset,\A_0 )$.
Then, the lemma will follow from AIV.

On the first move, player I chooses some finite $B_1 \subset \A_0$ so that $t(a/\A_0)$ does not split over $B_1$.
Then, there is some finite set $C_1 \subset \A_1$ so that $B_1 \subseteq C_1$ and $t(a/C_1)$ splits over $B_1$ and some $f_1 \in \textrm{Aut}(\M/B_1)$ such that $f(\A_1)=\A_0$.
Now player I plays $a_1=f(a)$ and $A_1'=f_1(C_1)$.
As $t(a/f_1(C_1))$ does not split over $B_1$ and $t(f_1(a)/f_1(C_1))$ splits over $B_1$, we have $t(a/f_1(C_1))\neq t(f_1(a)/f_1(C_1))$, and this is indeed a legitimate move.

On her move, player II chooses some finite $A_1 \subset \A_0$ such that $A_1' \subseteq A_1$.
On his second move, player I chooses some finite $B_2 \subset \A_0=f_1(\A_1)$ so that $A_1 \subset B_2$ and $t(a_1/\A_0)$ does not split over $B_2$.
Now there is some finite set $C_2 \subset f_1(\A_2)$ so that $t(a_1/C_2)$ splits over $B_2$ and some automorphism $f_2 \in \textrm{Aut}(\M/B_2)$ so that $f_2(f_1(\A_2))=\A_0$.
Player I plays $a_2=f_2(a_1)$ and $A_2'=f_2(C_2)$.
Continuing in this manner, he can survive $n$ many moves. 
\end{proof}
 
We now define $U$-ranks over models and finite sets.

\begin{definition}\label{Urank}
For $a$ and a model $\A$, we define the \emph{$U$-rank} of $a$ over $\A$, denoted $U(a/\A )$, as follows:
\begin{itemize}
\item  $U(a/\A )\ge 0$ always;
\item $U(a/\A )\ge n+1$ if there is some model $\B$ so that $\A \subseteq \B$, 
$a\nda^{ns}_{\A}\B$ and $U(a/\B )\ge n$;
\item $U(a/\A )$ is the largest $n$ such that $U(a/\A )\ge n$.
\end{itemize}

For finite $A$ we write $U(a/A)$ for 
$\textrm{max}(\{U(a/\A) \, | \, \A \textrm{ is a model s.t. } A \subset \A \})$.

Note that by Lemma \ref{eiaaretketjuja}, $U(a/A)$ is finite for finite $A$.
 \end{definition}

Later, we will define $U$-ranks over arbitrary sets, and it will turn out that they are always finite.
Thus, we call a class that satisfies the axioms AI-AVI a \emph{FUR-class}, for ``Finite $U$-Rank".
Eventually, we will show that FUR-classes have a perfect theory of independence (Theorem \ref{main}).
  
\begin{definition}
We say that an abstract elementary class $\K$ is a FUR-class if $\K$ has AP and JEP, $LS(\mathcal{K} )=\omega$, $\K$ has arbitrarily large structures and does not contain finite models, and $\K$ satisfies the axioms AI-AVI. 
\end{definition}

As the last result of this section, we show that non-splitting over models can be expressed in terms of preserving $U$-ranks. 

\begin{lemma}\label{Unonsplit}
Let  $\A\subseteq\B$ be models. Then
$a\da_{\A}^{ns}\B$ if and only if $U(a/\B )=U(a/\A )$.
\end{lemma}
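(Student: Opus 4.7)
The statement is an ``if and only if.'' For the ``only if'' direction, I argue by contrapositive: if $a\nda^{ns}_\A\B$, then given any chain $\B=\C_0\subseteq\C_1\subseteq\cdots\subseteq\C_m$ witnessing $U(a/\B)\ge m$, prepending $\A$ produces a legitimate chain $\A\subseteq\B=\C_0\subseteq\cdots\subseteq\C_m$ whose first step records $a\nda^{ns}_\A\B$, witnessing $U(a/\A)\ge m+1>U(a/\B)$.

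For the ``if'' direction I prove the two inequalities separately. The inequality $U(a/\B)\le U(a/\A)$ in fact requires no independence: given a witnessing chain $\B=\C_0\subseteq\cdots\subseteq\C_m$ for $U(a/\B)\ge m$, either $a\nda^{ns}_\A\B$ (in which case prepending $\A$ gives a chain witnessing $U(a/\A)\ge m+1$), or $a\da^{ns}_\A\B$, and then the contrapositive of Lemma \ref{transitivity} combined with $a\nda^{ns}_\B\C_1$ forces $a\nda^{ns}_\A\C_1$, so the shorter chain $\A\subseteq\C_1\subseteq\cdots\subseteq\C_m$ still witnesses $U(a/\A)\ge m$.

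For the reverse $U(a/\A)\le U(a/\B)$ I argue by induction on $n$ that $U(a/\A)\ge n$ together with $a\da^{ns}_\A\B$ forces $U(a/\B)\ge n$. Fix a model $\A_1\supseteq\A$ with $a\nda^{ns}_\A\A_1$ and $U(a/\A_1)\ge n$. The plan is to mirror the pair $(\A_1,a)$ over the bigger base $\B$: choose a model $E\supseteq\A_1\cup\{a\}$, use AVI to get $E^*$ with $t(E^*/\A)=t(E/\A)$ and $E^*\da^{ns}_\A\B$, realize this via Lemma \ref{typegalois} as $g\in\textrm{Aut}(\M/\A)$ with $g(E)=E^*$, and set $a^*=g(a)$. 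Since $a$ and $a^*$ have the same weak type over $\A$ and both lie non-splittingly free from $\B$ over $\A$, Lemma \ref{vaplaajykskas} yields $t(a/\B)=t(a^*/\B)$; by Lemma \ref{typegalois} there is $h\in\textrm{Aut}(\M/\B)$ with $h(a^*)=a$. Set $\A_1''=h(g(\A_1))$. Then $(a,\A_1'')\cong_\A(a,\A_1)$, the set $\A_1''\cup\{a\}$ is non-splitting free from $\B$ over $\A$, and $a\nda^{ns}_\A\A_1''$. Arranging $\B\cap\A_1''=\A$ via homogeneity, invoke AIII to form the $s$-prime model $\B_1$ over $\B\cup\A_1''$.

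It remains to verify $a\nda^{ns}_\B\B_1$ and $U(a/\B_1)\ge n$. For the former, $a\da^{ns}_\B\B_1$ would combine with the hypothesis $a\da^{ns}_\A\B$ through Lemma \ref{transitivity} to give $a\da^{ns}_\A\B_1\supseteq\A_1''$, contradicting $a\nda^{ns}_\A\A_1''$. For the latter, the induction hypothesis applied with base $\A_1''$ (using $U(a/\A_1'')=U(a/\A_1)\ge n$) reduces the claim to showing $a\da^{ns}_{\A_1''}\B_1$. First, $a\da^{ns}_{\A_1''}\B$ follows from $\A_1''\cup\{a\}\da^{ns}_\A\B$ by Lemma \ref{symlemma} and Lemma \ref{transitivity}. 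To lift this to $\B_1$, pick $A_0'\subseteq\A_1''$ finite witnessing $a\da^{ns}_{\A_1''}\A_1''$ via Lemma \ref{ansA}; Lemma \ref{vaplaajol2} yields $b$ with $t(b/\A_1'')=t(a/\A_1'')$ and $b\da^{ns}_{A_0'}\B_1$; Lemma \ref{vaplaajykskas} then gives $t(a/\B\cup\A_1'')=t(b/\B\cup\A_1'')$, and Lemma \ref{typegalois} produces $f\in\textrm{Aut}(\M/\B\cup\A_1'')$ with $f(b)=a$. The \emph{set-theoretic} uniqueness of $s$-prime models asserted in AIII now forces $f(\B_1)=\B_1$, so $a\da^{ns}_{\A_1''}\B_1$, completing the induction. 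The main obstacle is precisely this final step, where it is crucial that AIII delivers $\B_1$ as literally unique in $\M$ (not just up to isomorphism), since otherwise the freeness of $b$ would live on $f(\B_1)$ rather than on $\B_1$ itself.
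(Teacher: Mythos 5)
Your proof follows the same broad strategy as the paper (use AVI plus an automorphism over $\A$ to mirror the witnessing extension over the larger base, glue with an $s$-prime model from AIII, and transfer rank), and most of the intermediate steps are sound; in fact the step you wave away, arranging $\B\cap\A_1''=\A$, is automatic from $\A_1''\da^{ns}_\A\B$ and AI, and the off-by-one in "$U(a/\A_1)\ge n$" is only cosmetic. The genuine gap is exactly at the step you flag as the main obstacle. From $t(a/\B\cup\A_1'')=t(b/\B\cup\A_1'')$ you invoke Lemma \ref{typegalois} to get $f\in\textrm{Aut}(\M/\B\cup\A_1'')$ with $f(b)=a$. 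But Lemma \ref{typegalois} is proved only for weak types over a \emph{model}, and $\B\cup\A_1''$ is a union of two models, which in this setting need not be a model (it need not be closed; compare Example \ref{suorasumma}, where a union of two subgroups is generally not a subgroup). Equality of weak types over an arbitrary set only yields automorphisms over each finite subset; upgrading that to a single automorphism fixing all of $\B\cup\A_1''$ pointwise is precisely the kind of statement whose proof requires the $s$-primary/$s$-prime machinery (it is the content of the proof of Lemma \ref{typegalois}), so it cannot be cited for free. Without this automorphism you cannot conclude $f(\B_1)=\B_1$, and the final step — hence the inductive step — collapses as written. Note also that the set-theoretic uniqueness in AIII, which you identify as the crucial ingredient, is not the problem; the problem is producing $f$ at all.

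The fact you actually need, $a\da^{ns}_{\A_1''}\B_1$, is true, and the paper obtains the analogous freeness by a device your proof never uses: the ``Furthermore'' (domination) clause of AIII combined with AV/Lemma \ref{symlemma}. In your notation, $h(E^*)$ is a model containing $\A_1''\cup\{a\}$ with $\B\da^{ns}_\A h(E^*)$ (by Lemma \ref{symlemma} over $\A$), so the Furthermore clause applied to the $s$-prime model $\B_1$ over $\B\cup\A_1''$ gives $\B_1\da^{ns}_{\A_1''}h(E^*)$, in particular $\B_1\da^{ns}_{\A_1''}a$, and Lemma \ref{symlemma} flips this to $a\da^{ns}_{\A_1''}\B_1$; with that substitution your induction goes through. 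The paper organizes the argument slightly differently — it pushes the whole witnessing chain at once with a single automorphism, builds the chain of $s$-prime models $\B_i$ over $\B$, propagates freeness up the chain with the Furthermore clause, and transfers from the conjugate $b$ back to $a$ only over the model $\B$, where Lemma \ref{typegalois} legitimately applies — precisely so that no automorphism over a non-model base is ever needed.
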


\begin{proof}
From right to left the claim follows from the definition of $U$-rank.

For the other direction, suppose $a\da^{ns}_{\A}\B$.
It follows from the definition of $U$-rank that $U(a/\B) \le U(a/\A)$.
We will prove $U(a/\A) \le U(a/\B)$.

Let $n=U(a/\A )$, and choose models $\A_i'$, $i \le n$ so that $\A_0'=\A$ and for each $i<n$, $\A_i' \subseteq \A_{i+1}'$ and $a\nda^{ns}_{\A_{i}'}\A_{i+1}'$. 
Choose a model $\C$ so that $\A_n'  a \subseteq \C$.
By AVI, there is a model $\B'$ so that $t(\B'/\A)=t(\B/\A)$ and $\B' \da^{ns}_\A \C$.
Let $f \in \textrm{Aut}(\M/\A)$ be such that $f(\B')=\B$.
Denote $f(a)=b$ and $f(\A_i')=\A_i$ for $i \le n$.
Then, $\A_0=\A$, $t(b/\A)=t(a/\A)$ and
$b\nda^{ns}_{\A_{i}}\A_{i+1}$ for all $i<n$,
and $\B\da^{ns}_{\A} \A_n b$.

Let $\B_1$ be the unique $s$-prime model over $\B \cup \A_1$ (It exists by AIII since $\A \subseteq \B \cap \A_1$ and $\B \da_\A^{ns} \A_1$).
Suppose now that for $1 \le i <n$, $\B_{i-1} \da_{\A_{i-1}}^{ns} \A_i$, and  that we have defined $\B_i$ as the unique $s$-prime model over $\B_{i-1} \cup \A_i$ (taking $\B_0=\B$).
Then, we letÊ $\B_{i+1}$ be the unique $s$-prime model over $\B_i \cup \A_{i+1}$.
It exists, since from the "Furthermore" part in AIII it follows that $\B_i  \da_{\A_{i}}^{ns} \A_{i+1}.$
 
By Lemma \ref{vaplaajykskas}, $t(b/\B)=t(a/\B)$.
Thus, to show that $U(a/\B) \ge U(a/\A)$, it is enough that
$b\nda^{ns}_{\B_{i}}\B_{i+1}$ for all $i<n$.  
Suppose for the sake of contradiction that $b\da^{ns}_{\B_{i}}\B_{i+1}$ for some $i<n$.
Using induction and the "Furthermore" part in AIII, we get that $\B_i \da_{\A_i}^{ns} \A_n b$, and hence
by monotonicity and AV, $b \da_{\A_i}^{ns} \B_i$.
On the other hand, the counterassumption and monotonicity give $b\da^{ns}_{\B_{i}}\A_{i+1}$.
But from these two and Lemma \ref{transitivity}, it follows that
$b \da^{ns}_{\A_{i}}\A_{i+1}$,
a contradiction.
\end{proof}

\subsection{Indiscernible and Morley sequences}
 
In the next section (2.3), we will define Lascar types, an analogue to first order strong types.
We then define our main independence notion in terms of Lascar splitting.
However, there we will need technical tools to prove some properties of Lascar types.
For this purpose, we now define strongly indiscernible and Morley sequences and show that 
Morley sequences are strongly indiscernible.

\begin{definition}
We say that a sequence $(a_{i})_{i<\a}$ is \emph{indiscernible} over $A$
if every permutation of the sequence $\{ a_{i}\vert\ i<\a\}$
extends to an automorphism $f\in \textrm{Aut}(\M /A)$.

We say that a sequence $(a_{i})_{i<\a}$ is \emph{weakly indiscernible} over $A$
if every permutation of a finite subset of the sequence $\{ a_{i}\vert\ i<\a\}$
extends to an automorphism $f\in \textrm{Aut}(\M /A)$.

We say a sequence $(a_{i})_{i<\a}$ is \emph{strongly indiscernible} over $A$ if for all
cardinals $\kappa$, there are $a_{i}$, $\a\le i<\kappa$, such that
$(a_{i})_{i<\kappa}$ is indiscernible over $A$.

Let $\A$ be a model.
We say a sequence $(a_{i})_{i<\a}$ is \emph{Morley} over $\A$,  
if for all $i<\a$, $t(a_{i}/\A )=t(a_{0}/\A )$ and
$a_{i}\da_{\A}^{ns}\cup_{j<i}a_{j}$.
 \end{definition}

In the rest of this chapter, we will assume that all indiscernible sequences and Morley sequences that we consider are non-trivial,  i.e. they do not just repeat the same element.

Applying Fodor's lemma, we now show that every uncountable sequence contains a Morley sequence as a subsequence.
This will be extremely useful in many places later on. 

\begin{lemma}\label{fodor}
Let $A$ be a finite set and $\kappa$ a cardinal such that $\kappa=\textrm{cf}(\kappa)>\omega$.
For every sequence $(a_i)_{i<\kappa}$, there is a model $\A \supset A$ and some $X \subset \kappa$ cofinal so that $(a_i)_{i \in X}$ is Morley over $\A$.
\end{lemma}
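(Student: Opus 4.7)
The plan is a Fodor's-lemma argument on a continuous chain of models, followed by two pigeonhole refinements that stabilize the splitting data and the weak type.

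First I would build a continuous $\preccurlyeq$-increasing chain of models $(\A_i)_{i<\kappa}$ with $\A_0\supseteq A$ countable, $a_i\in\A_{i+1}$ for every $i$, and $|\A_i|\le |i|+\aleph_0$; this is available from $LS(\K)=\omega$ together with the AEC union-of-chain axiom. Lemma \ref{ansA} then provides, for each $i<\kappa$, a finite $F_i\subseteq\A_i$ such that $t(a_i/\A_i)$ does not split over $F_i$.

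The key step is Fodor's lemma. By continuity of the chain, for every limit $i<\kappa$ the finite set $F_i$ lies inside $\A_j$ for some $j<i$; taking $g(i)$ to be the least such $j$ gives a regressive function on the stationary set of limit ordinals below $\kappa$, and Fodor then produces a stationary $S'\subseteq\kappa$ on which $g$ is constantly equal to some $j^*$. Set $\A:=\A_{j^*}$. Since $\kappa$ is regular uncountable and $j^*<\kappa$, we have $|\A|<\kappa$, so $\A$ has fewer than $\kappa$ finite subsets and, by Lemma \ref{notypes}, carries fewer than $\kappa$ weak types. Two successive pigeonhole refinements on $S'$ (using regularity of $\kappa$) then yield a cofinal $X\subseteq S'$ on which $F_i=F$ is constant and $t(a_i/\A)=p$ is constant.

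To finish I would verify the Morley condition for $(a_i)_{i\in X}$ over $\A$. The type condition $t(a_i/\A)=p$ is immediate by construction. For independence, fix $i\in X$ and set $B_i=\{a_j:j\in X,\ j<i\}$; since $a_j\in\A_{j+1}\subseteq\A_i$ for every $j<i$, we have $\A\cup B_i\subseteq\A_i$, so any splitting of $t(a_i/\A\cup B_i)$ over $F$ would propagate to a splitting of $t(a_i/\A_i)$ over $F$, contradicting the choice $F_i=F$. Hence $a_i\da^{ns}_{\A}B_i$, as required.

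The main obstacle is forcing the finite splitting-witnesses $F_i$ to live in a common submodel of size less than $\kappa$: a naive pigeonhole over one fixed countable model only stabilizes $t(a_i/\A)$ and gives no control over the richer type $t(a_i/\A\cup B_i)$ that actually appears in the Morley condition. The continuous chain, together with Fodor's lemma, is the device that lets non-splitting persist once the earlier sequence members are adjoined to the base.
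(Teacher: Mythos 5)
Your proposal is correct and follows essentially the same route as the paper: a continuous chain of models $(\A_i)_{i<\kappa}$ with $a_i\in\A_{i+1}$, Lemma \ref{ansA} to get finite non-splitting bases, Fodor's lemma on the resulting regressive function at limit ordinals to fix a single model $\A=\A_{j^*}$, and then a pigeonhole via Lemma \ref{notypes} to stabilize the weak type on a cofinal set. Your extra pigeonhole making $F_i$ constant is not needed (having $F_i\subseteq\A$ already suffices), and your explicit verification of the non-splitting clause of the Morley condition is a harmless elaboration of what the paper leaves implicit.
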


\begin{proof}
For $i<\kappa$, choose models $\A_i$ so that for each $i$, $A \subset \A_i$, $a_i \in \A_{i+1}$, $\A_j \subset \A_i$ for $j<i$, $\A_\g=\bigcup_{i<\g} \A_i$ for a limit $\g$, and $\vert \A_i \vert = \vert i \vert +\o$.
Then, for each limit $i$, there is some $\a_i<i$ so that $a_i \da_{\A_{\a_i}}^{ns} \A_i$ (By Lemma \ref{ansA}, there is some finite $A_i \subset \A_i$ so that $a_i \da_{A_{i}}^{ns} \A_i$; just choose $\a_i$ so that $A_i \subset \A_{\a_i}$).
By Fodor's Lemma, there is some $X' \subset \kappa$ cofinal and some $\a<\kappa$ so that $\a_i=\a$ for all $i \in X'$.
Choose $\A=\A_\a$.
By Lemma \ref{notypes}, there are at most $\vert \A\vert<\kappa$ many weak types over $\A$, and thus by the pigeonhole principle, there is some cofinal $X \subseteq X'$ so that $t(a_i/\A)=t(a_j/\A)$ for all $i,j \in X$.
\end{proof}

\begin{lemma}\label{morleyvapaa}
If $(a_{i})_{i<\a}$ is Morley over a countable model $\A$, then
for all $i<\a$, $a_{i}\da_{\A}^{ns}\cup\{ a_{j}\vert\ j<\a,\ j\ne i\}$.
\end{lemma}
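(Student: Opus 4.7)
The plan is to reduce the joint independence statement to a family of pairwise statements via the symmetry lemma, then verify the pairwise statements directly from the Morley property and the pairwise symmetry of non-splitting.

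Fix $i<\alpha$. I want to establish that $a_i \da^{ns}_\A \bigcup\{a_j : j<\alpha, j\ne i\}$. First I would prove the pairwise claim: for every $j\ne i$, $a_j \da^{ns}_\A a_i$. Split into two cases. If $j>i$, then by the Morley hypothesis $a_j \da^{ns}_\A \bigcup_{l<j} a_l$, and since $a_i$ lies in this set, monotonicity (the fact that shrinking the set on the right preserves non-splitting) gives $a_j \da^{ns}_\A a_i$. If $j<i$, the Morley hypothesis applied to $a_i$ gives $a_i \da^{ns}_\A \bigcup_{l<i} a_l$, hence $a_i \da^{ns}_\A a_j$ by monotonicity; the pairwise symmetry recorded in Remark \ref{symremark} then flips this to $a_j \da^{ns}_\A a_i$.

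Next I would combine these pairwise independences using Lemma \ref{symlemma}. Put $A' = \{a_i\}\cup\A$ and $C' = \bigcup_{j\ne i} a_j \cup \A$; both contain the model $\A$, so the hypothesis $\A\subseteq A'\cap C'$ of Lemma \ref{symlemma} is satisfied. The pairwise independences above, together with the trivial remark that for every $a\in\A$ one has $a\da^{ns}_\A X$ for any $X$ (witnessed by the singleton $\{a\}\subseteq\A$, since then the type is already determined by fixing $a$), yield $C'\da^{ns}_\A A'$. Applying Lemma \ref{symlemma} in the direction $(C',A')\rightsquigarrow(A',C')$ produces $A'\da^{ns}_\A C'$. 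In particular the single element $a_i\in A'$ satisfies $a_i \da^{ns}_\A C'$, and absorbing the redundant $\A$ into the finite witness (which already lies in $\A$) gives the desired $a_i \da^{ns}_\A \bigcup\{a_j : j\ne i\}$.

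The main conceptual obstacle is that the Morley definition only supplies independence of each $a_k$ from its \emph{predecessors}, while the conclusion demands independence of $a_i$ from the entire remainder of the sequence, including the \emph{successors} $a_j$ with $j>i$. The Morley property for $a_i$ alone cannot see those successors, so one must somehow route the information through them. The trick is that non-splitting-independence is symmetric over models (Lemma \ref{symlemma}), and in particular the successor information ``$a_j \da^{ns}_\A a_i$ for $j>i$''---which is immediately available from $a_j$'s own Morley condition---can be recycled, together with the easy predecessor case, into the joint statement. The bookkeeping detail that makes this clean is that a single finite witness for $a_i \da^{ns}_\A C'$ drops out automatically from the element-wise conclusion of Lemma \ref{symlemma}, because $a_i$ is a single element on the left-hand side.
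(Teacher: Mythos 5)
There is a genuine gap at the step where you invoke Lemma \ref{symlemma}. The relation $C'\da^{ns}_{\A}A'$ that you need as its hypothesis is not a pairwise statement: as the paper defines and, more importantly, \emph{uses} it (the proof of Lemma \ref{symlemma} picks a finite tuple $a$ from the left-hand set and appeals to $a\da^{ns}_{\B}C$ for that tuple), it requires every \emph{finite tuple} from $C'$ to be non-splitting independent from $a_i$ over $\A$, i.e.\ $a_{j_1}\cdots a_{j_k}\da^{ns}_{\A}a_i$ for every finite set of indices $j_1,\ldots,j_k\neq i$. Your argument only supplies the case $k=1$. Knowing that each $t(a_{j_l}/\A a_i)$ does not split over a finite subset of $\A$ says nothing about the type of the concatenated tuple $a_{j_1}\cdots a_{j_k}$ over $\A a_i$; non-splitting does not pass from coordinates to tuples. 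Worse, by Remark \ref{symremark} the tuple statement $a_{j_1}\cdots a_{j_k}\da^{ns}_{\A}a_i$ is equivalent to $a_i\da^{ns}_{\A}a_{j_1}\cdots a_{j_k}$, which is exactly the finite instance of the lemma you are trying to prove (and, as the paper notes at the start of its proof, the lemma reduces to these finite instances). So the reduction is circular: the pairwise data you extract from the Morley property plus symmetry is strictly weaker than what the symmetry lemma's hypothesis demands.

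The missing content is precisely what the paper's induction supplies. One cannot bridge the gap with transitivity either, because Lemma \ref{transitivity} and symmetry for $\da^{ns}$ are only available over \emph{models}, and bases such as $\A\cup\{a_{j}\,:\,j\in J\}$ are not models. The paper's proof therefore shows by induction on $n$ that for any partition $n=I\cup J$ one has $\bigcup_{i\in I}a_i\da^{ns}_{\A}\bigcup_{j\in J}a_j$, replacing the finite base by the $s$-primary model $\A[a_j\,:\,j\in J]$ and using free extensions, the domination Lemma \ref{domination}, symmetry and transitivity over that model, and uniqueness of free extensions (Lemma \ref{vaplaajykskas}). Your pairwise observations are correct but do not substitute for this inductive argument.
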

 
\begin{proof}  
The claim holds if $a_i \da_\A^{ns} S$ for every finite $S \subset \cup\{ a_{j}\vert\ j<\a,\ j\ne i\}$.
Since we can always relabel the indices, 
it  thus suffices to show that for all $n<\o$, $a_i \da_\A^{ns} \{a_j \, | \, j \neq i, j \le n\}$.
We will prove that for any $n<\o$, if $n=I \cup J$, where $I \cap J= \emptyset$, then $\bigcup_{i \in I}  a_i \da^{ns}_\A \bigcup_{i \in J} a_i$, and the lemma will follow.
We do this by induction on $n$.
If $n=1$, the claim holds trivially, and if $n=2$, it follows directly from Remark \ref{symremark}.
Suppose now the claim holds for $n$, and consider the partition of $n+1$ into the sets $I$ and $J \cup \{n\}$.
Let $a_n'$ be such that $t(a_n'/\A)=t(a_n/\A)$ and
$$a_n' \da^{ns}_\A \A[a_i \, | \,i \in J]\cup \bigcup_{i<n}a_i.$$
Then, in particular, $a_n' \da_\A \bigcup_{i<n} a_i$, so 
 $t(a_n'/ \A \cup \bigcup_{i<n} a_i)=t(a_n/ \A \cup \bigcup_{i<n} a_i)$.
Now, 
$$a_n' \da_{\A[a_i \, | \,i \in J]} \bigcup_{i \in I} a_i,$$
and by Remark \ref{symremark} and monotonicity,
$$\bigcup_{i \in I} a_i \da_{ \A[a_i \, | \,i \in J]} a_n' \cup \bigcup_{i \in J} a_i.$$
By the inductive assumption, we have $\bigcup_{i \in I} a_i \da^{ns}_\A \bigcup_{i \in J} a_i,$ and thus, by Remark \ref{symremark} and Lemma \ref{domination},
$$\bigcup_{i \in I} a_i \da_\A^{ns} \A [a_i \, | \,i \in J].$$
Hence, by Lemma \ref{transitivity},
$$\bigcup_{i \in I} a_i \da_\A^{ns} a_n' \cup\bigcup_{i \in J} a_i,$$
and since $t(a_n'/ \A \cup \bigcup_{i<n} a_i)=t(a_n/ \A \cup \bigcup_{i<n} a_i)$, we have 
$$\bigcup_{i \in I} a_i \da_\A^{ns} \bigcup_{i \in J} a_i \cup \{a_n\},$$
as wanted.
\end{proof}

\begin{lemma}
If $\A$ is a countable model, then Morley sequences over $\A$
are strongly indiscernible over $\A$.
\end{lemma}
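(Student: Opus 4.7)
The plan is to argue in three stages: (i) show by induction on $n$ that any two $n$-tuples of distinct entries of a Morley sequence over $\A$ have the same weak (hence, by Lemma~\ref{typegalois}, Galois) type over $\A$; (ii) given a target cardinal $\kappa$, extend the given Morley sequence to a Morley sequence of length $\kappa$ via AVI, so that (i) still applies; (iii) upgrade (i) to full indiscernibility using the model-homogeneity of $\M$.

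For (i), fix a Morley sequence $(a_i)_{i<\alpha}$ and injective index sequences $i_1,\ldots,i_{n+1}$ and $j_1,\ldots,j_{n+1}$ in $\alpha$. The induction hypothesis together with Lemma~\ref{typegalois} yields $f\in\textrm{Aut}(\M/\A)$ with $f(a_{i_k})=a_{j_k}$ for $k\le n$. By Lemma~\ref{morleyvapaa}, $a_{i_{n+1}}\da^{ns}_{\A}\{a_{i_1},\ldots,a_{i_n}\}$, so a finite $A_0\subset\A$ witnesses that $t(a_{i_{n+1}}/\A\cup\{a_{i_1},\ldots,a_{i_n}\})$ does not split over $A_0$. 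Transporting along $f$ (which fixes $\A$ pointwise) gives $f(a_{i_{n+1}})\da^{ns}_{\A}\A\cup\{a_{j_1},\ldots,a_{j_n}\}$, and Lemma~\ref{morleyvapaa} gives $a_{j_{n+1}}\da^{ns}_{\A}\A\cup\{a_{j_1},\ldots,a_{j_n}\}$ similarly. Both share the weak type of $a_0$ over $\A$, so Lemma~\ref{vaplaajykskas} with base $\A$ and side set $\A\cup\{a_{j_1},\ldots,a_{j_n}\}$ forces $t(f(a_{i_{n+1}})/\A\cup\{a_{j_1},\ldots,a_{j_n}\})=t(a_{j_{n+1}}/\A\cup\{a_{j_1},\ldots,a_{j_n}\})$, hence $t(f(a_{i_{n+1}}),a_{j_1},\ldots,a_{j_n}/\A)=t(a_{j_{n+1}},a_{j_1},\ldots,a_{j_n}/\A)$; applying $f^{-1}$ to the left closes the induction.

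For (ii), I build the extension by recursion on $\beta<\kappa$: at step $\beta$, apply AVI (valid for arbitrary sets by the remark following the axiom) with a model $\B$ containing $\A a_0$ and a set containing $\A\cup\bigcup_{i<\beta}a_i$ to obtain a copy $\C$ of $\B$ over $\A$ with $\C\da^{ns}_{\A}\bigcup_{i<\beta}a_i$; take $a_\beta$ to be the image of $a_0$ in $\C$. For (iii), given any permutation $\pi$ of the extended sequence, for each finite subset $F$ the tuples listing $F$ and $\pi(F)$ in the corresponding order share the same Galois type over $\A$ by (i), so $\pi|_F$ extends to an $\A$-automorphism; thus $\pi$ is itself a partial Galois-elementary map of $\M$, and model-homogeneity of $\M$ extends it to an $\A$-automorphism. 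The main obstacle is the bookkeeping at stage (i): one must secure $f(a_{i_{n+1}})\da^{ns}_{\A}\A\cup\{a_{j_1},\ldots,a_{j_n}\}$ with $\A$ on the right, not merely $\da^{ns}_{\A}\{a_{j_1},\ldots,a_{j_n}\}$, so that Lemma~\ref{vaplaajykskas} yields weak-type equality over a set containing $\A$.
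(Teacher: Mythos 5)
Stages (i) and (ii) of your plan are fine: (i) is essentially the paper's own first step (weak indiscernibility, proved via Lemma \ref{morleyvapaa} and Lemma \ref{vaplaajykskas}, with Lemma \ref{typegalois} supplying the automorphisms for finite tuples), and (ii) fleshes out the paper's closing remark that a Morley sequence can be extended arbitrarily far. The gap is in stage (iii). From the fact that every finite restriction of $\pi$ extends to an automorphism of $\M$ over $\A$ you cannot conclude that $\mathrm{id}_\A\cup\pi$ itself extends to an automorphism. Model homogeneity of $\M$ extends isomorphisms between $\preccurlyeq$-submodels of size less than $\delta$; it says nothing about a partial map defined on the set $\A\cup\{a_i : i<\kappa\}$, which is in general not a model. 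Moreover, Lemma \ref{typegalois} is proved only for finite tuples over a model; the assertion that an infinite tuple whose finite subtuples all have matching Galois types over $\A$ must itself have a matching Galois type over $\A$ (equivalently, that a ``finitely Galois-elementary'' map over $\A$ is a restriction of an automorphism) is exactly the kind of locality that fails in general AECs and is not among the FUR axioms. So your step (iii) assumes precisely what has to be proved.

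This missing step is where the entire weight of the paper's proof lies: one builds an increasing tower of models $\A_i$, with $\A_{i+1}$ the unique $s$-prime model over $\A_i\cup\A[a_i]$ (whose existence requires a delicate induction using domination, symmetry and transitivity -- Claims \ref{domi} and \ref{2}), builds the parallel tower $\A_i^{\pi}$ for the permuted sequence, and then uses Lemma \ref{primary} together with uniqueness of $s$-prime models (AIII) to construct compatible isomorphisms $F_i:\A_i\to\A_i^{\pi}$ fixing $\A$; the union $F=\bigcup_i F_i$ is then an isomorphism between models containing the whole sequence, and only at that point can model homogeneity legitimately be invoked to extend it to an automorphism of $\M$. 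Without an argument of this kind (or some substitute establishing that weak types of the infinite sequences over $\A$ determine their Galois types), your proposal proves only weak indiscernibility plus extendibility of the Morley sequence, not strong indiscernibility.
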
 
 
\begin{proof}
We show first that Morley sequences are weakly indiscernible.
If a sequence $(a_i)_{i<\a}$ is Morley over some model $\A$, then also every finite subsequence is Morley over $\A$.
Thus, as we may relabel any finite subsequence, it suffices 
to show that if a sequence $(a_i)_{i\le n}$, where $n \in \o$, is Morley over a model $\A$, then it is indiscernible over $\A$, i.e. that every permutation extends to an automorphism $f \in \textrm{Aut}(\M/\A)$.
We do this by induction on $n$.
The case $n=0$ is clear.
Suppose now $n=m+1$, where $m\ge0$.
We can obtain any permutation of the $a_i$, $i\le m+1$, by first permuting the $m$ first elements, then changing the place of the two last elements and permuting the $m$ first elements again.
Thus, it is enough to find some  $f \in \textrm{Aut}(\M / \A)$ so that $f(a_i)=a_i$ for $i<m$, $f(a_m)=a_{m+1}$ and $f(a_{m+1})=a_m$.

Since $t(a_m/\A)=t(a_{m+1}/\A)$, $a_m \da_\A^{ns} (a_i)_{i<m}$ and $a_{m+1} \da_\A^{ns} (a_i)_{i<m}$, we have by Lemma \ref{vaplaajykskas} that 
$t(a_m/ \A (a_i)_{i<m})=t(a_{m+1}/ \A (a_i)_{i<m})$ and thus there is some $g_1 \in \textrm{Aut}(\M/\A (a_i)_{i<m})$ such that $g_1(a_{m})=a_{m+1}$.
By Lemma \ref{morleyvapaa}, we have 
$$a_m \da_\A^{ns} (a_i)_{i < m} a_{m+1}.$$
Since $a_{m+1} \da_\A^{ns} (a_i)_{i\le m}$, we have 
$$g_1(a_{m+1}) \da_\A^{ns} (a_i)_{i< m} g_1(a_m),$$ 
so 
$$g_1(a_{m+1}) \da_\A^{ns} (a_i)_{i< m} a_{m+1}$$ since $g_1(a_m)=a_{m+1}$.
Thus, by Lemma \ref{vaplaajykskas}, there is some $g_2 \in \textrm{Aut}(\M/\A (a_i)_{i<m} a_{m+1})$ such that $g_2(g_1(a_{m+1}))=a_{m}$.
Then, $f= g_2 \circ g_1$ is the desired automorphism.
 
Next, we show that Morley sequences are indiscernible. 
Let $(a_i)_{i \in I}$ be a Morley sequence over $\A$, and let $\pi \in \textrm{Sym}(I)$ be a permutation.
We need to show that $\pi$ extends to some $F \in \textrm{Aut}(\M/\A)$.
This is done by constructing models $\A_i$ for $i<\kappa$ so that $\A_0=\A$, for each $i$, $\A_{i+1}$ is the unique $s$-prime model over $\A_i \cup \A[a_i]$, and unions are taken at limit steps.
For this we need to show that these $s$-prime models exist, i.e. that for each $i$, $\A_i \da^{ns}_\A \A[a_i]$.
 
By Lemmas \ref{symlemma} and \ref{domination}, it suffices to show that $a_i \da_{\A}^{ns} \A_i$.
For this, we will show that $a_{i_0}, a_{i_1}, \ldots, a_{i_n} \da_\A^{ns} \A_i$ for $i \le i_0< \ldots <i_n$ (the claim then clearly follows).
We prove this by induction on $i$.
The claim holds for $i=0$, since $\A_0=\A$.
Suppose now it holds for $j$. We show it holds for $j+1$.
For this, we will need two auxiliary claims.

\begin{claim}\label{domi}
The element $a_j$ dominates $\A[a_j]$ over $\A_j$.
\end{claim}

\begin{proof}
Let $c$ be such that $c \da_{\A_j}^{ns} a_j$.
By the inductive assumption, we have $a_j \da^{ns}_\A \A_j$, and thus, by symmetry and transitivity, $\A_j c \da^{ns}_\A a_j$.
By Lemma \ref{domination}, $\A_j c \da^{ns}_\A \A[a_j]$, and hence $c \da^{ns}_{\A_j} \A[a_j]$, as wanted.
\end{proof}

\begin{claim}\label{2}
Let $\B$ be a model such that $\A \subseteq \B$.
Suppose $a \da_\A^{ns} b$ and $ab \da_\A^{ns} \B$. 
Then, $a \da_\B^{ns} b$.
\end{claim}

\begin{proof}
Suppose not.
Choose some finite $A \subset \A$ such that $ab \da_A^{ns} \B$ and $a \da_A^{ns} \A b$.
Since $t(a/\B b)$ splits over $A$, there is some $c \in \B$ so that $a \not\da^{ns}_A bc$.
Choose $c' \in \A$ so that $t(c'/A)=t(c/A)$.
If we would have $t(c'/Aab) \neq t(c/Aab)$, then the pair $c,c'$ would witness that $ab \not\da_A \B$.
Hence, $t(c/Aab)=t(c'/Aab)$.
But now we have $a \not\da^{ns}_A bc'$, so $a \not\da^{ns}_A \A b$, a contradiction.
\end{proof}

Let  now $j<i_0< \ldots < i_n$.
By the inductive assumption, $a_j, a_{i_0}, \ldots, a_{i_n} \da^{ns}_\A \A_j$.
Thus, by Lemma \ref{morleyvapaa} and Claim \ref{2},
$$a_{i_0}, \ldots, a_{i_n} \da^{ns}_{\A_j} a_j,$$
and hence, by  Remark \ref{symremark} and Claim \ref{domi},
$$\A[a_j] \da^{ns}_{\A_j} \A_j a_{i_0}, \ldots, a_{i_n}.$$
By the inductive assumption we have  $ \A[a_j] \da_\A^{ns} \A_j$.
This, and Lemma \ref{transitivity} give
 $$\A[a_j] \da^{ns}_\A \A_j a_{i_0}, \ldots, a_{i_n},$$
and hence by the domination part in AIII,
$$\A_{j+1} \da^{ns}_{\A_j} \A_j a_{i_0}, \ldots, a_{i_n},$$
so
$$a_{i_0}, \ldots, a_{i_n} \da_{\A_j}^{ns} \A_{j+1}.$$ 
By applying the inductive assumption and transitivity, we get
$a_{i_0}, \ldots, a_{i_n} \da^{ns}_\A \A_{j+1},$
as wanted.

Let now $i$ be a limit ordinal.
Then, $\A_j \da^{ns}_{\A} a_i, a_{i_0}, \ldots, a_{i_n}$ for all successor ordinals $j<i$ and $i<i_0<\ldots<i_n$. 
Since $\A_i=\bigcup_{j<i} \A_i$, we have $\A_i \da^{ns}_{\A} a_i, a_{i_0}, \ldots, a_{i_n}$.

Thus, we have shown that the $s$-prime models required for the construction indeed exist.
Now, we construct models $\A_i^\pi$ so that $\A_0^\pi=\A$, for each $i$, $\A_{i+1}^\pi$ is the unique $s$-prime model over $\A_{i}^\pi \cup \A[a_{\pi(i)}]$, and at limit stages unions are taken.
We have already shown that any permutation of finitely many elements of the sequence $(a_i)_{i \in I}$ extends to an automorphism of $\M$ fixing $\A$.
Since being a Morley sequence is a local property (i.e. determined by finite subsequences of a sequence), also the sequence $(a_{\pi(i)})_{i \in I}$ is Morley.
Thus, the models $\A_i^\pi$ exist for each $i \in I$.

We claim that for each $i$, there is an isomorphism $F_i: \A_i \to \A_i^\pi$ fixing $\A$ pointwise.
Since $(a_{\pi(i)})_{i \in I}$ is a Morley sequence, we have $\A[a_{\pi(i)}] \da_\A^{ns} \A_i^{\pi}$.

Clearly we may choose $F_0=id \raj \A$.
Suppose now the claim holds for $i$.
Now, $\A_i^\pi$ is isomorphic to $\A_i$ over $\A$, and by Lemma \ref{primary}, there is some mapping $f_i \in \textrm{Aut}(\M/\A)$ such that $f_i(\A[a_i])=\A[a_{\pi(i)}]$.
 
Now $\A[a_{\pi(i)}] \da^{ns}_\A f_i(\A_i)$, and similarly as in the proof of Lemma \ref{typegalois}, on sees that the map $F_i \cup f_i: \A_i \cup \A [a_i] \to \A_i^{\pi} \cup \A[\pi(i)]$ is weakly elementary.
Thus, it extends to an elementary map $F_{i+1}: \A_{i+1} \to \A_{i+1}^\pi$.
If $i$ is a limit, then we set $F_i=\bigcup_{j<i} F_j$.

Now $F=\bigcup_{i \in I} F_i$ is as wanted.

Clearly a Morley sequence can be extended to be arbitrarily long.
Thus, Morley sequences are strongly indiscernible.
\end{proof}

\subsection{Lascar types and the main independence notion}

In this section, we will present our main independence notion and prove that it has all the usual properties of non-forking. The notion will be based on independence in the sense of Lascar splitting.
The key here is that over models, our main independence notion will agree with non splitting independence (Lemma \ref{kolme}), so we will be able to make use of the properties that we proved in section 2.1. 

We start by giving the definition for Lascar types.
These can be seen as an analogue for first order strong types, and we will eventually show that Lascar types are stationary.
We will see that Lascar types imply weak types but also that over models weak types imply Lascar types.

\begin{definition}
We say that a set $A$ is \emph{bounded} if $\vert A \vert < \delta$, where $\delta$ is the number such that $\M$ is $\delta$- model homogeneous.
\end{definition}

\begin{definition}
Let $A$ be a finite set, and let $E$ be an equivalence relation on $M^{n}$, for some $n<\o$.
We say $E$ is \emph{$A$-invariant} if for all $f \in Aut(\M /A)$ and $a,b \in \M$, it holds that if $(a,b) \in  E$, then $(f(a),f(b)) \in E$.
We denote the set of all $A$-invariant equivalence 
relations that have only boundedly many equivalence classes by $E(A)$.
 
We say that $a$ and $b$ have the same \emph{Lascar type} over a set $B$, denoted  $Lt(a/B)=Lt(b/B)$, 
if for all finite
$A\subseteq B$ and all $E\in E(A)$, it holds that $(a,b)\in E$.
\end{definition}

\begin{lemma}\label{silt}
If $(a_{i})_{i<\o}$ is strongly indiscernible over $B$,
then $Lt(a_{i}/B)=Lt(a_{0}/B)$ for all $i<\o$
\end{lemma}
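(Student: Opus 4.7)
The plan is to fix a finite $A \subseteq B$ and an $A$-invariant equivalence relation $E \in E(A)$, and show that $(a_0, a_i) \in E$ for every $i < \omega$. Since $Lt(a_0/B) = Lt(a_i/B)$ is defined as this holding for all such $A$ and $E$, the lemma will follow.

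First, let $\delta$ be the degree of model homogeneity of $\M$, so that $E$ has fewer than $\delta$ equivalence classes. Pick any cardinal $\kappa \ge \delta$. By strong indiscernibility of $(a_i)_{i<\omega}$ over $B$, we may extend it to a sequence $(a_i)_{i<\kappa}$ that is indiscernible over $B$, and hence over $A$. By the pigeonhole principle, since the number of $E$-classes is strictly less than $\kappa$, there exist indices $i < j < \kappa$ such that $(a_i, a_j) \in E$.

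Next, I would use indiscernibility to propagate this to all pairs. Given any $i' < j' < \kappa$, the transposition swapping $(i,j)$ with $(i',j')$ (extended to any permutation of the whole index set) lifts by indiscernibility to some $f \in \textrm{Aut}(\M/B)$ with $f(a_i) = a_{i'}$ and $f(a_j) = a_{j'}$. Since $A \subseteq B$, we have $f \in \textrm{Aut}(\M/A)$, and the $A$-invariance of $E$ yields $(a_{i'}, a_{j'}) \in E$. In particular, $(a_0, a_i) \in E$ for every $0 < i < \omega$, and for $i = 0$ the conclusion is trivial by reflexivity.

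There is no real obstacle here: the only ingredient beyond definitions is that strong indiscernibility lets us blow the sequence up past the bound on $E$-classes, after which pigeonhole plus the homogeneous behavior of indiscernible sequences does the rest. The one point to be careful about is that invariance of $E$ is stated only over the finite set $A \subseteq B$, but since every automorphism fixing $B$ automatically fixes $A$, this causes no difficulty.
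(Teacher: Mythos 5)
Your proposal is correct and follows essentially the same route as the paper: use strong indiscernibility to extend the sequence past the bound on the number of $E$-classes, apply pigeonhole to find one pair in the same class, and then use automorphisms extending permutations of the indiscernible sequence (which fix $B \supseteq A$, so $E$-invariance applies) to propagate membership in $E$ to all pairs.
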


\begin{proof}
For each $\kappa$, there are $a_i$, $\o \le i < \kappa$, so that $(a_i)_{i<\kappa}$ is indiscernible over $B$.
If $E \in E(A)$ for some finite $A \subset B$, then $E$ has only boundedly many classes, and thus, for a large enough $\kappa$, there must be some indices $i<j<\kappa$ so that $(a_i, a_j) \in E$.
But this implies that $(a_i, a_j) \in E$ for all $i,j<\kappa$, and the lemma follows.
\end{proof}
 
\begin{lemma}\label{frown}
Let $\A$ be a model and let $t(a/\A)=t(b/\A)$.
Then, $Lt(a/\A)=Lt(b/\A)$.
\end{lemma}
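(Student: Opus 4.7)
The plan is to construct, out of $a$ and $b$, an auxiliary element $a'$ that is non-splitting independent from both $a$ and $b$ over $\A$ and has the same weak type as them, and then to use Morley sequences through $(a,a')$ and $(b,a')$ together with strong indiscernibility to transfer Lascar types. Before doing this I would first reduce to the case in which $\A$ is countable: unpacking the definition, $Lt(a/\A)=Lt(b/\A)$ amounts to showing $(a,b)\in E$ for every finite $A\subseteq\A$ and every $E\in E(A)$. Given such $A$, choose a countable $\B\preccurlyeq\A$ with $A\subseteq\B$; since weak types are determined by finite subsets, $t(a/\A)=t(b/\A)$ restricts to $t(a/\B)=t(b/\B)$. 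Thus once the lemma is established for countable models, we will have $Lt(a/\B)=Lt(b/\B)$, and because $A\subseteq\B$ and $E\in E(A)$, this forces $(a,b)\in E$, as required.

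Now assume $\A$ is countable. Using AVI in the form indicated by the remark following the axiom (where the right-hand side need not be a model), I would find $a'\in\M$ with $t(a'/\A)=t(a/\A)=t(b/\A)$ and $a'\da^{ns}_{\A}ab$. Then the pair $(a,a')$ is the start of a Morley sequence over $\A$, since $t(a'/\A)=t(a/\A)$ and $a'\da^{ns}_{\A}a$. Iterating AVI — at each stage pick an element of the correct weak type that is non-splitting independent from the finite collection already chosen — I would extend $(a,a')$ to a Morley sequence $(a_i)_{i<\omega}$ with $a_0=a$ and $a_1=a'$. Because $\A$ is countable, the preceding lemma tells us that this Morley sequence is strongly indiscernible over $\A$, and Lemma \ref{silt} then yields $Lt(a/\A)=Lt(a'/\A)$. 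A symmetric construction starting from $(b,a')$, which is legitimate since $t(a'/\A)=t(b/\A)$ and $a'\da^{ns}_{\A}b$, produces another strongly indiscernible Morley sequence and gives $Lt(b/\A)=Lt(a'/\A)$. Combining the two equalities finishes the argument.

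The main obstacle is producing the single element $a'$ that is non-splitting independent from $a$ and $b$ \emph{simultaneously} over $\A$: this is precisely the role of AVI in its set-theoretic form, and it is what lets the two distinct Morley sequences share a common second term $a'$ whose Lascar type then bridges those of $a$ and $b$. The only other delicate point is the countability of the base model, which is needed because the preceding result identifying Morley sequences with strongly indiscernible ones is stated only for countable $\A$; this is handled at the outset by the reduction to a countable submodel $\B\preccurlyeq\A$.
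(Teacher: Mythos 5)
Your proof is correct and follows essentially the same route as the paper: reduce to a countable model using locality of Lascar types, build Morley sequences over $\A$ starting with $a$ and with $b$ that share a common element, and conclude via strong indiscernibility of Morley sequences over countable models together with Lemma \ref{silt}. The only cosmetic difference is that the paper appends one common tail $(a_i)_{i<\o}$ to both $a$ and $b$, whereas you share just the single element $a'$ and extend each pair separately, which changes nothing essential.
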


\begin{proof}
Since the equality of Lascar types is determined locally (i.e. it depends on finite sets only), we may without loss assume that $\A$ is countable.

Since $t(a/\A)=t(b/\A)$, there is a sequence $(a_i)_{i<\o}$ such that $(a)\frown(a_{i})_{i<\o}$
and $(b)\frown(a_{i})_{i<\o}$ are Morley over $\A$. 
Because Morley sequences are strongly indiscernible, $Lt(a/\A)=Lt(b/\A)$ by Lemma \ref{silt}.
\end{proof}

In particular, by Lemma \ref{notypes}, for any finite set $A$, the number of Lascar types $Lt(a/A)$ is countable.
It follows that every equivalence relation $E \in E(A)$ has only countably many equivalence classes.

\begin{lemma}\label{lascsat}
Let $\A$ be a countable model, $A$ a finite set such that $A \subset \A$ and $b \in \M$.
Then, there is some $a \in \A$ such that $Lt(a/A)=Lt(b/A)$.
\end{lemma}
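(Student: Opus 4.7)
The plan is to reduce the realization of $Lt(b/A)$ inside $\A$ to a Morley-sequence argument in the spirit of the proof of Lemma \ref{frown}, using AI (the $s$-saturation of countable models) to supply a candidate element and then upgrading weak-type equality to Lascar-type equality via strong indiscernibility.

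First, by AI, I would pick a candidate $a \in \A$ with $t(a/A) = t(b/A)$; the task is then to show this $a$ can be chosen so that $Lt(a/A) = Lt(b/A)$. To do this, I would try to produce a single strongly indiscernible sequence over $A$ that contains both $a$ and $b$, so that Lemma \ref{silt} forces their Lascar types to agree. Concretely, I would pass to a countable submodel $\A' \preccurlyeq \A$ containing $A$ but chosen so that $a \notin \A'$ (possible since $\A$ is countable and LS$(\K) = \omega$), and construct via iterated AVI and Lemma \ref{vaplaajol2} a Morley sequence $(c_i)_{i<\omega}$ over $\A'$ with $c_0 = a$. By the main result of Section 2.2, this sequence is strongly indiscernible over $\A'$, hence over $A$, and so $Lt(c_i/A) = Lt(a/A)$ for every $i$.

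Next, I would try to arrange the construction so that $b$ sits at the start of the same (or a compatible) strongly indiscernible sequence over $A$. Since $t(b/A) = t(a/A)$, pick $f \in \textrm{Aut}(\M/A)$ with $f(a) = b$; then $(b, f(c_0), f(c_1), \ldots)$ is a Morley sequence over $f(\A')$, strongly indiscernible over $A$, giving $Lt(b/A) = Lt(f(c_0)/A)$. The idea is to connect the two Lascar types by choosing $a$ and the Morley sequence carefully — ideally so that $t(b/A\cup\{f(c_1),f(c_2),\ldots\})$ and $t(a/A\cup\{c_1,c_2,\ldots\})$ line up via uniqueness of non-splitting extensions (Lemma \ref{vaplaajykskas}), and Lemma \ref{silt} then yields $Lt(a/A) = Lt(b/A)$.

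The main obstacle is the last step: the Morley/strong-indiscernibility technology of Section 2.2 is developed over \emph{models}, but the Lascar equality I need is over the finite set $A$, so I have to guarantee that the relevant non-splitting bases live inside $A$ rather than in some auxiliary countable model $\A'$. Overcoming this probably requires choosing $A$ to be enlarged to a finite set $A^* \subseteq \A$ containing a non-splitting base for $t(b/\A)$ (which exists by Lemma \ref{ansA}), applying AI to the finite set $A^*$ to pick $a$ with $t(a/A^*) = t(b/A^*)$, and then using transitivity (Lemma \ref{transitivity}) together with the Morley-sequence freedom (Lemma \ref{morleyvapaa}) to show $a$ and $b$ are genuinely in a common $A$-indiscernible sequence. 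The delicate point is the symmetric $a$-side of the independence argument, since $a \in \A$ only guarantees non-splitting over \emph{some} finite subset of $\A$ rather than the specified $A^*$; this will likely be handled by iterating the choice of $A^*$ to absorb the non-splitting base of $a$ over $\A$ and applying AI one more time.
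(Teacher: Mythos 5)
Your proposal has a genuine gap at exactly the point you flag as delicate: nothing in the construction ever links $Lt(a/A)$ to $Lt(b/A)$. Starting from an $a \in \A$ with merely $t(a/A)=t(b/A)$ cannot work for an arbitrary such $a$: a single weak type over a finite set $A$ may split into several Lascar types over $A$ (this is the whole point of Lascar types), so the element supplied by AI may realize the wrong one, and no post-processing of that fixed $a$ will change it. The Morley-sequence device you borrow from Lemma \ref{frown} does not repair this: building $(a,c_1,c_2,\ldots)$ Morley over a countable $\A'\preccurlyeq\A$ and transporting it by some $f\in\textrm{Aut}(\M/A)$ with $f(a)=b$ gives two strongly indiscernible sequences over $A$, but they are disjoint in general, and Lemma \ref{lasindisc} needs a chain of \emph{intersecting} sequences. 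In Lemma \ref{frown} the two sequences share the common tail precisely because there $t(a/\A)=t(b/\A)$ over one and the same model; here you only have equality over the finite set $A$, and equality over $\A'$ for your chosen $a$ is exactly what you cannot assume (it would already give the conclusion via Lemma \ref{frown}). Note also that an automorphism over $A$ need not preserve Lascar types over $A$ (only strong automorphisms do), so $Lt(a/A)=Lt(c_1/A)$ and $Lt(b/A)=Lt(f(c_1)/A)$ together yield nothing. The fix in your last paragraph (enlarging $A$ to a finite $A^*\subset\A$ containing a non-splitting base for $t(b/\A)$ and taking $t(a/A^*)=t(b/A^*)$ by AI) hits the same wall: weak-type equality over a larger finite set still does not control the $A$-invariant bounded equivalence relations defining $Lt(\cdot/A)$. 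A smaller slip: you cannot always choose $\A'\preccurlyeq\A$ containing $A$ with $a\notin\A'$, since models are bcl-closed, so $a\in\textrm{bcl}(A)$ forces $a\in\A'$.

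The missing ingredient is the countability of the set of Lascar types over the finite set $A$ (the remark following Lemma \ref{frown}, resting on Lemma \ref{notypes} and Lemma \ref{frown} itself). The paper's proof takes a countable model $\B\supseteq A$ realizing all of the countably many Lascar types over $A$, uses AI and a back-and-forth to produce $f\in\textrm{Aut}(\M/A)$ with $f(\B)=\A$, and observes that $f$ preserves \emph{equality} of Lascar types over $A$ because $A$-invariant equivalence relations are preserved by automorphisms over $A$; applying this to $a'\in\B$ with $Lt(a'/A)=Lt(f^{-1}(b)/A)$ gives $a=f(a')\in\A$ with $Lt(a/A)=Lt(b/A)$. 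In short, every countable model containing $A$ is saturated for Lascar types over $A$; this is the fact your argument would need but never establishes.
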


\begin{proof}
Since there are only countably many Lascar types over $A$, there is some countable model $\B$ containing
$A$ and realizing all Lascar types over $A$.
By AI, we can construct an automorphism $f \in \textrm{Aut}(\M/A)$ such that $f(\B)=\A$.
Let $b'=f^{-1}(b)$.
Then, there is some $a' \in \B$ such that $Lt(a'/A)=Lt(b'/A)$.
Let $a=f(a')$.
Then, $a \in \A$ and 
$Lt(a/A)=Lt(f(b')/A)=Lt(b/A).$
\end{proof}

\begin{lemma}\label{lasindisc}
Let $A$ be a finite set and let $a, b \in \M$.
Then, $Lt(a/A)=Lt(b/A)$ if and only if there are
$n<\o$ and
strongly indiscernible sequences $I_{i}$ over $A$, $i\le n$,
such that $a\in I_{0}$, $b\in I_{n}$ and for all
$i<n$, $I_{i}\cap I_{i+1}\ne\emptyset$.
\end{lemma}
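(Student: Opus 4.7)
The plan is to prove the two directions separately, with the forward direction being the substantial one. For the reverse implication $(\Leftarrow)$, I will use Lemma \ref{silt}: along each strongly indiscernible sequence $I_j$, all elements share the same Lascar type over $A$. Since consecutive sequences $I_j, I_{j+1}$ share an element, chaining the equalities from $a \in I_0$ through to $b \in I_n$ gives $Lt(a/A) = Lt(b/A)$.

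For the forward direction $(\Rightarrow)$, the natural strategy is to define a relation
\[
a \sim b \iff \text{some chain } I_0, \ldots, I_n \text{ as in the statement connects } a \text{ and } b,
\]
and then to show $\sim \in E(A)$. Once this is done, $Lt(a/A) = Lt(b/A)$ forces $(a,b) \in \sim$, yielding the desired chain. Checking that $\sim$ is an equivalence relation is routine: reflexivity follows by building a single Morley (hence strongly indiscernible) sequence over a countable model $\A \supseteq A$ that contains $a$; symmetry by reversing the chain; and transitivity by concatenation. The $A$-invariance is immediate because any $f \in \mathrm{Aut}(\M/A)$ sends a strongly indiscernible sequence over $A$ to another such sequence.

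The main obstacle is bounding the number of $\sim$-classes. I would fix a countable model $\A \supseteq A$ and prove the key claim: if $t(b/\A) = t(b'/\A)$, then $b \sim b'$. To see this, use AVI (or Lemma \ref{vaplaajol2}) to produce $c$ with $t(c/\A) = t(b/\A)$ and $c \da_{\A}^{ns} bb'$; by monotonicity $c \da_{\A}^{ns} b$ and $c \da_{\A}^{ns} b'$, so both $(b, c)$ and $(b', c)$ are Morley initial segments over $\A$. Iterating Lemma \ref{vaplaajol2} I extend these to full Morley sequences $I = (b, c, c_1, c_2, \ldots)$ and $I' = (b', c, d_1, d_2, \ldots)$ over $\A$. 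By the result proved in section 2.2, Morley sequences over a countable model are strongly indiscernible over $\A$, hence also over $A$. Since $I \cap I' \ni c$, this witnesses $b \sim b'$ with $n = 1$.

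Granting the claim, every weak type over $\A$ is contained in a single $\sim$-class, so the number of $\sim$-classes is at most the number of weak types over $\A$. By Lemma \ref{notypes} this is $|\A| = \aleph_0$, which is bounded. Therefore $\sim$ is an $A$-invariant equivalence relation with boundedly many classes, i.e.\ $\sim \in E(A)$, and the forward direction follows. The subtlety that required care was ensuring the two Morley sequences constructed through the common element $c$ are genuinely Morley over $\A$, which rests on monotonicity of non-splitting together with the iterative use of Lemma \ref{vaplaajol2} to guarantee freeness from the previously chosen elements.
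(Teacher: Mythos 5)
Your proof is correct, and the interesting part differs from the paper's at exactly the step where the work happens: showing that the chain relation has only boundedly many classes. The paper argues by contradiction: it takes $\omega_1$ pairwise non-equivalent elements and applies Lemma \ref{fodor} (Fodor's lemma) to extract an uncountable subsequence that is Morley over some model, hence strongly indiscernible over $A$, contradicting pairwise inequivalence. You instead bound the number of classes directly: fixing a countable model $\A \supseteq A$, you use Lemma \ref{ansA} together with Lemma \ref{vaplaajol2} to produce a common nonsplitting extension $c$ of the shared weak type of $b,b'$ over $\A$, so that $(b,c,\dots)$ and $(b',c,\dots)$ are Morley over $\A$, hence strongly indiscernible over $\A$ and a fortiori over $A$, linking $b$ and $b'$ by two sequences meeting at $c$; combined with Lemma \ref{notypes} this gives at most countably many classes. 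Both routes lean on the section 2.2 result that Morley sequences over countable models are strongly indiscernible, but yours avoids Fodor's lemma altogether and yields more explicit information (a countable bound on the classes, and chains of length two between elements with the same weak type over $\A$), at the cost of the extra bookkeeping in constructing the linking sequences. One small point to note: if $b$ lies in $\A$ (equivalently, in the degenerate situation $b \in \textrm{bcl}(A)$, where Lascar-equivalence already forces $a=b$), your $c$ equals $b$ and the sequences become constant, which the chapter's convention calls trivial; this degenerate case is glossed over equally in the paper's own proof (reflexivity of the chain relation has the same issue), so it is not a gap specific to your argument, but it is worth flagging in a careful write-up.
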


\begin{proof}
The implication from right to left follows from Lemma \ref{silt} and the fact that all the strongly indiscernible sequences intersect each other.

For the other direction, we note that "there are
$n<\o$ and
strongly indiscernible sequences $I_{i}$ over $A$, $i\le n$,
such that $a\in I_{0}$, $b\in I_{n}$ and for all
$i<n$, $I_{i}\cap I_{i+1}\ne\emptyset$" is an $A$-invariant equivalence relation.
Since we assume that $Lt(a/A)=Lt(b/A)$, it is enough to prove that this equivalence relation has only boundedly many classes.  

Suppose, for the sake of contradiction, that it has unboundedly many classes.
Then, there is a sequence $(a_i)_{i<\o_1}$ where no two elements are in the same class. 
By Lemma \ref{fodor}, there is some $X \subseteq \o_1$, $\vert X \vert =\o_1$, and a model $\A \supset A$ such that $(a_i)_{i \in X}$ is a Morley sequence over $\A$ and thus strongly indiscernible over $A$.
But now by the definition of our equivalence relation, all the elements $a_i$, $i \in X$ are in the same equivalence class, a contradiction. 
 \end{proof}
 
 Now we are ready to introduce our main independence notion.

\begin{definition}\label{freedom}
Let $A \subset B$ be finite.
We say that $t(a/B)$ \emph{Lascar splits} over $A$,
if there are $b,c\in B$ such that
$Lt(b/A)=Lt(c/A)$ but $t(ab/A)\ne t(ac/A)$.

We say $a$ is free from $C$ over $B$, denoted $a\da_{B}C$, if there is some finite $A\subset B$ such that
for all $D\supseteq B \cup C$, there is some $b$ such that
$t(b/B \cup C)=t(a/B \cup C)$ and $t(b/D)$ does not Lascar split over $A$.
\end{definition}

\begin{remark}\label{freedomremark}
Note that it follows from the above definition that if $ab \da_A B$, then $a \da_A B$.

Also, the independence notion is monotone, i.e. if $A\subseteq B \subseteq C \subseteq D$ and $a \da_A D$, then $a \da_B C$. 
\end{remark}

\begin{lemma}\label{Ltypesimplytypes}
If $Lt(a/A)=Lt(b/A)$, then $t(a/A)=t(b/A)$.
\end{lemma}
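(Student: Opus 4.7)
The plan is to show that, for any finite set $A$, the relation $E_A$ on tuples defined by ``having the same Galois type over $A$'' belongs to $E(A)$. Once this is established, the hypothesis $Lt(a/A)=Lt(b/A)$ forces $(a,b)\in E_A$, which gives $t^g(a/A)=t^g(b/A)$ and hence $t(a/A)=t(b/A)$.

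I would first reduce to the case that $A$ is finite: both $Lt(\cdot/A)$ and $t(\cdot/A)$ are determined locally, so it suffices to prove that for each finite $B\subseteq A$, the equality $Lt(a/B)=Lt(b/B)$ (which is immediate from the hypothesis) implies $t^g(a/B)=t^g(b/B)$. So from now on $A$ is finite. The $A$-invariance of $E_A$ is automatic: if $g\in\textrm{Aut}(\M/A)$ witnesses $t^g(x/A)=t^g(y/A)$ and $f\in\textrm{Aut}(\M/A)$, then $fgf^{-1}$ still fixes $A$ pointwise and sends $f(x)$ to $f(y)$.

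The real content is that $E_A$ has only boundedly many classes. Using AP, JEP and $LS(\K)=\o$, one picks a countable model $\A$ with $A\subseteq\A$. By Lemma \ref{notypes} there are only $|\A|=\aleph_0$ weak types over $\A$, and by Lemma \ref{typegalois} each weak type over $\A$ determines a unique Galois type over $\A$; hence there are only countably many Galois types over $\A$. Every Galois type over $A$ is the restriction of some Galois type over $\A$ (take any realization), so the number of Galois types over $A$ is also at most countable, which is certainly bounded. Thus $E_A\in E(A)$, and $Lt(a/A)=Lt(b/A)$ yields $(a,b)\in E_A$, finishing the proof.

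The main obstacle is just the bounded-classes step; all other pieces are either unpacking the definitions or direct invocations of already-proved lemmas. An alternative route would be to use Lemma \ref{lasindisc} to represent $Lt(a/A)=Lt(b/A)$ as a chain of strongly indiscernible sequences over $A$, observe that an arbitrarily long indiscernible extension forces any two terms of such a sequence to share Galois type over $A$ (via the automorphism realising any permutation), and then chain through the non-empty intersections $I_i\cap I_{i+1}$; but the $E(A)$-argument via Lemmas \ref{notypes} and \ref{typegalois} is cleaner and more in the spirit of the earlier material.
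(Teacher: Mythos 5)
Your proposal is correct and is essentially the paper's own argument: the paper's one-line proof observes, via Lemma \ref{notypes}, that the $A$-invariant equivalence relation ``$t(x/A)=t(y/A)$'' (which for finite $A$ is exactly your $E_A$, since weak type over a finite set is Galois type) has only boundedly many classes, hence lies in $E(A)$, and equality of Lascar types then forces $(a,b)$ into it. Your extra detour through Lemma \ref{typegalois} to count Galois types over a countable model is harmless but not needed, since restriction of weak types from $\A$ to $A$ already bounds the number of classes.
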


\begin{proof}
By Lemma \ref{notypes}, the equivalence relation ``$t(x/A)=t(y/A)$" has only boundedly many classes.
\end{proof}

\begin{lemma}\label{kolme}
Let $a \in \M$, let $\A$ be a model and let $B\supseteq\A$.  
The following are equivalent:
\begin{enumerate}[(i)]
\item $a\da_{\A}B$, 
\item $a\da^{ns}_{\A}B$ ,
\item $t(a/B)$ does not Lascar split over some finite $A\subseteq\A$.
\end{enumerate}
\end{lemma}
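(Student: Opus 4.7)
The lemma asserts a three-way equivalence; I plan to handle (ii)$\Rightarrow$(iii) and (iii)$\Rightarrow$(ii) first, then (iii)$\Rightarrow$(i) and (i)$\Rightarrow$(iii). The direction (ii)$\Rightarrow$(iii) is immediate: by Lemma \ref{Ltypesimplytypes}, Lascar type equality over a finite $A$ implies weak type equality over $A$, so any pair of Lascar-splitting witnesses is already a pair of splitting witnesses, and non-splitting over a finite $A\subseteq\A$ entails Lascar non-splitting over the same $A$.

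For (iii)$\Rightarrow$(ii), apply Lemma \ref{ansA} to pick a finite $A_{0}\subseteq\A$ with $a\da^{ns}_{A_{0}}\A$, and set $A'=A\cup A_{0}\subseteq\A$. Since $\A\subseteq B$, the goal $a\da^{ns}_{\A}B$ reduces to showing that $t(a/B)$ does not split over some finite subset of $\A$, and the plan is to show that $A'$ works. Given $b,c\in B$ with $t(b/A')=t(c/A')$, I would (after passing to a countable submodel of $\A$ containing $A'$) invoke Lemma \ref{lascsat} to find $b',c'\in\A$ with $Lt(b'/A')=Lt(b/A')$ and $Lt(c'/A')=Lt(c/A')$; in particular $t(b'/A')=t(c'/A')$, and $a\da^{ns}_{A'}\A$ then yields $t(ab'/A')=t(ac'/A')$. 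It remains to bridge from $(b,c)$ to $(b',c')$: the Lascar non-splitting hypothesis over $A$ gives the corresponding equalities over $A$, and Lemma \ref{lasindisc} together with the freedom $a\da^{ns}_{A'}\A$ should allow one to lift these equalities from $A$ to $A'$ by working along the strongly indiscernible sequences connecting $b$ to $b'$ (respectively $c$ to $c'$) while controlling the action of automorphisms on $A_{0}$ via the freedom of $a$ over $\A$.

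Once (ii)$\Leftrightarrow$(iii) is established, (iii)$\Rightarrow$(i) follows by free extension. Fix a finite $A^{\ast}\subseteq\A$ witnessing (ii); for each $D\supseteq B$ use Lemma \ref{vaplaajol2} to produce $b$ with $t(b/\A)=t(a/\A)$ and $b\da^{ns}_{A^{\ast}}D$. Then both $a$ and $b$ are non-splittingly free over the model $\A$ from $B$ (monotonicity for $b$, and (ii) for $a$) and agree on $\A$, so Lemma \ref{vaplaajykskas} yields $t(a/B)=t(b/B)$; meanwhile $t(b/D)$ does not split, and hence does not Lascar split, over $A^{\ast}$. Thus the same $A^{\ast}$ certifies (i) uniformly in $D$. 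Conversely, (i)$\Rightarrow$(iii) follows by taking $D=B$ in the definition of $\da$: the resulting $b$ satisfies $t(b/B)=t(a/B)$ and $t(b/B)$ does not Lascar split over some finite $A\subseteq\A$. For any $\beta,\gamma\in B$ with $Lt(\beta/A)=Lt(\gamma/A)$, applying the weak-type equality $t(a/B)=t(b/B)$ to the finite set $A\cup\{\beta,\gamma\}\subseteq B$ gives an automorphism fixing $A\beta\gamma$ and sending $a$ to $b$; this transports $t(b\beta/A)=t(b\gamma/A)$ to $t(a\beta/A)=t(a\gamma/A)$.

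The principal obstacle is the bridging step in (iii)$\Rightarrow$(ii). Lascar non-splitting over the small set $A$ does not obviously transfer to weak-type equality over the strictly larger $A'=A\cup A_{0}$, because enlarging the base parameter set refines Lascar types and there is no free analogue of monotonicity for Lascar non-splitting. The resolution has to exploit both the characterisation of Lascar-type equality via chains of strongly indiscernible sequences (Lemma \ref{lasindisc}) and the freedom $a\da^{ns}_{A'}\A$; the delicate point is choosing the intermediate strongly indiscernible sequences so that the resulting equality of weak types over $A$ can be upgraded to equality over $A'$ without losing the link to the pair $(b,c)$ originally under consideration.
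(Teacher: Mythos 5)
Three of your four directions are correct and essentially the paper's own argument: your (i)$\Rightarrow$(iii) is the paper's ``choose $D=B$'' in Definition \ref{freedom}, made slightly more explicit by transporting Lascar non-splitting from the witness $b$ back to $a$ via equality of weak types over $A\beta\gamma$; your passage to (i) is exactly the paper's (ii)$\Rightarrow$(i), using Lemma \ref{vaplaajol2}, Lemma \ref{vaplaajykskas} and Lemma \ref{Ltypesimplytypes}; and (ii)$\Rightarrow$(iii) is indeed immediate from Lemma \ref{Ltypesimplytypes}.

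The genuine gap is in (iii)$\Rightarrow$(ii), at precisely the point you flag yourself and then leave open: after replacing $A$ by $A'=A\cup A_0$ (with $a\da^{ns}_{A_0}\A$ from Lemma \ref{ansA}), your argument needs $t(ab/A')=t(ab'/A')$ and $t(ac/A')=t(ac'/A')$, i.e.\ it needs the Lascar non-splitting hypothesis over the \emph{enlarged} finite set $A'$, and you only gesture at a resolution via Lemma \ref{lasindisc} and ``controlling automorphisms on $A_0$ via the freedom of $a$'', which is not an argument. Moreover, your diagnosis that ``there is no free analogue of monotonicity for Lascar non-splitting'' is what blocks you, but that monotonicity is in fact true and is exactly how the paper's proof gets through (it is hidden in the step ``we may without loss assume that $t(a/\A)$ does not split over $A$, just enlarge $A$ if necessary''). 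The short proof: write $A'=A\cup d$ with $d\in\A$ a finite tuple, and suppose $u,v\in B$ satisfy $Lt(u/A')=Lt(v/A')$ but $t(au/A')\neq t(av/A')$. Then $t(aud/A)\neq t(avd/A)$, and for every $E\in E(A)$ on tuples of the relevant length the relation $E'$ defined by $x\,E'\,y$ if and only if $xd\,E\,yd$ is $A'$-invariant with boundedly many classes, hence lies in $E(A')$; so $Lt(u/A')=Lt(v/A')$ yields $(ud,vd)\in E$ for all such $E$, i.e.\ $Lt(ud/A)=Lt(vd/A)$. Since $ud,vd\in B$, the pair $(ud,vd)$ would witness Lascar splitting of $t(a/B)$ over $A$, contradicting (iii). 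With this monotonicity your bridging step is immediate (it is just Lascar non-splitting over $A'$ applied to the pairs $b,b'$ and $c,c'$ supplied by Lemma \ref{lascsat}), and the witnesses $b',c'\in\A$ then contradict the non-splitting of $t(a/\A)$ over $A'$, exactly as in the paper. As written, however, your proposal does not contain this (or any) proof of the key step, so it is incomplete.
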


\begin{proof}
"$(i)\Rightarrow (iii)$" follows from Definition \ref{freedom} by choosing $D=B$.

For "$(ii) \Rightarrow (i)$", suppose $a\da^{ns}_{\A}B$.
Then, there is some finite $A \subset \A$ so that $t(a/B)$ does not split over $A$,
and in paricular $t(a/\A)$ does not split over $A$.
Let $D \supset B$ be arbitrary.
By Lemma \ref{vaplaajol2}, there is some $b$ such that $t(b/\A)=t(a/\A)$ and $t(b/D)$ does not split over $A$.
Since $a \da_\A^{ns} B$ and $b \da_\A^{ns} B$, we have by Lemma \ref{vaplaajykskas} that $t(b/B)=t(a/B)$.
Now, $t(b/D)$ does not Lascar split over $A$.
Indeed, if it would Lascar split, then we could find $c, d \in D$ so that $Lt(c/A)=Lt(d/A)$ 
but $t(bc/A) \ne t(bd/A)$.
By Lemma \ref{Ltypesimplytypes}, this implies that 
$t(b/D)$ would split over $A$, a contradiction.

For $(iii)\Rightarrow (ii)$,
suppose that $t(a/B)$ does not Lascar split over $A$.
We may without loss assume that
$t(a/\A )$ does not split over $A$ (just enlarge $A$ if necessary).
We claim that $t(a/B)$ does not split over $A$.
If it does, then there are $b,c \in B$  witnessing the splitting. 
Let $\B \subseteq \A$ be a countable model containing $A$.
By Lemma \ref{lascsat}, we find $(b',c') \in \B$ so that $Lt(b',c'/A)=Lt(b,c/A)$.
Since $Lt(b/A)=Lt(b'/A)$ and $Lt(c/A)=Lt(c'/A)$, we must have $t(ab/A)=t(ab'/A)$ and $t(ac/A)=t(ac'/A)$ 
(otherwise $t(a/B)$ would Lascar split over $A$).
But since $t(ab/A) \neq t(ac/A)$, we have
$$t(ab'/A)=t(ab/A) \neq t(ac/A)=t(ac'/A),$$
which means that $t(a/\A)$ splits over $A$, a contradiction.
\end{proof}

\begin{remark}\label{kolmeremark}
Note that from the proof of  ``(ii) $\Rightarrow$ (i)" for  Lemma \ref{kolme},
it follows that if $\A$ is a model such that $\A \subseteq B$ and $A \subset \A$ is a finite set so that
$a \da_A^{ns} B$, then $a \da_A B$.
In particular, for all models $\A$ and all $a \in \M$, there is some finite $A \subset \A$ such that $a \da_A \A$.
\end{remark}

\begin{lemma}\label{mallinyli}
Suppose $\A$ is a model, $A \subseteq \A$ finite, and $t(a/\A)$ does not Lascar split over $A$.
Then, $a \da_A \A$.
\end{lemma}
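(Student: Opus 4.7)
The plan is to take the given finite set $A$ itself as the witness required by Definition \ref{freedom}, so that establishing $a\da_{A}\A$ reduces to producing, for each $D\supseteq\A$, some $b$ with $t(b/\A)=t(a/\A)$ such that $t(b/D)$ does not Lascar split over $A$. This parallels the proof of $(ii)\Rightarrow(i)$ in Lemma \ref{kolme}, but extra work is needed because our hypothesis gives only Lascar non-splitting rather than outright non-splitting over $A$.

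To construct $b$, I would first use Lemma \ref{ansA} to pick a finite $A'\subseteq\A$ such that $t(a/\A)$ does not split over $A'$, enlarging $A'$ if necessary so that $A\subseteq A'$ (non-splitting is preserved under enlarging the base, by the remark following the splitting definition). Lemma \ref{vaplaajol2} then delivers some $b$ with $t(b/\A)=t(a/\A)$ and $t(b/D)$ not splitting over $A'$, hence not Lascar splitting over $A'$. Moreover, since $t(b/\A)=t(a/\A)$, a $\textrm{Aut}(\M/\A)$-automorphism sending $a$ to $b$ fixes every element of $\A$ pointwise, so the hypothesis transfers to show that $t(b/\A)$ does not Lascar split over $A$.

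The heart of the proof is to upgrade the Lascar non-splitting from being over $A'$ to being over $A$. Suppose, for contradiction, that there are $c_{1},c_{2}\in D$ with $Lt(c_{1}/A)=Lt(c_{2}/A)$ but $t(bc_{1}/A)\ne t(bc_{2}/A)$. Take a countable submodel $\A_{0}\preccurlyeq\A$ with $A'\subseteq\A_{0}$ and apply Lemma \ref{lascsat} to obtain $c_{1}',c_{2}'\in\A_{0}\subseteq\A$ with $Lt(c_{i}'/A')=Lt(c_{i}/A')$. By Lemma \ref{Ltypesimplytypes} this yields $t(c_{i}/A')=t(c_{i}'/A')$, so non-splitting of $t(b/D)$ over $A'$ gives $t(bc_{i}/A')=t(bc_{i}'/A')$, which restricts to $t(bc_{i}/A)=t(bc_{i}'/A)$. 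On the other hand, restricting Lascar types from $A'$ to the smaller set $A$ preserves equality (every $A$-invariant bounded equivalence relation is also $A'$-invariant), so $Lt(c_{i}'/A)=Lt(c_{i}/A)$ and therefore $Lt(c_{1}'/A)=Lt(c_{2}'/A)$; applying Lascar non-splitting of $t(b/\A)$ over $A$ to $c_{1}',c_{2}'\in\A$ then gives $t(bc_{1}'/A)=t(bc_{2}'/A)$. Chaining the three equalities yields $t(bc_{1}/A)=t(bc_{2}/A)$, a contradiction.

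The main obstacle is the mismatch between our hypothesis (Lascar non-splitting over the specified finite $A$) and the non-splitting that the standard free-extension tools deliver (non-splitting, but only over some a priori larger finite $A'\subseteq\A$). The crucial device for reconciling the two is Lemma \ref{lascsat}: the saturation-like behavior of $\A$ lets us pull arbitrary witnesses $c_{1},c_{2}\in D$ back to representatives inside $\A$ with the same Lascar type over $A'$, at which point the hypothesis becomes directly applicable and forces the desired type equality.
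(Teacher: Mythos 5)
Your proof is correct and follows essentially the same route as the paper's: obtain $b$ with $t(b/\A)=t(a/\A)$ whose type over $D$ does not split over a finite $A'\supseteq A$ contained in $\A$, then use Lemma \ref{lascsat} to pull the putative Lascar-splitting witnesses back into $\A$, where the hypothesis (transferred to $b$) and non-splitting over $A'$ chain together to give a contradiction. The only cosmetic difference is that you replace both witnesses by representatives in $\A$, whereas the paper replaces only one; this makes your version slightly more explicitly symmetric but does not change the argument.
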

 
\begin{proof}
Choose a finite set $B$ such that $A \subseteq B \subset \A$ and $t(a/\A)$ does not split over $B$.
For an arbitrary $D \supseteq \A$,  
there is some $b$ so that $t(b/\A)=t(a/\A)$ and $t(b/D)$ does not split over $B$.
We will show that $t(b/D)$ does not Lascar split over $A$.
Suppose it does.
Then, we can find $c \in \A$ and $d \in D$ such that $Lt(c/A)=Lt(d/A)$ but $t(bc/A) \neq t(bd/A)$.
By Lemma \ref{lascsat}, there is some $d' \in \A$ such that $Lt(d'/B)=Lt(d/B)$.
Then, either $t(d'b/A) \neq t(cb/A)$ or $t(d'b/A) \neq t(db/A)$.
In the first case $t(b/\A)$ Lascar splits over $A$, and in the second case, $t(b/D)$ splits over $B$.
Both contradict our assumptions.
\end{proof} 

We now show that in certain special cases, preservation of $U$-ranks implies independence.
This lemma will be applied later, after we have defined $U$-ranks over arbitrary sets and show 
that independence can be expressed as preservation of $U$-ranks.
 
\begin{lemma}\label{lemmanen}
Suppose $\A$ is a model, $A\subseteq\A$ is finite and
$U(a/A)=U(a/\A )$. Then $a\da_{A}\A$. 
\end{lemma}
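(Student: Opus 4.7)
The plan is to show that $t(a/\A)$ does not Lascar split over $A$, which by Lemma \ref{mallinyli} yields $a\da_A\A$. I argue by contradiction. Suppose $b,c\in\A$ witness Lascar splitting over $A$: $Lt(b/A)=Lt(c/A)$ and $t(ab/A)\ne t(ac/A)$. The aim is to build a model $\B\supseteq A$ with $U(a/\B)>U(a/\A)$, contradicting the hypothesis $U(a/A)=U(a/\A)$ together with the definition of $U$-rank over a finite set as the maximum over models containing it.

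First I localize the splitting to a single strongly indiscernible sequence over $A$. By Lemma \ref{lasindisc}, $b$ and $c$ are linked by a finite chain $I_{0},\dots,I_{m}$ of strongly indiscernible sequences over $A$ with $I_{k}\cap I_{k+1}\ne\emptyset$. Since $t(ax/A)$ is pinned down on each common element of $I_{k}$ and $I_{k+1}$, the inequality $t(ab/A)\ne t(ac/A)$ must actually occur within some single $I_{k}$, so I may assume a strongly indiscernible sequence $(b_{i})_{i<\omega}$ over $A$ with $t(ab_{0}/A)\ne t(ab_{1}/A)$. Extending to uncountable length via strong indiscernibility and applying Lemma \ref{fodor}, I pass to a Morley sub-sequence $(b_{i}')_{i<\omega}$ over a countable model $\B_{0}\supseteq A$, and by pigeonholing on the countably many weak types (Lemma \ref{notypes}) I preserve the property $t(ab_{0}'/A)\ne t(ab_{1}'/A)$.

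Second I splice. Let $\A=\A_{0}\subseteq\A_{1}\subseteq\cdots\subseteq\A_{n}$ be a chain witnessing $U(a/\A)=n$. Using AVI in its ``arbitrary sets'' form I free-amalgamate a copy $\A^{\dagger}_{0}\subseteq\A^{\dagger}_{1}\subseteq\cdots\subseteq\A^{\dagger}_{n}$ of this chain over $A$ with $\B_{0}$, obtaining $a^{\dagger}$ with $t(a^{\dagger}\A^{\dagger}_{1}\cdots\A^{\dagger}_{n}/A)=t(a\A_{1}\cdots\A_{n}/A)$ and $a^{\dagger}\A^{\dagger}_{1}\cdots\A^{\dagger}_{n}\da^{ns}_{A}\B_{0}$, and then I glue level-by-level with $\B_{0}$ using AIII to get $s$-prime models $\B_{0}\subseteq\B_{1}\subseteq\cdots\subseteq\B_{n}$. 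The ``furthermore'' clause of AIII, together with symmetry and transitivity (Lemmas \ref{symlemma}, \ref{transitivity}), ensures that the transferred chain still drops rank at every step, i.e.\ $a^{\dagger}\nda^{ns}_{\B_{k}}\B_{k+1}$. Meanwhile the Morley-splitting step $t(ab_{0}'/A)\ne t(ab_{1}'/A)$ survives the amalgamation and witnesses $a^{\dagger}\nda^{ns}_{\B_{0}}\B_{0}[b_{0}']$, providing an additional rank drop at the base. Stacking gives a chain of length $n+1$ inside a single model $\B^{*}\supseteq A$, so $U(a^{\dagger}/\B^{*})\ge n+1$, and since $t(a^{\dagger}/A)=t(a/A)$ we get (via a conjugate of $\B^{*}$ over $A$) a model containing $A$ over which $a$ has rank at least $n+1$, contradicting $U(a/A)=n$.

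The main obstacle is this final splicing step: one must verify that the Morley-splitting step genuinely contributes an extra rank drop rather than coinciding with the bottom step $\B_{0}\subseteq\B_{1}$ of the transferred chain. This comes down to showing, via the symmetry/transitivity of $\da^{ns}$ over models and AIII's preservation of $\da^{ns}$ through $s$-prime extensions, that freeness of the copy from $\B_{0}$ over $A$ keeps the witnesses $b_{0}',b_{1}'$ outside the parameters used to drop rank at the first step of the transferred chain, so that the two rank drops are independent and the chain lengths add.
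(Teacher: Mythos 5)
Your overall strategy (splice an extra splitting step below a transferred copy of the chain witnessing $U(a/\A)=n$, to contradict maximality of $U(a/A)$) differs from the paper's, but as written it has genuine gaps. First, the amalgamation step is not available: AVI only provides non-splitting free extensions over a \emph{model} base; the paper's remark that it ``holds also without the assumption that $\B$ and $\D$ are models'' does not relax the base, so you cannot produce $a^{\dagger}\A_1^{\dagger}\cdots\A_n^{\dagger}\da^{ns}_{A}\B_0$ over the finite set $A$. Free extensions over finite sets are exactly what is not yet at hand at this point of the development -- they are established later (Corollary \ref{typecoro}, Lemma \ref{lascvaplaaj}, Corollary \ref{lascvaplaaj2}) and their proofs use Lemma \ref{lemmanen} itself, so invoking them here would be circular. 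Second, the pigeonhole claim in your first step is unjustified and in general false: after extending the indiscernible sequence to length $\o_1$ and passing to a Morley subsequence over $\B_0$ via Lemma \ref{fodor}, there is no reason two of its members still have different weak types over $Aa$; in the typical configuration (the one the paper's own proof isolates) exactly one element of the $\o_1$-sequence carries the deviant type over $Aa$, and pigeonholing on countably many types yields many elements with the \emph{same} type, not two with different ones. Third, the issue you yourself flag as the ``main obstacle'' is the actual crux and remains unproved: to get the base drop you need $t(a^{\dagger}b_0'/A)\ne t(a^{\dagger}b_1'/A)$, but your amalgam only controls $t(a^{\dagger}/\B_0)$ and says nothing about $t(a^{\dagger}/Ab_0'b_1')$ (the $b_i'$ are not in $\B_0$); and any freeness of $a^{\dagger}$ from $b_0'b_1'$ over $A$ would in fact force $t(a^{\dagger}b_0'/A)=t(a^{\dagger}b_1'/A)$, since $t(b_0'/A)=t(b_1'/A)$. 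So the splicing that should give $U \ge n+1$ is not secured.

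The paper avoids chain-surgery altogether and argues by counting types. It first enlarges $\A$ to a model $\A^{*}$ containing, for every pair with equal Lascar type over $A$, witnessing strongly indiscernible sequences of length $\o_1$, and closed under automorphisms extending their permutations; choosing $a^{*}$ with $t(a^{*}/\A)=t(a/\A)$ and $a^{*}\da^{ns}_{\A}\A^{*}$ and pulling back by an automorphism gives $\A'\supseteq\A$ with $U(a/\A')=U(a/A)$ while the Lascar splitting over $A$ is witnessed \emph{inside} $\A'$ by an $\o_1$-sequence whose permutations extend to automorphisms of $\A'$. Applying these automorphisms to $a$ yields elements $b_i=f_i(a)$, $i<\o_1$, with $U(b_i/\A')=U(b_i/A)$ and pairwise distinct weak types over $\A'$; by Lemma \ref{Unonsplit} each $b_i\da^{ns}_{\B}\A'$ for a fixed countable $\B$, so Lemma \ref{vaplaajykskas} forces $\o_1$ distinct types over the countable model $\B$, contradicting Lemma \ref{notypes}. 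If you want to salvage your route, you would essentially have to reconstruct this machinery (rank preservation inside a model that already contains the witnessing sequences), at which point you are back to the paper's argument.
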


\begin{proof}
By Lemma \ref{mallinyli}, it is enough to show that $t(a/\A)$ does not Lascar split over $A$.
Suppose for the sake of contradiction, that 
$t(a/\A)$ does Lascar split over $A$. 
We enlarge the model $\A$ as follows.
First we go through all pairs $b,c \in \A$ so that $Lt(b/A)=Lt(c/A)$.
For each such pair, we find finitely many strongly indiscernible sequences over $A$ of length $\o_1$ as in Lemma \ref{lasindisc}.
We enlarge $\A$ to contain all these sequences.
After this, we repeat the process $\o$ many times.
Then, for every permutation of a sequence of length $\o_1$ that is strongly indiscernible over $A$ and contained in the model, we choose some automorphism $f \in \textrm{Aut}(\M/A)$ that extends the permutation.
We close the model under all the chosen automorphisms.
Next, we start looking again at pairs in the model that have same Lascar type over $A$ and adding $A$-indiscernible sequences of length $\o_1$ witnessing this.
After repeating the whole process sufficiently long, we have obtained a model $\A^*\supseteq \A$ such that for any $b,c \in \A^{*}$ with $Lt(b/A)=Lt(c/A)$, $\A^{*}$ contains $A$-indiscernible sequences witnessing this, and moreover
every permutation of a sequence of length $\o_1$ that is strongly indiscernible over $A$ and contained in $\A^{*}$ extends to an automorphism of $\A^{*}$.
 
Choose now an element $a^*$ so that $t(a^*/\A)=t(a/\A)$ and $a^* \da_\A^{ns} \A^*$.
Then, $U(a^*/\A^*)=U(a^*/A)$ by Lemma \ref{Unonsplit}.
Let $f \in \textrm{Aut}(\M/\A)$ be such that $f(a^*)=a$, and denote $\A'=f(\A^*)$.
Now, $U(a/\A')=U(a/A)$ and $t(a/\A')$ Lascar splits over $A$.

Let $b,c \in \A'$ witness the splitting.
Then, $Lt(b/A)=Lt(c/A)$ and inside $\A'$ there are for some $n<\o$, strongly indiscernible sequences $I_i$, $i \le n$, over $A$ of length $\o_1$ so that $b \in I_0$, $c \in I_n$ and $I_i \cap I_{i+1} \neq \emptyset$ for $i<n$.
Since $t(ab/A) \neq t(ac/A)$, in at least one of these sequences there must be two elements that have different weak types over $Aa$.
Since there are only countably many weak types over $Aa$, this implies that
there is inside $\A'$ a sequence $(a_{i})_{i<\o_{1}}$ strongly indiscernible over $A$
such that  
$t(aa_{0}/A)\ne t(aa_{1}/A)$ but $t(aa_{1}/A)=t(aa_{i}/A)$ for all $0<i<\o_{1}$.
Moreover, every permutation of $(a_{i})_{i<\o_{1}}$
extends to an automorphism $f\in Aut(\A' /A)$.
  
For each $i<\o_1$, let $f_i \in Aut(\M/A)$ be an automorphism permuting the sequence $(a_{i})_{i<\o_{1}}$ so that $f_i(a_0)=a_i$ and $f_i(\A')=\A'$.
Denote $b_i=f_i(a)$ for each $i<\o_1$.
Then, $U(b_i/\A')=U(b_i/A)$ and for all $j<i<\o_1$, $t(b_i/A)=t(b_j/A)$, but $t(b_i/\A') \neq t(b_j/\A')$ since  
$$t(b_i a_i/A)=t(f_i(a) f_i(a_0)/A)=t(a a_0 /A) \neq t(a f_j^{-1}(a_i) /A)=t(f_j(a) a_i/A)=t(b_j a_i /A).$$ 
  
Let $\B\subseteq\A$ be countable model such that $A\subseteq\B$.
Then for all $i<\o$, 
$$U(b_{i}/\A')=U(b_i/\B),$$
so $b_{i}\da^{ns}_{\B}\A'$ by Lemma \ref{Unonsplit}.
Thus, for all $i<j<\o_{1}$,
$t(b_{i}/\B )\ne t(b_{j}/\B )$, a contradiction by Lemma \ref{vaplaajykskas} since there are only countably many types over $\B$. 
\end{proof}

\begin{corollary}\label{typecoro}
For every $a \in \M$, every finite set $A$ and every $B \supseteq A$, there is some $b \in \M$ such that $t(a/A)=t(b/A)$ and $b \da_A B$.
\end{corollary}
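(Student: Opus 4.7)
The plan is to realize $t(a/A)$ by an element $b$ that is free, over some finite subset $A'$ of $A$, from a model $\B_1 \supseteq B$, and then to shrink the containing set to $B$ by monotonicity. Concretely, I will find a model $\B_1 \supseteq B$ and an element $b$ with $t(b/A) = t(a/A)$ such that $t(b/\B_1)$ does not Lascar split over some finite $A' \subseteq A$. Once this is in hand, Lemma \ref{mallinyli} applied to $\B_1$, $A'$ and $b$ yields $b \da_{A'} \B_1$, and the monotonicity stated in Remark \ref{freedomremark}, applied along the chain $A' \subseteq A \subseteq B \subseteq \B_1$, gives $b \da_A B$.

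To locate $b$, I will exploit the $U$-rank. By the definition of $U$-rank over a finite set as a maximum, there exists a model $\B_0 \supseteq A$ with $U(a/\B_0) = U(a/A)$. Lemma \ref{lemmanen} then yields $a \da_A \B_0$, and unpacking the definition of $\da$ produces a finite $A' \subseteq A$ such that for every $D \supseteq \B_0$ there is some realization of $t(a/\B_0)$ whose type over $D$ does not Lascar split over $A'$. Taking $\B_1$ to be any model containing $\B_0 \cup B$ and applying this clause with $D = \B_1$ delivers a $b$ with $t(b/\B_0) = t(a/\B_0)$ and $t(b/\B_1)$ not Lascar splitting over $A' \subseteq \B_1$. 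Since $A \subseteq \B_0$, this also gives $t(b/A) = t(a/A)$, as required.

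The rest is bookkeeping once the $U$-rank witness $\B_0$ has been extracted; the one point needing care is that the finite witness $A'$ obtained from $a \da_A \B_0$ sits in $A$ rather than merely in $\B_0$, but this is automatic because the left-hand slot in the conclusion of Lemma \ref{lemmanen} is the finite set $A$ itself. I do not foresee any further obstacle.
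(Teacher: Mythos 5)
Your proof is correct; every step checks against the definitions and lemmas available at this point in the paper (Definition \ref{freedom}, Remark \ref{freedomremark}, Lemma \ref{mallinyli}, Lemma \ref{lemmanen}, and the definition of $U$-rank over finite sets as a maximum over models), and there is no circularity. It does, however, take a somewhat different route from the paper. The paper picks a model $\A \supseteq A$ with $U(a/\A)=U(a/A)$, then produces $b$ as a \emph{non-splitting} free extension of $t(a/\A)$ to a model $\B \supseteq \A \cup B$ (via AVI/Lemma \ref{vaplaajol2}), transfers the rank equation to $b$ by Lemma \ref{Unonsplit}, and only then applies Lemma \ref{lemmanen} --- to $b$ --- before finishing with monotonicity. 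You instead apply Lemma \ref{lemmanen} to $a$ itself at the outset, obtaining $a \da_A \B_0$, and then extract $b$ directly from the ``for all $D$'' clause built into Definition \ref{freedom} with $D=\B_1 \supseteq \B_0 \cup B$ a model, closing with Lemma \ref{mallinyli} and monotonicity along $A' \subseteq A \subseteq B \subseteq \B_1$. Your version bypasses the $\da^{ns}$ machinery (no free ns-extension, no Lemma \ref{Unonsplit}) and makes transparent how the extension property is already encoded in the definition of $\da$, at the cost of unwinding that definition by hand; the paper's version is shorter on the page because the non-splitting calculus does the bookkeeping. Your closing observation that the finite witness $A'$ automatically lies in $A$ (since the base in $a \da_A \B_0$ is $A$) is exactly the right point to flag, and it is handled correctly.
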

  
\begin{proof}
Let $\A$ be a model such that $U(a/A)=U(a/\A).$
Let $\B$ be a model such that $\A \cup B \subseteq \B$, and let $b$ be such that $t(b/\A)=t(a/\A)$ and $b \da^{ns}_\A \B$.
Then, by Lemma \ref{Unonsplit}, 
$$U(b/\B)=U(b/\A)=U(a/\A)=U(a/A)=U(b/A).$$ 
By Lemma \ref{lemmanen}, $b \da_A \B$, and thus $b \da_A B$.
\end{proof}  
  
We now prove a weak form of transitivity.
We will apply it to prove some properties of the independence notion that we will need  before we can prove full transitivity.
  
\begin{lemma}\label{lasctrans}
Suppose $A\subseteq\A\subseteq B$. Then $a\da_{A}B$ if and only if
$a\da_{A}\A$ and $a\da_{\A}B$.
\end{lemma}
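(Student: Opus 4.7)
The plan is to split the biconditional into two directions. The forward direction should be immediate from the monotonicity stated in Remark \ref{freedomremark}, applied once to the chain $A\subseteq A\subseteq\A\subseteq B$ to extract $a\da_A\A$ and once to $A\subseteq\A\subseteq B\subseteq B$ to extract $a\da_\A B$.

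For the reverse direction, I assume $a\da_A\A$ and $a\da_\A B$ and try to show $a\da_A B$ using the very same finite witness. Concretely, I would fix a finite $A_0\subseteq A$ witnessing $a\da_A\A$, so that for every $D'\supseteq\A$ there exists $b$ with $t(b/\A)=t(a/\A)$ and $t(b/D')$ not Lascar-splitting over $A_0$, and claim that $A_0$ also witnesses $a\da_A B$. Given an arbitrary $D\supseteq B$, note that $D\supseteq\A$ as well, so the hypothesis $a\da_A\A$ applied to this $D$ produces an element $b_0$ with $t(b_0/\A)=t(a/\A)$ and $t(b_0/D)$ not Lascar-splitting over $A_0$. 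The only thing left to establish is that $t(b_0/B)=t(a/B)$; once this is in hand, $b_0$ is the required witness and the definition of $a\da_A B$ is satisfied.

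The main obstacle is precisely this last type-upgrade, since $b_0$ only a priori shares a weak type with $a$ over the smaller set $\A$. The plan is to exploit the standing convention that $\A$ is a model (as is the case in the analogous non-splitting transitivity Lemma \ref{transitivity}). By monotonicity of non-Lascar-splitting, $t(b_0/B)$ does not Lascar-split over the finite set $A_0\subseteq\A$, so the implication (iii)$\Rightarrow$(ii) of Lemma \ref{kolme} gives $b_0\da^{ns}_\A B$. Similarly, the assumption $a\da_\A B$ together with (i)$\Rightarrow$(ii) of Lemma \ref{kolme} yields $a\da^{ns}_\A B$. Since $t(a/\A)=t(b_0/\A)$, the uniqueness of free extensions of weak types over models (Lemma \ref{vaplaajykskas}) then forces $t(a/B)=t(b_0/B)$, which closes the argument.
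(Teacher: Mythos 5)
Your proof is correct and follows essentially the same route as the paper: the witness produced by $a\da_{A}\A$ is shown, via Lemma \ref{kolme} and the uniqueness of free extensions over the model $\A$ (Lemma \ref{vaplaajykskas}), to have the same weak type as $a$ over $B$, with the forward direction handled by monotonicity. The only difference is cosmetic: you carry the arbitrary $D\supseteq B$ explicitly, whereas the paper's write-up leaves that quantification implicit.
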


\begin{proof}
"$\Rightarrow$": $a\da_{A}\A$ is clear and $a\da_{\A}B$ follows from Lemma \ref{kolme}.
  
"$\Leftarrow$": 
Since $a \da_A \A$, there is by definition some finite $A_0 \subseteq A$ and some $b$ such that
$t(b/\A )=t(a/\A )$ and $t(b/B)$ does not Lascar split over $A_0$.
By Lemma \ref{kolme}, $b\da^{ns}_{\A}B$ and $a \da^{ns}_\A B$.
Thus, by Lemma \ref{vaplaajykskas}, $t(b/B)=t(a/B)$.
Hence $a\da_{A}B$, as wanted.
\end{proof}

Next, we prove symmetry over finite sets.
Later, this result will be applied when proving full symmetry.

\begin{lemma}\label{lascsym}
Let $A$ be finite. Then, $a\da_{A}b$ if and only if $b\da_{A}a$. 
\end{lemma}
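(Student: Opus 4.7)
The strategy is to reduce the statement to the symmetry of non-splitting over a model (Remark \ref{symremark}) and then push the resulting independence down to the finite base $A$. By the symmetry of the biconditional it suffices to assume $a\da_A b$ and prove $b\da_A a$.

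First I would replace $a$ by a convenient representative. Let $\A$ be a countable model containing $A\cup\{b\}$. Applying the definition of $a\da_A b$ with $D=\A$ produces $a^*$ with $t(a^*/Ab)=t(a/Ab)$ and $t(a^*/\A)$ not Lascar-splitting over some finite $A_0\subseteq A$. By Lemma \ref{mallinyli} and Lemma \ref{kolme}, $a^*\da_A^{ns}\A$, so (possibly enlarging $A_0$ inside $A$) $t(a^*/\A)$ does not split over $A_0$. Since $b\da_A a$ is equivalent to $b\da_A a^*$ via an automorphism fixing $Ab$, I may replace $a$ by $a^*$ and assume $t(a/\A)$ itself does not split over $A_0\subseteq A$.

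To verify $b\da_A a$ from its definition, take any $D\supseteq A\cup\{a\}$ and a model $\A^*$ with $\A\cup D\subseteq\A^*$. Using Lemma \ref{vaplaajol2} together with Lemma \ref{typegalois}, I may adjust $a$ by an automorphism fixing $\A$ (which preserves $t(a/Ab)$) so that $t(a/\A^*)$ does not split over $A_0$. Hence $a\da_\A^{ns}\A^*$, and Remark \ref{symremark} delivers $\A^*\da_\A^{ns} a$. I then take $b'=b$ in the definition of $b\da_A a$ and claim that $t(b/D)$ does not Lascar-split over $A_0$. Any hypothetical splitting witnesses $c,d\in D$ with $Lt(c/A_0)=Lt(d/A_0)$ and $t(bc/A_0)\ne t(bd/A_0)$ would, by Lemma \ref{lasindisc}, lie in a finite chain of strongly indiscernible sequences over $A_0$; extending such a sequence to length $\omega_1$ and applying Lemma \ref{fodor} together with the $\omega$-stability coming from Lemma \ref{notypes}, one extracts a cofinal Morley sub-sequence over a countable model $\B$ containing $A_0\cup\{b\}$. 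On this sub-sequence $t(bc_i/A_0)$ is constant, and combining this with the strong indiscernibility of the original sequence over $A_0$ and the non-splitting of $t(a/\A^*)$ over $A_0$ rules out the non-constancy of $t(bc_i/A_0)$ at the initial indices.

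The main obstacle is this final push-down: the symmetry step yields non-splitting of $t(b/\A\cup\{a\})$ only over a finite subset of the model $\A$, and it must be converted into non-Lascar-splitting of $t(b/D)$ over a finite subset of $A$. The argument hinges on $\omega$-stability and the characterization of Lascar types by strongly indiscernible sequences (Lemma \ref{lasindisc}), which together force any potential Lascar-splitting witness in $D$ to already be controlled by the finite set $A_0\subseteq A$.
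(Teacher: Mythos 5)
Your plan \textemdash{} reduce to symmetry of non-splitting over a model and then descend to the finite base \textemdash{} is the right general idea, but two of its steps do not hold up. First, from ``$t(a^*/\A)$ does not Lascar split over $A_0\subseteq A$'' you infer, citing Lemmas \ref{mallinyli} and \ref{kolme}, that $t(a^*/\A)$ does not split over some finite subset of $A$. Neither lemma gives this: Lemma \ref{kolme} produces a non-splitting base that is a finite subset of the \emph{model} $\A$, not of $A$, and non-Lascar-splitting over a finite set does not imply non-splitting over it. For instance, in Example \ref{equivalence} take $A=\{x\}$ and $a\in\textrm{bcl}(x)\setminus\{x\}$: then $t(a/\A)$ does not Lascar split over $A$, yet it splits over $A$ (witnessed by $a$ itself and another point of the class of $x$). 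Second, and fatally, when verifying $b\da_A a$ you take $b'=b$ and claim that $t(b/D)$ does not Lascar split over $A_0$ for every $D\supseteq A\cup\{a\}$. This is simply false whenever $b\notin\textrm{bcl}(A_0)$: pick $b''\neq b$ with $Lt(b''/A_0)=Lt(b/A_0)$ and let $D$ contain $b$ and $b''$; then $c=b$, $d=b''$ witness Lascar splitting, since $t(bb/A_0)\neq t(bb''/A_0)$. Adjusting $a$ by an automorphism fixing $\A$ cannot remove such witnesses, because any such automorphism fixes $b$ and $A_0$ and carries them to witnesses of the same kind. The definition of $\da$ genuinely requires passing to a conjugate $b'$ of $b$ over $Aa$, and your choice of $\A$ blocks the mechanism that produces one: since $b\in\A$, you cannot have $b\da_A\A$ unless $b\in\textrm{bcl}(A)$ (reflexivity), so the transitivity Lemma \ref{lasctrans} is unavailable, and no replacement for it is supplied \textemdash{} the closing ``push-down'' paragraph is an outline of hopes rather than an argument.

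The paper's proof avoids both problems by setting up the auxiliary model differently: using Corollary \ref{typecoro} and an automorphism over $A$, it finds a model $\A\supset A$ with $b\da_A\A$ (so $b$ is \emph{not} in $\A$ in general). Then, taking $D=\A b$ in the definition of $a\da_A b$, it obtains $a'$ with $t(a'/Ab)=t(a/Ab)$ and $t(a'/\A b)$ not Lascar splitting over a finite subset of $A$; Lemma \ref{kolme} gives $a'\da_{\A}b$, Remark \ref{symremark} (symmetry over models) gives $b\da_{\A}a'$, and Lemma \ref{lasctrans} combines this with $b\da_A\A$ to yield $b\da_A a'$; conjugating $a'$ to $a$ over the finite set $Ab$ then gives $b\da_A a$. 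If you want to salvage your write-up, replace your model ``containing $A\cup\{b\}$'' by one free from $b$ over $A$ in this sense, and prove $b\da_A a'$ for a conjugate $a'$ of $a$ rather than trying to verify the definition with $b'=b$.
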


\begin{proof}
Suppose $a \da_A b$.
Let $\A_0$ be a model such that $A \subset \A_0$.
By Corollary \ref{typecoro}, there exists some $b'$ such that $t(b'/A)=t(b/A)$
and $b' \da_A \A_0$.
Let $f \in \textrm{Aut}(\M/A)$ be such that $f(b')=b$, and denote $\A=f(\A_0)$.
Then, $A \subset \A$ and $b \da_A \A$.
Since $a \da_A b$, there is by Definition \ref{freedom} (take $D=\A b$),
some $a'$ such that
$t(a'/Ab)=t(a/Ab)$
and $t(a'/\A b)$ does not Lascar split over $A$.
It now follows from Lemma \ref{kolme}, that $a' \da_\A \A b$ and thus
$a' \da_\A b$. By Lemma \ref{kolme} and Remark \ref{symremark}, $b \da_\A a'$.
By Lemma \ref{lasctrans}, $b \da_A a'$, and thus $bÊ\da_A a$.
\end{proof}

Now, we prove a weak form of extension: that types over finite sets have free extensions.
We will apply the lemma when proving full extension. 
 
\begin{lemma}\label{lascvaplaaj}
For every $a$, every finite set $A$ and every  $B\supseteq A$, there is $b$
such that $Lt(b/A)=Lt(a/A)$ and $b\da_{A}B$.
\end{lemma}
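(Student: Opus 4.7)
The plan is to mimic the construction in the proof of Corollary \ref{typecoro} and observe that the witness $b$ it produces automatically satisfies the stronger Lascar-type condition. First, I would pick a model $\A$ with $A \subset \A$ and $U(a/\A) = U(a/A)$, which exists by the definition of $U(a/A)$ as a maximum over models extending $A$. Next, let $\B$ be a model with $\A \cup B \subseteq \B$. By Lemma \ref{ansA}, there is a finite $A_0 \subset \A$ over which $t(a/\A)$ does not split, and by Lemma \ref{vaplaajol2} we obtain $b$ with $t(b/\A) = t(a/\A)$ and $b \da_{A_0}^{ns} \B$; since $A_0 \subseteq \A$, this also gives $b \da_{\A}^{ns} \B$.

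The Lascar-type condition on $b$ is then immediate: $\A$ is a model and $t(b/\A) = t(a/\A)$, so Lemma \ref{frown} gives $Lt(b/\A) = Lt(a/\A)$, which restricts to $Lt(b/A) = Lt(a/A)$ since $A \subset \A$. For the freedom $b \da_A B$, I would repeat the $U$-rank computation from the proof of Corollary \ref{typecoro}: Lemma \ref{Unonsplit} gives $U(b/\B) = U(b/\A)$; the equality $t(b/\A) = t(a/\A)$ gives $U(b/\A) = U(a/\A)$; the choice of $\A$ gives $U(a/\A) = U(a/A)$; and $t(b/A) = t(a/A)$ gives $U(a/A) = U(b/A)$. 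Thus $U(b/\B) = U(b/A)$, and Lemma \ref{lemmanen} yields $b \da_A \B$, whence $b \da_A B$ by monotonicity (Remark \ref{freedomremark}).

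The main observation is really only notational: Corollary \ref{typecoro}, though stated only for weak types, in fact constructs a $b$ whose weak type agrees with $a$'s over the entire model $\A$ rather than merely over the finite set $A$. Lemma \ref{frown} then upgrades weak-type equality over a model to Lascar-type equality over that model, and Lascar-type equality restricts downward from $\A$ to $A$ with no extra cost. Once this is noticed, the statement is essentially a strengthening of Corollary \ref{typecoro} whose proof is already in hand; no genuinely new obstacle arises.
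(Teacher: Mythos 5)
Your proof is correct, and it differs from the paper's in mechanics though not in the underlying idea. The paper's proof invokes Corollary \ref{typecoro} as a black box for a countable model $\A_0\supseteq A$, pulls the resulting independence back to $a$ itself by an automorphism over $A$ (so that $a\da_A\A$ for $\A=f(\A_0)$), takes a non-splitting free extension $b$ of $t(a/\A)$ over $B$, and then concludes $b\da_A B$ via Lemma \ref{kolme} together with the weak transitivity Lemma \ref{lasctrans}; the Lascar-type equality comes from Lemma \ref{frown} exactly as in your argument. You instead choose $\A$ to realize the maximum in the definition of $U(a/A)$ and re-run, inline, the rank computation that constitutes the paper's proof of Corollary \ref{typecoro} (Lemma \ref{Unonsplit} plus Lemma \ref{lemmanen}), which lets you bypass both the automorphism pull-back and Lemma \ref{lasctrans}. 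What the two routes share is the essential observation you articulate at the end: the witness is a free extension of $t(a/\A)$ over a whole model, so Lemma \ref{frown} upgrades weak-type equality over $\A$ to Lascar-type equality, which then restricts to $A$. Your version is marginally more self-contained (it reproves \ref{typecoro} with the stronger conclusion rather than citing it), while the paper's version reuses \ref{typecoro} verbatim at the cost of the extra automorphism step; each of the individual steps you use (rank invariance under equality of types over $\A$ and over the finite set $A$, monotonicity from $b\da_A\B$ to $b\da_A B$) is exactly as in the paper's proof of Corollary \ref{typecoro}, so no new gap arises.
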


\begin{proof}
Let $\A_0$ be a countable model such that $A \subset \A_0$.
By Corollary \ref{typecoro}, there is some element $a'$ so that $t(a'/A)=t(a/A)$ and $a' \da_A \A_0$.
Let $f \in \textrm{Aut}(\M/A)$ be such that $f(a')=a$.
Denote $\A=f(\A_0)$.
Now, $A \subset \A$ and $a \da_A \A$.
 
Choose now  $b$ so that $t(b/\A )=t(a/\A )$ and $b\da^{ns}_{\A}B$.  
Then, $b\da_{\A}B$.
By Lemma \ref{lasctrans}, $b\da_{A}B$.
Moreover, by Lemma \ref{frown}, $Lt(b/A)=Lt(a/A)$.
\end{proof}
 
 Now, we are ready to prove stationarity and transitivity.

\begin{lemma}[Stationarity]\label{lascvaplaajykskas}
If $A \subseteq B$, $a\da_{A}B$, $b\da_{A}B$ and $Lt(a/A)=Lt(b/A)$, then
$Lt(a/B)=Lt(b/B)$.
\end{lemma}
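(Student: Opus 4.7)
The plan is to reduce stationarity to the case over a countable model, where the uniqueness of non-splitting extensions (Lemma \ref{vaplaajykskas}) and the agreement of weak and Lascar types over models (Lemma \ref{frown}) apply. First I would fix a countable model $\A \supseteq A$ chosen to be suitably free from $a$, $b$, and from each finite tuple of $B$ over $A$: starting from any countable model $\A_0 \supseteq A$, I apply Lemma \ref{lascvaplaaj} iteratively to an enumeration of $\A_0$, extending the Lascar type over $abB$ tuple by tuple, and transport $\A_0$ via an automorphism fixing $A$ to obtain a countable model $\A$ with $\A \da_A abB$ element-wise. Lemma \ref{lascsym} then gives $a \da_A \A$, $b \da_A \A$, and that each finite tuple from $B$ is free from $\A$ over $A$.

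Next I would establish $Lt(a/\A) = Lt(b/\A)$. The hypothesis $Lt(a/A) = Lt(b/A)$ gives $t(a/A) = t(b/A)$ by Lemma \ref{Ltypesimplytypes}, while $a, b \da_A \A$ combined with Lemma \ref{kolme} yield $a, b \da^{ns}_A \A$. I pass to a countable submodel $\B \subseteq \A$ containing $A$, invoke AI to find realizations in $\B$ matching the common weak type over $A$, and use Lemma \ref{transitivity} to lift the non-splitting base from $A$ to $\B$; Lemma \ref{vaplaajykskas} at the model $\B$ then gives $t(a/\A) = t(b/\A)$, and Lemma \ref{frown} upgrades this to $Lt(a/\A) = Lt(b/\A)$.

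To transfer the equality to $B$, I combine $a \da_A \A$, $a \da_A B$, and the freeness of $\A$ from $B$ over $A$ (from step one) to obtain $a \da_\A B$ via a transitivity argument through the model (Lemma \ref{lasctrans} combined with Lemma \ref{kolme}), and likewise for $b$. Lemma \ref{vaplaajykskas} applied at the model $\A$ then yields $t(a/\A B) = t(b/\A B)$. Passing to a model $\A' \supseteq \A B$ over which $a, b$ remain non-splitting-free over $\A$ (via Lemma \ref{vaplaajol2}), Lemma \ref{frown} produces $Lt(a/\A') = Lt(b/\A')$, and restriction to $B \subseteq \A'$ gives $Lt(a/B) = Lt(b/B)$. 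The hardest step will be arranging matching weak types over the countable submodel $\B$ in the middle paragraph: uniqueness of non-splitting extensions does not hold over a finite base in general, so we must exploit the Lascar type agreement together with the saturation of $\B$ to realize the common type and only then reduce to uniqueness over a model, and the combination of three fragmentary independencies in the final transfer must be organized carefully through the model to give the single conclusion $a \da_\A B$.
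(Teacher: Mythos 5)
Your middle step is where the argument genuinely breaks. From $a\da_A\A$ (Definition \ref{freedom}, taking $D=\A$) all you can extract is that $t(a/\A)$ does not \emph{Lascar} split over a finite subset of $A$; Lemma \ref{kolme} does not convert this into $a\da^{ns}_A\A$, since in that lemma the base of the independence is the model, and in the direction you need it only produces non-splitting over some \emph{enlarged} finite subset of $\A$ -- a different one for $a$ and for $b$. But the types of $a$ and $b$ are only known to agree over $A$ itself, so the hypotheses of Lemma \ref{vaplaajykskas} (agreement over a base over which both types are non-splitting) cannot be assembled, and the saturation of $\B$ does not rescue the computation: to compare $t(ac/A)$ with $t(bc/A)$ for $c\in\A$ one is forced to consider $f(c)$ for some $f\in\textrm{Aut}(\M/A)$ with $f(a)=b$, and $f(c)$ need not lie in $\A$, so non-Lascar-splitting (whose witnesses must come from the domain of the type) cannot be applied to it. What you are trying to prove at this point -- uniqueness of free extensions of a type/Lascar type over the finite base $A$ -- \emph{is} the stationarity statement, so the step begs the question.

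There is also a circularity problem in your first and third steps. Iterating Lemma \ref{lascvaplaaj} along an enumeration of $\A_0$ only matches Lascar types over $A$ tuple by tuple; that does not make the map elementary, so there is no automorphism transporting $\A_0$ onto your $\A$, and even granting an element-wise free copy, passing from ``every finite $c\in\A$ satisfies $c\da_A abB$'' to $a\da_A\A$ needs symmetry for infinite sets and finite character of $\da$, while combining $a\da_A\A$, $a\da_A B$ and the freeness of $\A$ from $B$ into $a\da_\A B$ needs transitivity and symmetry over arbitrary bases. All of these (Lemmas \ref{lasctrans2}, \ref{local}, \ref{local2}, \ref{lascfullsym}) are proved \emph{after} stationarity and use it; at this stage only Lemma \ref{lasctrans} (a model in the middle) and Lemma \ref{lascsym} (finite tuples over a finite base) are available, and they do not suffice. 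The paper's proof is arranged precisely so that every independence used has either a model as base or finite tuples on both sides of a finite base: it reduces to finite $A,B$, builds models $\A_a\ni a$, $\A_b\ni b$ with the finite tuple $B$ free from them ($B\da_A\A_a$, via Definition \ref{freedom}, an automorphism over $Aa$, and Lemma \ref{mallinyli}), then introduces an auxiliary $c$ with $Lt(c/A)=Lt(a/A)$ and $c\da_A\A_a\cup\A_b\cup B$, shows $ac\da_A B$, and, assuming $Lt(c/B)\neq Lt(a/B)$, uses Lemma \ref{frown} over a model $\B\supseteq B$ with $ac\da_A\B$ to extract a witness $b'\in\B$ with $t(ab'/A)\neq t(cb'/A)$; since $Lt(a/A)=Lt(c/A)$, this makes $t(b'/Aac)$ Lascar split over $A$, contradicting $b'\da_A ac$. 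That comparison with a third, fully free realization is the missing idea: it is what allows one to avoid proving any uniqueness-over-a-finite-base statement head-on.
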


\begin{proof}
Clearly it is enought to prove this under the assumption
that $A$ and $B$ are finite (if $Lt(a/B_0)=Lt(b/B_0)$ for every finite $B_0 \subset B$, then $Lt(a/B)=Lt(b/B)$).
Suppose the claim does not hold.
We will construct countable models $\A_a$ and $\A_b$ so that $Aa \subset \A_a$, 
$Ab \subset \A_b$,  $B\da_{A}\A_{a}$ and $B\da_{A}\A_{b}$.  
Let $\A$ be a model such that $Aa \subset \A$.
By Lemma \ref{lascsym}, we have $B \da_A a$.
Thus, by Definition \ref{freedom}, there is some $B'$ such that $t(B'/Aa)=t(B/Aa)$ and $t(B'/\A)$ does not Lascar split over $A$.
Let $f \in \textrm{Aut}(M/Aa)$ be such that $f(B')=B$, and denote $\A_a=f(\A)$.
Then, $Aa \subset \A_a$ and $t(B/\A_a)$ does not Lascar split over $A$.
By Lemma \ref{mallinyli}, $B \da_A \A_a$.
Similarly, we find a suitable model $\A_b$.
Now by Lemma \ref{lascvaplaaj}, there is some $c$ such that $Lt(c/A)=Lt(a/A)$ and $c \da_A \A_a \cup \A_b \cup B$.
By monotonicity, we have $c \da_{\A_a} B$, and thus by Lemma \ref{lascsym}, $B \da_{\A_a} c$.
Hence, by Lemma \ref{lasctrans}, $B\da_{A}\A_{a}c$ and so
$ac\da_{A}B$.

By the counterassumption, we may without loss assume that $Lt(c/B) \neq Lt(a/B)$.
Choose a model $\B\supseteq B$  
so that $ac\da_{A}\B$. 
By Lemma \ref{frown}, $t(a/\B )\ne t(c/\B )$. 
So there is some $b'\in\B$ that
withesses this, i.e. $t(ab'/A)\ne t(cb'/A)$. 
As $Lt(c/A)=Lt(a/A)$, this means $t(b'/Aac)$ Lascar splits over $A$, a contradiction since $b' \da_A ac$.
\end{proof}
 
\begin{lemma}[Transitivity]\label{lasctrans2}
Suppose $A\subseteq B\subseteq C$, $a\da_{A}B$ and $a\da_{B}C$.
Then $a\da_{A}C$. 
\end{lemma}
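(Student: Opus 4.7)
The plan is to use extension to produce a single element $a_1$ that agrees with $a$ in Lascar type over $A$ and is free over $A$ from a large set containing $A\cup B\cup C$; two applications of stationarity then transfer the Lascar-type equality up to $B\cup C$, at which point $a\da_{A}C$ follows because the truth of ``$x\da_{A}C$'' depends on $x$ only through its weak type over $A\cup C$.

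Concretely, I would first apply Lemma \ref{lascvaplaaj} to obtain $a_1$ with $Lt(a_1/A)=Lt(a/A)$ and $a_1\da_{A}(A\cup B\cup C)$. By monotonicity (Remark \ref{freedomremark}), $a_1\da_{A}B$; together with the hypothesis $a\da_{A}B$ and $Lt(a_1/A)=Lt(a/A)$, stationarity (Lemma \ref{lascvaplaajykskas}) yields $Lt(a_1/B)=Lt(a/B)$. Monotonicity again gives $a_1\da_{B}C$; together with $a\da_{B}C$ and the Lascar-type equality just obtained, a second application of stationarity gives $Lt(a_1/B\cup C)=Lt(a/B\cup C)$. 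Combining Lemma \ref{Ltypesimplytypes} with $A\subseteq B$, this yields $t(a_1/A\cup C)=t(a/A\cup C)$.

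Finally, $a_1\da_{A}C$ by monotonicity applied to $a_1\da_{A}(A\cup B\cup C)$, and inspection of Definition \ref{freedom} shows that the condition ``$x\da_{A}C$'' depends on $x$ only through $t(x/A\cup C)$: the same finite $A_0\subseteq A$ and the same witnessing $b$ serve for both $a$ and $a_1$. Hence $a\da_{A}C$. I do not foresee a serious obstacle: the argument is a routine threading of extension, monotonicity, and two applications of stationarity. The only point requiring care is that each invocation of stationarity must take Lascar-type equality as input and deliver Lascar-type equality as output, and the bases must be selected in the right order (first $A$, then $B$).
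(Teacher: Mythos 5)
Your argument is essentially the paper's proof: produce a free extension of $Lt(a/A)$ over $C$, push the Lascar-type equality up through $B$ and then through $C$ by two applications of stationarity (Lemma \ref{lascvaplaajykskas}), and conclude via the observation that, by Definition \ref{freedom}, whether $x\da_{A}C$ holds depends on $x$ only through $t(x/A\cup C)$. The one point you gloss over is that Lemma \ref{lascvaplaaj}, which you use to produce $a_1$, is stated only for \emph{finite} $A$ (full Extension, Corollary \ref{lascvaplaaj2}, comes later in the development); so, as the paper does, you should first note that it suffices to prove the lemma for finite $A$ --- immediate from Definition \ref{freedom}, since the finite witness $A_0\subseteq A$ for $a\da_{A}B$ gives $a\da_{A_0}B$, and the conclusion $a\da_{A_0}C$ yields $a\da_{A}C$ --- after which your argument goes through verbatim.
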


\begin{proof}
Clearly it is enough to prove this for finite
$A$. 
Choose $b$ so that $Lt(b/A)=Lt(a/A)$ and
$b\da_{A}C$. 
Then, by monotonicity, $b \da_A B$, and thus by
Lemma \ref{lascvaplaajykskas}, 
$Lt(b/B)=Lt(a/B)$.
Again by monotonicity,
$b\da_{B}C$, and by Lemma \ref{lascvaplaajykskas}, 
$Lt(b/C)=Lt(a/C)$.
The claim follows.
\end{proof}

We don't yet have all the results needed for proving finite character, but we prove 
the following special case that we will need when proving other properties. 

\begin{lemma}\label{local}
Suppose $A \subset B$, $A$ is finite, and $a \not\da_{A} B$.
Then there is some $b \in B$ such that $a \not\da_{A} b$.
\end{lemma}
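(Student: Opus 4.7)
The plan is to argue by contraposition: assume $a \da_A b$ for every finite tuple $b$ from $B$, and deduce $a \da_A B$. The first step is to invoke Lemma \ref{lascvaplaaj} to produce an $a^*$ with $Lt(a^*/A) = Lt(a/A)$ and $a^* \da_A B$. It is crucial to use Lemma \ref{lascvaplaaj} here rather than Corollary \ref{typecoro}: we need $a^*$ to share the \emph{Lascar} type of $a$ over $A$, not merely the weak type, since $A$ is only a finite set and the application of stationarity below (Lemma \ref{lascvaplaajykskas}) demands Lascar-type agreement.

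Next, for each finite tuple $b$ from $B$, monotonicity (Remark \ref{freedomremark}) gives $a^* \da_A b$, while $a \da_A b$ comes from the contrapositive hypothesis. Since $Lt(a/A) = Lt(a^*/A)$, stationarity then yields $Lt(a/A\cup b) = Lt(a^*/A\cup b)$, and Lemma \ref{Ltypesimplytypes} upgrades this to $t(a/A\cup b) = t(a^*/A\cup b)$. Letting $b$ vary over all finite subtuples of $B$ and using that weak types are local (determined by Galois types on finite subsets of the base), we conclude $t(a/A\cup B) = t(a^*/A\cup B)$, which is simply $t(a/B) = t(a^*/B)$ since $A \subseteq B$.

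Finally, unwrapping $a^* \da_A B$ produces a finite $A_0 \subseteq A$ such that for every $D \supseteq B$ some $b'$ satisfies $t(b'/B) = t(a^*/B)$ and $t(b'/D)$ does not Lascar split over $A_0$. As $t(a/B) = t(a^*/B)$, the very same $A_0$ and witnesses $b'$ now certify $a \da_A B$, contradicting our standing assumption. The only real design choice is picking the right $a^*$ (so that stationarity applies); once that is in place, the remainder is a mechanical unpacking of the definitions.
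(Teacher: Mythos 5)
Your proof is correct and is essentially the paper's argument run in contrapositive form: the paper likewise takes a Lascar-type-preserving free copy via Lemma \ref{lascvaplaaj}, notes that $\da_A B$ depends on $a$ only through $t(a/B)$, and uses stationarity (Lemma \ref{lascvaplaajykskas}) plus Lemma \ref{Ltypesimplytypes} to compare weak types over finite tuples from $B$. The only cosmetic difference is that the paper isolates a single witnessing tuple $b$ with $t(cb/A)\neq t(ab/A)$ and contradicts stationarity there, whereas you let $b$ range over all finite tuples to recover $t(a/B)=t(a^*/B)$; the content is the same.
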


\begin{proof}
By Lemma \ref{lascvaplaaj}, there is some $c$ such that $Lt(c/A)=Lt(a/A)$ and $c \da_A B$.
We have $a \not\da_A B$, and thus $t(c/B) \neq t(a/B)$.
Hence, there is some $b \in B$ so that $t(cb/A) \neq t(ab/A)$.
Since $c \da_A B$, we have $c \da_A b$.
Now, $a \not\da_A b$.
Indeed, otherwise Lemma \ref{lascvaplaajykskas} would imply $Lt(c/Ab)=Lt(a/Ab)$ and thus $t(c/Ab)=t(a/Ab)$, a contradiction against the fact that $t(cb/A) \neq t(ab/A)$.
\end{proof}

We will now start working towards a more comprehensive definition of $U$-rank that will allow characterizing independence in terms of $U$-ranks.
For this, we need the notion of strong automorphism.

\begin{definition}
Let $A$ be a finite set and let $f \in \textrm{Aut}(\M/A)$.
We say that $f$ is a \emph{strong automorphism} over $A$ if it preserves Lascar types over $A$, i.e. if 
for any $a$, $Lt(a/A)=Lt(f(a)/A)$.
We denote the set of strong automorphisms over $A$ by $\textrm{Saut}(\M/A)$.
\end{definition}

\begin{lemma}
Suppose $A$ is finite and $Lt(a/A)=Lt(b/A)$. Then there is
$f\in Saut(\M /A)$ such that $f(a)=b$.
\end{lemma}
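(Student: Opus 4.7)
The plan is to find a model $\A \supseteq A$ over which both $a$ and $b$ are independent in the sense of Definition \ref{freedom}; stationarity will then upgrade the assumed Lascar-type equality over the finite set $A$ to an equality over $\A$, and this (via Lemma \ref{typegalois}) yields an automorphism fixing $\A$ pointwise and sending $a$ to $b$. Any such model-fixing automorphism is automatically strong over $A$.

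First I would pick any countable model $\A_0 \supseteq A$ (which exists since $LS(\K)=\omega$) and apply Lemma \ref{lascvaplaaj} to the concatenated tuple $(a,b)$, obtaining $(a_1,b_1)$ with $Lt(a_1b_1/A)=Lt(ab/A)$ and $(a_1,b_1)\da_A\A_0$. By Lemma \ref{Ltypesimplytypes}, $t(a_1b_1/A)=t(ab/A)$, so some $g\in\textrm{Aut}(\M/A)$ satisfies $g(a_1b_1)=ab$. Setting $\A:=g(\A_0)$ gives a model containing $A$. Since $\da$ is defined in terms of $A$-invariant notions (weak types, Lascar splitting) and $g$ fixes $A$ pointwise, applying $g$ to $(a_1,b_1)\da_A\A_0$ yields $ab\da_A\A$, so $a\da_A\A$ and $b\da_A\A$ by Remark \ref{freedomremark}.

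Next, I would apply stationarity (Lemma \ref{lascvaplaajykskas}) with $B:=\A$: from $Lt(a/A)=Lt(b/A)$ together with the two independence statements I conclude $Lt(a/\A)=Lt(b/\A)$, whence $t(a/\A)=t(b/\A)$ by Lemma \ref{Ltypesimplytypes}. By Lemma \ref{typegalois}, weak types over the model $\A$ coincide with Galois types, so there is $\sigma\in\textrm{Aut}(\M/\A)$ with $\sigma(a)=b$.

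Finally I would verify $\sigma\in\textrm{Saut}(\M/A)$. For any $c\in\M$, since $\sigma$ fixes $\A\supseteq A$ pointwise, $t(c/\A)=t(\sigma(c)/\A)$, and Lemma \ref{frown} upgrades this to $Lt(c/\A)=Lt(\sigma(c)/\A)$. Because $\textrm{Aut}(\M/\A)\subseteq\textrm{Aut}(\M/A)$, every $A$-invariant equivalence relation with boundedly many classes is automatically $\A$-invariant with boundedly many classes, so Lascar equivalence over $\A$ implies Lascar equivalence over $A$, giving $Lt(c/A)=Lt(\sigma(c)/A)$ for all $c$. The only subtle point, which I would check at the start, is that Lemmas \ref{lascvaplaaj}, \ref{lascvaplaajykskas}, \ref{typegalois}, and \ref{frown} apply to concatenated finite tuples and not just single elements; each extension is routine from the given proofs.
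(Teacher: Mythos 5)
Your proposal is correct and follows essentially the same route as the paper: choose a (countable) model $\A\supseteq A$ with $ab\da_A\A$, apply stationarity (Lemma \ref{lascvaplaajykskas}) to get $Lt(a/\A)=Lt(b/\A)$, obtain $f\in\textrm{Aut}(\M/\A)$ via Lemma \ref{typegalois}, and conclude via Lemma \ref{frown} that $f$ is strong. The only difference is that you spell out the construction of $\A$ (via extension plus an automorphism over $A$), which the paper leaves implicit.
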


\begin{proof}
Choose a countable model
$\A$ such that $A \subseteq \A$ and
$ab\da_{A}\A$.
In particular, by Remark \ref{freedomremark}, 
$a\da_{A}\A$ and $b\da_{A}\A$.
By Lemma \ref{lascvaplaajykskas}, $Lt(a/\A)=Lt(b/\A)$. 
Thus, there is some $f\in \textrm{Aut}(\M /\A )$ such that $f(a)=b$.
By Lemma \ref{frown}, $f \in  \textrm{Saut}(\M /\A )$. 
\end{proof}

In order to give a general definition of $U$-rank, we will show
that if $\A$ is a model and $a \in \M$, then the rank $U(a/\A)$ equals the minimum
of ranks $U(a/A)$, where $A$ ranges over finite subsets of $\A$.
This is obtained as a corollary of the following lemma. 
 
\begin{lemma}\label{nonsplitU}
Suppose $\A$ is a model, $A\subseteq\A$ is finite and $t(a/\A )$
does not split over $A$. Then $U(a/A)=U(a/\A )$.
\end{lemma}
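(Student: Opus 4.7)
The inequality $U(a/A) \ge U(a/\A)$ is immediate: $\A$ itself is a model extending $A$, so $U(a/\A)$ contributes to the maximum defining $U(a/A)$. For the reverse, I would pick a model $\A^*$ with $A \subseteq \A^*$ attaining $U(a/\A^*) = U(a/A)$. Two independencies then drive the argument: since $t(a/\A)$ does not split over $A$ it \emph{a fortiori} does not Lascar split over $A$, so Lemma \ref{mallinyli} yields $a \da_A \A$; and Lemma \ref{lemmanen} applied to $\A^*$ yields $a \da_A \A^*$.

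Next I would invoke extension (Lemma \ref{lascvaplaaj}) to produce $a'$ with $Lt(a'/A) = Lt(a/A)$ and $a' \da_A (\A \cup \A^*)$. Monotonicity (Remark \ref{freedomremark}) then gives $a' \da_A \A$ and $a' \da_A \A^*$, so applying stationarity (Lemma \ref{lascvaplaajykskas}) twice, once with $\A$ and once with $\A^*$, produces $Lt(a'/\A) = Lt(a/\A)$ and $Lt(a'/\A^*) = Lt(a/\A^*)$. Lemmas \ref{Ltypesimplytypes} and \ref{typegalois} lift these to Galois-type equalities, so $U(a'/\A) = U(a/\A)$ and $U(a'/\A^*) = U(a/\A^*) = U(a/A)$. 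It therefore suffices to prove $U(a'/\A) = U(a'/\A^*)$.

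For this, I would fix a model $\C \supseteq \A \cup \A^*$ and unfold $a' \da_A (\A \cup \A^*)$ in Definition \ref{freedom} with $D := \C$. This produces $b$ with $t(b/\A \cup \A^*) = t(a'/\A \cup \A^*)$ such that $t(b/\C)$ does not Lascar split over some finite $A' \subseteq A$. Since $A' \subseteq \A$, $A' \subseteq \A^*$, and both $\A, \A^*$ sit inside $\C$ as models, Lemma \ref{kolme} ((iii) $\Rightarrow$ (ii)) yields both $b \da_\A^{ns} \C$ and $b \da_{\A^*}^{ns} \C$. Lemma \ref{Unonsplit} then gives $U(b/\A) = U(b/\C) = U(b/\A^*)$. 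Finally, the restrictions $t(b/\A) = t(a'/\A)$ and $t(b/\A^*) = t(a'/\A^*)$, passed through Lemma \ref{typegalois}, translate this chain of equalities into $U(a'/\A) = U(a'/\A^*)$, as wanted.

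The main obstacle is the mismatch between the finite base $A$ (where the no-splitting hypothesis lives) and the model bases $\A, \A^*$ required by Lemma \ref{Unonsplit}. The device is the intermediate realization $b$: since $b$ is controlled by a finite base $A' \subseteq A$ contained in both $\A$ and $\A^*$, Lemma \ref{kolme} converts finite-base non-Lascar-splitting over $\C$ into model-base non-splitting over $\A$ and $\A^*$ simultaneously, tying $U$-rank on both sides.
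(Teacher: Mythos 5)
Your proof is correct, and every lemma you invoke (Lemmas \ref{mallinyli}, \ref{lemmanen}, \ref{lascvaplaaj}, \ref{lascvaplaajykskas}, \ref{kolme}, \ref{Unonsplit}, \ref{Ltypesimplytypes}, \ref{typegalois}) is established before Lemma \ref{nonsplitU}, so there is no circularity; but your route is genuinely different from the paper's. The paper argues by contradiction: after reducing to countable $\A$, it maps a rank-attaining countable model $\B \supseteq A$ onto $\A$ by an automorphism over $A$, extracts a witness $c \in \A$ of $t(a/\A) \neq t(a'/\A)$, replaces the corresponding $b \in \B$ by some $c' \in \A$ with $Lt(c'/A)=Lt(b/A)$ using Lascar-saturation of models (Lemma \ref{lascsat}), and then transports $a$ by a strong automorphism over $A$ sending $b \mapsto c'$; the non-splitting hypothesis then produces two elements of equal Lascar type over $A$, both free from $c'$ over $A$, with different weak types over $Ac'$, contradicting stationarity. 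You instead argue directly: the hypothesis is used only to get $a \da_A \A$ (via Lemma \ref{mallinyli}, since non-splitting implies non-Lascar-splitting by Lemma \ref{Ltypesimplytypes}), you pass by extension to $a'$ free over $\A \cup \A^*$, transfer types with stationarity, and then tie the ranks together by unfolding Definition \ref{freedom} over a common ambient model $\C$ and invoking the model-base equivalences of Lemmas \ref{kolme} and \ref{Unonsplit}. In effect you prove the stronger statement that $a \da_A \A$ (for finite $A \subseteq \A$) already implies $U(a/A)=U(a/\A)$, a converse to Lemma \ref{lemmanen}, and your argument is close in spirit to the paper's later proof of the finite case of Lemma \ref{daiffU} (left-to-right direction), whereas the paper's proof of Lemma \ref{nonsplitU} leans on strong automorphisms and Lemma \ref{lascsat}, which you avoid entirely. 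The only step you leave implicit is that equality of Galois types over a model preserves $U$-rank (automorphism-invariance of $U$), but this is harmless and is used without comment in the paper as well.
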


\begin{proof}
Suppose not. 
If we choose some countable model $\A'$ such that $A \subset \A' \subseteq \A$,
then $a \da_{\A'}^{ns} \A$, and thus, by Lemma \ref{Unonsplit}, $U(a/\A')=U(a/\A)$.
Hence, we may assume that $\A$ is countable.
 
Choose a countable model $\B$ such that $A \subset \B$ and $U(a/\B )=U(a/A)$. 
Now, there is some $f \in \textrm{Aut}(\M/A)$ so that $f(\B)=\A$.
Let $a'=f(a)$.
We have
$$U(a/\A) \neq U(a/A)=U(a/\B)=U(a'/\A),$$
and thus $t(a/\A) \neq t(a'/\A)$.
Hence there is some $c \in \A$ such that $t(ac/A) \neq t(a'c/A)$.
Let $b \in \B$ be such that $f(b)=c$ (and thus $t(b/A)=t(c/A)$).
Then, $t(a'c/A)=t(ab/A)$, so $t(ac/A) \neq t(ab/A)$.
Let $c' \in \A$ be such that $Lt(c'/A)=Lt(b/A)$, and thus $t(c'/A)=t(b/A)=t(c/A).$
Since $t(a/\A)$ does not split over $A$, we have $t(ac'/A) \neq t(ab/A).$
 
We note that since $a \da_{A}^{ns} \A$, we have by Remark \ref{kolmeremark}  $a \da_{A} \A$, and thus in particular $a \da_A c'$.
 Choose $g\in Saut(\M /A)$ so that
$g(b)=c'$. Let $a''=g(a)$. 
By Lemma \ref{lemmanen}, we have $a \da_A b$, and thus $a'' \da_A c'$.
But now $Lt(a''/A)=Lt(a/A)$,
$a\da_{A}c'$ and $a''\da_{A}c'$, yet 
$$t(ac'/A) \ne t(a'c'/A)=t(ab/A)=t(a''c'/A),$$
so in particular $t(a/Ac') \ne t(a''/Ac')$, 
a contradiction to Lemma \ref{lascvaplaajykskas}.
\end{proof}

\begin{corollary}\label{Ukoro}
Let $\A$ be a model.
Then, 
$$U(a/\A)=\textrm{min}(\{U(a/B) \, | \, B \subset \A \textrm{ finite } \}.$$
\end{corollary}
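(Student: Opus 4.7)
The plan is to prove the two inequalities separately. For the inequality $U(a/\A) \le \min\{U(a/B) \mid B \subset \A \text{ finite}\}$, I would simply unpack the definition of $U$-rank over a finite set: for any finite $B \subset \A$, the rank $U(a/B)$ is defined as the maximum of $U(a/\B')$ over models $\B' \supseteq B$, and in particular $\A$ itself is such a model. Hence $U(a/B) \ge U(a/\A)$ for every finite $B \subset \A$, which gives the desired bound on the minimum.

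For the reverse inequality $U(a/\A) \ge \min\{U(a/B) \mid B \subset \A \text{ finite}\}$, the task is to exhibit a specific finite $B \subset \A$ achieving $U(a/B) = U(a/\A)$. Here I would invoke Lemma \ref{ansA}, which guarantees that $a \da^{ns}_\A \A$, i.e.\ there exists a finite $A \subseteq \A$ such that $t(a/\A)$ does not split over $A$. Once such an $A$ is in hand, Lemma \ref{nonsplitU} applies directly and yields $U(a/A) = U(a/\A)$. Combined with the first paragraph, $A$ is then a finite subset of $\A$ realizing the minimum, and equality follows.

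I do not expect any serious obstacle, as the result is essentially a corollary that packages Lemma \ref{ansA} and Lemma \ref{nonsplitU} together. The only point requiring slight care is to confirm that the minimum is actually attained (not merely an infimum), which is automatic because $U$-ranks of tuples over finite sets are finite by Lemma \ref{eiaaretketjuja}, so the set of values being minimized is a nonempty subset of $\o$.
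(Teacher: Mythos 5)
Your proposal is correct and follows the paper's proof essentially verbatim: the first inequality from the definition of $U$-rank over finite sets (the model $\A$ itself witnesses the maximum), and the second from Lemma \ref{ansA} combined with Lemma \ref{nonsplitU}. No issues.
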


\begin{proof} 
By Definition \ref{Urank}, for each finite $B \subset \A$, it holds that $U(a/B) \ge U(a/\A)$. 
On the other hand, by Lemma \ref{ansA}, there is some finite $A \subset \A$ so that $t(a/\A)$ does not split over $A$.
By Lemma \ref{nonsplitU}, $U(a/\A)=U(a/A)$.
 \end{proof}

Corollary \ref{Ukoro} allows us to define $U(a/A)$ for arbitrary $A$ as follows.
By Definition \ref{Urank}, for finite $A$ it holds that $U(a/A_0) \ge U(a/A)$ for all $A_0 \subseteq A$.
Thus, the following definition corresponds to Definition \ref{Urank} also in the case that $A$ is finite.
  
\begin{definition}\label{U2}
Let $A$ be arbitrary.
We define  $U(a/A)$ to be the minimum of
$U(a/B)$, $B\subseteq A$ finite.
\end{definition}

Now we can finally characterize independence in terms of $U$-ranks.
As corollaries, we will get local character and extension for the independence notion.

\begin{lemma}\label{daiffU}
For all $A\subseteq B$ and $a$, $a\da_{A}B$ if and only if $U(a/A)=U(a/B)$.
\end{lemma}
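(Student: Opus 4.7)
The plan is to handle the two directions separately. The ``$\Rightarrow$'' direction is a direct rank computation once the finite witness of $a \da_A B$ is unpacked against a big enough model; the ``$\Leftarrow$'' direction requires reducing to a finite--set case via Lemma \ref{lemmanen} and then using stationarity to upgrade to arbitrary $B$.

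For ``$\Rightarrow$'', I would start with a finite $A_0 \subseteq A$ witnessing $a \da_A B$ and fix a model $\A$ containing $B$. Applying Definition \ref{freedom} with $D = \A$ produces $b$ such that $t(b/B) = t(a/B)$ and $t(b/\A)$ does not Lascar split over $A_0$. Lemma \ref{mallinyli} then gives $b \da_{A_0} \A$, and Lemma \ref{kolme} converts this to $b \da^{ns}_{A_0} \A$; absorbing the non-splitting witness into $A_0$ and invoking Lemma \ref{nonsplitU} yields $U(b/A_0) = U(b/\A)$. Since $A_0 \subseteq B \subseteq \A$, the monotonicity of $U$-rank sandwiches $U(b/\A) \le U(b/B) \le U(b/A_0)$, forcing $U(b/A_0) = U(b/B)$. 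The equality $t(b/B) = t(a/B)$ restricts to $t(b/A_0) = t(a/A_0)$, which transfers the rank equality to $a$: $U(a/A_0) = U(a/B)$. Combined with $U(a/B) \le U(a/A) \le U(a/A_0)$, this gives $U(a/A) = U(a/B)$.

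For ``$\Leftarrow$'', assume $U(a/A) = U(a/B)$ and fix a finite $A_0 \subseteq A$ with $U(a/A_0) = U(a/A)$. First I would establish the finite-set case: for every finite $c \subseteq B$ containing $A_0$, the sandwich $U(a/B) \le U(a/c) \le U(a/A_0)$ forces $U(a/c) = U(a/A_0)$; by Definition \ref{Urank} this rank is realised as $U(a/\A^{*})$ for some model $\A^{*} \supseteq c$, so Lemma \ref{lemmanen} gives $a \da_{A_0} \A^{*}$, and monotonicity (Remark \ref{freedomremark}) yields $a \da_{A_0} c$. Next, to obtain a uniform witness valid for all $D \supseteq B$, I would apply Lemma \ref{lascvaplaaj} once to produce $b$ with $Lt(b/A_0) = Lt(a/A_0)$ and $b \da_{A_0} B$; let $A_0' \subseteq A_0$ be the finite subset that Definition \ref{freedom} attaches to $b \da_{A_0} B$. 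The key claim is then $t(a/B) = t(b/B)$: indeed, for any finite $c \subseteq B$ containing $A_0$ one has $a \da_{A_0} c$ (just shown), $b \da_{A_0} c$ (monotonicity), and $Lt(a/A_0) = Lt(b/A_0)$, so stationarity (Lemma \ref{lascvaplaajykskas}) forces $Lt(a/c) = Lt(b/c)$ and hence $t(a/c) = t(b/c)$. Since weak types are determined by their finite restrictions, $t(a/B) = t(b/B)$ follows. Finally, for each $D \supseteq B$ the $b'$ furnished by $b \da_{A_0} B$ with witness $A_0'$ satisfies $t(b'/B) = t(b/B) = t(a/B)$ and $t(b'/D)$ does not Lascar split over $A_0'$, which is precisely what Definition \ref{freedom} demands for $a \da_A B$ with witness $A_0' \subseteq A$.

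The hard part is in ``$\Leftarrow$'': Lemma \ref{lemmanen} is phrased over a model extending a finite base, while the given $B$ may be arbitrary, and Definition \ref{freedom} requires one finite $A' \subseteq A$ uniform in $D$. The device that resolves this is to realise $t(a/B)$ by a single independent $b$ (via Lemma \ref{lascvaplaaj}), use stationarity finite-by-finite to see $b$ and $a$ share the same weak type over $B$, and then inherit $b$'s uniform witness $A_0'$ as the witness for $a \da_A B$.
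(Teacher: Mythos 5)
Your ``$\Leftarrow$'' direction is essentially sound: reducing to finite $c\subseteq B$ through Lemma \ref{lemmanen}, producing one free realization $b$ with Lemma \ref{lascvaplaaj}, using stationarity (Lemma \ref{lascvaplaajykskas}) finite-by-finite to get $t(a/B)=t(b/B)$, and then letting $b$'s witness serve as the witness for $a\da_A B$ is a correct argument; the paper instead handles the general ``$\Leftarrow$'' by contradiction via finite character (Lemma \ref{local}) and the finite case, so your route is a legitimate variant.

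The genuine gap is in ``$\Rightarrow$'', at the step ``Lemma \ref{kolme} converts $b\da_{A_0}\A$ to $b\da^{ns}_{A_0}\A$''. Lemma \ref{kolme} relates $\da$, $\da^{ns}$ and Lascar non-splitting only when the base is a \emph{model}; it says nothing about the finite base $A_0$. Worse, the conversion you want is false in general: $b\da^{ns}_{A_0}\A$ asserts ordinary non-splitting of $t(b/\A)$ over (a subset of) $A_0$, which is strictly stronger than non-Lascar-splitting over $A_0$, since two elements of $\A$ may have equal weak types but distinct Lascar types over $A_0$. The charitable repair---take a countable model $\A'$ with $A_0\subseteq\A'\preccurlyeq\A$, apply Lemma \ref{kolme} to get $b\da^{ns}_{\A'}\A$ and Lemma \ref{nonsplitU} to get $U(b/A_1)=U(b/\A)$ for some finite $A_1\subseteq\A'$---does not help, because $A_1$ need not lie in $B$; once you ``absorb'' it into $A_0$ you lose $A_0\subseteq B$, and both the sandwich $U(b/B)\le U(b/A_0)$ and the restriction $t(b/A_0)=t(a/A_0)$ break down. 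What you are really invoking, that Lascar non-splitting of $t(b/\A)$ over $A_0$ forces $U(b/A_0)=U(b/\A)$, is (via Lemma \ref{mallinyli}) essentially the forward direction of the lemma itself, so the argument is circular as written. The paper sidesteps this by first proving the case of finite $B$ with a different configuration: choose a model $\A\supseteq A$ realizing $U(a/\A)=U(a/A)$, take by Lemma \ref{vaplaajol} a non-splitting extension $a'$ of $t(a/\A)$ over a model $\B\supseteq\A B$, obtain $U(a'/\B)=U(a'/A)$ from Lemma \ref{Unonsplit}, conclude $a'\da_A\B$ by Lemma \ref{lemmanen}, and then use the hypothesis $a\da_A B$ together with Lemma \ref{frown} and stationarity to identify $t(a'/B)=t(a/B)$, whence $U(a/B)=U(a/A)$; the general ``$\Rightarrow$'' then reduces to this finite case by monotonicity. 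Your forward direction needs to be replaced by an argument of this kind.
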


\begin{proof} 
Suppose first $B$ is finite.

"$\Leftarrow$": Choose a model $\A\supseteq B$ such that
$U(a/\A )=U(a/B)$. 
Then $U(a/\A )=U(a/A )$ and thus by Lemma \ref{lemmanen},
$a\da_{A}\A$, and in particular $a\da_{A}B$.

 "$\Rightarrow$":
Choose a model $\A\supseteq A$ such that
$U(a/\A )=U(a/A)$ and a model  $\B\supseteq\A B$.
By Lemma \ref{vaplaajol}, there is some $a'$ 
such that $t(a'/\A )=t(a/\A )$ and $a'\da^{ns}_{\A}\B$. 
Then, by Lemma \ref{Unonsplit},
$$U(a'/A)=U(a'/\A )=U(a'/\B ),$$ 
so by Lemma \ref{lemmanen}, $a'\da_{A}\B$.
By Lemma \ref{frown}, $Lt(a'/A)=Lt(a/A)$, and thus by Lemma \ref{lascvaplaajykskas}, 
 $t(a'/B)=t(a/B)$. 
 Thus
 $$U(a/B)=U(a'/B)=U(a'/A)=U(a/A).$$
 
We now prove the general case.
Let $A, B$ be arbitrary such that $A \subseteq B$.

"$\Rightarrow$": 
Suppose $a \da_A B$.
There are some finite sets $A_0, A_0' \subseteq A$ such that $a \da_{A_0} B$ and some $U(a/A)=U(a/A_0')$.
We may without loss assume that $A_0=A_0'$.
Indeed, this follows from monotonicity and the fact that if $A_0''$ is any set such that
$A_0' \subseteq A_0'' \subseteq A$, then $U(a/A_0'')=U(a/A)$.
Let $B_0 \subseteq B$ be a finite set such that $U(a/B_0)=U(a/B)$.
By similar argument as above, we may without loss suppose that $A_0 \subseteq B_0$.
Thus, since the result holds for finite sets, we have 
$$U(a/A)=U(a/A_0)=U(a/B_0)=U(a/B).$$

"$\Leftarrow$":
Suppose $U(a/A)=U(a/B)$, but $a \nda_A B$.
Let $A_0 \subseteq A$ and $B_0 \subseteq B$ be finite sets such that 
$$U(a/A_0)=U(a/A)=U(a/B)=U(a/B_0).$$
By monotonicity, we have $a \nda_{A_0} B$, and by Lemma \ref{local}, there is some $b \in B$ such that
$a \nda_{A_0} b$.
Then, also $a \nda_{A_0} B_0 b$.
But 
$$U(a/B_0b)=U(a/B)=U(a/A_0),$$
a contradiction.
\end{proof}

\begin{corollary}[Local character]\label{coro}
For all $A$ and $a$ there is finite $B\subseteq A$ such that
$a\da_{B}A$. 
\end{corollary}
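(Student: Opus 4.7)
The plan is to deduce local character immediately from the characterization of independence in terms of $U$-ranks (Lemma \ref{daiffU}) together with the definition of $U$-rank over arbitrary sets (Definition \ref{U2}).

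First, by Definition \ref{U2}, $U(a/A)$ is defined as the minimum of $U(a/B)$ over finite $B \subseteq A$. Since $U$-ranks take values in $\o$ (they are finite by Lemma \ref{eiaaretketjuja} for finite sets, hence the minimum over finite subsets is well-defined and attained), there exists some finite $B \subseteq A$ realizing this minimum, i.e.\ $U(a/B) = U(a/A)$.

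Then I would simply apply Lemma \ref{daiffU} to the inclusion $B \subseteq A$: since $U(a/B) = U(a/A)$, we get $a \da_B A$, which is exactly what local character demands.

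There is no real obstacle here; the work has been done in setting up Definition \ref{U2} so that the minimum is achieved on a finite subset, and in Lemma \ref{daiffU} which gives the $U$-rank characterization of $\da$ for arbitrary sets. The corollary is essentially just the unpacking of these two facts.
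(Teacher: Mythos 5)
Your argument is exactly the paper's intended one: the corollary is stated without separate proof precisely because Definition \ref{U2} provides a finite $B\subseteq A$ with $U(a/B)=U(a/A)$ (the minimum over a nonempty set of natural numbers is attained), and Lemma \ref{daiffU} then gives $a\da_{B}A$. This is the same route the paper itself uses explicitly in the proof of Corollary \ref{lascvaplaaj2}, so your proposal is correct and matches the paper.
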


\begin{corollary}[Extension]\label{lascvaplaaj2}
For all $a$ and all sets $A \subseteq B$, there is some $b$ such that $Lt(b/A)=Lt(a/A)$ and $b \da_A B$.
\end{corollary}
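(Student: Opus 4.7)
The plan is to bootstrap from the finite-base case, which is already handled by Lemma \ref{lascvaplaaj}, using the local character corollary that has just been established together with stationarity. So the first step is to invoke Corollary \ref{coro} (local character) to pick a finite $A_{0}\subseteq A$ with $a\da_{A_{0}}A$. This reduces the problem over $A$ to a problem over the finite set $A_{0}$.

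Next, apply Lemma \ref{lascvaplaaj} to the finite set $A_{0}$ and the (possibly large) set $B$: this yields some $b$ with $Lt(b/A_{0})=Lt(a/A_{0})$ and $b\da_{A_{0}}B$. Since $A_{0}\subseteq A\subseteq B$, monotonicity (Remark \ref{freedomremark}) gives both $b\da_{A}B$, which is half of what we want, and $b\da_{A_{0}}A$, which will be needed for the next step.

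It remains to upgrade the equality of Lascar types from over $A_{0}$ to over $A$. At this point we have in hand $a\da_{A_{0}}A$, $b\da_{A_{0}}A$, and $Lt(a/A_{0})=Lt(b/A_{0})$; so stationarity (Lemma \ref{lascvaplaajykskas}) immediately delivers $Lt(a/A)=Lt(b/A)$, completing the proof.

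There is no real obstacle here beyond bookkeeping: the only point that requires a little care is verifying the two monotonicity instances from a single application of Lemma \ref{lascvaplaaj} (one for the top pair $(A,B)$ to obtain $b\da_{A}B$, one for the bottom pair $(A_{0},A)$ to feed stationarity), and then confirming that the hypotheses of Lemma \ref{lascvaplaajykskas} are all in place.
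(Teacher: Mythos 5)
Your proposal is correct and follows essentially the same route as the paper: the paper picks the finite $A_{0}\subseteq A$ via $U$-ranks and Lemma \ref{daiffU} (which is exactly the content of Corollary \ref{coro}), then applies Lemma \ref{lascvaplaaj} over $A_{0}$ and upgrades the Lascar type to $A$ by stationarity (Lemma \ref{lascvaplaajykskas}), with $b\da_{A}B$ obtained by monotonicity just as you do.
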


\begin{proof}
Let $A_0 \subseteq A$ be a finite set such that $U(a/A_0)=U(a/A)$.
Then, $a \da_{A_0} A$ by Lemma \ref{daiffU}.
By Lemma \ref{lascvaplaaj}, there is some $b$ such that $Lt(b/A_0)=Lt(a/A_0)$ and $b \da_{A_0} B$.
By Lemma \ref{lascvaplaajykskas}, $Lt(b/A)=Lt(a/A)$.
\end{proof}
  
Now it is easy to prove also finite character and symmetry.  
   
\begin{lemma}[Finite character]\label{local2}  
Suppose $A \subset B$, and $a \not\da_{A} B$.
Then there is some $b \in B$ such that $a \not\da_{A} b$.
\end{lemma}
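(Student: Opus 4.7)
The plan is to reduce to the already-established finite-set version (Lemma \ref{local}) by routing everything through the $U$-rank characterization of independence supplied by Lemma \ref{daiffU}. The key preliminary observation is that, by Definition \ref{U2}, the function $C \mapsto U(a/C)$ is non-increasing under enlargement of $C$, because a larger set admits more finite subsets over which the minimum is taken. So from $A \subseteq B$ together with $a \not\da_A B$ and Lemma \ref{daiffU} I get the strict inequality $U(a/B) < U(a/A)$.

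Next I would pick finite witnesses of these two ranks: a finite $A_0 \subseteq A$ with $U(a/A_0) = U(a/A)$, and a finite $B_0 \subseteq B$ with $U(a/B_0) = U(a/B)$. By the monotonicity of $U$, replacing $B_0$ by $A_0 \cup B_0$ can only decrease $U(a/B_0)$, and it cannot go strictly below $U(a/B)$, so I may arrange $A_0 \subseteq B_0$ while keeping $U(a/B_0) = U(a/B)$. This produces a pair of finite sets $A_0 \subseteq B_0$ with $U(a/A_0) > U(a/B_0)$, and Lemma \ref{daiffU} converts this back to $a \not\da_{A_0} B_0$.

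Now Lemma \ref{local} applies, because $A_0$ is finite, and yields some $b \in B_0 \subseteq B$ with $a \not\da_{A_0} b$, i.e.\ $U(a/A_0 b) < U(a/A_0)$. To finish I would invoke monotonicity once more: since $A_0 b \subseteq Ab$,
\[
U(a/Ab) \le U(a/A_0 b) < U(a/A_0) = U(a/A),
\]
so a final application of Lemma \ref{daiffU} gives $a \not\da_A b$, which is exactly what is required.

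I do not foresee a serious obstacle. The only point to be careful about is the compatible choice of the finite witnesses $A_0$ and $B_0$, but the monotonicity of $U$ under enlargement of the parameter set makes this automatic, and once the problem is pushed down to the finite $A_0$ the rest is just bookkeeping with Lemmas \ref{local} and \ref{daiffU}.
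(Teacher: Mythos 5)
Your proof is correct, and it is non-circular: Lemma \ref{local}, Definition \ref{U2} and Lemma \ref{daiffU} are all in place before this statement, and the monotonicity of $U$-rank under enlargement of the parameter set really is immediate from Definition \ref{U2} as you say (a larger set has more finite subsets to minimize over), with the finite-set case noted in the paper just before that definition. However, your route differs from the paper's. The paper proves finite character directly, mimicking the proof of Lemma \ref{local} but first dropping to a finite base: it chooses a finite $C \subseteq A$ with $a \da_C A$ by local character (Corollary \ref{coro}), uses the extension lemma over finite sets (Lemma \ref{lascvaplaaj}) to get $c$ with $Lt(c/C)=Lt(a/C)$ and $c \da_C A \cup B$, extracts a tuple $b \in B$ with $t(cb/C)\neq t(ab/C)$ from $t(c/B)\neq t(a/B)$, and then derives the contradiction from transitivity (Lemma \ref{lasctrans2}) together with stationarity (Lemma \ref{lascvaplaajykskas}). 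You instead translate everything into $U$-ranks via Lemma \ref{daiffU}, arrange compatible finite witnesses $A_0 \subseteq B_0$, invoke the already-proved finite-base Lemma \ref{local}, and translate back; this is essentially the same bookkeeping trick the paper itself uses in the general case of the proof of Lemma \ref{daiffU}, now redeployed for finite character. Your argument is shorter and more mechanical once the rank characterization is available, while the paper's argument works directly with Lascar types and stationarity; both use only previously established material, so either is acceptable.
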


\begin{proof}
Choose a finite $C \subseteq A$ such that $a \da_C A$ and an element $c$ such that
$Lt(c/C)=Lt(a/C)$ and $c \da_C A \cup B$ (they exist by Corollary \ref{coro} and  Lemma \ref{lascvaplaaj}).
Then, by Lemma \ref{lascvaplaajykskas}, $Lt(c/A)=Lt(a/A)$.
We have $a \not\da_C B$, and thus $t(c/B) \neq t(a/B)$.
Hence, there is some $b \in B$ so that $t(cb/C) \neq t(ab/C)$.
By monotonicity, we have $c \da_A B$, and in particular $c \da_A b$.
If $a \da_A b$, then $a \da_C b$ by Lemma \ref{lasctrans2}.
Since  $t(a/Cb) \neq t(c/Cb)$, this contradicts Lemma \ref{lascvaplaajykskas}.
\end{proof}
  
\begin{lemma}[Symmetry]\label{lascfullsym}
Let $A$ be arbitrary.
If $a\da_{A}b$, then $b\da_{A}a$. 
\end{lemma}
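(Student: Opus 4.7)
I would reduce full symmetry to the finite-base symmetry already proved in Lemma \ref{lascsym}, using the $U$-rank characterization of independence (Lemma \ref{daiffU}) together with local character (Corollary \ref{coro}). The crucial device is to pick a single finite $A_0 \subseteq A$ that witnesses local character simultaneously for both $a$ (over $A$) and $b$ (over $Aa$).

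Concretely, I would choose a finite $A_0 \subseteq A$ satisfying (i) $U(a/A_0) = U(a/A)$ and (ii) $U(b/A_0 a) = U(b/Aa)$. Condition (i) comes directly from local character for $U$-rank. For (ii), since the tuple $a$ is finite, any finite minimizer $D \subseteq Aa$ for $U(b/Aa)$ can be replaced by $D \cup a$ without increasing the rank, producing a minimizer of the form $B_1 a$ with finite $B_1 \subseteq A$; taking $A_0$ to be the union of $B_1$ with any witness of (i) secures both conditions at once, since enlarging a finite witness of $U$-rank local character preserves the rank.

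Given such $A_0$, the hypothesis $a \da_A b$ gives $U(a/A) = U(a/Ab)$ by Lemma \ref{daiffU}. Combined with (i), the chain
$U(a/A_0) \geq U(a/A_0 b) \geq U(a/Ab) = U(a/A) = U(a/A_0)$
collapses to equalities, so $a \da_{A_0} b$ by the finite case of Lemma \ref{daiffU}. The finite symmetry Lemma \ref{lascsym} then yields $b \da_{A_0} a$, i.e.\ $U(b/A_0) = U(b/A_0 a)$, which by (ii) equals $U(b/Aa)$. Since $A_0 \subseteq A \subseteq Aa$ forces $U(b/A_0) \geq U(b/A) \geq U(b/Aa)$, each inequality must be an equality; in particular $U(b/A) = U(b/Aa)$, and a final application of Lemma \ref{daiffU} delivers $b \da_A a$.

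The main obstacle is the bookkeeping required to build one finite $A_0$ satisfying (i) and (ii) simultaneously. Once this is achieved, the rest is just sandwiching $U$-rank inequalities and invoking the finite-base Lemma \ref{lascsym}, so the conceptual content reduces entirely to the finite case.
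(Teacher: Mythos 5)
Your proof is correct, but it takes a different route from the paper's. The paper argues by contradiction: assuming $b\nda_{A}a$, it uses local character (Corollary \ref{coro}) to find a finite $B\subseteq A$ with $a\da_{B}Ab$ and $b\da_{B}A$, then finite character (Lemma \ref{local2}) to produce a finite $C$ with $B\subseteq C\subseteq A$ and $b\nda_{B}Ca$, and finally transitivity (Lemma \ref{lasctrans2}) and monotonicity to get $b\nda_{C}a$ while $a\da_{C}b$, contradicting the finite-base symmetry of Lemma \ref{lascsym}. You instead run a direct argument through the $U$-rank characterization of Lemma \ref{daiffU}: your construction of a single finite $A_0\subseteq A$ witnessing both $U(a/A_0)=U(a/A)$ and $U(b/A_0a)=U(b/Aa)$ is sound (Definition \ref{U2} gives finite minimizers, and enlarging a minimizer inside $A$, respectively inside $Aa$ after absorbing the finite tuple $a$, cannot change the rank by monotonicity and minimality), and the two sandwich arguments then transfer $a\da_{A_0}b$, hence $b\da_{A_0}a$ by Lemma \ref{lascsym}, back up to $b\da_{A}a$. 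Both proofs reduce to Lemma \ref{lascsym}; yours buys a positive, computation-style argument whose only nontrivial input is Lemma \ref{daiffU} (which already packages stationarity and the splitting machinery), while the paper's version uses only the qualitative calculus (local and finite character, transitivity, monotonicity) and so would survive in settings where a well-behaved rank is not available. There is no circularity in your use of Lemma \ref{daiffU}, Corollary \ref{coro} and Lemma \ref{lascsym}, as all precede the full symmetry lemma in the paper.
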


\begin{proof}
Suppose not.
Choose some finite $B\subseteq A$ such that
$a\da_{B}Ab$ and $b\da_{B}A$ (such a set can be found by Corollary \ref{coro}).  
Since $b \nda_A a$, we have $b \nda_B Aa$.
By Lemma \ref{local2}, there is some finite set $C$ such that
$B \subseteq C \subseteq A$ and $b\not\da_{B}Ca$.  
By transitivity, $b\nda_{C}a$.
On the other hand, $a \da_B Ab$, and thus $a \da_B Cb$, so $a \da_C b$, which contradicts Lemma \ref{lascsym}.
\end{proof}

We now show that ranks can be added together in the usual way.

\begin{lemma}\label{Uaddit}
For any $a,b$ and $A$, it holds that
$$U(ab/A)=U(a/bA)+U(b/A).$$
\end{lemma}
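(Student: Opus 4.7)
The plan is to prove the two inequalities $U(ab/A) \ge U(b/A) + U(a/bA)$ and $U(ab/A) \le U(b/A) + U(a/bA)$ separately, after first reducing to the case of finite $A$.

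The reduction goes as follows. By Corollary \ref{coro} I can choose finite $A_1 \subseteq A$ with $ab \da_{A_1} A$, finite $A_2 \subseteq A$ with $b \da_{A_2} A$, and finite $A_3 \subseteq bA$ with $a \da_{A_3} bA$; I may assume $b \in A_3$ and write $A_3 = bA_3'$ with $A_3' \subseteq A$ finite. Setting $A^* = A_1 \cup A_2 \cup A_3'$, monotonicity together with Lemma \ref{daiffU} gives $U(ab/A^*) = U(ab/A)$, $U(b/A^*) = U(b/A)$ and $U(a/bA^*) = U(a/bA)$, so I may replace $A$ by $A^*$ and assume $A$ is finite.

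For the lower bound let $m = U(b/A)$ and $n = U(a/bA)$. Pick a model $\A_0 \supseteq A$ with $b \da_A \A_0$; then $U(b/\A_0) = m$ by Lemma \ref{daiffU}. Unrolling Definition \ref{Urank} for models I obtain a chain of models $\A_0 \subseteq \A_1 \subseteq \cdots \subseteq \A_m$ with $b \nda^{ns}_{\A_i} \A_{i+1}$ at each step. By Corollary \ref{lascvaplaaj2} I find $a'$ with $Lt(a'/bA) = Lt(a/bA)$ and $a' \da_{bA} \A_m$, so that $U(a'/b\A_m) = n$ by Lemma \ref{daiffU}, and by Lemma \ref{lascvaplaajykskas} $t(a'/bA) = t(a/bA)$, hence $U(a'b/A) = U(ab/A)$. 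Next pick a model $\C_0 \supseteq \A_m \cup \{a',b\}$ with $a' \da_{b\A_m} \C_0$ and continue to a chain $\C_0 \subseteq \C_1 \subseteq \cdots \subseteq \C_n$ witnessing $U(a'/\C_0) = n$ with $a' \nda^{ns}_{\C_j} \C_{j+1}$. At each $\A_i \to \A_{i+1}$ splitting of $t(b/\A_{i+1})$ over every finite subset of $\A_i$ forces splitting of $t(a'b/\A_{i+1})$ over those same sets, so $a'b \nda^{ns}_{\A_i} \A_{i+1}$; at $\A_m \to \C_0$ the rank $U(a'b/\cdot)$ drops because already $U(a'/\cdot)$ drops (using Lemma \ref{Unonsplit} applied to $a'$); and at each $\C_j \to \C_{j+1}$ splitting of $t(a'/\C_{j+1})$ forces splitting of $t(a'b/\C_{j+1})$ since $b \in \C_j$. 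The resulting chain has length $m+n$, so $U(ab/A) \ge m+n$.

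For the upper bound I would induct on $m = U(b/A)$. When $m = 0$, then $b \da_A \B$ for every $\B \supseteq A$; thus in any forking chain $\A_0 \subseteq \cdots \subseteq \A_k$ for $ab$ over $A$ (with $\A_0 \supseteq A$ realizing $U(ab/\A_0) = U(ab/A)$), the splitting of $t(ab/\A_{i+1})$ over finite sets cannot be attributed to $b$ and so must come from $a$, giving a parallel chain showing $U(a/bA) \ge k$. For the inductive step, take a single extension $ab \nda^{ns}_{\A_0} \B_1$ realizing the first drop in the $U(ab/\cdot)$ rank: either $b \nda^{ns}_{\A_0} \B_1$, in which case $U(b/\B_1) = m-1$ and the inductive hypothesis applied to $\B_1$ gives the bound with one rank unit charged to $b$; or $b \da^{ns}_{\A_0} \B_1$, in which case the splitting is purely on the $a$-coordinate over $\A_0 b$ and one rank unit is charged to $a/bA$. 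The main obstacle, in both halves, is the bookkeeping at the precise moment a rank unit is ``charged'': I expect to need a short auxiliary lemma saying that if $ab \nda^{ns}_{\A} \B$ and $b \da^{ns}_{\A} \B$ then $a \nda^{ns}_{\A b} \B b$, which can be verified directly from the definition of splitting together with Lemma \ref{transitivity} and the domination provided by Lemma \ref{domination}.
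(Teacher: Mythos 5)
Your overall plan (reduce to finite $A$, then prove the two inequalities by chain arguments) is the same shape as the paper's proof, and the reduction and the bulk of the lower bound are fine, but two steps do not hold as written. In the lower bound, the junction $\A_m\to\C_0$ is the problem: besides the slip of putting $a'$ into $\C_0$ (which would force $U(a'/\C_0)=0$, not $n$), the claim that $U(a'/\cdot)$, hence $U(a'b/\cdot)$, must drop at that step is false in general --- for instance when $a\da_A b$ one gets $U(a'/\A_m)=U(a'/\C_0)=n$ and no drop --- and if you only have non-independence at the $m$ steps of the $\A$-chain and the $n$ steps of the $\C$-chain, the intermediate step breaks the consecutive chain that Definition \ref{Urank} requires (your count ``length $m+n$'' also does not match the $m+n+1$ steps listed). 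This is easily patched: since $\A_m\subseteq\C_0$, the splitting witnesses showing $a'b\nda^{ns}_{\A_{m-1}}\A_m$ lie in $\C_0$ and hence show $a'b\nda^{ns}_{\A_{m-1}}\C_0$, so the chain $\A_0\subseteq\dots\subseteq\A_{m-1}\subseteq\C_0\subseteq\dots\subseteq\C_n$ has $m+n$ non-independent consecutive steps. (The paper does this gluing with finite sets instead of models, after first characterizing $U(c/B)$, for finite $B$, as the maximal length of a chain of finite sets with $c\nda$ at each step, using Lemma \ref{daiffU} and finite character.)

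The genuine gap is in the upper bound, at the point where a rank unit is ``charged'' to $a/bA$ in the case $b\da^{ns}_{\A_0}\B_1$. What you need there is $U(a/b\B_1)<U(a/b\A_0)$, and over the non-model base $\A_0 b$ the rank is governed by the main independence notion $\da$ via Lemma \ref{daiffU}, not by non-splitting: Lemmas \ref{Unonsplit} and \ref{nonsplitU} apply only over models, and $a\nda^{ns}_{\A_0 b}\B_1 b$ does not imply $a\nda_{\A_0 b}\B_1 b$, because $\da$ only demands non-Lascar-splitting of a copy of the type, which is strictly weaker than non-splitting. So your auxiliary ns-lemma, though provable, does not deliver the rank drop, and this is precisely the place where the paper switches to the Lascar-splitting calculus: if $a\da_{bA_i}A_{i+1}$ and $b\da_{A_i}A_{i+1}$, then symmetry and transitivity of $\da$ (Lemmas \ref{lascsym}, \ref{lascfullsym}, \ref{lasctrans2}) give $ab\da_{A_i}A_{i+1}$; contrapositively, at every step where $U(ab/\cdot)$ drops, either $U(b/\cdot)$ or $U(a/b\,\cdot)$ drops, which yields $U(ab/A)\le U(a/bA)+U(b/A)$ with no induction at all. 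Replacing your splitting-witness bookkeeping by this argument also removes a structural defect of your induction on $m=U(b/A)$: in your second case $m$ does not decrease, so as set up the induction does not terminate (you would have to induct on $U(ab/A)$ instead).
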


\begin{proof}
We first note that it suffices to prove the lemma in case $A$ is finite.
Indeed, by definition $\ref{U2}$, we find finite $A_1, A_2, A_3 \subset A$ so that $U(ab/A)=U(ab/A_1)$,
$U(a/bA)=U(a/bA_2)$ and $U(b/A)=U(b/A_3)$.
Denote $A_0=A_1 \cup A_2 \cup A_3$.
Since the above ranks are minimal, we have $U(ab/A)=U(ab/A_0)$,
$U(a/bA)=U(a/bA_0)$ and $U(b/A)=U(b/A_0)$.
Thus it suffices to show that the lemma holds for $A_0$, a finite set.
 
Next, we show that for any $c$ and any finite set $B$, $U(c/B)$ is the maximal number $n$ such that there are
sets $B_i$, $i\le n$ so that $B_0=B$, and for all $i<n$, $B_i \subseteq B_{i+1}$ and $c \not\da_{B_i} B_{i+1}$.
By Lemma \ref{daiffU}, $U(c/B_i)>U(c/B_{i+1})$ for all $i<n$, and thus, $U(c/B) \ge n$.
On the other hand, by the definition of $U$-rank (Definition \ref{Urank}), there are models $\B_i$, $i \le m=U(c/B)$, so that 
$B \subset \B_0$, and for each $i<m$, $\B_i \subset \B_{i+1}$ and $c \not\da_{\B_i} \B_{i+1}$.
Write $B_0=B$.
By Lemma \ref{local2}, for each $1 \le i<m$, we find some finite $B_i \subset \B_i$ so that $c \not\da_{B_{i-1}} B_i$.
Thus, $n \ge m=U(c/B)$.

To show $U(ab/A) \le U(a/bA)+U(b/A)$, we let $n=U(ab/A)$ and $A_i$, $i\le n$ be as above for $U(ab/A)$.
Then, for each $i <n$, we must have either $a \not\da_{bA_i} A_{i+1}$ or $b \not\da_{A_i} A_{i+1}$.
Indeed, if we would have both $a \da_{bA_i} A_{i+1}$ and $b \da_{A_i} A_{i+1}$, then by Lemma \ref{lascsym}, we would have $A_{i+1} \da_{A_i} b$ and $A_{i+1} \da_{bA_i} a$, and thus by applying first Lemma \ref{lasctrans2} and monotonicity, then Lemma \ref{lascsym} again, we would get $ab \da_{A_i} A_{i+1}$.
Thus, $U(a/bA)+U(b/A) \ge n$.
 
Let now $U(b/A)=m$ and let $A_i'$, $i \le m$ be the sets witnessing this (here $A_0'=A$).
Choose $a'$ so that $t(a'/Ab)=t(a/Ab)$ and $a' \da_{bA} A_m'$.
Using a suitable automorphism, we find $A_i$, $i \le m$, also witnessing $U(b/A)=m$ so that $a \da_{bA} A_m$.
Thus, by Lemma \ref{daiffU}, $U(a/bA_m)=U(a/bA)$.
Let $U(a/bA_m)=k$ and choose $B_i$, $i \le k$ witnessing this.
Now, $A=A_0, \ldots, A_{m-1}, B_0, \ldots, B_k$ witness that $U(ab/A) \ge m+k$ (note that we may without loss assume that $A_m=B_0$).
\end{proof}
 
The results we have proved thus far allow us to prove also the following lemma.
It will be used in the next section, when discussing global types and canonical bases. 
 
\begin{lemma}\label{statilemma}
Suppose $\A$ is a model, $A$ a finite set such that $A \subset \A$, $B$ is such that $\A \subseteq B$,
$b \da_A^{ns} B$, $b' \da_A^{ns} B$ and $t(b/A)=t(b'/A)$.
Then, $t(b/B)=t(b'/B)$. 
\end{lemma}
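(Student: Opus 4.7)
The plan is to reduce the statement to $t(b/\A)=t(b'/\A)$ and then to establish this equality by a careful inductive argument using $s$-saturation of $\A$ and the non-splitting hypotheses.

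For the reduction, observe that $b\da^{ns}_{A}B$ means $t(b/B)$ does not split over the finite set $A$. Since $A$ is also a finite subset of $\A$, this immediately gives $b\da^{ns}_{\A}B$, and likewise $b'\da^{ns}_{\A}B$. Lemma \ref{vaplaajykskas}, applied with $\A$ as the base model and $B$ as the extended set, therefore reduces the problem to showing $t(b/\A)=t(b'/\A)$. Because a weak type over $\A$ is determined by its restrictions to countable submodels of $\A$ containing $A$, we may assume $\A$ is countable and hence $s$-saturated by AI.

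Next, following the strategy of the proof of Lemma \ref{vaplaajykskas}, I aim to show that $t(b\bar{d}/A)=t(b'\bar{d}/A)$ for every finite tuple $\bar{d}\subset\A$, by induction on the length of $\bar{d}$. The base case is hypothesis. For the step, given the inductive equality for $\bar{d}$, pick $g\in\textrm{Aut}(\M/A\bar{d})$ with $g(b)=b'$ and set $a^{*}:=g(a)$ for $a\in\A$; then $t(a^{*}/A\bar{d})=t(a/A\bar{d})$ and $t(b\bar{d}a/A)=t(b'\bar{d}a^{*}/A)$. It remains to show $t(b'\bar{d}a/A)=t(b'\bar{d}a^{*}/A)$. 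Were $a^{*}\in\A$, this would follow at once from non-splitting of $t(b'/\A)$ over $A\bar{d}$ (implied by non-splitting over $A$, since $A\subseteq A\bar{d}$ is finite, by the remark following the definition of splitting) applied to the pair $a,a^{*}\in\A$ with common weak type over $A\bar{d}$.

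The main obstacle is thus to bring $a^{*}=g(a)$ into a set where this non-splitting argument becomes available. The plan is to exploit Lemma \ref{vaplaajol2} to replace $b,b'$ by non-splitting extensions $\tilde{b},\tilde{b}'$ over a sufficiently saturated enlargement $B^{+}\supseteq B$ (containing $\A$); Lemma \ref{vaplaajykskas} then transfers the conclusion back from $(\tilde{b},\tilde{b}')$ over $B^{+}$ to $(b,b')$ over $B$. With $B^{+}$ saturated enough, the orbit $\{g(a):g\in\textrm{Aut}(\M/A\bar{d}),\ g(\tilde{b})=\tilde{b}'\}$ meets $B^{+}$; picking such a $g$ with $e:=g(a)\in B^{+}$, non-splitting of $t(\tilde{b}'/B^{+})$ over $A\bar{d}$ applied to $a,e\in B^{+}$ delivers $t(\tilde{b}'\bar{d}a/A)=t(\tilde{b}'\bar{d}e/A)=t(\tilde{b}\bar{d}a/A)$, closing the induction. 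Verifying that this enlargement can be made while preserving all the non-splitting hypotheses—especially the delicate compatibility between $B^{+}$ and the free extensions $\tilde{b},\tilde{b}'$—is the most technical step of the argument.
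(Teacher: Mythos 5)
Your reduction is fine: from $b \da^{ns}_A B$ and $b' \da^{ns}_A B$ you indeed get $b \da^{ns}_{\A} B$ and $b' \da^{ns}_{\A} B$, so by Lemma \ref{vaplaajykskas} it suffices to prove $t(b/\A)=t(b'/\A)$, and this is also where the paper's proof ends up. But the step you defer as ``the most technical'' is in fact the entire content of the lemma, and the mechanism you propose for it cannot work. You need an automorphism $g \in \textrm{Aut}(\M/A\bar{d})$ with $g(\tilde{b})=\tilde{b}'$ and $g(a) \in B^{+}$; equivalently, $B^{+}$ must realize the Galois type of $g_0(a)$ over the finite set $A\bar{d}\tilde{b}'$, where $\tilde{b}'$ is a free extension over $B^{+}$ and hence lies \emph{outside} $B^{+}$. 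The only saturation available in this framework (AI, or model homogeneity of $\M$) produces realizations of types over finite subsets \emph{of the model itself}; no enlargement of $B^{+}$ can supply realizations of types over parameter sets containing the external element $\tilde{b}'$, precisely because $\tilde{b}'$ is chosen free over $B^{+}$ after $B^{+}$ is fixed. The obvious repair --- pick $e \in B^{+}$ with merely $t(e/A\bar{d})=t(g_0(a)/A\bar{d})$ and hope that $t(\tilde{b}'e/A\bar{d})=t(\tilde{b}'g_0(a)/A\bar{d})$ --- is exactly stationarity of weak types over the finite set $A\bar{d}$ for non-splitting extensions, which fails in general in FUR-classes (already in $\o$-stable first order theories types over finite sets are not stationary); this failure is the reason the paper introduces Lascar types at all. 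Non-splitting of $t(\tilde{b}'/B^{+})$ over $A$ only compares pairs of tuples inside $B^{+}$, so it never sees the conjugate $g_0(a) \notin B^{+}$.

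The paper closes precisely this gap with the Lascar-type machinery, which your argument never invokes. Assuming $t(b/\A) \neq t(b'/\A)$, it takes $a \in \A$ with $t(ab/A)\neq t(ab'/A)$, conjugates to $a'=f(a)$ (possibly outside $\A$), and then uses Lemma \ref{lascsat} to find $a'' \in \A$ with $Lt(a''/A)=Lt(a'/A)$; the independences $b \da_A a'$, $b \da_A a''$ (via Remark \ref{kolmeremark}) together with symmetry (Lemma \ref{lascfullsym}) and stationarity of Lascar types (Lemma \ref{lascvaplaajykskas}) give $t(a''b/A)=t(a'b/A)$, and then $a,a'' \in \A$ contradict non-splitting of $t(b/\A)$ over $A$. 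So the missing idea in your proposal is the replacement of the external conjugate by an element of the model with the same \emph{Lascar} type, plus stationarity of Lascar types; with only weak types and s-saturation the inductive step cannot be completed.
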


\begin{proof}
By Lemma \ref{lascvaplaajykskas}, it suffices to show that $Lt(b/A)=Lt(b'/A)$. 
By Lemma \ref{frown}, this follows after we have shown that $t(b/\A)=t(b'/\A)$.
Suppose not. 
Then, there is some $a \in \A$ such that $t(ab/A) \neq t(ab'/A)$.
Let $f \in \textrm{Aut}(\M/A)$ be such that $f(b')=b$, and let $a'=f(a)$.
Since $b' \da_A^{ns} \A$, we have $b' \da_A \A$ by Remark \ref{kolmeremark}, and in particular, $b' \da_A a$.
Thus, $b \da_A a'$.
By Lemma \ref{lascsat} there is some $a'' \in \A$ such that $Lt(a''/A)=Lt(a'/A)$.
Since $b \da_A^{ns} \A$, we have $b \da_A a''$.
Then, $Lt(a''/Ab)=Lt(a'/Ab)$ by Lemmas \ref{lascfullsym} and \ref{lascvaplaajykskas}.
Now,
$$t(ab/A) \neq t(ab'/A)=t(a'b/A)=t(a''b/A),$$
a contradiction since $t(b/\A)$ does not split over $A$.
\end{proof}

Next, we give our analogue to first order algebraic closure: bounded closure.
We then show that models are closed in terms of the bounded closure, 
that the bounded closure operator has finite character, 
and that it really is a closure in the sense that the closure of a closed set is the set itself. 
In section 2.5, we will present quasiminimal classes.
This setting is analoguous to the first order strongly minimal setting.
In strongly minimal classes, the algebraic closure operator yields a pregeometry, 
and ranks can be calculated as pregeometry dimensions.
Similarly, in quasiminimal classes, a pregeometry is obtained from the bounded closure operator,
and $U$-ranks are given as pregeometry dimensions.

\begin{definition}
We say $a$ is in the \emph{bounded closure} of $A$, denoted $a \in \textrm{bcl}(A)$, if $t(a/A)$ has only boundedly many realizations.
\end{definition}

\begin{lemma}\label{bclmalli}
Let $\A$ be a model.
Then, $\textrm{bcl}(\A)=\A$.
\end{lemma}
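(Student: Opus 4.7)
The inclusion $\A \subseteq \textrm{bcl}(\A)$ is immediate: for $a \in \A$, any realization $b$ of $t(a/\A)$ satisfies $t^{g}(b/\A) = t^{g}(a/\A)$ by Lemma \ref{typegalois}, and any automorphism fixing $\A$ must fix $a$, forcing $b = a$, so $t(a/\A)$ has a unique realization.

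For the nontrivial inclusion $\textrm{bcl}(\A) \subseteq \A$ I would argue the contrapositive: assuming $a \notin \A$, I would exhibit $\delta$ many distinct realizations of $t(a/\A)$. By Lemma \ref{ansA}, fix a finite $A \subseteq \A$ over which $t(a/\A)$ does not split, and apply Lemma \ref{vaplaajol2} recursively to build a sequence $(a_i)_{i<\delta}$ with $a_0 = a$, $t(a_i/\A) = t(a/\A)$, and $a_i \da_A^{ns} \A \cup \{a_j : j<i\}$. Showing that the $a_i$ are pairwise distinct then produces unboundedly many realizations of $t(a/\A)$, contradicting $a \in \textrm{bcl}(\A)$.

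Suppose for contradiction that $a_i = a_j$ for some $i < j$. By monotonicity, $a_j \da_A^{ns} \A \cup \{a_j\}$, so $t(a_j/\A \cup \{a_j\})$ does not split over some finite $A_0 \subseteq A$. From $t(a_j/\A) = t(a/\A)$ and Lemma \ref{typegalois}, there exists $f \in \textrm{Aut}(\M/\A)$ with $f(a) = a_j$; if $a_j \in \A$, then $f^{-1}$ would fix $a_j$ and $a = a_j \in \A$, contradicting the assumption, so $a_j \notin \A$. L\"owenheim--Skolem supplies a countable $\B \preccurlyeq \A$ containing $A_0$, and AI gives $c \in \B$ with $t(c/A_0) = t(a_j/A_0)$; since $a_j \notin \A \supseteq \B$, we have $c \neq a_j$. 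Both $c$ and $a_j$ lie in $\A \cup \{a_j\}$ and share the same weak type over $A_0$, yet $t(a_j a_j / A_0) \neq t(a_j c / A_0)$ because automorphisms preserve equality of coordinates. This witnesses that $t(a_j/\A \cup \{a_j\})$ splits over $A_0$, the desired contradiction.

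The principal obstacle is passing from $a \notin \A$ to $a_j \notin \A$, which is what forces essential use of Lemma \ref{typegalois}: weak type equality alone would not preclude some $a_j$ from lying in $\A$, and then the splitting argument would have no handle. Once that bridge is in place, the rest is a straightforward combination of Lemma \ref{vaplaajol2}, AI, and the elementary fact that a pair of equal coordinates is never Galois-conjugate to a pair of distinct coordinates.
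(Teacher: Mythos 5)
Your proof is correct, but it is organized differently from the paper's. The paper argues in one step: assuming $a \in \textrm{bcl}(\A)\setminus\A$, it applies Lemma \ref{vaplaajol2} once with $B=\textrm{bcl}(\A)$ to obtain $a'$ with $t(a'/\A)=t(a/\A)$ and $a'\da^{ns}_{A}\textrm{bcl}(\A)$; since $\textrm{bcl}(\A)$ is invariant under $\textrm{Aut}(\M/\A)$, the free copy $a'$ still lies in $\textrm{bcl}(\A)$, and then exactly your ``equal pair versus distinct pair'' witness (with $b\in\A$ supplied by AI and $b\neq a'$) shows that $t(a'/\textrm{bcl}(\A))$ splits over $A$, a contradiction. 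You instead prove the contrapositive by a transfinite recursion of length $\delta$, producing unboundedly many realizations of $t(a/\A)$, and your distinctness argument is the same self-witnessing splitting trick, applied to the set $\A\cup\{a_j\}$ rather than to $\textrm{bcl}(\A)$. Both routes work: the paper's is shorter and needs no recursion, because extending freely over $\textrm{bcl}(\A)$ itself makes a single new realization already land inside the base set; yours avoids the (silent) use of the $\textrm{Aut}(\M/\A)$-invariance of $\textrm{bcl}(\A)$ at the cost of a cardinality construction, which implicitly requires $\A\cup\{a_j : j<i\}$ to remain a legitimate small parameter set for every $i<\delta$ — the usual monster-model convention, worth a word. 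One small inaccuracy in your commentary: Lemma \ref{typegalois} is not essential for $a_j\notin\A$; if $a_j\in\A$, then $\{a_j\}$ is a finite subset of $\A$, so $t(a_j/\A)=t(a/\A)$ already gives an automorphism fixing $a_j$ and sending $a_j$ to $a$, forcing $a=a_j\in\A$. The same remark simplifies your proof of the easy inclusion $\A\subseteq\textrm{bcl}(\A)$.
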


\begin{proof}
Clearly $\A \subseteq \textrm{bcl}(\A)$.
For the converse, suppose towards a contradiction that $a \in \textrm{bcl}(\A) \setminus \A$.
By Lemma \ref{ansA}, there is some finite $A \subset \A$ so that $a \da_A^{ns} \A$.  Choose now an element $a'$ such that $t(a'/\A)=t(a/\A)$ and $a' \da^{ns}_A \textrm{bcl}(\A)$.
Then, $a' \in \textrm{bcl}(\A)$.
By Axiom I, there is some $b \in \A$ such that $t(b/A)=t(a'/A)$ and thus $b \neq a'$.
In particular, $t(a'a'/A) \neq t(ba'/A)$.
Thus, $a'$ and $b$ witness that $t(a'/\textrm{bcl}(\A))$ splits over $A$, a contradiction.
\end{proof}

\begin{lemma}\label{bcllocal}
If $a \in \textrm{bcl}(A)$, then there is some finite $B \subseteq A$ so that $a \in \textrm{bcl}(B)$.
\end{lemma}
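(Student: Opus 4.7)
The plan is to pick, via Corollary \ref{coro} (local character), a finite $B\subseteq A$ with $a\da_B A$; I claim this $B$ witnesses the lemma, i.e.\ $a\in\textrm{bcl}(B)$.

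For the claim I argue by contradiction, supposing $t(a/B)$ has at least $\delta$ realizations. The central tool is Lascar stationarity (Lemma \ref{lascvaplaajykskas}): given $a\da_B A$, any $b$ with $Lt(b/B)=Lt(a/B)$ and $b\da_B A$ automatically satisfies $Lt(b/A)=Lt(a/A)$, hence $t(b/A)=t(a/A)$. So every ``free-over-$A$'' realization of $Lt(a/B)$ lands inside the bounded set of realizations of $t(a/A)$, and symmetrically a non-free realization $b$ of $Lt(a/B)$ must satisfy $t(b/A)\neq t(a/A)$, since $t(a/A)$ itself does not Lascar-split over $B$ (by $a\da_B A$ and Lemma \ref{kolme}).

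Using this, I split the realizations of $t(a/B)$ into Lascar types--there are only countably many over the finite $B$, by the remark following Lemma \ref{frown}--and in each Lascar class separate the free realizations (whose number is bounded by $|t(a/A)|<\delta$ via stationarity applied with a reference realization obtained from Corollary \ref{lascvaplaaj2}) from the non-free ones. In the expected case that the $\geq\delta$ realizations come from the Lascar type $p=Lt(a/B)$ itself, the unboundedness of the non-free part, combined with the stationarity-forced observation that each non-free $b$ sits in some weak type $s\neq t(a/A)$ over $A$ extending $t(a/B)$, lets me pigeonhole (over the boundedly many weak types over $A$, by Lemma \ref{notypes}) to produce an unbounded weak type $s$ over $A$ extending $t(a/B)$; I then aim to contradict this by showing that any such ``unboundedly realized'' extension is forced by $a\da_B A$ to coincide with $t(a/A)$, which has $<\delta$ realizations.

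The main obstacle is the variant case where the Lascar type $p$ carrying the $\geq\delta$ realizations differs from $Lt(a/B)$: here stationarity no longer connects the free realizations of $p$ to $t(a/A)$, and the counting collapses. To handle this I would either argue, using the $\omega$-stability package established earlier in Section 2.1, that the Lascar types over the finite $B$ refining $t(a/B)$ reduce to $Lt(a/B)$ alone, or iterate: replace $a$ by a realization $a^\ast$ of the offending Lascar type $p$, apply Corollary \ref{coro} again to get $B^\ast\subseteq A$ with $a^\ast\da_{B^\ast}A$, and descend; termination of this descent is guaranteed by the finite $U$-rank of the class (Lemma \ref{eiaaretketjuja}), so after finitely many iterations we land in the canonical case and close the argument.
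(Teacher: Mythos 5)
Your opening move coincides with the paper's: take a finite $B \subseteq A$ with $a \da_B A$ (Corollary \ref{coro}) and aim to show $a \in \textrm{bcl}(B)$. After that, the argument does not close. The decisive step of your ``expected case'' --- that an unboundedly realized weak type $s$ over $A$ extending $t(a/B)$ is ``forced by $a\da_B A$ to coincide with $t(a/A)$'' --- has no mechanism behind it. Since $B\subseteq A$, whether $b\da_B A$ holds depends only on the weak type $t(b/A)$ (the definition of $\da$ quantifies over $D\supseteq B\cup A=A$ and over realizations of $t(b/A)$); hence if $s\neq t(a/A)$ is one of your ``non-free'' types, \emph{every} realization of $s$ fails $\da_B A$, and stationarity (Lemma \ref{lascvaplaajykskas}) simply cannot be applied to any of them. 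No cardinality bookkeeping substitutes for this: the contradiction has to be manufactured, not counted. (A smaller issue: your pigeonhole over ``boundedly many weak types over $A$'' cites Lemma \ref{notypes}, which is stated for models; for arbitrary $A$ this needs a further reduction.)

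The ``variant case'' is likewise left open, and both suggested repairs are doubtful: a weak type over a finite set $B$ does not in general determine a unique Lascar type (this is precisely why Lascar types are introduced), so the Lascar classes refining $t(a/B)$ need not reduce to $Lt(a/B)$; and in the proposed descent (replace $a$ by $a^{*}$, re-apply Corollary \ref{coro}) no rank is shown to decrease, so finiteness of $U$-rank gives no termination. What is missing is the paper's actual idea, which uses the hypothesis $a\in\textrm{bcl}(A)$ structurally rather than only for counting: choose $a'$ with $Lt(a'/A)=Lt(a/A)$ and $a'\da_B\A$ for a model $\A\supseteq A$; then $a'\in\textrm{bcl}(A)\subseteq\A$ by Lemma \ref{bclmalli}. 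If $t(a/B)=t(a'/B)$ were unbounded, Lemma \ref{fodor} yields a nontrivial Morley, hence strongly indiscernible, sequence over a model containing $B$ starting at $a'$ (so by Lemma \ref{silt} a second element $a_1\neq a'$ with $Lt(a_1/B)=Lt(a'/B)$); AI-saturation pulls a copy $a''\neq a'$ of it into $\A$. The pair $a',a''\in\A$ then witnesses Lascar splitting of $t(a'/\A)$ over $B$ (same Lascar type over $B$, yet $t(a'a'/B)\neq t(a'a''/B)$), contradicting $a'\da_B\A$ via stationarity. This placement of the free copy inside the model and the use of Lemma \ref{fodor} to produce a distinct realization of the same Lascar type over $B$ are the essential ingredients absent from your outline.
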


\begin{proof}
There is some finite $B \subseteq A$ such that $a \da_B A$.
We claim that $a \in \textrm{bcl}(B)$.
Suppose not.
Let $\A$ be a model such that $A \subseteq \A$.
Now there is some $a'$ so that $Lt(a'/A)=Lt(a/A)$ and $a' \da_B \A$.
By Lemma \ref{bclmalli}, $a' \in \textrm{bcl}(A) \subseteq \textrm{bcl}(\A)=\A$.
Since $a \notin \textrm{bcl}(B)$, the weak type $t(a/B)$ has unboundedly many realizations.
Hence, by Lemma \ref{fodor}, there is a Morley sequence $(a_i)_{i<\o}$ over some model $\B \supset B$ so that $a_0=a'$ (just use a suitable automorphism to obtain this).
By Axiom AI, there is an element $a'' \in \A$ so that $t(a''/a'B)=t(a_1/a'B)$,
and by Lemma \ref{silt} , $Lt(a_1/B)=Lt(a'/B)$.
Thus, there is an automorphism $f \in \textrm{Aut}(\M/B)$ such that $f(a'')=a_1$ and $f(a')=a'$.
Using Lemma \ref{lasindisc}, one sees that automorphisms preserve equality of Lascar types.
Hence, the fact that $Lt(a_1/B)=Lt(a'/B)$ implies 
$Lt(a''/B)=Lt(a'/B).$ 
But we have $a'=a_0 \neq a_1$, and thus also $a'' \neq a'$, so
$t(a'a'/B) \neq t(a'a''/B)$, which contradicts Lemma \ref{lascvaplaajykskas} since we assumed $a' \da_B \A$.
\end{proof} 

\begin{lemma}\label{xiv}
For every $A$, $\textrm{bcl}(\textrm{bcl}(A))=\textrm{bcl}(A)$.
\end{lemma}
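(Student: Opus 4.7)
My plan is to prove both inclusions, the easy one directly from the definition and the hard one by a double application of finite character combined with a pigeonhole count. For $\textrm{bcl}(A) \subseteq \textrm{bcl}(\textrm{bcl}(A))$, note that every realization of $t(a/\textrm{bcl}(A))$ is also a realization of $t(a/A)$, since weak-type equality over a larger set trivially restricts to equality over any subset; hence the set of realizations of the former is contained in that of the latter and inherits its boundedness. The same observation yields the monotonicity property $\textrm{bcl}(A) \subseteq \textrm{bcl}(A')$ whenever $A \subseteq A'$, which I use freely below.

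For the nontrivial inclusion let $a \in \textrm{bcl}(\textrm{bcl}(A))$. First I would reduce to a finite configuration. By Lemma \ref{bcllocal} there is a finite $B \subseteq \textrm{bcl}(A)$ with $a \in \textrm{bcl}(B)$, and applying Lemma \ref{bcllocal} once more to each $b \in B$ yields a finite $A_b \subseteq A$ with $b \in \textrm{bcl}(A_b)$. Setting $A_0 = \bigcup_{b \in B} A_b$ gives a finite $A_0 \subseteq A$ with $B \subseteq \textrm{bcl}(A_0)$, so it suffices to prove $a \in \textrm{bcl}(A_0)$.

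With the problem now reduced to finite parameter sets, weak types and Galois types coincide over both $A_0$ and $B$, and I intend to argue by contradiction via a pigeonhole count. Suppose $a \notin \textrm{bcl}(A_0)$. Then the Galois orbit of $a$ over $A_0$ has size at least $\delta$, so I can choose $\delta$ pairwise distinct conjugates $a_i = f_i(a)$ with $f_i \in \textrm{Aut}(\M/A_0)$. Enumerate $B = (b_1,\ldots,b_n)$; each $b_k$ lies in $\textrm{bcl}(A_0)$, so has fewer than $\delta$ Galois conjugates over $A_0$, whence the number of possible tuples $(f_i(b_1),\ldots,f_i(b_n))$ is less than $\delta$ as well. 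By pigeonhole there is a set of indices $I$ of size $\delta$ on which this tuple equals some common $(b_1^*,\ldots,b_n^*)$. Fixing $i_0 \in I$, for each $i \in I$ the automorphism $f_{i_0}^{-1} \circ f_i$ fixes $A_0 \cup B$ pointwise and sends $a$ to the distinct element $f_{i_0}^{-1}(a_i)$. This produces $\delta$ distinct realizations of $t(a/B)$, contradicting $a \in \textrm{bcl}(B)$.

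The only real conceptual step is the double reduction via Lemma \ref{bcllocal} to a finite configuration in which weak and Galois types align; after that, the counting is entirely routine and no serious obstacle remains.
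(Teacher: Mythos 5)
Your proof follows essentially the same route as the paper's: reduce to finite parameter sets via Lemma \ref{bcllocal}, then count conjugates of the intermediate tuple $B$ and apply a pigeonhole argument to produce too many realizations of a bounded type. The one step that needs care is your extraction of a fiber $I$ of size $\delta$ from a partition of $\delta$ indices into fewer than $\delta$ classes: this is only valid if $\delta$ is regular (or at least $\mathrm{cf}(\delta)$ exceeds the number of classes), and the paper never assumes regularity of the monster cardinal. The paper sidesteps this by pigeonholing at a much smaller cardinal: it fixes an uncountable $\kappa>\vert\textrm{bcl}(\textrm{bcl}(A))\vert$ (note that for finite $A$ this set is countable, since $\textrm{bcl}(A)$, and then $\textrm{bcl}(\textrm{bcl}(A))$, is contained in any countable model containing $A$ by Lemma \ref{bclmalli} and monotonicity), takes $\kappa$ distinct realizations of $t(a/A)$, and gets the contradiction from the cardinality of the closed set rather than from unboundedness directly. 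Your argument is repaired the same way: take only $\omega_1$ conjugates of $a$ over $A_0$ and observe that the tuples $(f_i(b_1),\ldots,f_i(b_n))$ range over the countable set $\textrm{bcl}(A_0)^n$, so some value repeats $\omega_1$ times, which already puts $\omega_1$ distinct realizations of $t(a/B)$ inside the countable set $\textrm{bcl}(B)$ -- a contradiction needing no hypothesis on $\delta$. With that adjustment the proof is correct and is, in substance, the paper's proof.
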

 
\begin{proof}
By Lemma \ref{bcllocal}, we may assume that $A$ is finite.
Suppose now $a \in \textrm{bcl}(\textrm{bcl}(A)) \setminus \textrm{bcl}(A)$.
By Lemma \ref{bcllocal}, there is some $b \in \textrm{bcl}(A)$ so that $a \in \textrm{bcl}(Ab)$.
Let $\kappa$ be an uncountable cardinal such that $\kappa> \vert \textrm{bcl}(\textrm{bcl}(A)) \vert$.
Since $a \notin \textrm{bcl}(A)$, there are $a_i$, $i<\kappa$ so that $a_i \neq a_j$ when $i \neq j$ and $t(a_i/A)=t(a/A)$ for all $i<\kappa$.
For each $i$, there is some $b_i \in \textrm{bcl}(A)$ such that $t(b_ia_i/A)=t(ba/A)$.
By the pigeonhole principle, there is some $b' $ and some $X \subseteq \kappa$
so that $\vert X \vert = \kappa$ and $b_i=b'$ for $i \in X$.
Hence, for any $i \in X$, $t(a_i/Ab')$ has unboundedly many realizations, a contradiction since $a_i \in \textrm{bcl}(Ab')$.
\end{proof} 

\begin{lemma}\label{xi}
Let $A \subset B$.
If $a \in \textrm{bcl}(A)$, then $a \da_A B$.
\end{lemma}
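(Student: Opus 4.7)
The plan is to use the rank characterization of independence (Lemma \ref{daiffU}) to reduce the statement to showing $U(a/A)=U(a/B)$; I will in fact prove that both ranks vanish.

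First, by Lemma \ref{bcllocal} I pick a finite $A_0\subseteq A$ with $a\in\textrm{bcl}(A_0)$. The crucial step is to establish $U(a/A_0)=0$. Let $\A\supseteq A_0$ be any model. By Lemma \ref{bclmalli}, $a\in\textrm{bcl}(A_0)\subseteq\textrm{bcl}(\A)=\A$, so the coordinates of the tuple $a$ all lie in $\A$. Taking this finite set as the witness, I claim that for any $\B\supseteq\A$ the weak type $t(a/\A\cup\B)$ does not split over $a$: if $b,c\in\A\cup\B$ satisfy $t(b/a)=t(c/a)$, then an automorphism fixing $a$ pointwise and sending $b$ to $c$ automatically sends $ab$ to $ac$, so $t(ab/a)=t(ac/a)$. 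Hence $a\da_\A^{ns}\B$ for every model $\B\supseteq\A$, which forces $U(a/\A)=0$. Since this holds for every model extending $A_0$, Definition \ref{Urank} gives $U(a/A_0)=0$.

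Finally, the rank of $a$ over a finite set can only weakly decrease when the set grows (immediate from Definition \ref{Urank}), so $U(a/A_0\cup B_0)=0$ for every finite $B_0\subseteq B$, and Definition \ref{U2} then yields $U(a/B)=0$; the same argument gives $U(a/A)=0$. Applying Lemma \ref{daiffU} delivers $a\da_A B$. The only slightly delicate bookkeeping is the non-splitting verification with the tuple $a$ itself serving as the base; beyond that the argument is a direct chain of reductions from earlier results, so I do not foresee a genuine obstacle.
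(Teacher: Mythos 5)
Your proof is correct, and it takes a genuinely different route from the paper's. The paper reduces to finite $A$ via Lemma \ref{bcllocal}, then uses extension (Lemma \ref{lascvaplaaj}) to produce $a'$ with $Lt(a'/A)=Lt(a/A)$ and $a'\da_A B$, and finally shows $a'=a$ by observing that the relation ``$x=y\in\textrm{bcl}(A)$ or $x,y\notin\textrm{bcl}(A)$'' is an $A$-invariant equivalence relation with boundedly many classes (boundedness coming from $\textrm{bcl}(A)\subseteq\A$ for a countable model $\A\supseteq A$, i.e. Lemma \ref{bclmalli}); so the free copy guaranteed by extension is $a$ itself. You instead compute $U$-ranks: since the coordinates of $a$ lie in any model $\A\supseteq A_0$ (again by Lemma \ref{bclmalli}), and a type trivially never splits over the tuple's own coordinates, you get $a\da^{ns}_\A\B$ for all models $\B\supseteq\A$, hence $U(a/\A)=0$, hence $U(a/A_0)=0$, and then monotonicity of rank over finite sets together with Definition \ref{U2} gives $U(a/A)=U(a/B)=0$, so Lemma \ref{daiffU} yields $a\da_A B$. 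Both arguments use only material preceding Lemma \ref{xi}, so there is no circularity. The paper's argument is lighter in that it avoids the rank machinery and works directly with Lascar types; yours leans on the heavier Lemma \ref{daiffU} but yields a slightly stronger by-product, namely that bounded elements have $U$-rank $0$ over any set containing a finite witness, which is in line with the later remark that $U$-rank coincides with pregeometry dimension in quasiminimal classes. The only points you gloss over are routine: that each coordinate of a bounded tuple is bounded (so the coordinates really lie in $\A$), and that weak types over the finite tuple $a$ are Galois types, which is what your automorphism argument uses; both are immediate.
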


\begin{proof}
By Lemma \ref{bcllocal}, we may assume that $A$ is finite.
Choose $a'$ so that $Lt(a'/A)=Lt(a/A)$ and $a' \da_A B$.
Then, $a' \in \textrm{bcl}(A)$.
Consider the equivalence relation $E$ defined so that $(x,y) \in E$ if either $x,y \notin \textrm{bcl}(A)$ or $x=y \in \textrm{bcl}(A)$.
This is an $A$-invariant equivalence relation.
Moreover, since $A$ is finite, we may choose a countable model $\A$ so that $A \subset \A$. 
By Lemma \ref{bclmalli}, $\textrm{bcl}(A) \subset \A$, so $E$ has boundedly many classes, and thus $(a,a') \in E$.
It follows that $a=a'$.
\end{proof}

\begin{lemma}[Reflexivity]\label{xii}
If $a \in \textrm{bcl}(B) \setminus \textrm{bcl}(A)$, then $a \not\da_A B$. 
\end{lemma}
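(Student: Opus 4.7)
My plan is to prove the contrapositive by computing $U$-ranks. I will show separately that $a \in \textrm{bcl}(B)$ forces $U(a/B)=0$ and that $a \notin \textrm{bcl}(A)$ forces $U(a/A) \ge 1$, and then invoke Lemma~\ref{daiffU} (which characterizes $\da$ as preservation of $U$-rank) to conclude $a \not\da_A B$.

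For the upper bound, Lemma~\ref{bcllocal} lets me fix a finite $B_0 \subseteq B$ with $a \in \textrm{bcl}(B_0)$. Then for every model $\B \supseteq B_0$, Lemma~\ref{bclmalli} gives $a \in \B$, so in particular $a \in \textrm{bcl}(\B)$, and Lemma~\ref{xi} combined with Lemma~\ref{kolme} yields $a \da^{ns}_{\B} \B'$ for every model $\B' \supseteq \B$. By Definition~\ref{Urank} this forces $U(a/\B) = 0$, hence $U(a/B_0) = 0$, and finally $U(a/B) = 0$ by Definition~\ref{U2}.

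For the lower bound, Definition~\ref{U2} together with monotonicity of bcl reduces matters to the case $A$ finite. Since $a \notin \textrm{bcl}(A)$, the weak type $t(a/A)$ has $\o_1$ distinct realisations, so by Lemma~\ref{fodor} I can extract an infinite Morley subsequence over some model $\C \supseteq A$ with same weak type over $A$ as $a$. Using an automorphism in $\textrm{Aut}(\M/A)$ sending the first term to $a$, I may assume that $a$ itself is the first term of a Morley sequence $(a, d, \ldots)$ over a model $\C^* \supseteq A$; the non-triviality of the original distinct realisations guarantees $a \ne d$. Now pick any model $\B^* \supseteq \C^* \cup \{d\}$: for every finite $A_0 \subseteq \C^*$ the elements $a,d \in \B^*$ realise the same weak type over $\C^*$ but the equality $a=a$ versus $a \ne d$ forces $t(aa/A_0)\ne t(ad/A_0)$, so $(a,d)$ witnesses that $t(a/\B^*)$ splits over $A_0$. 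Hence $a \not\da^{ns}_{\C^*} \B^*$, giving $U(a/\C^*) \ge 1$ by Definition~\ref{Urank} and thus $U(a/A) \ge 1$.

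The main obstacle is the lower bound, since the upper bound is essentially immediate from the earlier machinery: I must arrange via an automorphism that $a$ actually appears as the initial term of a Morley sequence over a model extending $A$, and then verify that splitting over every finite subset of that model really follows from the non-triviality of the sequence. Once both bounds are in place, $U(a/A) \ge 1 > 0 = U(a/B)$ and Lemma~\ref{daiffU} delivers $a \not\da_A B$.
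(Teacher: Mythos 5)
Your proof is correct, but it takes a genuinely different route from the paper. The paper argues by contradiction: assuming $a \da_A B$, it uses extension to find $a'$ with $t(a'/B)=t(a/B)$ and $a' \da_A \A$ for a model $\A \supseteq B$, notes $a' \in \A$ by Lemma \ref{bclmalli}, and then reruns the argument of Lemma \ref{bcllocal} (Fodor/Morley sequence, $s$-saturation, preservation of Lascar types) to contradict stationarity (Lemma \ref{lascvaplaajykskas}). You instead prove the contrapositive by a rank computation: $a \in \textrm{bcl}(B)$ gives $U(a/B)=0$ via Lemmas \ref{bcllocal}, \ref{bclmalli}, \ref{xi} and \ref{kolme}, while $a \notin \textrm{bcl}(A)$ gives $U(a/A)\ge 1$ by exhibiting, via Lemma \ref{fodor} and an automorphism, a model over which $t(a/\cdot)$ splits over every finite subset (using the pair $(a,d)$ with $d$ a distinct member of the Morley sequence as the splitting witness), and then Lemma \ref{daiffU} finishes. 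Both arguments ultimately rest on the same engine — unboundedly many realizations yield a non-trivial Morley sequence, and the element itself serves as the witness — but your version stays at the level of plain splitting and the $U$-rank characterization, avoiding the Lascar-type stationarity machinery, at the cost of redoing separately the "bounded implies rank $0$" half that the paper gets for free from Lemma \ref{xi}; the paper's proof is shorter because it recycles Lemma \ref{bcllocal} wholesale. One small repair: you take "any model $\B^* \supseteq \C^* \cup \{d\}$" but then use $a$ as one of the splitting witnesses, which requires $a \in \B^*$; since otherwise the remaining sequence elements all have the same type over $\C^*$ together with $a$ and cannot witness splitting, you should simply choose $\B^* \supseteq \C^* \cup \{a,d\}$ — this is harmless and the rest of the argument is unchanged.
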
 

\begin{proof}
Suppose $a \da_A B$.
Choose a model $\A$ so that $B \subseteq \A$ and $a'$ so that $t(a'/B)=t(a/B)$ and $a' \da_A \A$.
By Lemma \ref{bclmalli}, $a' \in \A$.
Now we proceed as in the proof of Lemma \ref{bcllocal} to obtain a contradiction.
\end{proof}

Now we have shown that our main independence notion $\da$ has all the properties of non-forking.

\begin{theorem}\label{main} 
Let $\K$ be a FUR-class, let $\M$ be a monster model for $\K$, and
suppose $A \subseteq B \subseteq C \subseteq D \subset \M$.
Then, the following hold.
\begin{enumerate}[(i)]
\item Local character: For each $a$, there is some finite $A_0 \subseteq A$ such that $a \da_{A_0} A$.
\item Finite character: If $a \not\da_{A} B$, then there is some $b \in B$ so that $a \not\da_{A} b$.
\item Stationarity: Suppose that $Lt(a/A)=Lt(b/A)$, $a \da_A B$ and $b \da_A B$.
Then, $Lt(a/B)=Lt(b/B)$.
\item Extension: For every $a$, there is some $b$ such that $Lt(b/A)=Lt(a/A)$ and $b \da_A B$.
\item Monotonicity: If $a \da_A D$, then $a \da_B C$.
\item Transitivity: If $a \da_A B$ and $a \da_B C$, then $a \da_A C$.
\item Symmetry: If $a \da_A b$, then $b \da_A a$.
\item $U$-ranks: $a \da_A B$ if and only if $U(a/B)=U(a/A)$.
\item Finiteness of $U$-rank: For all $a$, $U(a/\emptyset)<\o$.
\item Addition of ranks: For all $a,b$, $U(ab/A)=U(a/bA)+U(b/A)$.
\item Independence of bcl: If $a \in \textrm{bcl}(A)$, then $a \da_A B$.
\item Reflexivity:  If $a \in \textrm{bcl}(B) \setminus \textrm{bcl}(A)$, then $a \not\da_A B$. 
\item Local character of bcl: If $a \in \textrm{bcl}(A)$, then there is some finite $A_0 \subseteq A$ so that $a \in \textrm{bcl}(A_0)$.
\item Closure: $\textrm{bcl}(\textrm{bcl}(A))=\textrm{bcl}(A)$.
\item Models are closed: If $\A$ is a model, then $\textrm{bcl}(\A)=\A$.
\end{enumerate}
\end{theorem}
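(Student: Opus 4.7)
The plan is essentially to assemble Theorem \ref{main} as a compilation of the lemmas already proved in this section, since by design every clause (i)--(xv) has been established along the way. I would not reprove anything; rather, I would pair each item with the lemma or corollary that supplies it, and add a short argument only for the one item that is not stated explicitly elsewhere, namely finiteness of $U$-rank.

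Concretely, I would proceed clause by clause. For (i) I cite Corollary \ref{coro}; for (ii) Lemma \ref{local2}; for (iii) Lemma \ref{lascvaplaajykskas}; for (iv) Corollary \ref{lascvaplaaj2}; for (v) the monotonicity part of Remark \ref{freedomremark}; for (vi) Lemma \ref{lasctrans2}; for (vii) Lemma \ref{lascfullsym}; for (viii) Lemma \ref{daiffU}; for (x) Lemma \ref{Uaddit}; for (xi) Lemma \ref{xi}; for (xii) Lemma \ref{xii}; for (xiii) Lemma \ref{bcllocal}; for (xiv) Lemma \ref{xiv}; and for (xv) Lemma \ref{bclmalli}. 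Each of these is a direct quotation, so the only task is bookkeeping: matching the hypotheses listed in the theorem statement (with the chain $A\subseteq B\subseteq C\subseteq D$) to the particular form of hypothesis used in the corresponding lemma, which in most cases is trivial.

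The only clause that needs a sentence of its own is (ix). For this, I would argue as follows. Given $a$, apply Corollary \ref{coro} to find a finite $A_{0}\subseteq\emptyset$; since $\emptyset$ is already finite we instead use Lemma \ref{eiaaretketjuja} directly: it produces a number $n<\omega$ bounding the length of any chain of models over which $a$ splits, so by Definition \ref{Urank} we have $U(a/\A)\le n$ for every model $\A$ containing $\emptyset$, and hence $U(a/\emptyset)\le n<\omega$ by Definition \ref{U2}. This is the only place where a brief argument, rather than a pure citation, is required.

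The main ``obstacle'' is really just cosmetic: to make the theorem read cleanly one should check that the statements in the theorem are phrased in a way that exactly matches the forms proved in the section. For instance, stationarity in Lemma \ref{lascvaplaajykskas} is stated with $A\subseteq B$; monotonicity in Remark \ref{freedomremark} is stated for $A\subseteq B\subseteq C\subseteq D$ with $a\da_{A}D$ implying $a\da_{B}C$, which is exactly clause (v); and reflexivity in Lemma \ref{xii} matches (xii) verbatim. No genuine mathematical work remains, so the proof can be written as a short table of references followed by the one-line verification of (ix).
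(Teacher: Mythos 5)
Your proposal is correct and matches the paper exactly: the theorem is stated there as a summary of the preceding results, with each clause supplied by the lemma, corollary or remark you cite (and finiteness of $U$-rank already noted after Definition \ref{Urank} via Lemma \ref{eiaaretketjuja}, just as you argue). No further comment is needed.
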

 
By easy calculations, one proves the following corollary (for (ii), use the definition of $U$-rank, and for (iii), choose a model $\A$ such that $a \in \A$ and $c \da_a \A$, and use the fact that if $b \in \textrm{bcl}(a)$, then $b \in \textrm{bcl}(\A)$ by Theorem \ref{main}, (xiv)).

\begin{corollary}
Suppose $\K$ is a FUR-class, and $\M$ is a monster model for $\K$.
Then, the following hold.
\begin{enumerate}[(i)]
\item If $A \da_B C$ and $D \subseteq \textrm{bcl}(BC)$, then $A \da_B CD$.
\item If $a \in \textrm{bcl}(Ab)$ and $b \in \textrm{bcl}(Aa)$, then $U(a/A)=U(b/A)$.
\item If $a \in \textrm{bcl}(Ab)$ and $b \in \textrm{bcl}(Aa)$, then for any $c$, it holds that $U(c/Aa)=U(c/Ab)$.
\end{enumerate}
\end{corollary}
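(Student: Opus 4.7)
The plan is to reduce all three parts to calculations with the addition formula for $U$-rank (Theorem \ref{main}(x)), after isolating one auxiliary fact: if $a \in \textrm{bcl}(Ab)$ then $U(a/Ab) = 0$. Part (i) I will dispatch separately, using only symmetry, bcl-independence, and transitivity.

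For (i), I fix an arbitrary $a \in A$. From the hypothesis together with monotonicity (Theorem \ref{main}(v)) I have $a \da_B C$. For each $d \in D \subseteq \textrm{bcl}(BC)$, bcl-independence (xi) gives $d \da_{BC} a$, symmetry (vii) converts this to $a \da_{BC} d$, and finite character (ii) upgrades the pointwise statement to $a \da_{BC} D$. I then apply transitivity (vi) to $a \da_B C$ and $a \da_{BC} D$ to conclude $a \da_B CD$. Since $a \in A$ was arbitrary, $A \da_B CD$.

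For the auxiliary fact, I pick any model $\A$ containing $Ab$. By (xv), $\A = \textrm{bcl}(\A) \supseteq \textrm{bcl}(Ab) \ni a$, so $a \in \A$. Directly from the definition of $U$-rank, $U(a/\A) = 0$, because for any $\B \supseteq \A$ the weak type $t(a/\B)$ cannot split over the singleton $\{a\} \subseteq \A$. Bcl-independence (xi) and the $U$-rank characterization (viii) then give $U(a/Ab) = U(a/\A) = 0$. With this in hand, part (ii) follows by computing $U(ab/A)$ in two ways via the addition formula (x):
\[
U(ab/A) \;=\; U(a/Ab) + U(b/A) \;=\; U(b/A),
\]
\[
U(ab/A) \;=\; U(b/Aa) + U(a/A) \;=\; U(a/A),
\]
the second line using the symmetric hypothesis to get $U(b/Aa) = 0$.

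For (iii) I apply the same trick to $U(abc/A)$. Since $b \in \textrm{bcl}(Aa) \subseteq \textrm{bcl}(Aac)$ and $a \in \textrm{bcl}(Ab) \subseteq \textrm{bcl}(Abc)$, the auxiliary fact gives $U(b/Aac) = U(a/Abc) = 0$, and two applications of (x) yield
\[
U(abc/A) \;=\; U(b/Aac) + U(ac/A) \;=\; U(c/Aa) + U(a/A),
\]
\[
U(abc/A) \;=\; U(a/Abc) + U(bc/A) \;=\; U(c/Ab) + U(b/A).
\]
Equating and cancelling with $U(a/A) = U(b/A)$ from (ii) gives $U(c/Aa) = U(c/Ab)$. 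I do not foresee a serious obstacle: the only nontrivial move is the vanishing of $U(a/Ab)$ from the bcl hypothesis, and everything else is bookkeeping with the rank formula.
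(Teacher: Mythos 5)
Your proof is correct, but for (ii) and (iii) it takes a genuinely different route from the one the paper sketches. The paper treats these as easy calculations and hints: for (ii), argue directly from the definition of $U$-rank (witnessing chains of models transfer between the interbounded elements $a$ and $b$); for (iii), choose a model $\A$ with $Aa \subseteq \A$ and $c \da_{Aa} \A$, note that $b \in \textrm{bcl}(Aa) \subseteq \textrm{bcl}(\A)=\A$ by parts (xiv) and (xv) of Theorem \ref{main}, and conclude $U(c/Aa)=U(c/\A) \le U(c/Ab)$ via (viii) and the monotonicity of rank, with the reverse inequality by symmetry of the hypotheses. You instead derive both (ii) and (iii) uniformly from the additivity formula (x), after establishing the auxiliary fact that $a \in \textrm{bcl}(Ab)$ forces $U(a/Ab)=0$; your verification of that fact is sound: $a$ lies in any model $\A \supseteq Ab$ by (xv), $t(a/\B)$ never splits over $\{a\}$ so $U(a/\A)=0$ by the definition of rank over models, and (xi) together with (viii) transfers this to $U(a/Ab)=0$. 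Your argument for (i), via (xi), symmetry, finite character and transitivity, is the natural one and presumably what the paper intends. What your route buys is uniformity: one vanishing lemma plus rank bookkeeping (with finiteness of ranks, (ix), justifying the cancellation in (iii)) handles both statements at once; the cost is that it leans on the heavier additivity lemma and silently uses that the $U$-rank of a tuple is invariant under permuting its coordinates, which you need in order to apply (x) in both orders, e.g. to write $U(ab/A)=U(b/Aa)+U(a/A)$ and $U(abc/A)=U(b/Aac)+U(ac/A)$. That invariance is immediate from the definitions (the relevant types of $ab$ and $ba$ determine one another), so it is not a gap, but it deserves a word if you write the argument out in full.
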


\subsection{$\M^{eq}$ and canonical bases}\label{meqsect}

In this section, we will construct $\M^{eq}$ and show that canonical bases exist and have the usual properties one would expect.

In the first order context, there are only countably many formulae, and thus only countably many definable equivalence relations.
Hence, $\M^{eq}$ can be built by adding imaginaries corresponding to all of them without violating $\o$-stability.
Moreover, in this context, $\M^{eq}$ has elimination of imaginaries.

However, in the non-elementary case, the concept of definability is much broader than first order definability.
As an example, consider a class of models with unary predicates $P_i$, $i<\o$, such that each $P_i$ has an infinite interpretation.
For each $X \subseteq \o$, define an equivalence relation $E_X$ so that $a E_X b$ if and only if $a \in \bigcup_{i \in X} P_i \leftrightarrow b \in \bigcup_{i \in X} P_i$.
This gives us uncountably many equivalence relations.
The same construction can be done using Galois types in place of the predicates $P_i$, so in most non-elementary cases there will be uncountably many equivalence relations to consider.

Since we wish to be able to use our independence calculus in $\M^{eq}$, $\o$-stability is vital for our arguments.
Thus, we cannot add uncountably many imaginaries.
We will solve the problem by choosing a countable collection of equivalence relations and adding imaginaries only for them.
The choice will be made so that the collection contains everything needed for our arguments (in particular, so that we can have canonical bases).
However, the drawback here is that we cannot prove elimination of imaginaries.
Instead, we will move to $(\M^{eq})^{eq}$, $((\M^{eq})^{eq})^{eq}$, etc., when needed.
For our arguments, it will be important to be able to use the independence calculus in the extensions.
Thus, we will show that if $\M$ is a monster model for a FUR-class, then also $\M^{eq}$ is such.

We now make all this more precise.
Let $\mathcal{E}$ be a countable collection of $\emptyset$-invariant
equivalence relations $E$ such that $E \subseteq \M^n \times \M^n$ for some $n$.
By this we mean that if $E \in \mathcal{E}$, then $E$ is an equivalence relation on some model in $\mathcal{K}$ (note that from this it follows that $E$ is an equivalence relation on every model in $\K$; indeed, it takes at most three tuples to prove that a relation is not an equivalence relation, and by axiom AI all models are $s$-saturated)
and there is some countable collection $G_E$ of Galois-types so that $(a,b) \in E$ if and only if $t^g(ab/\emptyset) \in G_E$.
We assume that the identity relation is in $\mathcal{E}$, $= \in \mathcal{E}$ (note that there are only countably many Galois types over $\emptyset$).
 For every
$\A\in\K$ we let $\A^{eq}$ be the set
$\{ a/E\vert\ a\in\A ,\ E\in\mathcal{E}\}$. 
We identify each element $a$ with $a/=$. 
For each $E \in \mathcal{E}$, we add to
our language a predicate $P_{E}$
with the interpretation $\{ a/E\vert\ a\in\A\}$
and a function $F_{E}: \A^n \to \A^{eq}$ (for a suitable $n$) such that $F_{E}(a)=a/E$.  
Then, we have all the structure of $\A$ on $P_{=}$.  
We let $\K^{eq}=\{\A^{eq}\vert\ \A\in\K\}$.
We write $\A^{eq}\preccurlyeq^{eq}\B^{eq}$ if
$\A^{eq}$ is a submodel of $\B^{eq}$ and $\A\preccurlyeq\B$. 

We will now show that if $(\K, \preccurlyeq)$ is a FUR-class, then also
$(\mathcal{K}^{eq},\preccurlyeq^{eq})$ is a FUR-class.
 Notice first that for each model $\A$, the model $\A^{eq}$ is unique up to isomorphism over $\A$
and that every automorphism of $\A$ extends to an automorphism
of $\A^{eq}$. 
Thus it is easy to see that if $(\K, \preccurlyeq)$ is a FUR-class, then
$(\K^{eq},\preccurlyeq^{eq})$ is an AEC with AP, JEP and arbitrary
large models,  that $LS(\K^{eq})=\o$ and that $\K^{eq}$ does not contain finite models.
It is also easily seen that if the axioms AI, AIII, and AVI hold for $\mathcal{K}$, they hold also for $\mathcal{K}^{eq}$. 
 
We now show that also AII holds.

\begin{lemma}
Suppose that $\K$ is a FUR-class.
Then, axiom AII holds for $\K^{eq}$.
\end{lemma}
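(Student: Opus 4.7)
The plan is to take $\B^{eq}$ as the candidate $s$-primary model over $\A^{eq} a^*$, where $\B = \A[a]$ is the $s$-primary model over $\A a$ in $\K$ furnished by axiom AII for $\K$ (applied to some representative $a \in \M$ of $a^* = a/E$, $E \in \mathcal{E}$). Since $\B$ is countable, so is $\B^{eq}$, and $\A^{eq} a^* \subseteq \B^{eq}$. I would enumerate $\B^{eq} \setminus (\A^{eq} \cup \{a^*\})$ as $\{b_i : i<\o\}$, where each $b_i = \bar c_i/E_i$ with $\bar c_i$ a finite tuple from $\B$ and $E_i \in \mathcal{E}$.

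For the $s$-primary verification in $\K^{eq}$, given $n$ I would pick $N$ large enough that all coordinates of $\bar c_0, \ldots, \bar c_n$ lie in $\A \cup \{a, a_0, \ldots, a_N\}$, apply the $s$-primary property of $\B$ to the tuple $(a, a_0, \ldots, a_N)$ to get a finite $A_N \subset \A$, and enlarge $A_N$ finitely to include every element of $\A$ appearing as a coordinate of any $\bar c_i$, $i \le n$. Then set $A_n^* := A_N$, viewed as a subset of $\A^{eq}$ via the canonical embedding $d \mapsto d/{=}$.

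To verify $A_n^*$ works, suppose $(a^{**}, b_0^*, \ldots, b_n^*) \in \M^{eq}$ satisfies $t(a^{**}/\A^{eq}) = t(a^*/\A^{eq})$ and Galois type equality with $(a^*, b_0, \ldots, b_n)$ over $A_N$ (the latter coincides with weak-type equality since $A_N$ is finite). The Galois type equality supplies $h \in \textrm{Aut}(\M^{eq}/A_N)$ mapping one tuple to the other; its restriction to $\M$ yields a lift $(a^{\dagger}, \bar c_0^{\dagger}, \ldots, \bar c_n^{\dagger})$ with the correct Galois type over $A_N$ in $\M$ and the correct $E$- and $E_i$-projections. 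The crux is then to adjust this lift, within the respective $E$- and $E_i$-classes, so as to produce a lift that \emph{additionally} satisfies $t(a^{\dagger}/\A) = t(a/\A)$ in $\M$. Once this is achieved, AII in $\K$ delivers $t(a^{\dagger}, \bar c_0^{\dagger}, \ldots, \bar c_n^{\dagger}/\A) = t(a, \bar c_0, \ldots, \bar c_n/\A)$ in $\M$, which projects via $E, E_0, \ldots, E_n$ to the required weak-type equality over $\A^{eq}$.

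The main obstacle is precisely this adjustment step, since the hypothesis $t(a^{**}/\A^{eq}) = t(a^*/\A^{eq})$ only controls $a^{\dagger}$ modulo $E$. My plan is to exploit the countability of $\A$ and construct the required representative by a back-and-forth along an exhausting chain $\A_0 \subset \A_1 \subset \cdots$ of finite subsets of $\A$: at each stage $k$, the weak-type hypothesis applied to the finite set $A_N \cup \{d/{=} : d \in \A_k\} \subset \A^{eq}$ supplies an automorphism of $\M^{eq}$ whose restriction to $\M$ fixes $\A_k$ pointwise and sends $a$ to an $E$-equivalent representative of $a^{**}$. Using the $\delta$-model-homogeneity of $\M$, these partial witnesses amalgamate into a single $a' \in \M$ with $a' E a^{\dagger}$ and $t(a'/\A) = t(a/\A)$; a final automorphism fixing $A_N a^*$ then moves $\bar c_0^{\dagger}, \ldots, \bar c_n^{\dagger}$ to compatible representatives of $b_0^*, \ldots, b_n^*$, producing the desired lift and completing the verification.
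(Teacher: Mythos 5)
Your overall skeleton --- pick a real representative, form the $s$-primary model $\A[\,\cdot\,]$ in $\K$, and propose $\B^{eq}$ with a careful enumeration --- matches the paper's, but there is a genuine gap at exactly the step you yourself flag as the crux. You take an \emph{arbitrary} representative $a$ of $a^*$ and postpone the difficulty to the claim that the automorphisms obtained from $t(a^{**}/\A^{eq})=t(a^*/\A^{eq})$ over the finite sets $A_N\cup\A_k$ ``amalgamate'' into a single $a'$ in the fiber of $a^{**}$ with $t(a'/\A)=t(a/\A)$. This does not follow: for each finite $\A_k$ you get a \emph{different} automorphism $g_k$ of $\M^{eq}$ and hence a different representative $g_k(a)$; these witnesses need not cohere, their restrictions do not form an increasing chain of partial maps whose union one could take, and $\delta$-model-homogeneity gives no way to pass to a limit. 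What you need is to realize, inside a fixed $E$-class, a weak type over the infinite set $\A$ that is only approximated over its finite subsets --- a compactness-type principle that is unavailable in this non-elementary setting. (A posteriori the statement is true, because weak types over models determine Galois types in $\K^{eq}$; but that fact is proved using AII for $\K^{eq}$ via the $s$-primary models, so invoking anything of that strength here is circular.)

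The paper removes the obstacle \emph{before} forming the primary model: using the game of axiom AIV it chooses the representative $b$ of $a^*$ together with a finite $A\subset\A$ so that, among lifts $b'$ of $a^*$, $t(b'/A)=t(b/A)$ already implies $t(b'/\A a^*)=t(b/\A a^*)$ (start with any lift and the empty set; as long as isolation fails one finds another lift with the same type over the current finite set but a different type over a larger one, and AIV forces this process to terminate). With this isolation in hand, one needs, for each finite $D\subset\A$ \emph{separately}, only an automorphism over $A\cup D$ pulling the fiber of $a^{**}$ back onto the fiber of $a^*$; the isolation then gives agreement of types over all of $\A$ uniformly, so no amalgamation of the finitely-based witnesses is ever required. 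Only after this choice does one set $\B=\A[b]$ and enumerate $\B^{eq}$ (the coordinates of $b$ first, and each imaginary after the coordinates of some representative), much as you describe. Without the AIV-based choice of representative, your verification cannot be completed as written.
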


\begin{proof}
Let $\A^{eq} \in \K^{eq}$ be countable, and let $a$ be arbitrary.
We need to construct an $s$-primary model over $\A^{eq}a$.
Let $b_1, \ldots, b_n \in \M$ be such that $a=(F_{E_1}(b_1), \ldots, F_{E_m}(b_n))$ for some $E_1, \ldots, E_m \in \mathcal{E}$, and denote $b=(b_1, \ldots, b_n)$.
 We will first show that we may choose $b$ so that there is some finite $A \subset \A$ such that for all $b'$, $t(b'/Aa)=t(b/Aa)$ implies $t(b'/\A a)=t(b/\A a)$.
 
We note first that for this it suffices to find some $b=(b_1, \ldots, b_m)$ so that $a=(F_{E_1}(b_1), \ldots, F_{E_m}(b_n))$ and a finite set $A$ such that $t(b/\A)=t(b'/\A)$ whenever $t(b/A)=t(b'/A)$ and $(F_{E_1}(b_1'), \ldots, F_{E_m}(b_n'))=a$.
Indeed, suppose we have found such a tuple $b$ and such a set $A$.
Let $b'$ be such that $t(b'/Aa)=t(b/Aa)$.
Then, $a=(F_{E_1}(b_1'), \ldots, F_{E_m}(b_n'))$, and thus  $t(b/\A)=t(b'/\A)$.
We claim that moreover, $t(b' /\A a)=t(b / \A a)$.
If not, then there is some finite set $A' \subset \A$ such that $t(b'a/A') \neq t(ba/A')$.
Since $t(b'/A')=t(b/A')$, there is some $f \in \textrm{Aut}(\M/A')$ such that $f(b)=b'$.
But $f$ extends to an automorphism $f' \in \textrm{Aut}(\M^{eq}/A)$, and 
$$f'(a)=(F_{E_1}(f(b_1)), \ldots, F_{E_m}(f(b_m))=(F_{E_1}(b_1'), \ldots, F_{E_m}(b_n')=a,$$
where for each $i$, $f(b_i)$ denotes the relevant projection of the tuple $f(b)$.
Thus, $t(b'a/A')=t(ba/A')$, a contradiction.
 
To simplify notation, denote now by $F$ the function from $\M$ to $\M^{eq}$ that is given by $(F_{E_1}, \ldots, F_{E_m})$.
Let  $A_0 =\emptyset$ and let $b_0$ be such that $F(b_0)=a$.
If $b_0$ and $A_0$ are not as wanted, then there is some finite $A_1 \subset \A$ and some $b_1$ so that $F(b_1)=a$, $t(b_0/A_0)=t(b_1/A_0)$ and $t(b_1/A_1) \neq t(b_0/A_1)$.
Now we check if $A_1$ and $b_1$ are as wanted.
By AIV, we cannot continue this process infinitely, so at some step we have found $b=b_n$ and $A=A_n$ as wanted.  
  
Let now $\B=\A[b]=\A b \cup \bigcup_{i<\o} b_i \le \M$.
We claim that $\B^{eq}$ is $s$-primary over $\A^{eq}a$.
For this, we need to enumerate the elements of $\B^{eq}$ so that we may write $\B^{eq}=\A^{eq} a \cup \bigcup_{i<\o} c_i$.
Let $b=(b_0, \ldots, b_k)$, where each $b_i$ is a singleton.
For $0 \le i \le k$, we denote $c_i=b_i$.
By the above argument, the required isolation property is satisfied.
After this, we list the elements so that whenever $i<j$,  we have $b_i=c_{i'}$ and $b_j=c_{j'}$ for some $i'<j'$. 
Moreover, we take care that for each singleton $c \in \B^{eq}\setminus (\B \cup \A^{eq})$, the elements of some tuple $d \in \B$ such that $c=F_E(d)$ for some $E \in \mathcal{E}$ are listed before $c$ (i.e. if $d=(d_0, \ldots, d_k)$, then, $d_0=c_{i_0}, \ldots, d_k=c_{i_k}$ and $c=c_j$ for some $i_0< \ldots <i_k<j$).
Then, the required isolation properties are satisfied and we see that $\B^{eq}$ is indeed as wanted.
\end{proof}
  
\begin{lemma}
Suppose that $\K$ is a FUR-class.
Then, axiom AIV holds for $\K^{eq}$.
\end{lemma}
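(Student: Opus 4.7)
The strategy is to reduce AIV for $\K^{eq}$ to AIV for $\K$. Given $a \in \M^{eq}$, I would write $a = (F_{E_1}(b_1), \ldots, F_{E_m}(b_m))$ for some $b = (b_1, \ldots, b_m) \in \M$ and $E_1, \ldots, E_m \in \mathcal{E}$, and let $n < \omega$ be the bound from AIV for $\K$ applied to $b$, so that player II wins $GI(b, B, \B)$ in $n$ moves for every countable $\B \in \K$ and finite $B \subset \B$. I claim player II wins $GI(a, A, \A^{eq})$ in $n + 1$ moves for every countable $\A^{eq}$ and finite $A \subset \A^{eq}$.

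The key idea is that player II can force the witnessing $\M^{eq}$-automorphisms to fix the relevant $\M$-representatives pointwise, by including those representatives explicitly as real elements (via $F_{=}$) in each of the sets $A_k$ she plays. Concretely, after player I's move $(a_{k+1}, A'_{k+1})$, player II responds with $A_{k+1} \supseteq A'_{k+1}$ containing $F_{=}(c)$ for each $c \in \A$ that is a representative of some imaginary appearing in $A'_{k+1}$, together with any further additions dictated by the shadow strategy described below. This maintains, from step $1$ onwards, the invariant that every imaginary in $A_k$ has its $\A$-representative explicitly present in $A_k$ as a real element.

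With this invariant in force at step $k \ge 1$, any $f_k \in \textrm{Aut}(\M^{eq}/A_k)$ with $f_k(a_k) = a_{k+1}$ restricts to $\tilde f_k \in \textrm{Aut}(\M)$ fixing $B_k := A_k \cap \M$ pointwise. Setting $b_{k+1} := \tilde f_k(b_k)$ yields a representative of $a_{k+1}$, and the resulting data form a legal play of the shadow game $GI(b_1, B_1, \A)$ in $\M$, where player I's shadow move at step $k$ is $(b_{k+1}, B'_{k+1})$ with $B'_{k+1}$ consisting of the elements of $A'_{k+1} \cap \M$ together with all $\A$-representatives of imaginaries in $A'_{k+1}$. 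The condition $t(b_{k+1}/B_k) = t(b_k/B_k)$ in $\M$ is immediate from $\tilde f_k$ fixing $B_k$, while $t(b_{k+1}/B'_{k+1}) \ne t(b_k/B'_{k+1})$ in $\M$ follows from $t(a_{k+1}/A'_{k+1}) \ne t(a_k/A'_{k+1})$: an $\M$-automorphism fixing $B'_{k+1}$ pointwise and sending $b_k$ to $b_{k+1}$ would lift to an $\M^{eq}$-automorphism fixing $A'_{k+1}$ and sending $a_k$ to $a_{k+1}$, a contradiction.

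By AIV for $\K$ applied to $b_1$, whose bound equals $n$ since $b_1$ lies in the $\textrm{Aut}(\M)$-orbit of $b$, player I can make at most $n$ further moves in the shadow game from state $(b_1, B_1)$. Combined with the initial $\M^{eq}$-game move at step $1$, this yields the desired bound $n + 1$. The main obstacle is the treatment of step $1$ itself, where the invariant is not yet in force and hence $\tilde f_0$ need not fix the representatives of imaginaries in $A_0$; accordingly the shadow game effectively begins at step $1$ with the updated state $(b_1, B_1)$, once player II has installed the invariant.
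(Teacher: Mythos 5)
Your proposal is correct and takes essentially the same route as the paper's proof: reduce the game on an imaginary $a=F(b)$ to the game on a representative $b\in\M$, transferring type equalities by restricting $\M^{eq}$-automorphisms to $\M$ and transferring type inequalities ($t(a_{k+1}/A'_{k+1})\neq t(a_k/A'_{k+1})$) by lifting $\M$-automorphisms that fix chosen representatives; the paper is merely terser about the bookkeeping and states the bound $n$ rather than your $n+1$, a difference immaterial for AIV. One small correction: in player II's response you should add only the finitely many \emph{chosen} representatives of the imaginaries occurring in $A'_{k+1}$ (those used to form $B'_{k+1}$), not ``$F_{=}(c)$ for each $c\in\A$ that is a representative of some imaginary appearing in $A'_{k+1}$'', since an imaginary can have infinitely many representatives in $\A$ and the sets played must be finite.
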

  
\begin{proof}
Let $a \in \M^{eq}$.
Again, there is some tuple $b \in \M$ so that $a=F(b)$ for some definable function $F$. 
Then, there is a number $n<\o$ so that for any countable $\A \in \K$ and finite $A' \subset \A$, player II wins $GI(b, A', \A)$ in $n$ moves.
Let now $\A^{eq} \in \K^{eq}$ be countable and $A \subset \A^{eq}$ finite.
We claim that player  II will win $GI(a, A, \A^{eq})$ in $n$ moves.
Let $A' \subset \A$ be such that every element $x \in A$ can be written as $x=F(y)$ for some $y \in A'$ where $F$ is a definable function.
Now, player II wins $GI(b, A', \A)$ in $n$ moves.
If there are some tuples $a', a'' \in \M^{eq}$ and some finite sets $C \subset B \subset \A^{eq}$ such that $t(a'/C)=t(a''/C)$ but $t(a'/B) \neq t(a''/B)$, then there are tuples $b', b'' \in \M$ and  a definable function $F$ so that $a' =F(b')$, $a'' =F(b'')$, and some $B' \subset \A$ and a definable function $H$ so that $B \subseteq H(B')$ and $t(b'/B') \neq t(b''/B')$.
Thus, the claim follows.  
\end{proof}  

\begin{lemma}
Suppose $\K$ is a FUR-class.
Then, axiom AV holds for $\K^{eq}$.
\end{lemma}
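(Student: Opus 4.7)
The strategy is to reduce AV for $\K^{eq}$ to AV for $\K$ by working with the preimages of imaginary elements in $\M$. Given countable models $\A^{eq}\subseteq\B^{eq}$ in $\K^{eq}$ and $a\in\M^{eq}$ with $\B^{eq}\da^{ns}_{\A^{eq}}a$, I would write $a=F(b)$ for some tuple $b\in\M$ and definable function $F$ (a product of $F_{E_i}$'s), and consider the underlying countable models $\A,\B\in\K$ obtained as the $P_=$-interpretations of $\A^{eq},\B^{eq}$.

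The proof would proceed in three steps. First, I would show $\B\da^{ns}_\A b$ in $\M$: for each $c\in\B\subseteq\B^{eq}$, take the finite set $A_c\subset\A^{eq}$ from the hypothesis $c\da^{ns}_{\A^{eq}}a$, and choose a finite $A_c^*\subset\A$ containing preimages of every element of $A_c$, so that each $y\in A_c$ has the form $F_E(z)$ for some tuple $z$ from $A_c^*$. Since any automorphism of $\M$ fixing $A_c^*$ extends to an automorphism of $\M^{eq}$ fixing $A_c$, I would transfer non-splitting of $t^{eq}(c/\A^{eq}\cup\{a\})$ over $A_c$ into non-splitting of $t(c/\A\cup\{b\})$ over a suitable finite subset of $\A$, invoking AI in $\K$ to convert $E$-class witnesses from $\M^{eq}$ into true fixed-point witnesses in $\M$. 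Second, AV in $\K$ applied to $\A\subseteq\B$ and $b\in\M$ would give $b\da^{ns}_\A\B$, providing a finite $A_0\subset\A$ such that $t(b/\B)$ does not split over $A_0$ in $\M$.

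Third, I would translate back to $\M^{eq}$ to obtain $a\da^{ns}_{\A^{eq}}\B^{eq}$ (using the finite set $A_0$ itself). For $d,e\in\B^{eq}$ with $t^{eq}(d/A_0)=t^{eq}(e/A_0)$, write $d=F_E(d^*)$ and $e=F_E(e^*)$ for tuples $d^*,e^*\in\B$; the equivalence relations must agree since the predicates $P_E$ are automorphism-invariant. The type equality supplies some $g\in\textrm{Aut}(\M^{eq}/A_0)$ with $g(d)=e$, whence $g\raj\M(d^*)\,E\,e^*$. By AI in $\K$ applied inside the countable model $\B$, I would find $v\in\B$ with $t(v/A_0\cup e^*)=t(g\raj\M(d^*)/A_0\cup e^*)$; since $E$ is $\emptyset$-invariant this forces $v\,E\,e^*$, so $F_E(v)=e$. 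Non-splitting of $t(b/\B)$ over $A_0$ then provides $h\in\textrm{Aut}(\M/A_0\cup\{b\})$ with $h(d^*)=v$; extending $h$ to $\M^{eq}$ yields an automorphism fixing $A_0\cup\{a\}$ pointwise and sending $d$ to $F_E(v)=e$, giving $t^{eq}(ad/A_0)=t^{eq}(ae/A_0)$, as required.

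The main obstacle is the first step. Although automorphisms of $\M$ fixing $A_c^*$ extend to automorphisms of $\M^{eq}$ fixing $A_c$, the converse fails: an automorphism of $\M^{eq}$ fixing $A_c$ pointwise need only fix the preimages in $A_c^*$ up to $E$-class, so non-splitting in $\M^{eq}$ over $A_c$ does not automatically produce non-splitting in $\M$ over $A_c^*$. Overcoming this mismatch requires applying AI in $\K$ in the same spirit as the third step above, finding witnesses in $\A$ that realize both the appropriate Galois type and the appropriate $E$-class. Carrying out this bookkeeping uniformly for all $c\in\B$ is the technical heart of the argument.
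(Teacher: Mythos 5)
Your plan hinges on Step 1: from $\B^{eq}\da^{ns}_{\A^{eq}}a$ you want to extract $\B\da^{ns}_{\A}b$ in $\M$ for a preimage $b$ with $a=F(b)$, and you yourself flag that you do not know how to carry this out. This is a genuine gap, not just bookkeeping. The hypothesis only constrains how elements of $\B^{eq}$ relate to the imaginary $a$; an automorphism of $\M^{eq}$ fixing $a$ (or a finite $A_c\subset\A^{eq}$) is free to move any particular representative $b$ within its class, so non-splitting of $t(c/\A^{eq}\cup\{a\})$ gives no control over pairs of tuples involving $b$ itself. Worse, the statement is sensitive to the choice of preimage: nothing in the hypothesis prevents a given preimage of $a$ from being entangled with $\B$, so before any transfer one would first have to replace $b$ by a representative suitably free from $\B$ over $a$ --- and producing such a representative already requires the extension/uniqueness machinery, i.e.\ it is not a reduction of AV for $\K^{eq}$ to AV for $\K$ but a use of the calculus one is trying to build. (Your Step 3, translating non-splitting of $t(b/\B)$ back into non-splitting of $t(a/\B^{eq})$ via AI inside $\B$, is sound and is essentially the same observation the paper uses; but it is moot without Step 1.) Note also that transferring the hypothesis uniformly over all $c\in\B$ is more than is needed: failure of the conclusion is witnessed by a single $b\in\B^{eq}$.

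The paper's proof avoids the transfer entirely by arguing by contradiction and moving the witness rather than a preimage of $a$. Assuming $a\nda^{ns}_{\A^{eq}}b$ for some $b\in\B^{eq}$, it first reduces to $b\in\B$ (your Step 3 idea, applied on the other side). It then fixes a preimage $a'\in\M$ with $a=F(a')$ and uses Lemma \ref{vaplaajol} (which needs only AI) to find $b'$ with $t(b'/\A^{eq})=t(b/\A^{eq})$ and $b'\da^{ns}_{\A^{eq}}aa'$. Since the hypothesis gives $b\da^{ns}_{\A^{eq}}a$, uniqueness of free extensions over models (Lemma \ref{vaplaajykskas}, again only AI) yields $t(b'/\A^{eq}a)=t(b/\A^{eq}a)$, so it suffices to show $a\da^{ns}_{\A^{eq}}b'$. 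But $b'\da^{ns}_{\A^{eq}}aa'$ gives $b'\da^{ns}_{\A}a'$ in $\M$, and symmetry over models in $\M$ (Lemma \ref{symlemma}, which is where AV for $\K$ enters) gives $a'\da^{ns}_{\A}b'$, hence $a\da^{ns}_{\A^{eq}}b'$ and so $a\da^{ns}_{\A^{eq}}b$, a contradiction. In short, the paper never needs the problematic implication ``$\B^{eq}\da^{ns}_{\A^{eq}}a$ implies $\B\da^{ns}_{\A}(\text{preimage of }a)$''; replacing the single witness $b$ by a well-chosen conjugate $b'$ independent from $aa'$ is exactly the device that dissolves the obstacle you identified, and your proposal would need something of this kind to be completed.
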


\begin{proof}
Suppose $\A^{eq}, \B^{eq} \in \K^{eq}$ are countable models, $\A^{eq} \subset \B^{eq}$, $a \in \M^{eq}$ and $\B^{eq} \da_{\A^{eq}}^{ns} a$.
We need to prove $a  \da_{\A^{eq}}^{ns} \B^{eq}$.
Suppose this does not hold.
Then, there is some $b \in \B^{eq}$ such that
$$a  \nda_{\A^{eq}}^{ns} b.$$

We note that we may assume $b \in \B$.
Indeed, there is some $b_0 \in \B$ and some definable function $F$ such that $b=F(b_0)$.
Then, $a  \nda_{\A^{eq}}^{ns} b_0$.
This follows from the fact that if  $A \subset \A^{eq}$ is finite and $t(b_0/A)=t(b_0'/A)$, then $t(b/A)=t(F(b_0)/A)=t(F(b_0')/A)$, and $t(ab_0/A)=t(ab_0'/A)$ implies $t(ab/A)=t(aF(b_0')/A)$.
 
Let now $a' \in \M$ be such that $a =F(a')$ for some definable function $F$.
We wish to apply Lemma \ref{vaplaajol} to find some $b' \in \M$ such that $t(b'/\A^{eq})=t(b/\A^{eq})$ and $b' \da_{\A^{eq}}^{ns} aa'$.
First, we note that we only needed axiom AI to prove Lemma \ref{vaplaajol}, so we can indeed apply it.
Secondly, Lemma \ref{vaplaajol} requires that there is some finite $A \subset \A^{eq}$ such that $t(a/\A^{eq})$ does not split over $A$.
But by Lemma \ref{ansA}, there is some finite $A \subset \A \subset \A^{eq}$ such that $t(a/\A)$ does not split over $A$.
By similar reasoning that was used above to prove that it suffices that $b \in \B$, one sees that then actually $t(a/\A^{eq})$ does not split over $A$.
Thus, we may apply Lemma \ref{vaplaajol} to find a suitable $b' \in \M$.
 
Since $\B^{eq} \da_{\A^{eq}}^{ns} a$, we have $b \da_{\A^{eq}}^{ns} a$, and thus 
$t(b'/\A^{eq}a)=t(b/\A^{eq}a)$ (note that also Lemma \ref{vaplaajykskas} requires only axiom AI).
Thus, to obtain a contradiction, it suffices to show that $a \da_{\A^{eq}}^{ns} b'$.
But $b' \da_{\A^{eq}}^{ns} aa'$ implies that $b' \da_\A^{ns} a'$ (in $\M$), and thus by Lemma \ref{symlemma}, 
$a' \da_\A^{ns} b'$.
It follows that $a' \da_{\A^{eq}}^{ns} b'$.  
\end{proof}  

We have now proved the following.

\begin{theorem}
If $(\K, \preccurlyeq)$ is a FUR-class, then also $(\K^{eq}, \preccurlyeq^{eq})$ is a FUR-class.
\end{theorem}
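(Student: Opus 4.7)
The statement is essentially a packaging of the three lemmas proved immediately above together with the remarks surrounding them, so the plan is to collate everything into a single clean verification. First I would dispatch the AEC data (AP, JEP, arbitrarily large models, $LS(\K^{eq})=\omega$, no finite models) by invoking the already recorded facts that $\A^{eq}$ is unique up to isomorphism over $\A$, that every automorphism of $\A$ lifts to one of $\A^{eq}$, and that $P_=$ carries a full copy of $\A$; JEP and AP transfer by amalgamating or joining in $\K$ first and then applying $(-)^{eq}$, while the Löwenheim--Skolem bound is witnessed by the $(-)^{eq}$ of a countable $\preccurlyeq$-submodel of the underlying $\B$.

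Once the AEC data is in place, the axioms AII, AIV, and AV are literally the contents of the three lemmas preceding the theorem and need no further work, so it remains to verify AI, AIII, and AVI. Each is handled by the same translation trick that runs through the AV lemma: any $a\in\M^{eq}$ can be written as $a=F(b)$ with $b\in\M$ and $F$ built from the $F_E$ for $E\in\mathcal{E}$, and any finite $A\subseteq\A^{eq}$ has the form $H(A')$ for some finite $A'\subseteq\A$ and a similar definable function $H$. For AI I would apply AI in $\K$ to $b$ and $A'$ to obtain a realization in $\A$ of the appropriate $\K$-weak type, then push it through $F$ to land in $\A^{eq}$. For AIII I would translate $\A^{eq}\da^{ns}_{\B^{eq}}\C^{eq}$ into the corresponding non-splitting statement in $\K$, apply AIII in $\K$ to the underlying $\A,\B,\C$, and verify that $(-)^{eq}$ of the unique $s$-prime model produced is $s$-prime in $\K^{eq}$ and still satisfies the ``furthermore'' clause. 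For AVI I would apply AVI in $\K$ to $\A,\B,\D$ to obtain $\C$ with $t(\C/\A)=t(\B/\A)$ and $\C\da^{ns}_{\A}\D$, and then show that $\C^{eq}$ meets the $\K^{eq}$ requirements.

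The main obstacle is the bookkeeping step of checking that $(-)^{eq}$ really commutes with weak types and non-splitting: namely that $t(F(b)/H(A'))$ in $\K^{eq}$ is controlled by $t(b/A')$ in $\K$, and likewise $\C^{eq}\da^{ns}_{\A^{eq}}\D^{eq}$ iff the underlying $\C\da^{ns}_{\A}\D$. The essential point is that $\mathcal{E}$ is countable and each $E\in\mathcal{E}$ is $\emptyset$-invariant with Galois type controlled by a countable set $G_E$, so knowing the Galois type of $b$ over $A'$ in $\M$ pins down the Galois type of $F(b)$ over $H(A')$ in $\M^{eq}$. Once this translation is written down precisely, following the template already used in the proofs of AII, AIV and AV, the three remaining axioms drop out and the theorem is the conjunction of all the verifications.
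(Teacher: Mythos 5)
Your proposal is correct and follows essentially the same route as the paper: the paper likewise disposes of the AEC data and the axioms AI, AIII and AVI by noting they transfer "easily" via the canonical correspondence between $\A$ and $\A^{eq}$ (elements of $\M^{eq}$ being images $F(b)$ of real tuples, finite parameter sets being coded by finite real sets), and then devotes its actual work to the three lemmas establishing AII, AIV and AV, which you correctly cite. Your sketch of the translation for AI, AIII and AVI is exactly the bookkeeping the paper leaves implicit (and reuses in its proof of the AV lemma), so no essential difference or gap remains.
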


We wish to prove the existence of canonical bases.
With this in aim, we will now discuss global types. 
Let $\M' \in \K$ be a $\vert \M' \vert$ -model homogeneous and universal structure such that $\M \preccurlyeq \M'$ and $\vert \M' \vert > \vert \M \vert$.
We call  $\M'$ the \emph{supermonster}. 
Then, every $f \in \textrm{Aut}(\M)$ extends to some $f' \in \textrm{Aut}(\M')$.
In the following, we will abuse notation and write just $f$ for both maps. 

By a \emph{global type} $p$, we mean a maximal collection $\{p_A \, | \, A \subset \M \textrm{ finite }\}$ such that $p_A$ is a Galois type over $A$, and whenever $A \subseteq B$ and $b \in \M$ realizes $p_B$, then $b$ realizes also $p_A$.
We denote the collection of global types by $S(\M)$.
Moreover, we require that global types are consistent, i.e. that for each $p \in S(\M)$, there is some $b \in \M'$ such that $b$ realizes $p_A$ for every finite set $A \subset \M$ (note that the \emph{same} element $b \in \M'$ is required to realize $p_A$ for every $A$).
  
Let $f \in \textrm{Aut}(\M^{eq})$, $p \in S(\M)$.
We say that $f(p)=p$ if for all finite $A,B \subset \M$ such that $f(B)=A$ and all $b$ realizing $p_B$, it holds that $t(b/A)=p_A$.

\begin{definition}
Let $p \in S(\M)$.
We say that $\alpha \in \M^{eq}$ is a \emph{canonical base} for $p$ if it holds for every $f \in \textrm{Aut}(\M^{eq})$ that $f(p)=p$ if and only if $f(\alpha)=\alpha$.
\end{definition}

We will now prove that we may choose the collection $\mathcal{E}$ of equivalence relations in such a way that each global type will have a canonical base in $\M^{eq}$.
For this, we need to make sure that certain equivalence relations are included in $\mathcal{E}$.

Let $p \in S(\M)$ be a global type that does not split over $a \in \M$.
Suppose $b \in \M'$ realizes $p$.
Consider an arbitrary $c \in \M$ and let $q=t(b,c/\emptyset)$.
Since $t(b/\M)$ does not split over $a$, there are types $q_i$, $i<\o$, over $\emptyset$, so that for all $d \in \M$ the following holds: $bd$ realizes $q$ if and only if $ad$ realizes $q_i$ for some $i<\o$.
Indeed, there are only countably many types over the empty set, and from the non-splitting it follows that if $t(d_1 a/ \emptyset)=t(d_2 a/\emptyset)$, then $t(bd_1a/\emptyset)=t(bd_2a/\emptyset)$.
Thus, we may choose the types  $q_i$ as wanted.

For $c \in \M$, denote $q_c=t(b,c/\emptyset)$, and let $q_i^c$, $i<\o$ be such that for all $d \in \M$, $bd$ realizes $q_c$ if and only if $ad$ realizes $q_i^c$ for some $i<\o$.
Define the equivalence relation $E$ as follows: $(a_0, a_1) \in E$ if  the following holds for all $c,d \in \M$:  $a_0 d$ realizes $\bigvee_{i<\o}q_i^c$ if and only if $a_1 d$ realizes $\bigvee_{i<\o}q_i^c$.

We will later show that $a/E$ is a canonical base for $p$.
But first, we have to make sure that the equivalence relation $E$ can be included in $\mathcal{E}$ for every $p \in S(\M)$.
Namely, it needs to be verified that we can do with countably many such equivalence relations $E$.

Indeed, for each global type $p$ realized by an element $b \in \M'$, there is some tuple $a \in \M$ such that $t(b/\M)$ does not split over $a$.
Now the tuple $ba$ determines the equivalence relation $E$ described above, and we claim that $E$ depends only on $t(ba/\emptyset)$.
Indeed, let $b' \in \M'$, $a' \in \M$ be such that $b' \da_{a'} \M$ and $t(b'a'/ \emptyset)=t(ba/\emptyset)$.
Then, there is some $f \in \textrm{Aut}(\M/\emptyset)$ such that  $f(a)=a'$, and some $F \in \textrm{Aut}(\M'/\emptyset)$ extending $f$ (and in particular, $F(m) \in \M$ for every $m \in \M$).
Let $b''=F(b)$.
Then, since $b \da_a^{ns} \M$, we have $b'' \da_{a'}^{ns} \M$.
On the other hand, we have 
$$t(b'a'/\emptyset)=t(ba/\emptyset)=t(b''a'/\emptyset),$$
so $t(b'/a')=t(b''/a')$ and by Lemma \ref{statilemma}, $t(b'/\M)=t(b/\M)$.
Thus, there is an automorphism $G$ of $\M'$ such that $G(ab)=a'b'$ and $G(\M)=\M$ (we first take $ab \mapsto a'b''$ by $F$ and then fix $\M$ pointwise and take $b'' \mapsto b'$).
Then, the global type $p'=t(G(b)/\M)$ can be determined from $f(a)=a'$ in the same way as $p$ was determined from $a$, and the definition of $E$ stays the same.  

From now on, we will assume that all these equivalence relations $E$ are indeed included in $\mathcal{E}$.
This allows us to construct $\M^{eq}$ so that every global type has a canonical base.

\begin{lemma}\label{cbexists}
Suppose $p \in S(\M)$. 
Then, there is some  $\alpha \in \M^{eq}$ so that $\alpha$ is a canonical base for $p$.
\end{lemma}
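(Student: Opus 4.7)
The plan is to realize $p$ by some $b\in\M'$, pick a finite $a\in\M$ over which $t(b/\M)$ does not split (using Theorem \ref{main}(i) and Lemma \ref{kolme}), and show that the class $\alpha:=a/E$ of the equivalence relation $E$ constructed immediately before the lemma serves as a canonical base. The discussion preceding the statement guarantees that this $E$ lies in $\mathcal{E}$, so $\alpha\in\M^{eq}$ is available.

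The bulk of the argument is an explicit reformulation of membership in $E$. Given $f\in\textrm{Aut}(\M^{eq})$, I would extend $f\raj\M$ to $\tilde f\in\textrm{Aut}(\M')$ and set $b_1:=\tilde f(b)$. Each $\bigvee_{i<\o}q_i^c$ is a countable union of Galois types over $\emptyset$ and hence $\tilde f$-invariant as a set of tuples in $\M'$; combining this invariance with the defining equivalence $ad\models\bigvee_i q_i^c\iff bd\models q_c$ yields
\[ f(a)\,d\models\textstyle\bigvee_i q_i^c \iff a\,\tilde f^{-1}(d)\models\textstyle\bigvee_i q_i^c \iff b\,\tilde f^{-1}(d)\models q_c \iff b_1\,d\models q_c. \]
Consequently $(a,f(a))\in E$ is equivalent to ``$bd\models q_c\iff b_1 d\models q_c$ for all $c,d\in\M$''; specializing to $c=d$ and using that $bd\models q_d$ always, this reduces to $t(b_1/d)=t(b/d)$ for every $d\in\M$, equivalently $t(b_1/\M)=t(b/\M)=p$ as global types (weak types determine global types on finite sets).

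It remains to match this last condition with $f(p)=p$. Since $\tilde f$ preserves non-splitting and fixes $\M$ setwise, $b_1\da^{ns}_{f(a)}\M$. For the backward direction, $t(b_1/\M)=p$ means $b_1$ is itself a realization of $p$, so for any finite $B\subset\M$ and any $b'$ realizing $p_B$, both $b'$ and $b_1$ realize $p_B$; using stationarity (Lemma \ref{lascvaplaajykskas}) together with Lemma \ref{statilemma} applied on the common non-splitting base, I would conclude $t(b'/f(B))=t(b_1/f(B))=p_{f(B)}$, which is exactly $f(p)=p$. The forward direction is the main obstacle: assuming $f(p)=p$, I need to reconstruct $t(b_1/\M)=p$ from the local conditions, which I expect to do by first verifying that $b_1$ realizes $p_{f^{-1}(A)}$ for each finite $A\subset\M$ (using $\tilde f$-conjugation and the non-splitting of $t(b_1/\M)$ over $f(a)$ via Lemma \ref{statilemma}), and then invoking the defining condition of $f(p)=p$ with $B=f^{-1}(A)$ to conclude $t(b_1/A)=p_A$ on every finite $A\subset\M$. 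The bookkeeping around Galois types over sets moved by $\tilde f$, rather than any conceptual novelty, is what makes this step delicate.
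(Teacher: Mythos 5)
Your choice of $\alpha=a/E$ and your reformulation of membership in $E$ are correct and are essentially the paper's own computation: the chain $f(a)d\models\bigvee_i q_i^c\iff a\tilde f^{-1}(d)\models\bigvee_i q_i^c\iff b\tilde f^{-1}(d)\models q_c\iff \tilde f(b)d\models q_c$ is the paper's argument conjugated by the extension $\tilde f\in\textrm{Aut}(\M')$, and your conclusion that $f(\alpha)=\alpha$ is equivalent to $t(\tilde f(b)/\M)=t(b/\M)=p$ is right. The genuine gap is in the final step, where you try to identify this condition with $f(p)=p$. Your backward direction argues that an \emph{arbitrary} realization $b'$ of $p_B$ must satisfy $t(b'/f(B))=p_{f(B)}$, invoking stationarity (Lemma \ref{lascvaplaajykskas}) and Lemma \ref{statilemma}; this fails, because those lemmas require the elements being compared to be free (or non-splitting over a base inside a model) from the larger parameter set, and an arbitrary realization of the Galois type $p_B$ over the finite set $B$ carries no independence at all from $f(B)$ over $B$ -- it may even lie in $\textrm{bcl}(B\cup f(B))$. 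Indeed the condition you are trying to verify, namely that every realization of $p_B$ realizes $p_{f(B)}$, is false in general and is not how $f(p)=p$ is actually used in the paper: there it enters precisely in the form $t(bc/\emptyset)=t(bf(c)/\emptyset)$ for every finite $c\in\M$, i.e. $f(p_B)=p_{f(B)}$ for all finite $B$. Your forward direction is likewise only a sketch, and its first step ($b_1$ realizes $p_{f^{-1}(A)}$ for each finite $A$) is essentially the statement to be proved, so as written the equivalence with $f(p)=p$ is not established in either direction.

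The repair is a one-line conjugation, so you were very close: for every finite $d\in\M$ one has $t(\tilde f(b)d/\emptyset)=t(bf^{-1}(d)/\emptyset)$, hence $t(\tilde f(b)/\M)=p$ holds if and only if $t(bf^{-1}(d)/\emptyset)=t(bd/\emptyset)$ for all finite $d$, which (substituting $d=f(c)$) is exactly the condition $t(bc/\emptyset)=t(bf(c)/\emptyset)$ for all finite $c\in\M$, i.e. $f(p)=p$ in the sense the paper's proof uses. Replacing the detour through non-splitting, Lemma \ref{statilemma} and stationarity by this observation closes the argument and makes your proof essentially identical to the paper's proof of Lemma \ref{cbexists}.
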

 
\begin{proof}
Let $p \in S(\M)$ be a global type that does not split over $a \in \M$, and suppose
$b \in \M'$ realizes $p$.
Let $E$ be for $p$ and $a$ as described above.
 
We claim that $a/E$ is a canonical base for $p$.
Suppose first that $f \in \textrm{Aut}(\M)$ is such that $f(p)=p$.
We will show that $(a,f(a)) \in E$, which implies that $f(a/E)=f(a)/E=a/E$.
 As $f$ is an automorphism, we have $t(a/\emptyset)=t(f(a)/\emptyset)$.
Let now $c \in \M$ be arbitrary. 
Since $f(p)=p$, we have $$q_c=t(bc/\emptyset)=t(bf(c)/\emptyset).$$ 
for every $c \in \M$
Thus, we may choose $q_i^c=q_i^{f(c)}$ for all $i<\o$, and moreover we have
\begin{eqnarray*}
f(a)f(d) \textrm{ realizes } \bigvee_{i<\o}q_i^c \iff ad \textrm{ realizes } \bigvee_{i<\o}q_i^c 
\iff af(d) \textrm{ realizes } \bigvee_{i<\o}q_i^c, 
\end{eqnarray*}
where the first equivalence follows from $f$ being an automorphism, and the second one from the fact that $t(bd/\emptyset)=t(bf(d)/\emptyset)$. 
Since this holds for every $d$ and automorphisms are surjective, we may, for arbitrary $d' \in \M$, choose $d \in \M$ so that $d'=f(d)$ to obtain
\begin{eqnarray*}
f(a)d' \textrm{ realizes } \bigvee_{i<\o}q_i^{c} \iff  ad' \textrm{ realizes } \bigvee_{i<\o}q_i^{c},
\end{eqnarray*}
so $(a, f(a)) \in E$.

Suppose now $f(a/E)=a/E$.
Then, $(a,f(a)) \in E$.
We will show that $t(bc/\emptyset)=t(b f(c)/ \emptyset)$ for all $c \in \M$.
Then, clearly $f(p)=p$.
Let $c$ be arbitrary.
Denote $q^c=t(bc/\emptyset)$.
Then, $ac$, and thus also $f(a)f(c)$, realizes $\bigvee_{i<\o}q_i^c$.
But now, since $(a,f(a)) \in E$, we have that also $af(c)$ realizes $\bigvee_{i<\o}q_i^c$.
From this it follows that $bf(c)$ realizes $q^c$, i.e.
$$t(bc/\emptyset)=q_c=t(b f(c)/ \emptyset).$$

So, we have shown that $a/E$ is a canonical base for $p$, as wanted.
\end{proof} 
  
\begin{definition}
Let $a \in \M$ and let $A \subset \M$.
By Theorem \ref{main}, (iv), there is some $b \in \M'$ such that $Lt(a/A)=Lt(b/A)$ and $b \da_A \M$.
Let $p=t(b/\M)$.
By a \emph{canonical base} for $a$ over $A$, we mean a canonical base of $p$.
We write $\alpha=Cb(a/A)$ to denote that $\alpha$ is a canonical base of $a$ over $A$.
\end{definition}

Next, we prove some important properties of canonical bases.

\begin{lemma}\label{cbbcl}
Let $a \in \M$ and let $A \subset \M$ be a finite set.
Then, $Cb(a/A) \in \textrm{bcl}(A)$.
\end{lemma}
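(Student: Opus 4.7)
The plan is to bound the orbit of $\alpha$ under $\textrm{Aut}(\M^{eq}/A)$: since $A$ is finite, weak and Galois types over $A$ coincide, so realizations of $t(\alpha/A)$ are precisely the images $f(\alpha)$ for $f\in\textrm{Aut}(\M^{eq}/A)$. I will show this orbit is countable, which is below the saturation cardinal $\delta$ and hence witnesses $\alpha\in\textrm{bcl}(A)$.

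First I unpack the setup. By definition, $\alpha=Cb(p)$ where $p\in S(\M)$ is the global type of some $b\in\M'$ with $Lt(b/A)=Lt(a/A)$ and $b\da_A\M$. Any $f\in\textrm{Aut}(\M^{eq}/A)$ is determined by its restriction to $\M$ (since imaginary elements are values of the $F_E$), and this restriction extends to some $\tilde f\in\textrm{Aut}(\M'/A)$. Set $f_*(p):=t^g(\tilde f(b)/\M)\in S(\M)$; this is a global type because $\tilde f(\M)=\M$. The defining property of canonical bases gives: if $f_*(p)=g_*(p)$, then $(g^{-1}f)_*(p)=p$, so $(g^{-1}f)(\alpha)=\alpha$, i.e.\ $f(\alpha)=g(\alpha)$. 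Hence the map $f(\alpha)\mapsto f_*(p)$ is well-defined and injective, so it suffices to bound $\{f_*(p):f\in\textrm{Aut}(\M^{eq}/A)\}$.

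Now I bound this set. Write $b'=\tilde f(b)$. Since $\tilde f$ fixes $A$ pointwise and independence is automorphism-invariant, $b'\da_A\tilde f(\M)=\M$. By stationarity (Theorem \ref{main}(iii)), the two conditions $b'\da_A\M$ and $Lt(b'/A)$ together determine $Lt(b'/\M)$. Since $\M$ is a model, Lemma \ref{frown} identifies this Lascar type with the weak type $t(b'/\M)$, which by Lemma \ref{typegalois} determines $t^g(b'/\M)=f_*(p)$. Thus $f_*(p)$ is a function of $Lt(b'/A)$ alone, and the remark following Lemma \ref{frown} gives at most countably many Lascar types over the finite set $A$. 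Therefore $|\{f_*(p)\}|\le\aleph_0$, so the orbit of $\alpha$ is countable and $\alpha\in\textrm{bcl}(A)$.

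The main obstacle I expect is the bookkeeping across $\M$, $\M^{eq}$ and $\M'$: verifying that $f_*(p)$ is well-defined as a global type on $\M$ (not just on $\tilde f(\M)$), that the well-definedness argument for $f(\alpha)\mapsto f_*(p)$ really uses only the canonical-base axiom, and that the automorphism $\tilde f$ can be chosen to fix $A$ pointwise while permuting $\M$. Once these are in hand, the stationarity step reduces the cardinality count immediately to the countable-Lascar-types remark.
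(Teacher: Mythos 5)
Your argument is correct and is essentially the paper's own proof: both inject the $\textrm{Aut}(\M^{eq}/A)$-conjugates of $\alpha$ into the associated global types via the defining property of canonical bases, and then bound the number of such global types by uniqueness of independent extensions together with $\o$-stability. The only cosmetic difference is that you count directly, using stationarity over the finite set $A$ (Theorem \ref{main}(iii)) and the countability of Lascar types over $A$, whereas the paper argues by contradiction, restricting the global types to a countable model $\A \supseteq A$ and invoking Lemmas \ref{kolme}, \ref{vaplaajykskas} and \ref{notypes}; both reduce to the same $\o$-stability count (and both rely, to the same extent as the paper does, on applying the independence calculus to elements of the supermonster $\M'$ over subsets of $\M$).
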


\begin{proof}
Let $b \in \M'$ be such that $Lt(a/A)=Lt(b/A)$ and $b \da_A \M$,
and let $p=t(b/\M)$.
Denote $\alpha=Cb(a/A)$, and suppose $\alpha \notin \textrm{bcl}(A)$.
Then, $t(\alpha/A)$ has unboundedly many realizations.
Suppose $t(\beta/A)=t(\alpha/A)$.
Then, there is some $f \in \textrm{Aut}(\M/A)$ such that $f(\alpha)=\beta$, and there is some $f' \in \textrm{Aut}(\M'/A)$ such that $f \subset f'$.
Then, $\beta$ defines the global type $t(f'(b)/\M)$.
Similarly, each realization of $t(\alpha/A)$ defines a global type.
By the definition of a canonical base, the global types defined by these unboundedly many elements are pairwise distinct. 
Let $f \in \textrm{Aut}(\M/A)$ and let $\alpha'=f(\alpha)$. 
Then $f$ extends to an automorphism $g$ of $\M'$, and we have $g(b)=b'$ for some $b' \in \M' \setminus \M$.
Since $g(\M)=\M$, we have $b' \da_A \M$.
Let $\A \subset \M$ be a countable model such that $A \subset \A$.
Then, by (v) in Theorem \ref{main}, we have $b \da_\A \M$ and $b' \da_\A \M$.
Since $t(b/\M) \neq t(b'/\M)$, by Lemmas \ref{kolme} and \ref{vaplaajykskas}, we must have 
$t(b/\A)Ê\neq t(b'/\A)$.
This means that we have uncountably many distinct types over the countable model $\A$, a contradiction against
Lemma \ref{notypes}.
\end{proof}

\begin{remark}\label{cbremark}
Let $a \in \M$, and let $A$ and $B$ be sets such that $A \subsetneq B$, and let $\alpha \in \M^{eq}$
If $a \da_A B$, then $\alpha=Cb(a/A)$ if and only if $\alpha=Cb(a/B)$.
\end{remark}
 
\begin{lemma}
Let  $a \in \M$ and let $\alpha=Cb(a/A)$.
Then, $a \da_\alpha A$.
\end{lemma}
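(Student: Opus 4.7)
Proof plan:

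Let $b \in \M'$ realize the global non-splitting extension $p = t(b/\M)$ of $Lt(a/A)$ used in the definition of $\alpha = Cb(a/A)$; so $Lt(b/A) = Lt(a/A)$, $b \da_A \M$, and $\alpha \in \M^{eq}$ is a canonical base for $p$. The plan is first to establish $b \da_\alpha A$ using the canonical base property, then to transfer this independence to $a$ using $\alpha \in \textrm{bcl}(A)$ and Stationarity, and finally to conclude with a short $U$-rank calculation.

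To obtain $b \da_\alpha A$, I would show that $t(b/\M^{eq})$ does not split over $\alpha$. Given $c,d \in \M^{eq}$ with $t(c/\alpha) = t(d/\alpha)$, fix $f \in \textrm{Aut}(\M^{eq}/\alpha)$ with $f(c) = d$ and extend it to an automorphism $\tilde f$ of the imaginary closure of the supermonster, still fixing $\alpha$. Since $f(\alpha) = \alpha$, the canonical base property gives $f(p) = p$; by the same argument as in the proof of Lemma \ref{cbexists}, this forces $\tilde f(b)$ to realize $p$, equivalently $t(\tilde f(b)/\M) = t(b/\M)$. Because $\M^{eq} \subseteq \textrm{bcl}(\M)$, item (i) of the Corollary after Theorem \ref{main} together with Stationarity upgrade this to $Lt(\tilde f(b)/\M^{eq}) = Lt(b/\M^{eq})$, so we can choose an automorphism $h$ fixing $\M^{eq}$ pointwise with $h(\tilde f(b)) = b$. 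Then $h \circ \tilde f$ fixes $\alpha$ and $b$ and sends $c$ to $d$, witnessing $t(bc/\alpha) = t(bd/\alpha)$. Applying Lemma \ref{mallinyli} inside $\M^{eq}$ with any model containing $A \cup \{\alpha\}$ yields $b \da_\alpha A$.

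For the transfer step, Lemma \ref{cbbcl} gives $\alpha \in \textrm{bcl}(A)$, so Theorem \ref{main}(xi) supplies $a \da_A \alpha$ and $b \da_A \alpha$. Combined with $Lt(a/A) = Lt(b/A)$, Stationarity (Theorem \ref{main}(iii)) yields $Lt(a/A\alpha) = Lt(b/A\alpha)$, and in particular $Lt(a/\alpha) = Lt(b/\alpha)$. Using Theorem \ref{main}(viii) together with $b \da_\alpha A$, $b \da_A \alpha$, $a \da_A \alpha$, and the Lascar-type equalities, one obtains the chain
\begin{align*}
U(a/\alpha) = U(b/\alpha) = U(b/A\alpha) = U(b/A) = U(a/A) = U(a/A\alpha),
\end{align*}
so $a \da_\alpha A\alpha$ by (viii), which is the same as $a \da_\alpha A$.

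The main obstacle is Step 2: converting $f(p) = p$ into a single automorphism that simultaneously fixes $\alpha$ and $b$ while sending $c$ to $d$. The canonical base property alone only produces an automorphism fixing $\alpha$ and sending $c$ to $d$, but that automorphism typically moves $b$. The key trick is to exploit $\M^{eq} \subseteq \textrm{bcl}(\M)$ so that the Lascar-type agreement over $\M$ lifts to $\M^{eq}$, providing a second automorphism fixing $\M^{eq}$ pointwise that can be composed to repair the location of $b$.
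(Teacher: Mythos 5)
Your proposal is correct and follows essentially the same route as the paper: use the canonical-base property ($f(\alpha)=\alpha\Rightarrow f(p)=p$) to show the global type of $b$ does not split over $\alpha$, hence $b\da_\alpha A$, then transfer to $a$ via $\alpha\in\textrm{bcl}(A)$, independence over $A$, and stationarity. Your extra "repair" automorphism fixing $\M^{eq}$ pointwise and the closing $U$-rank chain are just more laborious versions of the paper's shortcut (splitting witnesses taken in $\M$, and the observation that $\da$ depends only on the weak type over $A\alpha$), not a different argument.
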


\begin{proof}
Let $b \in \M'$ be such that $Lt(a/A)=Lt(b/A)$ and $b \da_A \M$, and let $p=t(b/\M)$.
Then, $\alpha$ is a canonical base of $p$.

We note first that $b \da^{ns}_\alpha \M$.
Indeed, if there were some $c,d \in \M$ that would witness the splitting, then there would be some automorphism $f$ fixing $\alpha$ such that $f(d)=c$.
Let $b'=f(b)$.
Now, we have
$$t(bd/\alpha) \neq t(bc/\alpha)=t(b'd/\alpha),$$
so $t(b/d\alpha) \neq t(b'/d\alpha)$, which is a contradiction since $f$ fixes the type $p$ (since it fixes $\alpha$).

In particular, by Remark \ref{kolmeremark}, this implies $b \da_\alpha \M$ and thus $b \da_\alpha A$.
Since $\alpha \in \textrm{bcl}(A)$, we have $b \da_A \alpha$ and $a \da_A \alpha$, so $t(a/A\alpha)=t(b/A \alpha)$, and thus $a \da_\alpha A$. 
\end{proof}

\begin{definition}
Let $p=t(d/B)$ for some $d \in \M$ and $B \subset \M$.
Suppose $B \subset C$.
We say that $p'=t(d'/C)$ is a \emph{free extension} of $p$ into $C$ if $t(d/B)=t(d'/B)$ and $d' \da_B C$.
We call a type \emph{stationary} if it has a unique free extension to any set.
If $p$ is a stationary type, we will denote the free extension of $p$ into $C$ by $p \vert_C$. 
\end{definition}

\begin{lemma}
Let $\alpha=Cb(a/A)$.
Then, $t(a/\alpha)$ is stationary.
\end{lemma}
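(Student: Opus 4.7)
The plan is to reduce stationarity of the weak type $t(a/\alpha)$ to stationarity of Lascar types (Theorem \ref{main}(iii)) via the following \emph{key claim}: if $c \in \M^{eq}$ satisfies $t(c/\alpha) = t(a/\alpha)$, then $Lt(c/\alpha) = Lt(a/\alpha)$. Once this is granted, for any $B \supseteq \{\alpha\}$ and $c_1, c_2$ with $t(c_i/\alpha) = t(a/\alpha)$ and $c_i \da_\alpha B$, the key claim gives $Lt(c_1/\alpha) = Lt(c_2/\alpha)$, whence Theorem \ref{main}(iii) yields $Lt(c_1/B) = Lt(c_2/B)$ and in particular $t(c_1/B) = t(c_2/B)$; existence of a free extension realizing $t(a/\alpha)$ is supplied by Theorem \ref{main}(iv).

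The key claim is proved through the canonical base machinery. Pick $b \in (\M^{eq})'$ with $Lt(b/A) = Lt(a/A)$, $b \da_A \M^{eq}$, and $\alpha$ the canonical base of $p = t(b/\M)$, as in Lemma \ref{cbexists}. A preliminary step shows $Lt(b/\alpha) = Lt(a/\alpha)$: since $\alpha \in \textrm{bcl}(A)$, Theorem \ref{main}(xi) and monotonicity give $a \da_A \alpha$ and $b \da_A \alpha$, so stationarity of Lascar types applied to the hypothesis $Lt(a/A) = Lt(b/A)$ upgrades to $Lt(a/A\alpha) = Lt(b/A\alpha)$, hence $Lt(a/\alpha) = Lt(b/\alpha)$ by monotonicity.

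Now take $c$ with $t(c/\alpha) = t(a/\alpha)$, choose $f \in \textrm{Aut}(\M^{eq})$ with $f(\alpha) = \alpha$ and $f(a) = c$, extend $f$ to $F \in \textrm{Aut}((\M^{eq})')$, and set $b' = F(b)$. Applying $F$ to the properties of $b$ gives $Lt(b'/f(A)) = Lt(c/f(A))$, $b' \da_{f(A)} \M^{eq}$, and applying $f$ to $\alpha = Cb(a/A)$ gives $\alpha = Cb(c/f(A))$, so $\alpha$ is the canonical base of $p' = t(b'/\M)$ as well. The pivotal step is the canonical base property: $f(\alpha) = \alpha$ forces $f(p) = p$, which means $b' = F(b)$ is still a realizer of $p$, so $p = p'$. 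By Lemma \ref{frown}, $t(b/\M) = t(b'/\M)$ lifts to $Lt(b/\M) = Lt(b'/\M)$. Since $\alpha$ is $\textrm{dcl}$-definable from some finite tuple $\bar a$ in $\M$ (as $\alpha = F_{E_0}(\bar a)$ for some $E_0 \in \mathcal{E}$), every $\{\alpha\}$-invariant equivalence relation is automatically $\bar a$-invariant, so $Lt(b/\bar a) = Lt(b'/\bar a)$ (a consequence of $Lt(b/\M) = Lt(b'/\M)$) implies $Lt(b/\alpha) = Lt(b'/\alpha)$. Combined with the analogous computation $Lt(b'/\alpha) = Lt(c/\alpha)$ and the preliminary $Lt(b/\alpha) = Lt(a/\alpha)$, this yields $Lt(a/\alpha) = Lt(c/\alpha)$.

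The main obstacle is the bookkeeping around the canonical base property: one must verify carefully that extending $f \in \textrm{Aut}(\M^{eq})$ to $F \in \textrm{Aut}((\M^{eq})')$ and transporting $b$ really does produce another realizer of the same global type $p$, essentially because the definition of $f(p) = p$ forces $F(b)$ to realize $p_A$ for every finite $A \subset \M$. Everything else is a direct application of results already gathered in Theorem \ref{main}.
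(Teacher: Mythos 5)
Your proposal is correct, but it is organized differently from the paper's own argument. The paper argues by contradiction: it takes $b\in\M'$ realizing the global type $p$ with canonical base $\alpha$, notes (from the preceding lemma) that $t(b/\M)$ does not split over $\alpha$, lifts a competing free extension $c$ to some $b'\in\M'$ with $Lt(b'/\alpha)=Lt(c/\alpha)$ and $b'\da_\alpha\M$, and then invokes Lemma \ref{statilemma} (uniqueness of non-splitting extensions over a model from a finite base) to force $t(b'/\M)=t(b/\M)$, contradicting $t(b/B)\neq t(c/B)=t(b'/B)$. You instead isolate and prove the stronger intermediate fact that every realization of $t(a/\alpha)$ has the same Lascar type over $\alpha$, obtained by transporting $b$ with an automorphism fixing $\alpha$, using the canonical base property ($f(\alpha)=\alpha$ implies $f(p)=p$, hence $F(b)$ realizes $p$), then Lemma \ref{frown} over $\M$ and the reduction of $\alpha$-invariance to $\bar a$-invariance for a real tuple $\bar a$ with $\alpha=F_{E_0}(\bar a)$; stationarity of Lascar types (Theorem \ref{main}(iii)) then finishes. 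Both arguments hinge on the same direction of the canonical base property, but the paper's packaging through $\da^{ns}$ and Lemma \ref{statilemma} is shorter, while yours stays entirely inside the Lascar-type calculus of Theorem \ref{main} and yields the extra observation that the weak type over a canonical base already determines the Lascar type over it — at the cost of more automorphism and supermonster bookkeeping (which you handle at the same level of rigor as the paper itself, e.g. the step $b\da_A\alpha$ from $\alpha\in\textrm{bcl}(A)$ is exactly the move made in the adjacent lemma). Your identification of the delicate point — that $f(p)=p$ really makes $F(b)$ a realization of $p$, via surjectivity of $f$ on $\M$ — is the right one, and the $\textrm{dcl}$-detour through $\bar a$ is genuinely needed to pass from Lascar types over subsets of $\M$ to Lascar types over the imaginary $\alpha$; both are handled correctly.
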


\begin{proof}
Let $b \in \M'$ be such that $Lt(a/A)=Lt(b/A)$ and $b \da_A \M$, and let $p=t(b/\M)$.
Then, $\alpha$ is a canonical base of $p$.
As in the proof of the previous Lemma, $b \da^{ns}_\alpha \M$, so $b \da^{ns}_\alpha \M^{eq}$ and thus $b \da_\alpha \M^{eq}$.
Also, we have (as seen in the proof of the previous lemma) $t(b/\alpha)=t(a/\alpha)$.
Suppose now $\alpha \in B \subset \M^{eq}$ and there is some $c \in \M$ such that $t(c/\alpha)=t(a/\alpha)=t(b/\alpha)$, $c \da_\alpha B$ but $t(c/B) \neq t(b/B)$.
Let $b' \in \M'$ be such that $Lt(b'/\alpha)=Lt(c/\alpha)$ and $b' \da_\alpha \M$.
Then, $b' \da^{ns} \M^{eq}$, so
by Lemma \ref{statilemma}, $t(b'/\M)=t(b/\M)$, which is a contradiction, since 
$$t(b/B)\neq t(c/B)=t(b'/B).$$
\end{proof}
 
\subsection{Quasiminimal classes}

We have given the model class $\K$ of Example \ref{equivalence} as an example of a FUR-class (see \cite{lisuri} for details).
In this section, we will show that something more general is true: If
 $\K$ is a quasiminimal class in the sense of \cite{monet} (which is the same as in \cite{kir}, but without finite-dimensional structures), then $\K$ is a FUR-class, given that $\K$ only contains infinite-dimensional models.
It will then follow from Theorem \ref{main} that $\K$ has a perfect theory of independence.
 
Quasiminimal classes are AECs that arise from a quasiminimal pregeometry structure. 
Quasiminimal pregeometry structures can be seen as an analogue to strongly minimal structures.
They are defined as structures equipped with a pregeometry that has similar properties as the pregeometry obtained from the algebraic closure operator in the strongly minimal case.
In fact, it turns out that this pregeometry is actually obtained from the bounded closure operator, and that $U$-ranks are given as pregeometry dimensions.
 
In \cite{monet}, a quasiminimal pregeometry structure and a quasiminimal class are defined as follows.  

\begin{definition}\label{quasiminclass}
Let $M$ be an $L$-structure for a countable language $L$, equipped with a pregeometry $cl$ (or $\textrm{cl}_M$ if it is necessary to specify $M$).
We say that $M$ is a \emph{quasiminimal pregeometry structure} if the following hold:
\begin{enumerate}
\item (QM1) The pregeometry  is determined by the language.
That is, if $a$ and $a'$ are singletons and
$\textrm{tp}(a, b)=\textrm{tp}(a', b')$, then $a \in \textrm{cl}(b)$ if and only if $a' \in \textrm{cl}(b')$.
\item (QM2) $M$ is infinite-dimensional with respect to cl.
\item (QM3) (Countable closure property) If $A \subseteq M$ is finite, then $\textrm{cl}(A)$ is countable.
\item (QM4) (Uniqueness of the generic type) Suppose that $H,H' \subseteq M$ are countable closed subsets, enumerated so that $\textrm{tp}(H)=\textrm{tp}(H')$.
If $a \in M \setminus H$ and $a' \in M \setminus H'$ are singletons, then $\textrm{tp}(H,a)=\textrm{tp}(H',a')$ (with respect to the same enumerations for $H$ and $H'$).
\item (QM5) ($\aleph_0$-homogeneity over closed sets and the empty set)
Let $H,H' \subseteq M$ be countable closed subsets or empty, enumerated so that $\textrm{tp}(H)=\textrm{tp}(H')$,
and let $b, b'$ be finite tuples from $M$ such that $\textrm{tp}(H, b)=\textrm{tp}(H',b')$, and let $a$ be a singleton such that $a \in \textrm{cl}(H, b)$.
Then there is some singleton $a' \in M$ such that $\textrm{tp}(H, b,a)=\textrm{tp}(H',b',a').$
\end{enumerate}

We say $M$ is a \emph{weakly quasiminimal pregeometry structure} if it satisfies all the above axioms except possibly QM2.
\end{definition}

It is easy to see that the class from Example \ref{equivalence} satisfies the axioms.
On the other hand, Example \ref{suorasumma} is an example of a non-elementary FUR-class that does not arise from a quasiminimal pregeometry structure.
Indeed, if you choose two distinct primes, $p$ and $q$, then two generic elements of order $p$ and $q$, respectively, will have different types over the closure of the empty set, violating (QM4).
 
\begin{definition}
Suppose $M_1$ and $M_2$ are weakly quasiminimal pregeometry  $L$-structures.
Let $\theta$ be an isomorphism from $M_1$ to  some substructure of $M_2$.
We say that  $\theta$ is a \emph{closed embedding} if $\theta(M_1)$ is closed in $M_2$ with respect to $\textrm{cl}_{M_2}$, and $\textrm{cl}_{M_1}$ is the restriction of $\textrm{cl}_{M_2}$ to $M_1$.
\end{definition}

Given a quasiminimal pregeometry structure $M$, let $\K^-(M)$ be the smallest class of $L$-structures which contains $M$ and all its closed substructures and is closed under isomorphisms, and let $\K(M)$ be the smallest class containing $\K^-(M)$ which is also closed under taking unions of chains of closed embeddings.

From now on, we suppose that $\K=\K(\M)$ for some  quasiminimal pregeometry structure $\M$, and that we have discarded all the finite-dimensional structures from $\K$.
We will call such a class a \emph{quasiminimal class}.
For $\A, \B \in \K$, we define $\A \preccurlyeq \B$ if $\A$ is a closed submodel of $\B$. 
It is well known that $(\K, \preccurlyeq)$ is an AEC with $LS(\K)=\o$.
We may without loss assume that $\M$ is a monster model for $\K$.
In \cite{monet}, it is shown that $\K$ is totally categorical and has arbitrarily large models (Theorem 2.2).
It is easy to see that $\K$ has AP and JEP.
We will show that it is in fact a FUR-class. 

We first note that we may reformulate the conditions QM4 and QM5 so that the concept of Galois type is used instead of the concept of quantifier-free type. This will be useful in the arguments we later present.

Indeed, for QM4, let $H, H' \subset \M$ be countable and closed, let $t^g(H)=t^g(H')$, and let $a, a'$ be singletons such that $a \notin \textrm{cl}(H)$ and $a' \notin \textrm{cl}(H')$.
As $H$ and $H'$ are closed, they are models.
Since $H$ and $H'$ are countable, there is some isomorphism $f: H \to H'$.
Using QM4, we may extend $f$ to a map $g_0: H a \to H' a'$ that preserves quantifier-free formulae.
Let $\A=\textrm{cl}(Ha)$ and $\B=\textrm{cl}(H'a')$.
We will extend $g_0$ to an isomorphism $g: \A \to \B$.
Indeed, if $b \in \A=\textrm{cl}(Ha)$, then by QM5 and QM1, there is some $b' \in \B= \textrm{cl}(H'a')$ such that 
$\textrm{tp}(H,a,b)=\textrm{tp}(H,a',b')$, so $f_0$ extends to a map $f_1: H,a,b \to H',a',b'$ preserving quantifier-free formulae.
Since both $\A$ and $\B$ are countable, we can do a back-and-forth construction to obtain an isomorphism $g:  \A \to  \B$.
Then, $g(H,a)=(H',a')$ and 
 $g$ extends to an automorphism of $\M$, so 
 $t^g(H,a)=t^g(H',a')$, as wanted.
 
For QM5,  suppose $H, H' \subset \M$ are either countable and closed or empty, let $t^g(H)=t^g(H')$, and let $b,b' \in \M$ be such that $t^g(H,b)=t^g(H',b')$ and let $a \in \textrm{cl}(H,b)$.
Again, there is a map $f$ such that $f(H)=H'$, $f(b)=b'$ and $f$ preserves quantifier-free formulae.
As in the case of QM4, we may extend $f$ to an isomorphism $g: \textrm{cl}(Hb) \to \textrm{cl}(H'b')$.
If $a \in \textrm{cl}(Hb)$, then $t^g(H, b, a)=t^g(H',b',g(a))$.
 
We will now show that $\K$ satisfies the axioms AI-AVI.
For this, we will need the following auxiliary result.
It was first presented in a draft for \cite{monet} but eventually left out.

\begin{lemma}\label{poisjatetty}
Let $a \in \M$ and let $\A$ be a model and $A$ a finite set such that $t(a/\A)$ does not split over $A$.
Then, $\textrm{dim}_{cl}(a/A)=\textrm{dim}_{cl}(a/\A).$
\end{lemma}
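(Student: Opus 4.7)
The inequality $\dim_{cl}(a/A) \ge \dim_{cl}(a/\A)$ is automatic from monotonicity of the pregeometry dimension, so the task is the reverse. The plan is to fix a maximal $cl$-independent sub-tuple $a^0$ of $a$ over $A$ and show it remains $cl$-independent over $\A$, giving $\dim_{cl}(a/\A) \ge \vert a^0 \vert = \dim_{cl}(a/A)$. Were this to fail, some coordinate would lie in the $cl$-closure of the others together with $\A$, so after reordering one could pick $b := a_{i_m}$ and $c := (a_{i_1}, \ldots, a_{i_{m-1}})$ with
\[
b \in \textrm{cl}(\A c) \setminus \textrm{cl}(Ac),
\]
reducing the whole problem to refuting this single configuration.

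Because $\A$ is a model, Lemma \ref{vaplaajol2} lets one replace $a$ by an $\A$-conjugate whose type over $\M$ does not split over $A$; by (QM1) the pregeometry $cl$ is type-invariant, so no dimensions or the setup above change. Thus I may assume $t(a/\M)$ does not split over $A$, so that for any $u,v \in \M$ with $t(u/A)=t(v/A)$ one has $t(au/A)=t(av/A)$, and hence $t(bcu/A)=t(bcv/A)$ since $(b,c)$ is a sub-tuple of $a$. Finite character of the pregeometry then yields a finite tuple $(d_1,\ldots,d_k)$ from $\A$ of minimal length with $b \in \textrm{cl}(Ac d_1 \ldots d_k)$; one must have $k \ge 1$, the tuple is $cl$-independent over $Ac$, and pregeometry exchange gives $d_k \in \textrm{cl}(Ac d_1 \ldots d_{k-1} b)$.

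To finish, set $H := \textrm{cl}(Ac d_1 \ldots d_{k-1})$, which is countable by (QM3). Since every model in $\K$ is infinite-dimensional, the monster has unbounded $cl$-dimension, so I can pick an uncountable family $\{d'_\alpha : \alpha < \omega_1\}$ of distinct elements in $\M \setminus H$. The Galois-type reformulation of (QM4) applied with $H = H'$ yields $t^g(H,d_k) = t^g(H,d'_\alpha)$, hence $t(d_1 \ldots d_{k-1} d_k /A) = t(d_1 \ldots d_{k-1} d'_\alpha /A)$ for every $\alpha$. Non-splitting of $t(a/\M)$ over $A$ then forces $t(a d_1 \ldots d_{k-1} d_k /A) = t(a d_1 \ldots d_{k-1} d'_\alpha /A)$, and restricting to the sub-tuple $(b,c)$ of $a$ with enlarged base produces $t(b d_k / A c d_1 \ldots d_{k-1}) = t(b d'_\alpha / A c d_1 \ldots d_{k-1})$. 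By (QM1), $cl$-membership is determined by the type, so $d_k \in \textrm{cl}(Ac d_1 \ldots d_{k-1} b)$ forces $d'_\alpha \in \textrm{cl}(Ac d_1 \ldots d_{k-1} b)$ for every $\alpha$. But that last set is countable by (QM3), contradicting the uncountability of $\{d'_\alpha\}$.

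The main obstacle is making non-splitting available with witnesses anywhere in $\M$ rather than only in $\A$: that is exactly what Lemma \ref{vaplaajol2} provides, and is the step that really needs $\A$ to be a model. Once that upgrade is in place, restricting from the full tuple $a$ to the sub-tuple $(b,c)$ is immediate from the definition of splitting, and the remainder is routine bookkeeping with exchange together with the countability supplied by (QM3).
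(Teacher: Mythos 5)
Your reduction to a single dependence $b \in \textrm{cl}(\A c)\setminus\textrm{cl}(Ac)$ and the closing QM4/QM1/QM3 computation are sound, but the pivotal step fails. You cannot ``replace $a$ by an $\A$-conjugate whose type over $\M$ does not split over $A$'': any such conjugate $a^*$ lies in $\M$, and since in the nontrivial case $t(a^*/A)$ has a second realization $v\neq a^*$, the pair $u=a^*$, $v$ witnesses splitting of $t(a^*/\M)$ over $A$ (no automorphism over $A$ can fix $a^*$ while moving $a^*$ to $v$, so $t(a^*u/A)\neq t(a^*v/A)$). So the statement you want is unattainable inside $\M$; Lemma \ref{vaplaajol2} only makes sense for bounded $B$, and in any case it belongs to the FUR-class machinery (its proof uses AI, AVI and Lemma \ref{transitivity}), whereas the present lemma is precisely an ingredient of the verification in Lemma \ref{qmfur} that quasiminimal classes satisfy those axioms. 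Invoking it here therefore needs at least a separate argument that AI and AVI are established for quasiminimal classes beforehand; as written, the dependence is circular, and even a repaired version would have to prechoose a bounded set containing the intended witnesses $d'_\alpha$ before extending the type, since the set $H$ depends on the conjugate chosen.

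The argument is saved, with no upgrade at all, by keeping the witnesses inside $\A$, which is what the paper does. Since $\textrm{dim}_{\textrm{cl}}(\A)$ is infinite while $\textrm{cl}(Acd_1\ldots d_{k-1}b)$ is finite-dimensional, you may pick a single $d' \in \A\setminus\textrm{cl}(Acd_1\ldots d_{k-1}b)$; QM4 over the countable closed set $H=\textrm{cl}(Acd_1\ldots d_{k-1})$ gives $t^g(d_k/H)=t^g(d'/H)$, and now both witnesses $d_1\ldots d_{k-1}d_k$ and $d_1\ldots d_{k-1}d'$ lie in $\A$, so the hypothesis that $t(a/\A)$ does not split over $A$ applies verbatim; restricting to the subtuple $(b,c)$ and using QM1 forces $d'\in\textrm{cl}(Acd_1\ldots d_{k-1}b)$, a contradiction --- no uncountable family and no appeal to QM3 is needed. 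The paper's own proof is a compressed form of this: take a basis $b$ of $a$ over $A$, a tuple $c\in\A$ independent over $A$ with $\textrm{dim}_{\textrm{cl}}(b/Ac)<\textrm{dim}_{\textrm{cl}}(b/A)$, and, using infinite-dimensionality of $\A$, a tuple $c'\in\A$ of the same length independent over $A$ and from $b$; then QM4 gives $t(c/A)=t(c'/A)$ while QM1 gives $t(c/Aa)\neq t(c'/Aa)$, contradicting non-splitting of $t(a/\A)$ over $A$ directly.
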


\begin{proof}
Assume towards a contradiction that $\textrm{dim}_{cl}(a/A)>\textrm{dim}_{cl}(a/\A)$.
Let $b$ be a pregeometry basis for $a$ over $A$.
By the counter-assumption, there exists $c \in \A$ such that $\textrm{dim}_{cl}(b/Ad)<\textrm{dim}(b/A).$
We can choose $c$ to be independent over $A$.

Since $\textrm{dim}_{cl}(\A)$ is infinite, there exists $c' \in \A$ with the same length as $c$ such that $\textrm{dim}(b/Ac')=\textrm{dim}(b/A)$ and that $c'$ is independent over $A$.

By (QM4), $t(c/A)=t(c'/A)$, but clearly $t(c/Aa) \neq t(c'/Aa)$, so $t(a/\A)$ splits over $A$, a contradiction.
\end{proof}

\begin{lemma}\label{qmfur}
If $\K$ is a quasiminimal class, then $\K$ is a FUR-class. 
\end{lemma}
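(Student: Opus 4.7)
The plan is to verify each of the axioms AI--AVI for a quasiminimal class $\K$, using the pregeometry $\textrm{cl}$ as the main tool. The standard AEC hypotheses (AP, JEP, $LS(\K) = \omega$, arbitrarily large models, no finite models) follow from known results on quasiminimal classes in \cite{monet} and the fact that finite-dimensional structures have been discarded. The fundamental bridge to the axioms is Lemma \ref{poisjatetty}: if $t(a/\A)$ does not split over a finite $A \subseteq \A$, then $\textrm{dim}_{\textrm{cl}}(a/A) = \textrm{dim}_{\textrm{cl}}(a/\A)$. Together with the reformulations of QM4 and QM5 in terms of Galois types (established in the discussion preceding the lemma), this translates non-splitting arguments into dimension-theoretic arguments and vice versa.

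For AI, given a countable model $\A$, a finite $A \subseteq \A$, and $b \in \M$, I would find $a \in \A$ with $t^g(a/A) = t^g(b/A)$ by realizing the same pregeometry configuration: since $\A$ is infinite-dimensional and closed, pick elements in $\A$ matching the $\textrm{cl}$-dependencies of $b$ over $A$, and invoke the Galois version of QM5. For AII, given countable $\A$ and a tuple $a$, take $\B = \textrm{cl}(\A a)$; this is countable by QM3, a closed submodel of $\M$, and the required isolation property follows because Galois types over the closed set $\A$ are controlled by finite pregeometry data via QM4 and QM5. For AVI, given $\A \subseteq \B \cap \D$, pick a pregeometry basis $X$ of $\B$ over $\A$, find a copy $X'$ of $X$ that is $\textrm{cl}$-independent from $\D$ over $\A$, and set $\C = \textrm{cl}(\A X')$; QM5 yields $t(\C/\A) = t(\B/\A)$, and non-splitting follows because pregeometry independence prevents splitting over a suitable finite subset of $\A$.

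For AIV, Player II's winning strategy is driven by pregeometry dimension. On each legitimate move of Player I producing $a_{n+1}$ and $A'_{n+1}$ with $t(a_{n+1}/A'_{n+1}) \neq t(a_n/A'_{n+1})$, Player II enlarges $A'_{n+1}$ to a finite $A_{n+1} \subseteq \A$ over which $t(a_n/A_{n+1})$ does not split. Any further legal move by Player I then produces $a_{n+2}$ with $\textrm{dim}_{\textrm{cl}}(a_{n+2}/A_{n+2}) < \textrm{dim}_{\textrm{cl}}(a_{n+1}/A_{n+1})$, by Lemma \ref{poisjatetty} and the Galois reformulation of QM4 (equal dimension together with equal type over $A_{n+1}$ would force equal types). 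Since $\textrm{dim}_{\textrm{cl}}(a/A)$ is a finite non-negative integer (each coordinate of $a$ either lies in $\textrm{cl}(A)$ or contributes one), the game terminates in at most $\textrm{dim}_{\textrm{cl}}(a/A)$ moves. Axiom AV follows by the same dimension argument combined with the symmetry of $\textrm{cl}$-independence.

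The main obstacle, as expected, will be AIII. Given models $\A, \B, \C$ with $\B = \A \cap \C$ and $\A \indep_\B^{ns} \C$, the natural candidate for the $s$-prime model is $\D = \textrm{cl}(\A \cup \C)$. The independence assumption translates, via Lemma \ref{poisjatetty} applied piecewise to finite tuples, into the additivity of pregeometry dimension across $\A \cup \C$ over $\B$. Uniqueness of $\D$ (not just up to isomorphism) follows because the Galois reformulation of QM5 makes $\D$ canonically determined by $\A \cup \C$ as the pregeometry closure inside $\M$; the weakly elementary extension condition is satisfied by virtue of QM4 controlling the Galois type of any new element against finite bases. The ``furthermore'' clause, that $\D \indep_\C^{ns} \C'$ when $\A \indep_\B^{ns} \C'$, will be the most technical step: it requires combining additivity of dimension with a careful choice of finite witnesses to non-splitting, after which Lemma \ref{poisjatetty} delivers the conclusion.
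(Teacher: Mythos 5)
Your proposal takes essentially the same route as the paper: the AEC preliminaries are quoted from \cite{monet} and \cite{kir}, and the axioms AI--AVI are verified directly from the pregeometry, with Lemma \ref{poisjatetty} together with the Galois-type reformulations of QM4 and QM5 as the bridge; in particular your dimension-drop strategy for AIV (player II passes to a finite set over which the current type does not split, after which any legal move of player I lowers $\textrm{dim}_{cl}$) is exactly the argument the paper writes out. The paper presents only AIV in detail and defers the remaining axioms to \cite{lisuri}, and your sketches of AI, AII, AIII, AV and AVI are in line with that omitted treatment.
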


\begin{proof}
By \cite{monet}, $\K$ is totally categorical and has arbitrarily large models.
It is also easy to see that $\K$ has AP and JEP (for AP, use Theorem 3.3 in \cite{kir}).
It is quite straightforward to prove that the axioms AI-AVI hold.
As an example, we present the proof for AIV.
 
On move $n$ in $GI(a, A, \A)$,
let player II choose the set $A_{n+1}$ so that $t(a_{n+1}/\A)$ does not split over $A_{n+1}$ (such a set exists by Proposition 4.2 in \cite{monet}).
Then, player $I$ plays some tuple $a_{n+2}$ and some set $A_{n+2}'=A_{n+1}b$ so that $t(a_{n+2}/A_{n+1})=t(a_{n+1}/A_{n+1})$ but  $t(a_{n+2}/A_{n+2}')\neq t(a_{n+1}/A_{n+2}')$.
We claim that $\textrm{dim}_{cl}(a_{n+2}/\A)<\textrm{dim}_{cl}(a_{n+1}/\A)$, which means that player I can only move $\textrm{dim}_{cl}(a/\A)$ many times.

Let $m=\textrm{dim}_{cl}(a_{n+1}/\A)$.
By Lemma \ref{poisjatetty}, $\textrm{dim}_{cl}(a_{n+1}/A_{n+1})=m$, and thus  $\textrm{dim}_{cl}(a_{n+2}/A_{n+1})=m$, so $\textrm{dim}_{cl}(a_{n+2}/\A) \le m$.
Suppose $\textrm{dim}_{cl}(a_{n+2}/\A) = m$.
Then, in particular, $\textrm{dim}_{cl}(a_{n+2}/A_{n+1}b)=\textrm{dim}_{cl}(a_{n+1}/A_{n+1}b)=m$.
For $i=1,2$, write $a_{n+i}=a_{n+i}' a_{n+i}''$, where $a_{n+i}'$ is an $m$-tuple free over $A_{n+1}b$ and $a_{n+i}'' \in \textrm{cl}(A_{n+1} b a_{n+i}')$.
By QM4, there is some automorphism $\sigma$ fixing $A_{n+1} b$ pointwise so that $\sigma(a_{n+2}')=a_{n+1}'$.
Since $t(a_{n+2}/A_{n+1})=t(a_{n+1}/A_{n+1})$, we have
$$t(\sigma(a_{n+2}'')/A_{n+1} a_{n+1}')=t(a_{n+1}''/A_{n+1} a_{n+1}').$$
On the other hand, $t(a_{n+2}/A_{n+2}')\neq t(a_{n+1}/A_{n+2}')$, so
$$t(a_{n+1}''/A a_{n+1}' b)\neq t(\sigma(a_{n+2}'')/A a_{n+1}' b),$$
which contradicts the fact that $t(a_{n+1}''/A_{n+1} a_{n+1}')$ determines $t(a_{n+1}''/\A a_{n+1}')$ by the proof of Proposition 5.2 in \cite{monet}.
 \end{proof}

\begin{remark}
We note that in a quasiminimal pregeometry structure, $\textrm{cl}(A)=\textrm{bcl}(A)$.
Indeed, suppose $a \in \textrm{cl}(A)$.
Then, $a \in \textrm{cl}(A_0)$ for some finite $A_0 \subseteq A$.
Let $a'$ be such that $t(a/A)=t(a'/A)$.
By QM1, $a' \in \textrm{cl}(A_0)$. 
By QM3, $\textrm{cl}(A_0)$ is countable, so $t(a/A)$ only has countably many realizations.
Thus, $\textrm{cl}(A) \subseteq \textrm{bcl}(A)$.
On the other hand, suppose $a \notin \textrm{cl}(A)$.
By QM4, $t(a/\textrm{cl}(A))$ has uncountably many realizations, and thus $a \notin \textrm{bcl}(\textrm{cl}(A))$, and hence $a \notin \textrm{bcl}(A)$.
Thus, $\textrm{cl}(A)=\textrm{bcl}(A)$.
From now on we will use bcl for cl.

It is then easy to prove that in a quasiminimal pregeometry structure, $U(a/A)=\textrm{dim}_{\textrm{bcl}}(a/A)$.
\end{remark}
 
\section{The Group Configuration}
  
In this chapter, we adapt Hrushovski's group configuration theorem for the setting of quasiminimal classes.
We assume that $\K$ is a quasiminimal class, i.e. $\K=\K(\M)$ for some quasiminimal pregeometry structure, as in the last section of Chapter 2.
We may without loss of generality assume that $\M$ is a monster model for the class $\K$.
The proof in the elementary case can be found in e.g. \cite{pi}.  
Hrushovski's theorem states that if there are six tuples in the model that form a strict algebraic partial guadrangle (see \cite{pi}), then a group can be interpreted.
We will adapt the configuration to our setting by essentially replacing algebraic closures with bounded closures.
  
We will be working in $\mathbb{M}^{eq}$ and occasionally in $(\mathbb{M}^{eq})^{eq}$.
To avoid confusion, we will write $\textrm{bcl}^{eq}(A)$ for the bounded closure of $A$ in $\mathbb{M}^{eq}$.
In this case, $A$ might contain some element $a \in \mathbb{M}^{eq} \setminus \mathbb{M}$.
  
We will say a set $A$ is independent over $B$ if $a \downarrow_B  (A \setminus \{a\})$ for each $a \in A$. 

In analogue of interalgebraicity, we define interboundedness. 

\begin{definition}
We say $x$ and $y$ are \emph{interbounded} over a set $A$ if $x \in \textrm{bcl}(Ay)$ and $y \in \textrm{bcl}(Ax)$.
\end{definition}
 
We are now ready to present the configuration that will yield a group.
  
\begin{definition}\label{conf}
By a \emph{strict bounded partial quadrangle} over  a finite set $A$ we mean a $6$-tuple of elements $(a,b,c,x,y,z)$ in $\M^{eq}$, each of $U$-rank $1$ over $A$, such that 
\begin{enumerate}[(i)]
\item any triple of non-collinear points is independent over $A$ (see the picture below), i.e. has $U$-rank $3$ over $A$;
\item every line has $U$-rank $2$ over $A$ (see the picture).
\end{enumerate}
\end{definition}

\begin{figure}[htbp] \label{fig:multiscale}
\begin{center}
   \includegraphics[scale=0.7]{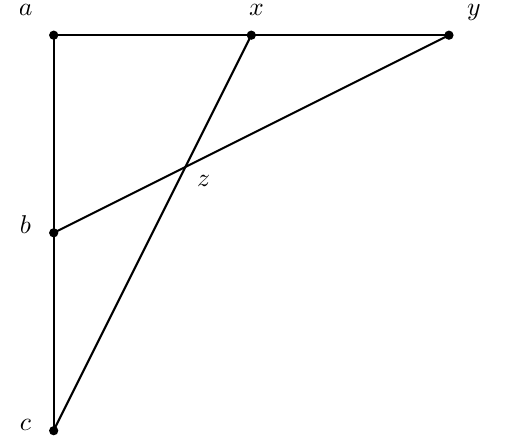}
\end{center}
\end{figure}

We will show that if there is a strict bounded partial quadrangle in $\M$, then a group can be interpreted (Theorem \ref{groupmain}).
The idea in Hrushovski's proof (see e.g. \cite{pi}) is that you (broadly speaking) view the tuples $x$, $y$, and $z$ as points on the plane 
and the tuples $a$, $b$, and $c$ as transformations that move the points.
He proves that $y$ and $z$ are interdefinable over $b$ (after making some small modifications), 
and thus $b$ can (broadly speaking) be seen as a function taking $y \mapsto z$. 
Eventually, he constructs a group that consists of this kind of functions. 

The small modifications that he makes mostly consist of replacing tuples with interalgebraic ones,
and the following remark ensures that we can follow the same strategy, in our case replacing tuples with interbounded ones.
The remark after it states that the tuples $a$, $b$ and $c$ can be replaced with certain canonical bases (this will be essential when constructing the functions).

\begin{remark}
If each of $a$,$b$,$c$,$x$,$y$,$z$ is replaced by an element interbounded with it over $A$, then  it is easy to see that the new $6$-tuple $(a',b',c',x',y',z')$ is also a strict bounded partial quadrangle over $A$. 
 
We say that this new partial quadrangle is \emph{boundedly equivalent} to the first one.
\end{remark}
   
\begin{remark}\label{cbinterbd}
If we have a strict bounded partial quadrangle, as in Definition \ref{conf}, then
$a$ is interbounded with $\textrm{Cb}(xy/Aa)$, $b$ is interbounded with $\textrm{Cb}(yz/Ab)$, and $c$ is interbounded with $\textrm{Cb}(zx/Ac)$.
\end{remark}

As a notion of definability, we use Galois definability, given in the definition below.
However, compared to the first order case, this is more similar to type definability than to actual first order definability.

\begin{definition}
We say that a tuple $a=(a_1, \ldots, a_n)$ is \emph{Galois definable} from a set $A$, if it holds for every $f \in \textrm{Aut}(\mathbb{M}/A)$
that $f(a)=(f(a_1), \ldots, f(a_n))=a$.
We write $a \in \textrm{dcl}(A)$, and say that $a$ is in the \emph{definable closure} of $A$.

We say that $a$ and $b$ are \emph{interdefinable} if $a \in \textrm{dcl}(b)$ and $b \in \textrm{dcl}(a)$.
We say that they are interdefinable over $A$ if  $a \in \textrm{dcl}(Ab)$ and $b \in \textrm{dcl}(Aa)$.
\end{definition}

\begin{definition}
We say that a set $B$ is \emph{Galois definable} over a set $A$, if every $f \in \textrm{Aut}(\M/A)$ fixes $B$ setwise.
\end{definition}
 
In the case of quasiminimal pregeometry structures, a set that is Galois definable over some set $A$ can be given as an infinite disjunction of Galois types over $A$.
 
\begin{definition}
We say that a group $G$ is \emph{Galois definable} over $A$ if $G$ and the group operation on $G$ are both Galois definable over $A$ as sets.
\end{definition}
 
For example, any group that is $\infty$-definable in first order over some set $A$ is Galois definable over $A$.
 
 \begin{definition}\label{genericeka}
 Let $B \subset \M$.
 We say an element $b \in B$ is \emph{generic} over some set $A$ if $U(b/A)$ is maximal (among the elements of $B$). 
 The set $A$ is not mentioned if it is clear from the context.
 For instance, if $B$ is assumed to be Galois definable over some set $D$, then we usually assume $A=D$. 
  
Let $p=t(a/A)$ for some $a \in \M$ and $A \subset A'$.
We say $b \in \M$ is a \emph{generic} realization of $p$ (over $A'$) if $U(b/A')$ is maximal among the realizations of $p$.
\end{definition}

We note that in the quasiminimal case, the length of a tuple gives an upper bound for its $U$-rank over any set, so generic elements always exist.

We will show that if there is a strict bounded partial quadrangle in a quasiminimal pregeometry structure $\M$, then a Galois definable group can be interpreted in $\M$.
In the first order setting, Hrushovski constructs a group from a strict algebraic partial quadrangle (see e.g. \cite{pi}) by viewing the tuple $b$ as a germ of a function taking $y$ to $z$.
However, we cannot straightforwardly follow Hrushovski's approach of showing that $y$ and $z$ are interdefinable over $b$.
Instead, we will find new tuples $a', x', c'$ and $a'', x'', c''$ so that also $(a', b, c', x', y,z)$ and $(a'', b, c'', x'', y,z)$ are strict, bounded partial quadrangles.
Then, we will show that $yx'$ and $zx''$ are interdefinable over $a'bc''$.
Another difference is that in the beginning of his proof, Hrushovski replaces the tuples $x,y,z$ with their finite sets of conjugates.
In our context, the conjugates are defined in terms of bounded closure, and the set of conjugates can be (countably) infinite.
Such a set cannot be taken as an element in our $\M^{eq}$, and thus we cannot use Hrushovski's trick.
Instead, we will, by defining suitable equivalence relations, move from the pregometry to the canonical geometry associated to it and work there. 
 
We will add the tuples $a'$ and $c''$ as parameters to our language and then view $b$ as a germ of function taking $yx'$ to $zx''$.
To be able to make this more precise, we need to introduce the concept of germs of definable functions.
We define them analoguously to the first order case.
 
\begin{definition}  
Suppose $p$ and $q$ are stationary types over some set $B$.
By a \emph{germ of an invertible definable function} from $p$ to $q$, we mean a Lascar type $r(u,v)$ over some finite set $C$, such that 
\begin{itemize}
\item $Lt(u,v)=r$ implies $t(u/B)=p$ and $t(v/B)=q$;
\item Suppose $Lt(u,v)=r$ and $D \subset \M$ is such that $C \subseteq D$. 
If $(u',v')$ realizes $r \vert_D$, then $u' \da_B D$, $v' \da_B D$, $v' \in \textrm{dcl}(u',D)$ and $u' \in \textrm{dcl}(v',D)$.
\end{itemize}
\end{definition}

We will denote germs of functions by the Greek letters $\sigma, \tau$, etc.
We note that the germs can be represented by elements in $\M^{eq}$. Just represent the germ determined by some Lascar type $r$, as above, by some canonical base of $r$.
If $\sigma$ is this germ and $u$ realizes $p \vert_\sigma$, then $\sigma(u)$ is the unique element $v$ such that $(u,v)$ realizes $r \vert_\sigma$.
Note that if $a$ realizes $p\vert_B$ and $\sigma \in B$, then $\sigma(a)$ realizes $q \vert_\sigma$, and as $\sigma(a) \da_\sigma B$, the element $\sigma(a)$ realizes $q \vert_B$.
 
We note that the germs can be composed.
Suppose $q'$ is another stationary type over $B$, $\sigma$ is a germ from $p$ to $q$ and $\tau$ is a germ from $q$ to $q'$.
Then, by $\tau.\sigma$ we denote a germ from $p$ to $q'$ determined as follows.
Let $u$ realize $p \vert_{\sigma, \tau}$.
Then, we may think of $\tau.\sigma$ as some canonical base of $Lt((u, \tau(\sigma(u)))/\sigma, \tau)$.
We note that $t(u, \tau(\sigma(a)))/\sigma, \tau)$ is stationary since $t(u/B \sigma \tau)$ is stationary as a free extension of a stationary type and since $\tau(\sigma(u))$ is definable from $u$, $\sigma$ and $\tau$.
Thus, $\tau.\sigma \in \textrm{dcl}(\sigma, \tau)$ (see the proof of Lemma \ref{cbexists}), and the notation is meaningful.

We are now ready to state the main theorem of this chapter.
We will prove it as a series of lemmas.

\begin{theorem}\label{groupmain}
Suppose $\K$ is a quasiminimal class, $\M$ is a monster model for $\K$, $A \subset \M$ is a finite set, $(a,b,c,x,y,z) \in \M$ is a strict bounded partial quadrangle over $A$ and $t(a,b,c,x,y,z/A)$ is stationary.
Then, there is a group $G$ in $(\M^{eq})^{eq}$, Galois definable over some finite set $A' \subset \M$.
Moreover, a generic element of $G$ has $U$-rank $1$.
\end{theorem}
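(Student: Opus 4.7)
The plan is to follow the overall strategy of Hrushovski's first-order argument (see e.g.\ \cite{pi}): view $b$ as the germ of an invertible Galois definable function moving $y$ to $z$, generate a family of such germs from independent generic copies of the configuration, and read off a group from their compositions and inverses. Throughout I work in $\M^{eq}$ and, where required, in $(\M^{eq})^{eq}$, using the independence calculus of Theorem \ref{main} and the canonical bases of Section \ref{meqsect}. The crucial adjustment for the quasiminimal setting is that the $\textrm{bcl}$-conjugates of $x,y,z$ can be countably infinite and therefore cannot be named in $\M^{eq}$; the construction will instead take place in the canonical geometry, after adjoining to $\mathcal{E}$ the equivalence relations identifying tuples that share a given canonical base for the relevant stationary type.

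The first step is to produce two copies of the partial quadrangle sharing the line through $b$, $y$ and $z$. Using extension and stationarity (Corollary \ref{lascvaplaaj2} and Lemma \ref{lascvaplaajykskas}) together with symmetry, transitivity and $U$-rank additivity (Theorem \ref{main} and Lemma \ref{Uaddit}), I construct tuples $(a', c', x')$ and $(a'', c'', x'')$, chosen suitably generic over $Aabcxyz$, so that both $(a', b, c', x', y, z)$ and $(a'', b, c'', x'', y, z)$ are strict bounded partial quadrangles over $A$ and are as independent from each other as possible over $Abyz$. Replacing each of $a'$, $b$, $c''$ by canonical bases of the appropriate Lascar types (Remark \ref{cbinterbd}), which are interbounded with the original elements over $A$, I may treat them as canonical parameters for the germs to come.

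The main obstacle is to prove that $yx'$ and $zx''$ are interdefinable over $A a' b c''$, which is the substitute for Hrushovski's clean fact that $y$ and $z$ are interdefinable over $b$. The strategy is first to extract from Definition \ref{conf}, together with the $U$-rank and reflexivity clauses of Theorem \ref{main}, that $y x' \in \textrm{bcl}(A a' b c'' z x'')$ and symmetrically, and then to upgrade bounded to Galois definability in $(\M^{eq})^{eq}$. For the upgrade I use Lemma \ref{cbexists}: the stationary type $t(y x' / A a' b c'' z x'')$ has a canonical base, and I add to $\mathcal{E}$ the equivalence relation whose classes are the $\textrm{bcl}$-orbits over this canonical parameter. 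Stationarity, applied together with Lemma \ref{statilemma}, then forces the relevant $\textrm{bcl}$-orbit to collapse to a single element in $(\M^{eq})^{eq}$, giving the desired Galois definability. This is the step that genuinely requires the passage to the canonical geometry and that has no first-order analogue.

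Once interdefinability is in hand, the parameter $b$, together with $a'$ and $c''$, determines a germ $\sigma_b$ of an invertible definable function from the stationary type of $y x'$ to that of $z x''$ over $A a' c''$, as in the definition preceding the theorem. Taking an independent generic copy $(a_1, b_1, c_1, x_1, y_1, z_1)$ of the quadrangle over $A$ with $y_1$ aligned to $z$, I obtain a second germ $\sigma_{b_1}$; Lemma \ref{Uaddit} and the independence assumptions guarantee that compositions $\sigma_{b_1} . \sigma_b$ and quotients $\sigma_{b_1} . \sigma_b^{-1}$ are again germs of the same type and are parametrized by canonical bases of $U$-rank $1$ over a finite set $A' \supseteq A$. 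A Weil-style argument, with first-order definability replaced by Galois definability via canonical bases, now shows that the set of such generic quotients, modulo equality on a generic input, forms a group $G$ in $(\M^{eq})^{eq}$ Galois definable over $A'$; the generic element of $G$ is interbounded over $A'$ with $b$ and therefore has $U$-rank $1$, as required.
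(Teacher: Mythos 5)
Your overall plan coincides with the paper's: pass to the canonical geometry, produce two further quadrangles $(a',b,c',x',y,z)$ and $(a'',b,c'',x'',y,z)$ sharing the line $b,y,z$, show $yx'$ and $zx''$ are interdefinable over the parameters, view $b$ (or rather a canonical base interbounded with it) as a germ from $t(yx'/a'c'')$ to $t(zx''/a'c'')$, and build the group from quotients $b_1^{-1}.b_2$ of independent realizations. The gap is in the step you yourself single out as the crucial one, the upgrade from interboundedness to interdefinability. You propose to add to $\mathcal{E}$ ``the equivalence relation whose classes are the bcl-orbits over this canonical parameter'' and to let stationarity plus Lemma \ref{statilemma} ``collapse'' the orbit. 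This does not work as stated: by the construction of $\M^{eq}$ in Section \ref{meqsect}, the relations in $\mathcal{E}$ must be $\emptyset$-invariant, with classes determined by countably many Galois types over $\emptyset$, so a relation defined relative to a specific canonical parameter is not admissible. More importantly, even granting some quotient, knowing only that $yx' \in \textrm{bcl}(Aa'bc''zx'')$ (and symmetrically) is too weak: the conjugates of $zx''$ over $byc'x'$ may be countably infinite, and nothing you cite forces them all into a single class of the chosen relation; stationarity and Lemma \ref{statilemma} concern free extensions of types and do not produce this collapse.

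What is actually needed, and what the paper proves (Claim \ref{tyyppi} inside Lemma \ref{interdefinable}), is the stronger statement that \emph{every} realization $z'$ of $t(z/byc'x')$ satisfies $\textrm{bcl}(z')=\textrm{bcl}(z)$, because both are interbounded with the single element $\textrm{Cb}(b,y/c',x')$ --- note the canonical base of the type of the opposite line over $c'x'$, not of $t(yx'/Aa'bc''zx'')$. This is exactly why the paper quotients $x,y,z$ at the outset by the uniform, $\emptyset$-invariant relations $E^n$ (``same bounded closure over $\emptyset$''), for which Lemma \ref{luokat} gives interdefinability of $u/E^n$ with its $E^*$-class: once all conjugates share the same bcl over $\emptyset$, the class of $z$ is fixed by every automorphism over $byc'x'$, giving $z\in\textrm{dcl}(byc'x')$ in $(\M^{eq})^{eq}$. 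Without an argument of this kind your interdefinability claim is unsupported, and the germ machinery that follows (which also presupposes stationarity of $q_1,q_2$ --- the paper secures this by adjoining an auxiliary independent tuple $d^*$, another point you pass over) has no foundation. The remaining group-chunk portion of your sketch (rank computations for $b_1^{-1}.b_2$, closure of generic compositions, $G=G'/E$) is in the same spirit as the paper's Lemmas \ref{vapaus}--\ref{ykkonen}, but it only becomes meaningful after the interdefinability step is repaired along the lines above. Incidentally, the generic element of $G$ is not interbounded with $b$ over $A'$; its $U$-rank $1$ comes from the computation for $\sigma=b_1^{-1}.b_2$, namely $\sigma\in\textrm{dcl}^*(b_1,b_2)$, $\sigma\da^* b_i$, and $b_2\in\textrm{dcl}^*(b_1,\sigma)$.
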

   
\begin{proof} 
We note first that if we replace the closure operator bcl with the closure operator $\textrm{bcl}_A$ defined by $\textrm{bcl}_A(B)=\textrm{bcl}(A\cup B)$, we get from $\M$ a new quasiminimal class that is closed under isomorphisms and consists of models containing the set $A$.
We may think of this new class as obtained by adding the elements of $A$ as parameters to our language.
Then, $A \subseteq \textrm{cl}(\emptyset)$.
Thus, to simplify notation, we assume from now on that $A=\emptyset$.
When using the independence calculus developed in Chapter 2, we will write $A \da B$ for $A \da_\emptyset B$.

We begin our proof by replacing 
the tuple $(a,b,c,x,y,z)$ with one boundedly equivalent with it so that $z$ and $y$ become interdefinable over $b$.
For each $n$ we first define an equivalence relation $E^n$ on $\M^n$ so that $xE^ny$ if and only if $\textrm{bcl}(x)=\textrm{bcl}(y)$.
Similarly, define an equivalence relation $E^{*}$ on $\M^{eq}$ so that $xE^{*}y$ if and only if $\textrm{bcl}^{eq}(x)=\textrm{bcl}^{eq}(y)$.

\begin{lemma}\label{luokat}
For each $u \in \M^n$, the element $u/E^n$ is interdefinable with $(u/E^n)/E^*$ in $(\mathbb{M}^{eq})^{eq}.$
\end{lemma}

\begin{proof}
Clearly $(u/E^n)/E^* \in \textrm{dcl}(u/E^n)$.
Suppose now $v \in \M^n$ is such that $(v/E^n)/E^*=(u/E^n)/E^*$.
We note that for each $w \in \M^n$, $(\textrm{bcl}^{eq}(w/E)) \cap \M=\textrm{bcl}(w)$ and $((\textrm{bcl}^{eq})^{eq}((w/E)/E^*)) \cap \M^{eq}=\textrm{bcl}^{eq}(w/E)$.
Thus, $\textrm({bcl}^{eq})^{eq}((w/E)/E^*) \cap \M=\textrm{bcl}(w)$.
Hence, $\textrm{bcl}(v)=\textrm{bcl}(u)$, so $a/E^n=u/E^n$.
We have thus seen that $(u/E^n)/E^{*}$ determines $u/E^n$, so $u/E^n \in \textrm{dcl}((u/E^n)/E^{*})$. 
\end{proof}

We also note that if $U(u)=1$, then $U(u/E^n)=U((u/E^n)/E^*)=1$.
Indeed,  $u/E^n$ is interbounded with $u$ and thus has $U$-rank $1$. 
As $((u/E^n)/E^*)$ is interbounded with $u/E^n$, it also has $U$-rank $1$.

\begin{claim}
Without loss of generality, each of the tuples $x$, $y$, $z$ is of the form $u/E^n$ for some $n$ and some $u \in \M^n$.
\end{claim}

\begin{proof}
If we replace $x$ with $x/E^n$, $y$ with $y/E^n$ and $z$ with $z/E^n$, then these elements are interbounded
with the old ones, so we still have a strict bounded partial quadrangle over $A$. 
\end{proof}

  
Let $a' \in \M$ be such that $Lt(a'/b,z,y)=Lt(a/b,z,y)$
and $a' \da_{bzy}  abcxyz$.
Since we have a bounded partial quadrangle, $a' \da bzy$ and thus by transitivity $a' \da abcxyz$. 
Then, there are tuples $c',x'$ such that $Lt(a',c',x'/ b,z,y)=Lt(a,c,x/b,z,y)$ and in particular, $t(a',b,c',x',y,z/\emptyset)=t(a,b,c,x,y,z/\emptyset)$.
Thus, $(a',b,c',x',y,z)$ is a strict bounded partial quadrangle over $\emptyset$.
Similarly, we find an element $c'' \in \M$ such that $Lt(c''/bzy)=Lt(c/bzy)$ and $c'' \da abcxyza'c'x'$,  and  there are elements $a'', x''$ so that $(a'',b,c'',x'',y,z)$ is a strict bounded partial quadrangle over $\emptyset$.
The below picture may help the reader.
\begin{figure}[htbp] \label{fig:multiscale}
\begin{center}
   \includegraphics[scale=0.9]{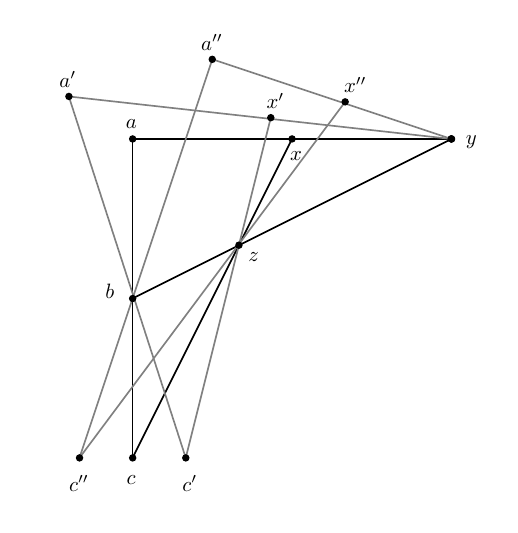}
\end{center}
\end{figure}

We will add the elements $a'$ and $c''$ as parameters in our language, but this will affect the closure operator and the independence notion.
In our arguments, we will be doing calculations both in the set-up we have before adding these parameters and the one after adding them.
We will use the notation cl and $\da$ for the setup before adding the parameters, and $\textrm{cl}^*$ and $\da^*$ for the setup after adding the parameters, i.e. for any sets $B,C,D$, $\textrm{cl}^*(B)=\textrm{cl}(B,a',c'')$ and 
$B \da^{*}_C D$ if and only if $B \da_{Ca' c''} D$.
Similary, we write $u \in \textrm{dcl}^*(B)$ if and only if $u \in \textrm{dcl}(Ba'c'')$ and use the notation $\textrm{Cb}^*(u/B)$ for $\textrm{Cb}(u/Ba'c'')$.
 
\begin{lemma}\label{interdefinable}
The tuples $yx'$ and $zx''$ are interdefinable over $a''bc'$ in $\mathbb{M}^{eq}$ after adding the parameters $a'$ and $c''$ to the language.
\end{lemma}
 
\begin{proof} 
We first prove an auxiliary claim.
 
\begin{claim}\label{tyyppi}
If $t(z'/byc'x')=t(z/byc'x')$, then $\textrm{bcl}^{eq}(z)=\textrm{bcl}^{eq}(z')$ in $\M^{eq}$.
\end{claim}

\begin{proof}
We will show that both $z$ and $z'$ are interbounded (with respect to bcl) with $\textrm{Cb}(b,y/c',x')$ and thus interbounded with each other.
Denote $\alpha=\textrm{Cb}(b,y/c',x')$.
The set $\{b,c',z\}$ is independent. 
In particular, $b \downarrow_z c'$.
But $y \in \textrm{bcl}(b,z)$ and $x' \in \textrm{bcl}(c',z)$.
Hence, $by \downarrow_z c' x'$.
Since $z \in \textrm{bcl}(c'x')$, we have $\alpha=Cb(by/c'x'z)$ by Remark \ref{cbremark}, and since $by \da_z c'x'$, we have $\alpha=Cb(by/z)$.
Thus, $\alpha \in \textrm{bcl}(z)$ by Lemma \ref{cbbcl}.
We also have $by \da_\alpha c'x'$, so
$$U(by/\alpha)=U(by/\alpha c'x')=U(by/c'x')=1,$$
where the second equality follows from the fact that $\alpha \in \textrm{bcl}(z)$ and $z \in \textrm{bcl}(c',x')$.
Now $\alpha \notin \textrm{bcl}(\emptyset)$, since then we would have $U(by/\emptyset)=1$, contradicting our assumptions.
Thus, $\alpha \nda z$, and hence $z \nda \alpha$, so $z \in \textrm{bcl}(\alpha)$.
Hence we have seen that  $z$ is interbounded with $\alpha=\textrm{Cb}(b,y/c',x')$.
Since $t(z'/byc'x')=t(z/byc'x')$, the same holds for $z'$.
Thus, $z$ and $z'$ are interbounded.
\end{proof}

By Claim \ref{tyyppi}, 
$u=z/E^*$ if and only if there is some $w$ such that $t(w/byc'x')=t(z/byc'x')$ and $w/E^*=u$.
From this, it follows that $z/E^* \in \textrm{dcl}(byc'x')$ in $(\M^{eq})^{eq}$.  
Thus, by Lemma \ref{luokat}, $z \in \textrm{dcl}(byc'x') \subseteq \textrm{dcl}(a''bc'yx')$.  

For $zx'' \in \textrm{dcl}^{*}(a''bc'yx')$, it suffices to show that $x'' \in \textrm{dcl}^{*}(a''bc'yx'z)$.
If $t(x^*/a''yzc'')=t(x''/a''yzc'')$, then $\textrm{bcl}(x^*)=\textrm{bcl}(x'')$ (this is proved like Claim \ref{tyyppi}), and thus $x''/E^* \in \textrm{dcl}(a''c''yz) \subseteq \textrm{dcl}^{*}(a''bc'x'yz)$ (note that $\textrm{dcl}^*$ is defined with $c''$ as a parameter).
By Lemma \ref{luokat}, $x'' \in \textrm{dcl}^*(a''bc'x'yz)$.

Similarly, one proves that $yx' \in \textrm{dcl}^*(a''bc'zx'')$.
\end{proof}

Let $q_1=t(yx'/a'c'')$, $q_2=t(zx''/a'c'')$.
We will consider $Cb(yx',zx''/a''bc')$ as a germ of an invertible definable function from $q_1$ to $q_2$.
But for this, we need the types $q_1$ and $q_2$ to be stationary.
This can be obtained by doing a small trick.

\begin{claim}
We may without loss assume that the types $q_1$ and $q_2$ are stationary.
\end{claim}
 
\begin{proof}
We will show that the types become stationary after adding certain parameters to the language.

In order to simplify notation, denote for a while $d=(a,b,c,x,y,z,c',x',a'', x'')$.
Choose now a tuple $d' \in \M^{eq}$ such that $Lt(d'/a'c'')=Lt(d/a'c'')$ and $d' \da_{a'c''} d$.
Now, there is some tuple $d'' \in \M$ and a definable function $F$ such that $d'=F(d)$ (choose $F$ to be a 
cartesian product of the functions $F_E$ for the relevant equivalence relations $E$ (see \ref{meqsect})).
Let now $d^* \in \M$ be such that $Lt(d^*/a'c''d')=Lt(d''/a'c''d')$ and $d^* \da_{a'c''d'} d$.
Then, $F(d^*)=F(d'')=d'$, and by the choice of $d'$, symmetry and transitivity, we get $d^* \da_{a'c''} d$.

We claim that for any subsequence $e \subseteq d$, the type $t(e/a'c''d^*)$ is stationary.
Indeed, there is some subsequence $e^* \subset d^*$ such that $Lt(F'(e^*)/a'c'')=Lt(e/a'c'')$ for some definable function $F'$ (here, $F'$ is some subsequence of the functions $F_E$ that form $F$).
Thus, $t(e/a'c''e')$ (and hence $t(e/a'c''d'')$) determines $Lt(e/a'c'')$.
Let $a'c''d^* \subseteq B$ and  $f_1, f_2$ are such that $t(f_1/a'c''d^*)=t(f_2/a'c''d^*)=t(e/a'c''d^*)$ and $f_i \da_{a'c''d^*} B$ for $i=1,2$. 
Then, $Lt(f_1/a'c'')=Lt(f_2/a'c'')$.
Since $f_i \da_{a'c''} d^*$ for $i=1,2$, we have by transitivity $f_i \da_{a'c''} B$, and thus $Lt(f_1/B)=Lt(f_2/B)$.
So the type is indeed stationary.

Now, we add the tuple $d^*$ as parameters to our language.
Since it is independent over $a'c''$ from everything that we will need in the independence calculations that will follow, the calculations won't depend on whether we have added $d^*$ or not.
Thus, we may without loss assume $d^*=\emptyset$.
\end{proof}
  
We will show that we can consider $Cb(yx',zx''/a''bc')$ as a germ of an invertible definable function from $q_1$ to $q_2$, and that we may without loss suppose that $b=Cb(yx',zx''/a''bc')$.
Then, we will prove that for independent  $b_1, b_2$ realizing $\textrm{tp}(b/a'c'')$, $b_1^{-1}.b_2$ is a germ of an invertible definable function from $q_1$ to $q_1$.  

Note first that as $a'' \in \textrm{bcl}(bc'') \subseteq \textrm{bcl}^{*}(b)$ and $c' \in \textrm{bcl}(a'b) \subseteq \textrm{bcl}^{*}(b)$,
 we have $\textrm{Cb}(yx',zx''/a''bc')=\textrm{Cb}^*(yx',zx''/b)$.
Thus, from Lemma \ref{interdefinable}, it follows that the tuples $yx'$ and $zx''$ are interdefinable over $\textrm{Cb}^*(yx',zx''/b)$ after adding the parameters.
Hence, we may view $\textrm{Cb}^*(yx',zx''/b)$ as a germ of a function taking $yx' \mapsto zx''$.

We claim that after adding the parameters, $b$ is interbounded with $\textrm{Cb}^*(yx',zx''/b)$.
Denote now $\alpha=\textrm{Cb}^*(yx',zx''/b)$.
Clearly, $\alpha \in \textrm{bcl}^*(b)$.
We have $yx' zx'' \da_\alpha^* b$ and thus $b \da_\alpha^* yx'zx''$.
Since $b \in \textrm{bcl}^*(y,z)$, we have $b \in \textrm{bcl}^*(\alpha)$ by Theorem \ref{main}, (xii).
Thus, we may without loss assume that $b=\textrm{Cb}^*(yx',zx''/b)$.

Let $r=t(b/a',c'')$.
If $b_1, b_2$ realize $r$, then by $b_1^{-1}.b_2$ we mean the germ of the invertible definable function from $q_1$ to $q_1$ obtained by first applying $b_2$, then $b_1^{-1}$.
In other words, let $y_1 x_1'$ realize $q_1 |_{b_1b_2}$, and let $z_1x_1''=b_2.(y_1 x_1')$.
So $z_1x_1''$ realizes $q_2|_{ b_1b_2}$.
Let $y_2 x_2'=b_1^{-1}.(z_1x_1'')$ (i.e. $z_1x_1''=b_1.(y_2 x_2')$).
We may code the germ $b_1^{-1}.b_2$ by some canonical base of the type $p=t(y_1 x_1',y_2 x_2'/b_1,b_2,a',c'')$, i.e.
we will have $b_1^{-1}.b_2=\textrm{Cb}^*(y_1 x_1',y_2 x_2'/b_1,b_2)$.
A type can have different canonical bases that don't necessarily have the same Galois type.
However, we now fix a Galois type for this canonical base, and whenever we consider a canonical

This type can have different canonical bases, and all of them might not have the same Galois type.
Thus we now choose one canonical base and fix its Galois type; whenever we will consider a canonical base for $p$, we will assume it to have this particular Galois type.
 
As noted before, we have
$b_1^{-1}.b_2 \in \textrm{dcl}^*(b_1,b_2)$.

\begin{lemma}\label{vapaus}
Let $b_1$, $b_2$ realize $r$ ($=t(b/a'c'')$), and let $b_1 \da^* b_2$. 
Then, $b_1^{-1}.b_2 \da^* b_i$ for $i=1,2$.
In particular, $U(b_1^{-1}.b_2/a'c'')=1.$
\end{lemma}

\begin{proof}
Without loss of generality, $b_2=b$ and $b_1 \da^* a,b,c,x,y,z,c',x',a'',x''$.
  
We have $a' \da bzx$, and thus $b \da_{zx} a'$.
Since $b \da zx$, we get $b \da a'zx$.
On the other hand, $c'' \da a'bzx$, and thus (since $b \da a'zx$) $b \da a'c''zx$.
This implies $b \da^* zx$.
Since, $x'' \in \textrm{bcl}^*(z)$ and $c \in \textrm{bcl}^*(zx)$, we have $b \downarrow^* cxzx''$.
 Hence, $t(b/a'c''cxzx'')=t(b_1/a'c''cxzx'')$ (remember that $r$ is stationary due to the trick we did earlier), and there are elements $a_1, y_1, c_1',x_1',a_1''$ so that
$$t(a_1,b_1,c,x,y_1,z,c_1',x_1',a_1'',x''/a'c'')=t(a,b,c,x,y,z,c',x',a'',x''/a'c'').$$
To visualize this, think of the picture just before Lemma \ref{interdefinable}.
In the picture, keep the lines $(c,x,z)$ and $(c'',z,x'')$ fixed pointwise and move $b$ to $b_1$ by an automorphism fixing $a'c''$.
As a result, we get another similar picture drawn on top of the first one, with new elements $a_1, y_1, c_1'$ and $a_1''$ in the same configuration with respect to the fixed points as $a,y,c$ and $a''$ in the original picture.
 
\begin{claim}\label{i}
$a a_1 b b_1 \downarrow^* yx'.$ 
\end{claim}
\begin{proof}
By similar arguments as before, one sees that $y \da_{a'c''abc} b_1$ and $y \da a'c''abc$, so $y \da_{a' c''} abcb_1$
by transitivity.
As $a_1 \in \textrm{bcl}^*(b_1, c)$, we have  (by symmertry) $aa_1b b_1c \downarrow^* y$ and thus $a a_1 b b_1 \downarrow^* y.$
As $x' \in \textrm{bcl}^*(y)$, we have $a a_1 b b_1 \downarrow^* yx'.$
\end{proof}

\begin{claim}\label{ii}
$y_1 x_1' \in \textrm{bcl}^*(a,a_1,y)$
\end{claim}
\begin{proof}
$x \in \textrm{bcl}^*(a,y)$, $y_1 \in \textrm{bcl}^*(a_1,x)$ and $x_1' \in \textrm{bcl}^*(y_1)$.
\end{proof}

\begin{claim}\label{iii}
$y_1x_1'=(b_1^{-1}.b)(yx')$. 
\end{claim}
\begin{proof}
By Claim \ref{i}, $yx' \downarrow^* b b_1$, so it realizes $q_1 \vert_{bb_1}$.
On the other hand, $t(b_1y_1x_1'/a'c'')=t(byx'/a'c'')$ so $y_1 x_1' \da^* b_1$.
By similar arguments that were used to show that we may assume $b=Cb^*(yx',zx''/b)$, we also
see that we may assume $b_1=Cb^*(y_1x_1',zx''/b_1)$.
Thus, $b: yx' \mapsto zx''$ and $b_1:y_1x_1' \mapsto zx''$.
\end{proof}

\begin{claim}\label{iv}
$a a_1 \downarrow b.$
\end{claim}
\begin{proof}
$abc \downarrow^* b_1$, and thus $ab \downarrow_c^* b_1$.
As $a_1 \in \textrm{bcl}^*(b_1,c)$, we have $ab \downarrow_c^* a_1$.
By similar type of calculations that we have done before, we see that $a \da a' c'' c$.
Since $t(a_1/a'c''c)=t(a/a'c''c)$, we have that $a_1 \da a' c'' c$.
Together with $ab \downarrow_c^* a_1$, this implies $ab \da^* a_1$.
Using the independence calculus, one can verify that $b \da a'c''$ and $a \da^* b$, and thus 
$$U(aa_1b/a'c'')=U(b/a'c'')+U(a/ba'c'')+U(a_1/aba'c'')=1+1+1=3,$$
so $aa_1 \da^* b$, as wanted.
 \end{proof}

\begin{claim}\label{v}
$aa_1 \downarrow b_1.$
\end{claim}
\begin{proof}
Like Claim \ref{iv}.
\end{proof}

Denote $\sigma=b_1^{-1}.b$.
Now by Claim \ref{i}, $yx' \downarrow_{aa_1}^* a a_1 b b_1$.
Thus, by Claim \ref{ii}, $yx'y_1x_1' \downarrow _{aa_1}^* a a_1 b b_1$.
On the other hand, by Claim \ref{i}, $yx' \downarrow_{bb_1}^* a a_1 b b_1$.
By Claim \ref{iii}, $y_1x_1' \in \textrm{bcl}^*(yx',b,b_1)$, so $yx'y_1x_1' \downarrow_{bb_1}^* a a_1bb_1$.
Recall that $\sigma=\textrm{Cb}^*(yx',y_1x_1'/b,b_1)$, and thus 
$$\sigma=\textrm{Cb}^*(yx',y_1x_1'/a,a_1,b,b_1).$$
So, $\sigma \in \textrm{bcl}^*(a,a_1)$ since $yx'y_1x_1' \downarrow _{aa_1}^* a a_1 b b_1$.
By Claims \ref{iv} and \ref{v}, $\sigma \downarrow^* b$ and $\sigma \downarrow^* b_1$.

Since $\sigma \in \textrm{dcl}^*(b_1,b_2)$ and $\sigma \downarrow^* b_i$ for $i=1,2$, we have $U(\sigma/a'c'') \le 1$ (remember that $dcl^*$ and $\da^*$ were defined using $a'c''$ as parameters).
Since $b_1$ and $b_2$ realize $r=t(b/a'c'')$, $U(b/a'c'')=1$, $b_1 \da^*b_2$, and $b_1 \in dcl^*(\sigma, b_2)$, we have $U(\sigma/a'c'') \ge 1$.

\end{proof}
 
Denote now $\sigma=b_1^{-1}.b_2$ (from Lemma \ref{vapaus}) and let $s=t(\sigma/a'c'')$ (note that $t(\sigma^{-1}/a'c'')=s$ also).

\begin{lemma}\label{ykkonen}
Let $\sigma_1, \sigma_2$ be realizations of $s$ such that $\sigma_1 \da^* \sigma_2$.
Then, $\sigma_1.\sigma_2$ realizes $s|_{\sigma_i}$ for $i=1,2$.
\end{lemma}

\begin{proof}
Similar as in \cite{pi} (see also \cite{lisuri}). 
\end{proof}

Let $G$ be the group of germs of functions from $q_1$ to $q_1$ generated by $\{\sigma \, | \, \sigma \textrm{ realizes } s\}$ (note that this set is closed under inverses and thus indeed a group). 

\begin{lemma}
For any $\tau \in G$, there are $\sigma_1, \sigma_2$ realizing $s$ such that $\tau=\sigma_1.\sigma_2$. 
\end{lemma}

\begin{proof}
Similar as in \cite{pi} (see also \cite{lisuri}). 
\end{proof}

Consider the set 
$$G'=\{(\sigma_1, \sigma_2) \, | \, \textrm{$\sigma_1, \sigma_2$ are realizations of $s$}\}.$$
It is clearly Galois definable over $a'c''$.
Let $E$ be the equivalence relation such that for $\gamma_1, \gamma_2 \in G'$, $(\gamma_1, \gamma_2) \in E$ if and only if $\gamma_1(u)=\gamma_2(u)$ for all $u$ realizing $q_1 \vert_{\gamma_1 \gamma_2}$.
Then, $G=G'/E$, and $G$ is  Galois definable over $a'c''$.

It remains to prove that  for a generic $\sigma_1. \sigma_2$ it holds that $U(\sigma_1.\sigma_2/a'c'')=1$.
We note first that for $\sigma=b_1^{-1}.b_2$, we have $U(\sigma/a'c'')=1$.
Indeed, since $\sigma \da^* b_1$, we have
$$U(\sigma/a'c'')=U(\sigma/a'c''b_1) \le U(b_2/a'c''b_1)=1,$$
where the inequality follows from the fact that $\sigma \in \textrm{dcl}^*(b_1, b_2)$, and the last equality from the fact that $b_1 \da^* b_2$.
On the other hand, we cannot have $\sigma \in \textrm{bcl}(a'c'')$, since $b_2 \in \textrm{dcl}(b_1, \sigma)$.
Thus, $U(\sigma/a'c'')=1$.
If $\sigma_1 \da^* \sigma_2$, then by Lemma \ref{ykkonen}, $\sigma_1.\sigma_2$ realizes $s$, and thus $U(\sigma_1.\sigma_2/a'c'')=1$.
If $\sigma_1 \nda^* \sigma_2$, then $U(\sigma_1.\sigma_2/a'c'') \le 1$.
This proves the theorem.
\end{proof}

\begin{remark}\label{locmodgrouprem}
Using the group configuration theorem, it is easy to see that if $(\M, bcl)$ is a non-trivial, locally modular pregeometry, then there exists a group (see \cite{lisuri} for details).
\end{remark}
 
\section{Groups in Zariski-like structures}

In this chapter we suppose that $\M$ is a monster model for a quasiminimal class as introduced in Chapter 2.
As an attempt to generalize Zariski geometries to this context, we will present axioms for a Zariski-like structure.
These axioms capture some of the properties of the irreducible closed sets in Zariski geometries that are needed for finding a group in that context.
We then apply the group configuration theorem from Chapter 3 to show that if $\M$ satisfies these axioms and the pregeometry obtained from the bounded closure operator is non-trivial, then a $1$-dimensional group can be found in $(\M^{eq})^{eq}$.
The argument is a modification of the one presented for Zariski geometries in \cite{HrZi}.

To simplify notation, we often write $a \da b$ for $a \da_\emptyset b$ and $U(a)$ for $U(a/\emptyset)$.
In the following definition, when speaking about indiscernible sequences, we don't assume that they are non-trivial.
Recall that by generic elements of a set, we mean the elements with maximal $U$-rank (see Definition \ref{genericeka}).
  
Before we can list the axioms for a Zariski-like structure, we need some auxiliary definitions.
First, we generalize the notion of specialization from \cite{HrZi} to our context.

\begin{definition}\label{spesialmaar}
Let $\M$ be a monster model for a quasiminimal class, $A \subset \M$, and let $\mathcal{C}$ be a collection of subsets of $\M^n$, $n=1,2,\ldots$.
We say that a function $f: A \to \M$ is a \emph{specialization} (with respect to $\mathcal{C}$) if for any $a_1, \ldots, a_n \in A$ and for any $C \in \mathcal{C}$, it holds that if $(a_1, \ldots, a_n) \in C$, then $(f(a_1), \ldots, f(a_n)) \in C$.
If $A=(a_i : i \in I)$, $B=(b_i:i\in I)$ and the indexing is clear from the context, we write $A \to B$ if the map $a_i \mapsto b_i$, $i \in I$, is a specialization.

We say that the specialization $f$ is an \emph{isomorphism} if also the converse holds, that is if $C \in \mathcal{C}$ and $(f(a_1), \ldots, f(a_n)) \in C$, then $(a_1, \ldots, a_n) \in C$.

If $a$ and $b$ are finite tuples and $a \to b$, we denote $\textrm{rk}(a \to b)=\textrm{dim}(a/\emptyset)-\textrm{dim}(b/\emptyset)$.
\end{definition}
 
The specializations in the context of Zariski geometries in \cite{HrZi} are specializations in the sense of our definition if we take $\mathcal{C}$ to be the collection of closed sets (Zariski geometries are quasiminimal since they are strongly minimal).
 
Next, we generalize the definition of regular specializations from \cite{HrZi}.  

\begin{definition}
Let $\M$ be a monster model for a quasiminimal class.
We define a \emph{strongly regular} specialization recursively as follows:
\begin{itemize}
\item Isomorphisms (in the sense of Definition \ref{spesialmaar}) are strongly regular;
\item If $a \to a'$ is a specialization and $a \in \M$ is generic over $\emptyset$, then $a \to a'$ is strongly regular;
\item $aa' \to bb'$ is strongly regular if $a \downarrow_\emptyset a'$ and the specializations $a \to b$ and $a' \to b'$ are strongly regular.
\end{itemize}
\end{definition}

\begin{remark}
It follows from Assumptions 6.6 (7) in \cite{HrZi} (for a more detailed discussion on why these properties hold in a Zariski geometry, see \cite{lisuri}, Chapter 1.1.) that if a specialization on a Zariski geometry is strongly regular in the sense of our definition, then it is regular in the sense of \cite{HrZi} (definition on p. 25).
\end{remark}
  
The following generalizes the definition of good specializations from \cite{HrZi}.

\begin{definition}\label{good}
We define a \emph{strongly good} specialization recursively as follows.
Any strongly regular specialization is strongly good.
Let $a=(a_1, a_2, a_3)$, $a'=(a_1', a_2', a_3')$, and $a \to a'$.
Suppose:
\begin{enumerate}[(i)]
\item $(a_1, a_2) \to (a_1', a_2')$ is strongly good.
\item $a_1 \to a_1'$ is an isomorphism.
\item $a_3 \in \textrm{cl}(a_1)$.
\end{enumerate}
Then, $a \to a'$ is strongly good.
\end{definition}

A Zariski-like structure is defined by nine axioms as follows.  
Axioms (ZL1)-(ZL6) are some basic properties that hold for Zariski geometries.
Axioms (ZL7) and (ZL8) are based on some results proved for Zariski geometries in \cite{HrZi} 
that are needed for finding the group.
Axiom (ZL9) is designed to bring back some traces of Compactness to the setting, since that is needed at the
very end of the argument for finding the group (this will be discussed in more detail below).

\begin{definition}\label{zariskilike}
We say that a quasiminimal pregeometry structure $\M$ is \emph{Zariski-like} if there is a countable collection $\mathcal{C}$ of subsets of $\M^n$ ($n=1, 2, \ldots$), which we call the \emph{irreducible} sets, satisfying the following axioms (all specializations are with respect to $\mathcal{C}$).
\vspace{0.1cm}
  
\noindent  
(ZL1) Each $C \in \mathcal{C}$ is Galois definable over $\emptyset$. \\
(ZL2) For each $a \in \M$, there is some $C \in \mathcal{C}$ such that $a$ is generic in $C$. \\
(ZL3) If $C \in \mathcal{C}$, then the generic elements of $C$ have the same Galois type. \\
(ZL4) If $C,D \in \mathcal{C}$, $a \in C$ is generic and $a \in D$, then $C \subseteq D.$\\
(ZL5) If $C_1, C_2 \in \mathcal{C}$, $(a,b) \in C_1$ is generic, $a$ is a generic element of $C_2$ and $(a',b') \in C_1$, then $a' \in C_2$.\\
(ZL6) If $C \in \mathcal{C}$, $C \subset \M^n$, and $f$ is a coordinate permutation on $\M^n$, then $f(C) \in \mathcal{C}$.\\
(ZL7) Let $a \to a'$ be a strongly good specialization  and let $rk(a \to a') \le 1$.\\
Then any specializations $ab \to a'b'$, $ac \to a'c'$  can be amalgamated: there exists $b^{*}$, independent from $c$ over $a$ such that $\textrm{t}^g(b^{*}/a)=\textrm{t}^g(b/a)$, and $ab^{*}c \to a'b'c'$.\\
(ZL8) Let $(a_i:i\in I)$ be independent and indiscernible over $b$.\\
Suppose $(a_i':i\in I)$ is indiscernible over $b'$, and $a_ib \to a_i'b'$ for each $i \in I$.
Further suppose $(b \to b')$ is a strongly good specialization  and $rk(b \to b') \le 1$.
Then, \mbox{$(ba_i:i \in I) \to (b'a_i':i\in I)$.}\\
(ZL9) Let $\kappa$ be a (possibly finite) cardinal and let  $a_i, b_i \in \M$ with $i < \kappa$, such that $a_0 \neq a_1$ and $b_0=b_1$. 
Suppose $(a_i)_{i<\kappa} \to (b_i)_{i <\kappa}$ is a specialization.
Assume there is some unbounded and directed $S \subset \mathcal{P}_{<\omega}(\kappa)$ satisfying the following conditions:
\begin{enumerate}[(i)]
\item  $0,1 \in X$ for all $X \in S$;
\item For all $X,Y \in S$ such that $X \subseteq Y$,  and for all sequences $(c_i)_{i \in Y}$ from $V$, the following holds: 
If $c_0=c_1$, $ (a_i)_{i \in Y} \to (c_i)_{i \in Y} \to (b_i)_{i \in Y},$
and $\textrm{rk}((a_i)_{i \in Y} \to (c_i)_{i \in Y}) \le 1$, then $\textrm{rk}((a_i)_{i \in X} \to (c_i)_{i \in X}) \le 1$.
\end{enumerate}
 
Then, there are $(c_i)_{i<\kappa}$ such that   
$$(a_i)_{i \in \kappa} \to (c_i)_{i \in \kappa} \to (b_i)_{i \in \kappa},$$
$c_0=c_1$ and $\textrm{rk}((a_i)_{i \in X} \to (c_i)_{i \in X}) \le 1$ for all $X \in S$.
  
\end{definition}
 
Let $a \in \M$.
Then, by (ZL2) and (ZL3), there is a unique $C \in \mathcal{C}$ such that $a$ is generic on $C$.
This is also the smallest $C \in \mathcal{C}$ such that $a \in C$.
This set determines the Galois type of $a$, and we call it the \emph{locus} of $a$.
 
\begin{remark}\label{ZL9implies}
We note that (ZL9) implies the dimension theorem of Zariski geometry (see \cite{HrZi}).
Indeed, suppose $\kappa=n$, a finite cardinal, and $a=(a_0, \ldots, a_{n-1})$ and $b=(b_0, \ldots, b_{n-1})$ are such that
$a \to b$, $a_0 \neq a_1$ and $b_0=b_1$.
Let $S=\{n\}$.
Then, conditions (i) and (ii) in (ZL9) hold, so we find an $n$-tuple $c$ such that
$a \to c \to b$, $U(a)-U(c) \le 1$ and $c_0=c_1$.
\end{remark}

In the following, we note that Zariski-like structures are indeed generalizations of Zariski geometries.
 
\begin{example}\label{ZariskiZariskiLike} 
Let $D$ be a Zariski geometry.
Since $D$ is strongly minimal, it is also quasiminimal.
Consider the collection of closed sets in the language.
Then, the irreducible (in the topological sense) ones among them satisfy the axioms (ZL1)-(ZL9).
Indeed, the axioms (ZL1)-(ZL6) are clearly satisfied.
It is well known that on a strongly minimal structure, $U$-ranks and Morley ranks coincide.
On a Zariski geometry, first order types imply Galois types.
Moreover, every strongly regular specialization is regular, and every strongly good specialization is good.
Hence, (ZL7) is Lemma 5.14 in \cite{HrZi} and (ZL8) is Lemma 5.15 in \cite{HrZi}.
(ZL9) holds by Compactness.
 \end{example}

\begin{example}
Consider the model class from Example \ref{equivalence}.
For each $n$, define the irreducible sets of $\M^n$ to be those definable with finite conjunctions of formulae of the form $x_i=x_j$ or $E(x_i, x_j) \wedge x_i  \neq x_j$.
In addition, we require that   
if $E(x_i, x_j) \wedge  x_i  \neq x_j$ belongs to the conjunction, then also $E(x_j, x_i) \wedge  x_i \neq x_j$ belongs there, that if 
both $E(x_i, x_j)  \wedge  x_i \neq x_j$ and $E(x_j, x_k) \wedge   x_j \neq x_k$ belong to the conjunction, then either $x_i=x_k$ or $E(x_i,x_k) \wedge  x_i \neq x_k$ belongs there,
and that if both $E(x_i, x_j) \wedge x_i \neq x_j$ and $x_i=x_k$ belong to the conjunction, then also $E(x_k, x_j) \wedge  x_k \neq x_j$ belongs there.

Now, it is easy to verify that the class satisfies the axioms for a Zariski-like structure.
\end{example} 
 
\begin{example}
Let $V$ be a vector space over $\mathbb{Q}$, and let $F$ be an algebraically closed field of characteristic $0$.
A \emph{cover} of the multiplicative group of $F$ is a structure represented by the exact sequence
$$0 \to K \to V \to F^* \to 1,$$
where the map $V \to F^*$ is a surjective group homomorphism from $(V,+)$ onto $(F^*, \cdot)$ with kernel $K$.
This is an example of a Zariski-like structure if we take the collection $\mathcal{C}$ to consist of the 
irreducible $\emptyset$-closed sets in the PQF-topology (see \cite{Lucy}, \cite{cover}).  
The proof can be found in \cite{lisuri} or \cite{cover}.
\end{example}
 
\begin{example}
Hrushovski \cite{newsm} has constructed a non locally modular strongly minimal set that does not interpret any group.
Since strongly minimal structures are quasiminimal, this is an example of a quasiminimal pregeometry structure.
However, it is not Zariski-like since our Theorem \ref{grouptheorem} implies that if Zariski-like, it would interpret a group. 
\end{example} 
 
We will show that if the pregeometry obtained from the bounded closure operator is non-trivial, then a group can be interpreted in a Zariski-like structure.
The locally modular case will follow from Remark \ref{locmodgrouprem}, so the non locally modular case is the more interesting one. 
In \cite{HrZi}, Hrushovski's group configuration theorem is used to show that there is a group in a
non locally modular Zariski geometry.
There authors introduce the concept of an indiscernible array and reformulate the group configuration theorem using it: if there is a certain kind of an array in the model, then there is a group configuration.
Then, they use properties of families of plane curves to construct a suitable array.
The crucial fact here is that if $D$ is a non locally modular Zariski geometry, then there is a family of plane curves that cannot be parametrized with a set of dimension less than $2$. 

Reformulating the group configuration in terms of indiscernible arrays generalizes rather straightforwardly to our setting and is done in section 4.2.
The main difference is that in the first order context it suffices that the arrays are infinite, while we need them to be uncountable.
To be able to follow the method from \cite{HrZi} to construct an array that yields a group, we also
need a suitable generalization for the notion of a family of plane curves presented in \cite{HrZi}.
We introduce it in section 4.1. 
Then, in section 4.3, we will follow the argument from \cite{HrZi} to construct an indiscernible array that yields a group.
Here, the main difference is that we don't have Compactness.
In \cite{HrZi}, the array is built by first constructing $n \times n$ -arrays for each natural number $n$, 
and then using Compactness to obtain an infinite array. 
Since we can't do this, we start from a very large array, use some combinatorial geometric tricks to make it indiscernible, and finally employ (ZL9) in place of Compactness.
  
\subsection{Families of plane curves} 
  

In \cite{HrZi}, a plane curve is defined simply as a $1$-dimensional closed and irreducible set.
This might seem intuitive also in our context, but it would make it difficult to define families of curves in a meaningful way.
Following \cite{HrZi}, we wish to define a family of plane curves as an irreducible set $C$ where the curves are parametrized using another irreducible set $E$.
Then, for each generic $e \in E$, we would expect the set $C(e)=\{x \, | \, (e,x) \in C\}$ to be a plane curve.
This makes sense in the context of Zariski geometries since there the irreducible sets are determined by a topology
which makes the notion of irreducibility in a sense more flexible than in our setting.
The trouble is that our collection $\mathcal{C}$ captures only the properties of the irreducible, $\emptyset$-closed sets in a Zariski geometry, not the properties of all irreducible closed sets.
Axiom (ZL1) requires the sets in $\mathcal{C}$ to be Galois definable over $\emptyset$, but arbitrary sets of the form $C(e)$ need not satisfy this.
However, if for a generic $(x,y) \in C(e)$, the tuple $(e,x,y)$ is generic in $C$, then (ZL3) guarantees that the generic elements of $C(e)$ have the same Galois type over $e$.
Thus, we introduce the following auxiliary notion and define families of plane curves in terms of it.

\begin{definition}\label{goodfor}
Let $C \subset \M^{n+m}$ be an irreducible set.
We say an element $a \in \M^n$ is \emph{good} for $C$ if there is some $b \in \M^m$ so that $(a,b)$ is a generic element of $C$.
\end{definition}

\begin{definition}
Let $\M$ be a Zariski-like structure, let $E \subseteq \M^n$ be irreducible, and let $C \subseteq \M^2 \times E$ be an irreducible set.
For each $e \in E$, denote $C(e)=\{(x,y) \in \M^2 \, | \, (x,y,e) \in C\}$.
Suppose now $e \in E$ is a generic point.
If $e$ is good for $C$ and the generic point of $C(e)$ has $U$-rank $1$ over $e$, then we say that $C(e)$ is a \emph{plane curve}.
We say $C$ is a \emph{family of plane curves} parametrized by $E$.  

Assume $e$ is good for $C$ and $(x,y,e)$ is generic on $C$.
We say that $\alpha$ is the \emph{canonical parameter} of the plane curve $C(e)$ if $\alpha=\textrm{Cb}(x,y/e)$ for a generic element $(x,y) \in C(e)$.
We define the $\emph{rank}$ of the family to be the $U$-rank of $\textrm{Cb}(x,y/e)$ over $\emptyset$, where $e \in E$ is generic, and $(x,y)$ is a generic point of $C(e)$.
\end{definition}

\begin{definition}
We say a family of plane curves $C \subset \M^2 \times E$ is \emph{relevant} if for a generic  $e \in E$ and a generic point $(x,y) \in C(e)$ it holds that $x,y \notin \textrm{bcl}(e)$.
\end{definition}

When proving that a one-dimensional group can be found from a Zariski-like structure, the non locally modular case will be the difficult one.
In this case, finding the group configuration will lean heavily on the fact that not being locally modular implies the existence of a relevant family of plane curves of rank at least $2$.
  
\begin{lemma}\label{relevantfamily}
Suppose $\M$ is a Zariski-like structure, and every relevant family of plane curves on $\M$ has rank $1$.
Then, $\M$ is locally modular.
\end{lemma}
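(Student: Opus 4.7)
The plan is to prove the contrapositive. Assume $\M$ is not locally modular, and I aim to exhibit a relevant family of plane curves of rank at least $2$, contradicting the hypothesis. The first task is to translate the failure of local modularity of the pregeometry $(\M, \textrm{bcl})$ into an algebraic configuration. Adapting Zilber's classical characterization of local modularity to the quasiminimal setting using Theorem \ref{main} and the canonical base theory developed in Section \ref{meqsect}, non-local-modularity should yield finite tuples $x, y \in \M$ and an element $\alpha \in \M^{eq}$ such that $x \da y$, $U(xy) = 2$, $U(xy/\alpha) = 1$, $U(\alpha) \ge 2$, and $x, y \notin \textrm{bcl}(\alpha)$. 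Concretely, $\alpha = \textrm{Cb}(xy/B)$ for a suitable set $B$ witnessing the failure of the modular identity in the pregeometry.

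Next I replace $\alpha$ by a finite tuple from $\M$. By Lemma \ref{cbbcl} and Theorem \ref{main}(xiii), $\alpha \in \textrm{bcl}^{eq}(e)$ for some finite $e \in \M^n$; choosing such $e$ with $U(e)$ minimal forces $\textrm{Cb}(xy/e)$ to be interbounded with $\alpha$ over $\emptyset$. Let $E \in \mathcal{C}$ be the locus of $e$ (existing by axioms (ZL2)--(ZL3)), and let $C \in \mathcal{C}$ be the locus of $(x, y, e) \in \M^2 \times E$. For generic $e' \in E$, the element $e'$ is good for $C$ (by (ZL3)), and the generic fiber $C(e')$ has $U$-rank $U(xy/e) = U(xy/\alpha) = 1$; so each such fiber is a plane curve and $C$ is a family of plane curves parametrized by $E$. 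The canonical parameter of the family is $\textrm{Cb}(xy/e)$, interbounded with $\alpha$, hence the family has rank $U(\alpha) \ge 2$. Relevance follows from $x, y \notin \textrm{bcl}(\alpha)$, which passes to generic fibers by (ZL3): for generic $(x', y') \in C(e')$ we still have $x', y' \notin \textrm{bcl}(e')$. This yields the promised relevant family of rank $\ge 2$, contradicting the hypothesis and proving the lemma.

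The main obstacle is the first step. Unlike the first-order case, where the failure of local modularity is directly extracted from a non-forking calculation using compactness, in the quasiminimal setting the classical argument must be re-derived purely from the independence calculus and canonical base machinery of Chapter $2$. One must show that if the modular identity fails somewhere in $(\M, \textrm{bcl})$ then it fails on a rank-$2$ configuration whose canonical base has rank $\ge 2$, and then realize this canonical base inside $\M^{eq}$ in a way that can subsequently be coded by a tuple of $\M$ with irreducible locus in $\mathcal{C}$. Once this translation is in hand the rest of the proof is essentially formal: axioms (ZL2)--(ZL3) supply the needed loci, the canonical base gives the rank of the family, and the independence conditions on $x, y$ guarantee relevance.
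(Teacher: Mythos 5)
There is a genuine gap, and it is exactly where you flag ``the main obstacle'': your first step is not an ingredient of the proof, it \emph{is} the proof. The paper's argument consists precisely of the adaptation you defer -- it reduces the lemma to (ii) $\Rightarrow$ (iii) of Lemma 3.4 of Hyttinen's paper on finitely generated substructures of homogeneous structures (with details written out in the licentiate thesis), i.e.\ to the statement that ``linearity'' (every rank-$1$ type of a pair of singletons over a set has a canonical base of $U$-rank at most $1$, away from degenerate cases) implies local modularity of the bcl-pregeometry. Your contrapositive skeleton needs the same statement: from failure of local modularity you must produce two \emph{singletons} $x,y$ and a set $B$ with $U(xy/B)=1$, $U(\textrm{Cb}(xy/B))\ge 2$ and $x,y\notin\textrm{bcl}(B)$. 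Failure of local modularity is a priori witnessed by closed sets (equivalently tuples) of arbitrary dimension, and the reduction to a rank-$1$ configuration of two singletons is the substantive geometric argument (localization at a generic point, induction on dimension, exchange); it is not available ``off the shelf'' in the quasiminimal setting, since the classical proofs use first-order compactness and the elementary theory of canonical bases -- re-deriving it from Chapter 2 is exactly the cited work. Writing ``non-local-modularity should yield'' such a configuration therefore leaves the entire mathematical content unproved; the remainder of your argument, as you yourself note, is essentially formal bookkeeping with loci.

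Two smaller points in the formal part. First, relevance requires $x,y\notin\textrm{bcl}(e)$ for the real parameter tuple $e$, and this does not follow from $x,y\notin\textrm{bcl}(\alpha)$ together with (ZL3), since $\alpha\in\textrm{bcl}^{eq}(e)$ gives no inclusion of $\textrm{bcl}(e)$ into $\textrm{bcl}(\alpha)$; it does follow, but via a different route: taking $e\subseteq B$ one has $xy\da_{\alpha}e$ (as $xy\da_{\alpha}B$), so $x\in\textrm{bcl}(e)\subseteq\textrm{bcl}(\alpha e)$ would contradict reflexivity (Theorem \ref{main} (xii)). Likewise, the identification of $\textrm{Cb}(xy/e)$ with $\alpha$ comes not from minimizing $U(e)$ but from $xy\da_{e}B$ (which holds since $\alpha\in\textrm{bcl}(e)$ forces $U(xy/e)=U(xy/B)$) together with Remark \ref{cbremark}; and to have $C\subseteq\M^{2}\times E$ for non-generic points of $C$ you need (ZL5) with (ZL6), not just the definition of locus. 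These are fixable, but the central extraction step must actually be carried out (or the cited lemma invoked) for the proposal to constitute a proof.
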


\begin{proof}
This is similar to (ii) $\Rightarrow$ (iii) of Lemma 3.4. in \cite{Hy}.
See \cite{lisuri} for details.
\end{proof}
 
\subsection{Groups from indiscernible arrays} 
 
In the non locally modular case, we are going to use a relevant family of plane curves of rank at least $2$ to build the group configuration from Chapter 3.
As in \cite{HrZi}, we reformulate this configuration in terms of indiscernible arrays.

 \begin{definition}
We say that $A=(A_{ij} : i \in I, j \in J)$, where $I$ and $J$ are ordered sets, is an \emph{indiscernible array} over $B$ if whenever $i_1, \ldots, i_n \in I$, $j_1, \ldots, j_m \in J$, $i_1 < \ldots < i_n$, $j_1 <\ldots < j_m$, then $t((A_{i_{\nu} j_{\mu}}: 1 \le \nu \le n, 1 \le \mu, \le m)/B)$ depends only on the numbers $n$ and $m$.

If at least the $U$-rank of the above sequence depends only on $m,n$, and $U((A_{i_{\nu} j_{\mu}}: 1 \le \nu \le n, 1 \le \mu, \le m)/B)= \alpha(m,n)$, where $\alpha$ is some polynomial of $m$ and $n$, we say that $A$ is \emph{rank-indiscernible}  over $B$, of type $\alpha$, and write $U(A;n,m/B)=\alpha(n,m)$.
\end{definition}  
 
If $(c_{ij}: i \in I, j \in J)$ is an array and $I' \subseteq I$, $J' \subseteq J$ , we write $c_{I'J'}$ for $(c_{ij}: i \in I', j \in J'$).
If $|I'|=m$ and $|J'|=n$, we call $c_{I'J'}$ an $m \times n$ -\emph{rectangle} from $c_{ij}$.
 
\begin{lemma}
Let $f=(f_{ij} : i, j \in \kappa)$ be an indiscernible array over $A$, and let $\kappa \ge \o_1$.
Then, for all $m,n$, all the $m \times n$ rectangles of $f$ have the same Lascar type over $A$. 
\end{lemma}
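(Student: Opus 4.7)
The plan is to first prove the following auxiliary claim: if $B$ is a finite set and $(a_i)_{i<\o_1}$ is a sequence satisfying that $t((a_{i_1},\ldots,a_{i_k})/B)$ depends only on $k$ whenever $i_1<\cdots<i_k$, then $Lt(a_i/B)=Lt(a_j/B)$ for all $i,j<\o_1$. To prove it, I would apply Lemma \ref{fodor} to obtain a countable model $\A\supseteq B$ and a cofinal $X\subseteq\o_1$ with $(a_i)_{i\in X}$ Morley over $\A$. Morley sequences over countable models are strongly indiscernible (shown earlier in Chapter~2), so Lemma \ref{silt} gives $Lt(a_i/\A)=Lt(a_j/\A)$, hence $Lt(a_i/B)=Lt(a_j/B)$, for all $i,j\in X$. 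For $j\notin X$, I would fix $j_1\in X$ with $j_1>j$ and $i\in X$ with $i<j_1$; the hypothesis gives $t((a_i,a_{j_1})/B)=t((a_j,a_{j_1})/B)$, and since weak types and Galois types coincide over finite sets, this supplies $g\in\textrm{Aut}(\M/B)$ with $g(a_i)=a_j$ and $g(a_{j_1})=a_{j_1}$. Because automorphisms fixing $B$ preserve equality of Lascar types over $B$ (established via Lemma \ref{lasindisc} in the proof of Lemma \ref{bcllocal}), applying $g$ to $Lt(a_i/B)=Lt(a_{j_1}/B)$ yields $Lt(a_j/B)=Lt(a_{j_1}/B)$.

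\textbf{Applying the claim.} To prove the main lemma, fix $m,n$, and (since equality of Lascar types over $A$ is determined by equality over each finite subset of $A$) a finite $B\subseteq A$. For any $m$-subset $I^{*}\subseteq\kappa$, choose pairwise disjoint $n$-subsets $J_\alpha\subseteq\kappa$ for $\alpha<\o_1$ with $\max J_\alpha<\min J_\beta$ whenever $\alpha<\beta$, and set $R_\alpha=f_{I^{*},J_\alpha}$. The indiscernibility of the array $f$ makes $(R_\alpha)_{\alpha<\o_1}$ satisfy the auxiliary hypothesis: any ordered $k$-tuple of the $R_\alpha$'s is a canonical sub-tuple of the $m\times(nk)$-rectangle $f_{I^{*},J_{\alpha_1}\cup\cdots\cup J_{\alpha_k}}$, whose type over $B$ depends only on $m$ and $nk$. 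The claim yields $Lt(f_{I^{*},J}/B)=Lt(f_{I^{*},J'}/B)$ for any two $n$-subsets $J,J'\subseteq\kappa$, using an auxiliary $J^{*}$ with $\max(J\cup J')<\min J^{*}$ to bridge cases where $J,J'$ do not themselves satisfy the ordering constraint. The symmetric argument, valid because the indiscernibility condition is symmetric in the two coordinates of the array, gives $Lt(f_{I,J^{*}}/B)=Lt(f_{I',J^{*}}/B)$ for any two $m$-subsets $I,I'\subseteq\kappa$. Chaining the two equalities shows that $Lt(f_{I,J}/B)$ is constant over all $m\times n$ rectangles of $f$, and taking the conjunction over all finite $B\subseteq A$ completes the proof.

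\textbf{Main obstacle.} The substantive step is the auxiliary claim, specifically the transfer of constancy of Lascar types from the Fodor-extracted Morley subsequence to the remaining $a_j$ with $j\notin X$: Lemma \ref{silt} controls Lascar types only along a strongly indiscernible sequence, and the full sequence $(a_i)_{i<\o_1}$ need not be Morley. The sliding automorphism argument above resolves this by exploiting the $B$-invariance of the relation ``having the same Lascar type over $B$'', which lets us move the equality from a pair inside $X$ to a pair straddling $X$ via an automorphism that fixes $B$.
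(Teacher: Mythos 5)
Your proposal is correct, and at its core it is the paper's own argument: both proofs extract, via Lemma \ref{fodor}, a Morley (hence strongly indiscernible) subsequence from an $\o_1$-sequence of pairwise disjoint rectangles, use Lemma \ref{silt} to get a constant Lascar type along it, and then transfer this to an arbitrary rectangle by an automorphism over the finite base supplied by the array's weak-type indiscernibility, together with the fact that automorphisms fixing the base preserve equality of Lascar types. The only genuine difference is the decomposition: the paper slides whole $m\times n$ rectangles diagonally (row and column indices increase simultaneously), so a single transfer step handles an arbitrary rectangle directly, whereas you slide one coordinate at a time (columns with the row set fixed, then rows with the column set fixed) and chain the two moves, bridging through a block $J^*$ lying above everything involved; this costs some extra bookkeeping but is equally valid, and your abstract auxiliary claim about $\o_1$-sequences that are order-indiscernible with respect to weak types is a clean way to isolate the common core that both routes rely on.
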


\begin{proof}
Suppose not.
Let $m, n$ be such that all the $m \times n$ -rectangles don't have the same Lascar type over $A$.
Let $(B_k)_{k< \kappa}$ be a sequence of disjoint $m \times n$ -rectangles such that if $f_{ij} \in B_{k_1}$ and $f_{i'j'} \in B_{k_2}$, where $k_1 <k_2$, then $i<i'$ and $j<j'$.
There is some $I \subset \kappa$, $\vert I \vert =\kappa$ such that $(B_k)_{k \in I}$ is Morley over some model $\A \supset A$.
Relabel the indices so that $I=\kappa$.
By the counterassumption, there is some $m \times n$ rectangle $B$ such that $Lt(B/A) \neq Lt(B_0/A)$.
Let $0<\lambda < \kappa$ be such that whenever $f_{ij} \in B$ and $f_{i'j'} \in B_\lambda$, then $i<i'$ and $j<j'$.
Now, $B_0 B$ and $B_0 B_\lambda$ are both $2m \times 2n$ -rectangles, so $t(B_0 B/A)=t(B_0 B_\lambda /A)$.
This is a contradiction, since $Lt(B_0/A) \neq Lt(B/A)$, $Lt(B_0/A)=Lt(B_\lambda/A)$ and automorphisms preserve the equality of Lascar types.
\end{proof} 
 
The following lemma will yield the connection between the indiscernible arrays and the group configuration from Chapter 3.

\begin{lemma}\label{groupexists}
 Let $(f_{ij} : i,j< \omega_1)$  be an indiscernible array of elements of $\M$, of type  $\alpha(m,n)=m+n-1$ over some finite parameter set $B$.
Then there exists a Galois definable $1$-dimensional group in $(\M^{eq})^{eq}$. 
 \end{lemma}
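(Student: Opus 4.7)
The plan is to reduce to Theorem \ref{groupmain} by extracting from the indiscernible array a strict bounded partial quadrangle $(a,b,c,x,y,z)$ over some finite $A \supseteq B$; once this is done and stationarity of $t(a,b,c,x,y,z/A)$ is verified, Theorem \ref{groupmain} immediately produces a Galois definable group in $(\M^{eq})^{eq}$ whose generic element has $U$-rank $1$.

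The construction is driven by the type formula $\alpha(m,n)=m+n-1$: every additional row or column beyond the first contributes exactly one to the rank, so if $R_i$ denotes the $i$th row of a finite sub-rectangle of sufficient length, then $U(R_i/R_j)=1$ for $i \ne j$. Passing to $\M^{eq}$, the canonical base $\textrm{Cb}(R_i/R_j)$ exists by Lemma \ref{cbexists}, lies in $\textrm{bcl}(R_j)$ by Lemma \ref{cbbcl}, and in the generic case has $U$-rank exactly one. I would work in a $3 \times N$ sub-rectangle for $N$ large, take $x,y,z$ to be three suitably chosen array entries, and take $a,b,c$ to be rank-$1$ canonical bases encoding the relative positions of pairs of rows, playing the role of the ``lines'' joining the points. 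The rank identities required by Definition \ref{conf}---each element of rank $1$, each of the four collinear triples $\{a,x,y\}$, $\{b,y,z\}$, $\{c,x,z\}$, $\{a,b,c\}$ of rank $2$, and each remaining triple of rank $3$---will then reduce, via additivity of $U$-rank (Theorem \ref{main}(x)) and the closure properties of Theorem \ref{main}, to instances of the formula $\alpha(m,n)=m+n-1$ applied to small sub-rectangles of the array.

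Stationarity of $t(a,b,c,x,y,z/A)$ for a suitable finite $A$ should follow from indiscernibility of the array together with Lemma \ref{silt}: an $\omega_1$-indexed indiscernible array contains Morley sub-arrays by Lemma \ref{fodor}, and the trick used earlier in the proof of Theorem \ref{groupmain} of adding enough independent parameters to make the relevant weak type stationary applies here as well. The argument overall is an adaptation of the classical extraction of a group configuration from an indiscernible array in the Zariski-geometry setting of \cite{HrZi}, with bcl and $U$-rank replacing algebraic closure and Morley rank.

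The main obstacle is the simultaneous verification of all the rank equalities in Definition \ref{conf} for a single choice of the six elements. In particular, the ``line'' $a$ through $x$ and $y$ must lie in $\textrm{bcl}(x,y)$ but stay outside the bcl of any third element of the configuration, and $\{a,b,c\}$ must itself form a collinear triple of rank $2$; arranging these symmetric conditions starting from the asymmetric canonical-base construction $\textrm{Cb}(R_i/R_j)$ is the delicate step and is where most of the combinatorial work will lie.
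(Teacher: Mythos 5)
Your overall strategy---extract a strict bounded partial quadrangle from the array and feed it to Theorem \ref{groupmain}---is exactly the paper's, but the construction of the ``line'' elements is where the proposal breaks down. The central claim, that $\textrm{Cb}(R_i/R_j)$ has $U$-rank one and ``encodes the relative position of rows $i$ and $j$'', is both unjustified and false: $U(R_i/R_j)=1$ does not give $U(\textrm{Cb}(R_i/R_j))=1$ (the rank of a canonical base measures the rank of the associated family, not of the type), and since $\textrm{Cb}(R_i/R_j)\in\textrm{bcl}(R_j)$ by Lemma \ref{cbbcl}, it cannot record how row $i$ sits relative to row $j$ beyond what is already bounded over $R_j$ alone. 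Test it on the motivating example $f_{ij}=a_i+b_j$ with $a_i,b_j$ generics of a rank-one group: $t(R_i/R_j)$ is the generic type of a translate of $R_j$, and its canonical base is interbounded with $(b_2-b_1,\ldots,b_n-b_1)$, so it has $U$-rank $n-1$ for rows of length $n$; and even for $n=2$, where the rank is $1$, it is (up to interboundedness) the \emph{same} element for every pair of rows, so the three elements $a,b,c$ you would extract are pairwise interbounded and cannot form a collinear triple of rank $2$. So the proposed quadrangle degenerates and the reduction does not go through as described.

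The missing idea is the intersection-of-bounded-closures lemma that the paper proves as Claim \ref{claimura}: if $U(c/d_1d_2\A)=U(c/d_1\A)=U(c/d_2\A)$, then there is $e\in\textrm{bcl}_\A(d_1)\cap\textrm{bcl}_\A(d_2)$ with $U(c/e\A)=U(c/d_1d_2\A)$. With this, the paper takes \emph{four} array entries $a=f_{1,2}$, $c=f_{2,2}$, $y=f_{1,3}$, $z=f_{2,3}$, obtains $x\in\textrm{bcl}(ay)\cap\textrm{bcl}(cz)$ (controlling $d=(f_{3,2},f_{3,3})$) and $b\in\textrm{bcl}(ac)\cap\textrm{bcl}(f_{1,1}f_{2,1})$ (controlling $yz$), and then checks the rank identities from the type $m+n-1$, using indiscernibility and an automorphism-swap argument to rule out degeneracies such as $y\in\textrm{bcl}(a,c)$. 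Note that in this setting the element capturing the displacement between rows $i$ and $j$ is bounded over a single column pair $(f_{i,k},f_{j,k})$ and is found inside $\textrm{bcl}(f_{i,1}f_{j,1})\cap\textrm{bcl}(f_{i,2}f_{j,2})$---precisely what Claim \ref{claimura} delivers---not inside $\textrm{bcl}(R_j)$. This route also keeps the whole configuration inside $\M$, over a countable model $\A$ chosen independent from the array (then a finite $A\subset\A$), which handles the stationarity hypothesis of Theorem \ref{groupmain}; your configuration of imaginaries would put additional strain on that theorem as stated, since its hypothesis is formulated for tuples of $\M$.
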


\begin{proof}
We will show that there is in $\M$ a group configuration as in Definition \ref{conf}, and thus a Galois definable $1$-dimensional group by Theorem \ref{groupmain}. 
Let $\A$ be a countable model such that $B \subset \A$ and $f \da_B \A$
(note that we can find such a model by constructing a sequence $(a_i)_{i<\o}$ independent from $f$ over $B$, and then taking $\A=\textrm{bcl}(B, (a_i)_{i<\o})$).
We write $\textrm{bcl}_\A(X)$ for $\textrm{bcl}(\A \cup X)$.
To simplify notation, we assume that $B=\emptyset$.

We prove first an auxiliary claim. 
 
\begin{claim}\label{claimura}
Suppose $U(c/ d_1 d_2 \A)=U(c/d_1 \A)=U(c/d_2 \A)$.
Then there exists $e \in \textrm{bcl}_\A(d_1) \cap \textrm{bcl}_\A(d_2)$ such that $U(c/e \A)=U(c/d_1 d_2 \A)$.
\end{claim}

\begin{proof}
 
Let $E= \textrm{bcl}_\A(d_1) \cap \textrm{bcl}_\A(d_2)$. 
Let $c=(c_1, \ldots, c_m)$ and suppose $c_1, \ldots, c_k$ are independent over $\A$ from $d_1 d_2$ while $c \in \textrm{bcl}_\A(c_1, \ldots, c_k, d_1, d_2)$.
Then
\begin{displaymath}
c \in \textrm{bcl}_\A(c_1, \ldots, c_k, d_i, E)
\end{displaymath}
for $i=1,2$.
We will show that $c \in \textrm{bcl}_\A(c_1, \ldots, c_k, E)$.
Let $e \in E$ be a finite tuple such that $E=\textrm{bcl}_\A(e)$.
Suppose $U(d_2/\A d_1)=r$ and $U(d_2/\A \cup E)=r+l$.
We may assume without loss of generality that $d_2=e \cup \{d_{2,1}, \ldots, d_{2,r}, d_{2,r+1}, \ldots, d_{2,r+l}\}$ where $d_{2,1}, \ldots, d_{2,r}$ are independent over $\A d_1$ and $d_{2,r+1}, \ldots, d_{2,r+l} \in \textrm{bcl}_\A(d_1, d_{2,1}, \ldots, d_{2,r})$.
Now  
\begin{displaymath}
c \in \textrm{bcl}(c_1, \ldots, c_k, d_{2,1}, \ldots, d_{2,r}, \ldots, d_{2,r+l}, e,a)
\end{displaymath}
for some $a \in \A$ such that $d_{2,r+1}, \ldots, d_{2,r+l} \in \textrm{bcl}(d_1, d_{2,1}, \ldots, d_{2,r},a)$.
We will show that we can move the parameters $d_{2,1}, \ldots, d_{2,r}, \ldots, d_{2,r+l}$ one by one to $E$ using automorphisms.
We do this first for $d_{2,1}$.

We note first that $c \da_{\A d_1, d_{2,2}, \ldots, d_{2,r}} d_{2,1}$.
Indeed,
\begin{displaymath}
U (c/ \A d_1) \ge U(c/ \A, d_1, d_{2,2}, \ldots, d_{2,r}) \ge U(c/ \A, d_1, d_{2,1}, d_{2,2}, \ldots, d_{2,r})=U(c/\A, d_1, d_2)=U(c/\A d_1),
\end{displaymath}
so $U(c/ \A, d_1, d_{2,2}, \ldots, d_{2,r})=U(c/ \A, d_1, d_{2,1}, d_{2,2}, \ldots, d_{2,r})$.

We have $d_{2,1} \notin \textrm{bcl}_\A(d_1, d_{2,2}, \ldots, d_{2,r})$, and thus, $d_{2,1} \da_\A d_1 d_{2,2},\ldots d_{2,r}$.
Hence, by transitivity,
$$d_{2,1} \da_\A d_1 d_{2,2} \ldots, d_{2,r} c.$$
Then, there is some finite set $A \subset \A$ such that $a \in A$, $d_{2,1} \da_A d_1 d_{2,2} \ldots, d_{2,r} c$ and $d_1 d_{2,2} \ldots, d_{2,r} c \da_A \A$.
Let $d_{2,1}' \in \A$ be such that $Lt(d_{2,1}'/A)=Lt(d_{2,1}/A)$.
Now, there is some $f \in \textrm{Aut}(\M/A d_1 d_{2,2} \ldots, d_{2,r} c)$ such that $f(d_{2,1})=d_{2,1}'$.
For $1 \le i \le l$, denote $d_{2,r+i}'=f(d_{2,r+i})$.
Then, we have
\begin{displaymath}
c \in \textrm{bcl} (c_1, \ldots, c_k, d_{2,2}, \ldots, d_{2,r}, d_{2,r+1}' \ldots, d_{2,r+l}', e, d_{2,1}', a).
\end{displaymath}

We now repeat the above argument with $d_{2,2}$ in place of $d_{2,1}$.
When choosing a finite set $A' \subset \A$ such that $d_{2,2} \da_{A'} d_1 d_{2,3} \ldots, d_{2,r} c$ and $d_1 d_{2,3} \ldots, d_{2,r} c \da_{A'} \A$, we take care that $a,d_{2,1}' \in A$.
After doing the argument $r$ times, we have obtained elements $d_{2,r+1}^* \ldots, d_{2,r+l}'^* \in \textrm{bcl}_\A(d_1)$ such that 
\begin{displaymath}
c \in \textrm{bcl}_\A(c_1, \ldots, c_k, d_{2,r+1}^* \ldots, d_{2,r+l}^*, e).
\end{displaymath} 
If $d_{2,r+1}^* \ldots, d_{2,r+l}^* \in E$, we are done.

If not, there are some numbers $0<n \le m \le l$ such that (after renaming the elements in $\{d_{2, r+1}^{*}, \ldots, d_{2, r+l}^{*} \} \setminus (E \cup \A)$)  we have $U(d_{2,1}^{*}, \ldots, d_{2,n}^{*}, d_{2,n+1}^{*}, \ldots, d_{2,m}^{*}/E \cup \A)=m$, $U(d_{2,1}^{*}, \ldots, d_{2,n}^{*}/\A d_2)=n$, and $d_{2,n+1}^{*}, \ldots, d_{2,m}^{*} \in \textrm{bcl}_\A(d_2, d_{2,1}^{*}, \ldots, d_{2,n}^{*})$.
As $d_{2,1}^{*}, \ldots, d_{2,n}^{*} \in \textrm{bcl}_\A(d_1)$, we have $n \le U(d_1/E \cup \A)$.
Thus
\begin{displaymath}
U(c/\A d_2) \ge U(c/\A,d_2, d_{2,1}^{*}, \ldots, d_{2,n}^{*}) \ge U(c/ \A, d_1,d_2)=U(c/ \A d_2),
\end{displaymath}
so 
$$c \da_{ \A d_2} d_{2,1}^{*} \ldots d_{2,n}^{*},$$
and thus e.g. $d_{2,1}^{*} \da_{ \A, d_2, d_{2,2}^{*}, \ldots, d_{2,n}^{*}} c.$
Hence we may move $d_{2,1}^{*}, \ldots, d_{2,n}^{*}$ to $E$ with the same process as before with $d_2$ in place of $d_1$.
We keep repeating the process, and as at every step we move one element to $E$, we will eventually have moved them all, so we get $c \in \textrm{bcl}_\A (c_1, \ldots, c_k, E)$ as wanted. 

\end{proof}

From now on, we will simplify the notation by assuming that the elements of $\A$ are symbols in our language.

Let $a=f_{1,2}$, $c=f_{2, 2}$, $y=f_{1,3}$, $z=f_{2,3}$.
We will find elements $x$ and $b$ so that $\{a,b,c,x,y,z\}$ will form a group configuration.

Let $d=(f_{3, 2}, f_{3,3})$.
One can compute using the type $\alpha$ of the array that 
\begin{displaymath}
U(d/a y)=U(d/c z)=U(d/a c z y)=1.
\end{displaymath}
Thus, by Claim \ref{claimura}, there exists $x \in \textrm{bcl}(a y) \cap \textrm{bcl}(c z)$ such that $U(d/x)=1$.
We prove that $U(x)=1$.
We have $U(x) \geq 1$ since $U(d)=2$.
Now
\begin{displaymath}
3-U(x)=U(a y c z/x) \leq U(a y/x)+ U(c z/ x)=U(a y)+U(c z) -2U(x)=4- 2 U(x),
\end{displaymath}
where we use the type of the array and the fact that $x \in \textrm{bcl}(a y) \cap \textrm{bcl}(c z)$.
Thus, $U(x) \leq 1$.

Let $a'=f_{1, 1}$, $c'=f_{2,1}$.
By the type of the array, 
\begin{displaymath}
U(yz/a c)= U(yz /a' c')=U(yz /a c a' c') =1.
\end{displaymath}
By Claim \ref{claimura}, there exists $b \in \textrm{bcl}(a c) \cap \textrm{bcl}(a' c')$ such that $U(yz/b)=1$.
Similarly as before, $U(b)=1$.

It is clear from the type of the array that 
\begin{displaymath}
U(z)=U(y)=U(c)=U(a)=1,
\end{displaymath}
and
\begin{displaymath}
U(z, y)=U(a, c) =U(a, y) =U(c, z)=2.
\end{displaymath}
Also, 
\begin{displaymath}
U(a, b, c)=U(a, y, x)=U(z, y, b)=U(z, c, x)=2,
\end{displaymath}
and
\begin{displaymath}
U(z,x,y,a,b,c)=U(z,y, a,c)=3
\end{displaymath}
by the type of the array and the choice of $x$ and $y$.
Thus, we are left to prove that the rest of the pairs have $U$-rank $2$ and that the rest of the triples have $U$-rank $3$.

We prove first that $U(a, c, y)=U(a, c, z)=3$ (and it of course follows that $U(y,c)=U(z, a)=2$).
Suppose that $y \in \textrm{bcl}(a, c)$.
Consider the concatenated sequence $(f_{i,2}f_{i,3})_{i < \omega_1}$.
Now, there is some stationary set $S \subseteq \omega_1$ and some model $\B$ such that the sequence $(f_{i,2}f_{i,3})_{i \in S}$ is Morley over $\B$.
Let $j, k \in S$ be such that $j<k$.
Since the sequence $(f_{i,2}f_{i,3})_{i < \omega_1}$ is order indiscernible, there is some automorphism $g$ of $\M$ such that $g(f_{1,2} f_{1,3})=f_{j,2} f_{j,3}$ and $g(f_{2,2} f_{2,3})=f_{k,2} f_{k,3}$.
Since $(f_{i,2}f_{i,3})_{i \in S}$ is Morley over $\B$, there is an automorphism $\pi \in \textrm{Aut}(\M/\B)$ such that
$\pi(f_{j,2} f_{j,3})=f_{k,2} f_{k,3}$ and $\pi(f_{k,2} f_{k,3})=f_{j,2} f_{j,3}$.
The map $g^{-1} \circ \pi \circ g$ is an automorphism taking  $f_{1,2} f_{1,3} \mapsto f_{2,2} f_{2,3}$,  and $f_{2,2} f_{2,3} \mapsto f_{1,2} f_{1,3}$.
Hence
\begin{displaymath}
t(f_{1,3}f_{1,2} f_{2,2}/\emptyset)=t(f_{2,3}f_{2,2} f_{1,2}/\emptyset).
\end{displaymath}
So, $z \in \textrm{bcl}(a, c)$ and $U( a, c,y,z)=2$ which is a contradiction (by the type of the array it should be $3$).
One proves similarly that $z \notin \textrm{bcl}(a, c)$.

Similarly one shows that $U(c, y, z)=3$.
 
Now we prove $U(x, z)=2$.
Suppose not.
Then, $x \in \textrm{bcl}(z).$
We chose $x$ so that $U(d/x)=1$.
As $U(d)=2$, we have $U(d/z)=1$ and thus $U(d, z)=2$.
So, $z \in \textrm{bcl}(d)= \textrm{bcl}(f_{3,2}, f_{3,3})$.
By the indiscernibility of the array, $y \in \textrm{bcl}(c, z)$, and we already proved this is not the case.
Similarly, $U(x, y)=2$.

For the rest of the conditions needed for $\{a,b,c,x,y,z\}$ to be a group configuration, one uses properties of pregeometries and the conditions that we already proved.
Eventually, we will have obtained a group configuration over $\A$,
and there is some finite $A \subset \A$ so that the configuration is over $A$.
Hence, we may apply Theorem \ref{groupmain} to see that the group exists.
\end{proof}

\subsection{Finding the group}

In this section we will prove that the group exists.
We will need the technical lemma that follows.
Recall that the closure operator $\textrm{bcl}_X$ is defined so that $bcl_X(Y)=bcl(XY)$, and write $dim_{bcl_X}$ for the dimension in the pregeometry given by $bcl_X$.

\begin{lemma}\label{tekn1}
Let $(A_{ij}: 1 \le i \le M, 1 \le j \le N)$, $M, N \ge 2$, be a subarray of an indiscernible array of size $\o_1 \times \o_1$ over some finite tuple $b$.  
Assume $U(A;m,n/b)=m+n$ for any $m \le M$, $n \le N$, and that $\textrm{dim}_{\textrm{bcl}_b}(\textrm{dcl}(A_{12} A_{22}b) \cap \textrm{dcl}(A_{11} A_{21}b))=2$.
 
Let $b(A_{ij}) \to b(a_{ij})$ be a rank-$1$ specialization.
Suppose $Lt(a_{ij}/ b)$ is constant with $i$, $j$,   $U(a_{ij}/b)=1$ for each pair $i$, $j$, and $U(a;2,1/b)=2$.
Also assume $bA_{ij} A_{i'j} \to ba_{ij} a_{i'j}$ is strongly good for any $i$, $i'$, $j$.
Then $a$ is a rank-indiscernible array of type $m+n-1$ over $b$.
\end{lemma}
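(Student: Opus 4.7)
The plan is to show that for every $m \times n$ sub-rectangle $R$ of the array $(a_{ij})$, with $m \le M$ and $n \le N$, the restricted specialization $bA_R \to ba_R$ has rank drop exactly $1$. Granted this, one obtains
$$U(a;m,n/b) = U(A;m,n/b) - 1 = m+n-1,$$
and since the value depends only on $m$ and $n$, the array $a$ is rank-indiscernible of type $m+n-1$ over $b$, which is the desired conclusion.

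First I would establish the lower bound on rank drop. Each single-element sub-specialization $bA_{ij} \to ba_{ij}$ satisfies $\textrm{rk}(bA_{ij}\to ba_{ij}) = \dim(A_{ij}/b) - \dim(a_{ij}/b) = 2-1 = 1$, using $U(A;1,1/b) = 2$ and the hypothesis $U(a_{ij}/b) = 1$. By monotonicity of rank drop under enlargement of a specialization (in the quasiminimal setting, a specialization cannot strictly increase relative $\textrm{bcl}$-dimension), every non-empty sub-rectangle satisfies $\textrm{rk}(bA_R \to ba_R) \ge 1$. For the upper bound, I would combine the global hypothesis that $b(A_{ij}) \to b(a_{ij})$ has rank $1$ with the same monotonicity to conclude $\textrm{rk}(bA_R \to ba_R) \le 1$. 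The two bounds force $\textrm{rk} = 1$ on every sub-rectangle of the array, as required.

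The main obstacle is verifying the monotonicity property in the ``row'' direction, since the strongly good hypothesis is provided only for pairs sitting in the same column. Here the modularity assumption $\textrm{dim}_{\textrm{bcl}_b}(\textrm{dcl}(A_{12}A_{22}b) \cap \textrm{dcl}(A_{11}A_{21}b))=2$ is essential: it furnishes a codimension-one link between adjacent columns that must survive the specialization. Combining this with (ZL7), which amalgamates rank-$\le 1$ strongly good specializations, and (ZL8), which propagates rank-$\le 1$ specializations along indiscernible sequences, lets me lift the column-wise control of rank drops to cross-column sub-rectangles. The remaining hypotheses $U(a;2,1/b)=2$ and constancy of $Lt(a_{ij}/b)$ provide, respectively, the base case for the rank computation and the uniformity needed so that the propagation does not depend on the choice of sub-rectangle, ultimately yielding the constant-valued $U$-rank required for rank-indiscernibility.
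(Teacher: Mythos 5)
Your whole argument rests on one monotonicity principle: that the rank of a specialization is monotone under enlargement of the tuple, so that every sub-rectangle satisfies $\textrm{rk}(bA_R \to ba_R) \le \textrm{rk}(b(A_{ij}) \to b(a_{ij})) = 1$, and dually $\textrm{rk}(bA_R \to ba_R) \ge \textrm{rk}(bA_{ij} \to ba_{ij}) = 1$. That principle is false for specializations. Absolute dimension cannot increase along a specialization (the image stays in the locus), but \emph{relative} dimension can jump: one can have $\textrm{dim}(y'/x') > \textrm{dim}(y/x)$ for a specialization $xy \to x'y'$, so a restriction of a rank-$1$ specialization can have strictly larger rank. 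Already in an algebraically closed field (a Zariski geometry, hence a Zariski-like structure): let $xy$ be a generic point of the blow-up surface of the plane at the origin, and let $x'y'$ have $x'$ the origin and $y'$ generic on the exceptional curve; then $xy \to x'y'$ is a specialization of rank $1$, while its restriction $x \to x'$ has rank $2$. This fibre-dimension jump is exactly the phenomenon the lemma must exclude, and it is why the statement carries the hypotheses your sketch barely touches: the strongly good column specializations $bA_{ij}A_{i'j} \to ba_{ij}a_{i'j}$, the condition $\textrm{dim}_{\textrm{bcl}_b}(\textrm{dcl}(A_{12}A_{22}b) \cap \textrm{dcl}(A_{11}A_{21}b))=2$, the hypothesis $U(a;2,1/b)=2$, and the constancy of the Lascar types. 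If your monotonicity were available, the lemma would follow with none of these, which is a strong sign the route cannot work as written.

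Your final paragraph does acknowledge the problem in the row direction, but the proposed remedy --- that the modularity condition together with (ZL7) and (ZL8) ``lets you lift the column-wise control of rank drops'' --- is not an argument; it is a restatement of the lemma itself, i.e.\ exactly the content that has to be proved. The paper does not prove the lemma by any restriction/enlargement formalism: it refers to the proof of Lemma 6.8 in \cite{HrZi} (with details in \cite{lisuri}), which is an induction on rectangles in which the element of $\textrm{dcl}(A_{11}A_{21}b) \cap \textrm{dcl}(A_{12}A_{22}b)$ supplied by the modularity hypothesis links consecutive columns, and (ZL7) and (ZL8) are applied over the strongly good, rank-$\le 1$ column specializations to control how much rank can be lost on larger and larger sub-rectangles; both inequalities $U(a;m,n/b) \le m+n-1$ and $U(a;m,n/b) \ge m+n-1$ come out of this induction. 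Note that for genuine rectangles ($m,n \ge 2$) neither bound is a formal consequence of the global rank-$1$ hypothesis together with the single-entry and $2\times 1$ data: for instance $U(a;2,2/b) \le 3$ asserts a dependence among the four specialized entries that must be produced by the argument, not inherited by restricting the big specialization.
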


\begin{proof} 
Similar to the proof of Lemma 6.8 in \cite{HrZi} (see \cite{lisuri} for details).
\end{proof}

Now we are ready to prove the existence of a group.

\begin{theorem}\label{grouptheorem}
Let $\M$ be a Zariski-like structure with a non-trivial pregeometry.
Then, there exists a Galois definable one-dimensional group in $(\M^{eq})^{eq}$.
\end{theorem}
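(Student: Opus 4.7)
The plan is to split into two cases based on local modularity of the pregeometry $(\M, \textrm{bcl})$. If $(\M, \textrm{bcl})$ is non-trivial and locally modular, then by Remark \ref{locmodgrouprem} a Galois definable group already exists, so we are done. Hence the interesting case is when $(\M, \textrm{bcl})$ is non locally modular. By the contrapositive of Lemma \ref{relevantfamily}, there then exists a relevant family of plane curves $C \subseteq \M^{2} \times E$ of rank at least $2$, and the aim becomes to use $C$ to produce an indiscernible array satisfying the hypotheses of Lemma \ref{groupexists}; that lemma then delivers the desired $1$-dimensional group in $(\M^{eq})^{eq}$.

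The second step is to mimic the Hrushovski--Zilber plane-curve construction to build a preliminary geometric configuration. Using the rank $\ge 2$ hypothesis on $C$, pick a generic parameter $e \in E$ good for $C$, a generic point $(x,y)$ on $C(e)$, and enough independent copies to produce, at least formally, a rectangular array of pairs $(f_{ij})$ in which horizontal edges lie on common plane-curve fibres and the column parameters are themselves generic. A direct calculation with $U$-ranks and the relevance of the family should give $U(f_{ij}) = 1$, $U(f_{\cdot j};1,n) = n$, $U(f_{i \cdot};m,1) = m$, and, crucially, the ``line-like'' relation $U(f; m, n) = m + n - 1$ on small rectangles, provided one quotients by an equivalence identifying boundedly interbounded parameters (as in the tricks used before Lemma \ref{groupexists}).

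The third step is to promote this combinatorial data to a genuinely indiscernible array of size $\o_{1} \times \o_{1}$ of type $\alpha(m,n) = m+n-1$ over a finite base $B$, and here the main technical obstacle lies: in \cite{HrZi} one builds arrays of every finite size and closes under Compactness, which is unavailable to us. Following the strategy advertised in the introduction, I would first take a very long sequence of independent realizations of the relevant configuration, then repeatedly apply Lemma \ref{fodor} (Fodor's lemma) along rows and columns, together with the pigeonhole arguments used in Lemma \ref{lasindisc} and Lemma \ref{notypes}, to extract cofinal subsequences that are Morley and hence strongly indiscernible. After shrinking, one obtains a rank-indiscernible array of the right polynomial type on an uncountable grid, to which Lemma \ref{tekn1} applies to force the strongly good rank-$1$ specializations needed between columns. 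The role of Compactness is then played by axiom (ZL9): taking $\kappa = \o_{1}$ and $S$ the directed family of finite rectangles $X \subseteq \o_{1} \times \o_{1}$ containing two specified coordinates, the rank-$1$ bounds verified by Lemma \ref{tekn1} give exactly the hypothesis (ii) of (ZL9), and its conclusion yields a global specialization collapsing the two rows appropriately while preserving the rank profile on every finite rectangle. Combined with (ZL3) and (ZL5), this promotes the rank-indiscernibility to full Galois-type indiscernibility on the whole $\o_{1} \times \o_{1}$ grid.

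Once such an array is in hand, Lemma \ref{groupexists} applies directly and produces a Galois definable $1$-dimensional group in $(\M^{eq})^{eq}$, completing the non locally modular case and hence the theorem. The main obstacle, as flagged above, is the third step: carrying out the combinatorial extraction of the uncountable indiscernible array without Compactness, and verifying in detail that the hypotheses of (ZL9) really are met by the family produced from $C$ --- in particular that the specializations arising between generic fibres of $C$ can be arranged to be strongly good of rank $\le 1$, which is where most of the bookkeeping (and uses of Remark \ref{cbinterbd}, Lemma \ref{cbbcl}, and the stationarity/canonical base machinery of section \ref{meqsect}) will concentrate.
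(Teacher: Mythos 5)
Your overall strategy coincides with the paper's (locally modular case via Remark \ref{locmodgrouprem}; otherwise a relevant family of plane curves of rank $\ge 2$ from Lemma \ref{relevantfamily}, an indiscernible array of type $m+n-1$, Lemma \ref{groupexists}, with (ZL9) standing in for Compactness and Lemma \ref{tekn1} feeding its hypothesis (ii)), but there are genuine gaps in the middle. First, the claim in your second step that a direct $U$-rank computation on the array built from the family yields the profile $m+n-1$ is false: the array $A_{ij}=(a_i,b_j,e_{ij})$ one honestly obtains from the curves has $U(A;m,n/da_0b_0)=m+n$, and the whole difficulty of the proof is manufacturing a \emph{new} array with profile $m+n-1$. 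Second, and most importantly, (ZL9) needs as \emph{input} a specialization $(a_i)\to(b_i)$ in which the right-hand side is already collapsed ($b_0=b_1$) while $a_0\neq a_1$; your sketch never constructs it. In the paper this is the specialization $A\to^*A''$ (with $A''_{ij}=(a_0,b_0,e_{i1})$), built in two stages (Claims \ref{ekac} and \ref{tokac}) by combining Lemma \ref{fodor} (to get Morley, hence strongly indiscernible, rows) with repeated applications of (ZL8), whose key geometric ingredient is Claim \ref{aclspec}(iii): for two independent generic points of the same curve one has $deaba'b'\to deabab$, itself proved via (ZL8) and the stationarity bookkeeping. Without this collapsing specialization, (ZL9) simply cannot be invoked, and Lemma \ref{tekn1} has nothing to apply to — note also that Lemma \ref{tekn1} does not ``force'' specializations; it only converts a given rank-$1$, strongly good specialization of a type-$(m+n)$ array into the conclusion that the target is of type $m+n-1$.

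Third, your mechanism for producing the uncountable indiscernible array is not adequate. Lemma \ref{fodor} extracts a Morley subsequence from a single sequence of finite tuples of regular uncountable length; applying it ``along rows and columns'' together with pigeonhole does not yield two-dimensional indiscernibility of an $\o_1\times\o_1$ grid, and (ZL3)/(ZL5) do not upgrade rank-indiscernibility to Galois-type indiscernibility (that would require knowing the finite rectangles are generic on a common irreducible set, which is essentially what must be proved). The paper instead starts from an array of size $\kappa\times\lambda$ with $\lambda\ge\beth_{(2^{\o_1})^+}$ and $\kappa\ge\beth_{(2^\lambda)^+}$ and applies Erd\"os--Rado together with an Ehrenfeucht--Mostowski construction (Shelah's trick from \cite{CatTran}), first along rows of length $\lambda$ and then along columns, to get an indiscernible $\o_1\times\o_1$ array whose finite rectangles embed into the original one; and it is precisely because the initial cardinals were chosen this large that the same trick can be applied \emph{again after} (ZL9) to make the resulting type-$(m+n-1)$ array $A^*$ indiscernible, as Lemma \ref{groupexists} requires. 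So the skeleton of your plan is right, but the collapsing specialization (via (ZL8) and Claim \ref{aclspec}) and the Erd\"os--Rado/EM extraction are missing ideas, not mere bookkeeping.
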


\begin{proof}  
If $\M$ is locally modular, then there is a group by Remark \ref{locmodgrouprem}.
 
So suppose $\M$ is non locally modular.
By Lemma \ref{relevantfamily}, there exists a relevant family of plane curves that has rank $r \ge 2$.
Let $\alpha$ be the canonical parameter for one of the curves in this family, and suppose $U(\alpha)=r$.
Let $(x,y)$ be a generic point on this curve, i.e. $\alpha=\textrm{Cb}(x,y/\alpha)$. 
Since the family is relevant, we have $x \da \alpha$, $y \da \alpha$.
We also have $x \da y$, because otherwise $1=U(xy/\emptyset)=U(xy/\alpha)$, so $xy \da_\emptyset \alpha$, which would imply $\alpha \in \textrm{bcl}(\emptyset)$ since $\alpha$ is a canonical parameter.
This is a contradiction, since the family has rank $r \ge 2$.
 
Let $c_1, \ldots, c_r, d_1, \ldots, d_r$  be such that $t(c_i,d_i/\alpha)=t(x,y/\alpha)$ for $1 \le i \le r$,
and the sequence $x, c_1, \ldots, c_r$ is independent over $\alpha$.
We claim that $U(c_1,d_1, \ldots, c_r,d_r)=2r$.
For this, we first show that $\alpha \in \textrm{bcl}(c_1,d_1, \ldots, c_r,d_r)$.
 We have $c_1 \in \textrm{bcl}(d_1, \alpha)$, so $c_1 \nda_\alpha d_1$, and thus
$$U(c_1/d_1)=U(c_1)=U(c_1/\alpha)>U(c_1/d_1\alpha),$$
so $c_1 \nda_{d_1} \alpha$.
Hence, $U(\alpha/d_1)>U(\alpha/c_1d_1)$.
Thus, $U(\alpha/ c_1 d_1) \le r-1$.
 
If $U(\alpha/c_1 d_1, \ldots, c_k d_k)=0$ for some $k \le r$, then $\alpha \in \textrm{bcl}(c_1 d_1, \ldots, c_k d_k)$ so we are done. 
So suppose $0<U(\alpha/c_1 d_1, \ldots, c_k d_k) \le r-k$ for some $k<r$.
We claim that $U(\alpha/c_1 d_1, \ldots, c_k d_k, c_{k+1} d_{k+1}) \le r-k-1$.
Suppose towards a contradiction that
$$U(\alpha/c_1 d_1, \ldots, c_k d_k)=U(\alpha/c_1 d_1, \ldots, c_k d_k, c_{k+1} d_{k+1}).$$
Then, 
\begin{eqnarray}\label{rairai}
c_{k+1} d_{k+1} \da_{c_1 d_1, \ldots, c_k d_k}  \alpha.
\end{eqnarray}
We have
\begin{eqnarray}\label{rairai2}
\alpha=\textrm{Cb}(c_{k+1} d_{k+1}/\alpha)=\textrm{Cb}(c_{k+1} d_{k+1}/\alpha, c_1 d_1, \ldots, c_k d_k),
\end{eqnarray}
where the second equality follows from the fact that $c_{k+1} d_{k+1}Ê\da_\alpha c_1 d_1, \ldots, c_k d_k.$
From (\ref{rairai}) and and (\ref{rairai2}), it follows that $\alpha \in \textrm{bcl}( c_1 d_1, \ldots, c_k d_k),$ a contradiction.
 
Thus, $\alpha \in \textrm{bcl}(c_1, d_1, \ldots, c_r,d_r)$.
Since the sequence $(c_id_i)_{1 \le i \le r}$ was chosen to be independent over $\alpha$, Theorem \ref{main} (x) gives
$$U(\alpha)+U(c_1 d_1/\alpha)+ \ldots + U(c_r d_r/\alpha)=r+r=2r,$$
so $U(c_1, d_1, \ldots, c_r,d_r)=2r$. 

Next, we show that for $1 \le k \le r$, $U(\alpha/c_1, \ldots, c_r, d_1, \ldots, d_k)=r-k$.
Indeed, since $d_1, \ldots, d_r \in \textrm{bcl}(\alpha, c_1, \ldots, c_r)$, we have
\begin{eqnarray*}
2r&=&U(\alpha, c_1, \ldots, c_r, d_1, \ldots, d_k)=U(c_1, \ldots, c_r, d_1, \ldots, d_k)+U(\alpha/c_1, \ldots, c_r, d_1, \ldots, d_k) \\ &=&(r+k)+U(\alpha/c_1, \ldots, c_r, d_1, \ldots, d_k). 
\end{eqnarray*} 

Let now $C$ be the locus of $(x,y,c_1, \ldots, c_r, d_1, \ldots, d_r)$ and $E$ the locus of $(c_1, \ldots, c_r, d_1, \ldots, d_r)$.
Then, $C$ is a family of plane curves parametrized by $E$, and 
 $C(c_1, \ldots, c_r, d_1, \ldots, d_r)$ is a curve in this family.
Denote $d=(c_1, \ldots, c_r, d_1, \ldots, d_{r-2})$ and $e_0=(d_{r-1}, d_r)$.
Since $xy \da_\alpha c_i d_i$ for each $i$, we have $\alpha=\textrm{Cb}(x,y/c_1, \ldots, c_r, d_1, \ldots, d_r)$. 
It is interbounded with $e_0$ over $d$.
Since the sequence $de_0=(c_1, \ldots, c_r, d_1, \ldots, d_r)$ was seen to be independent, we have $U(e_0/d)=2$.

Let $e \in E(d)$ be a generic element.
We now write  $C(e; a,b)$ for "$(a,b)$ is a generic point of $C(ed)$". 
We write $C^2(e; ab, a'b')$ if the following hold:
\begin{enumerate}
\item $C(e; a, b)$ and $C(e; a', b')$; 
\item $ab \downarrow_{de} a' b'$; 
\item $Lt(ab/de)=Lt(a'b'/de).$  
\end{enumerate}

\begin{claim}\label{aclspec}
Suppose $a \neq a'$, $b \neq b'$, and $C^2(e; ab, a'b')$.
Then,
\begin{enumerate}[(i)]
\item $e \in \textrm{bcl}(d, a,b,a',b')$.
\item $U(aba'b'/d)=4$.
\item  $deaba' b' \to deabab$. 
\end{enumerate}
\end{claim}

\begin{proof}
Since $e$ is interbounded with $\textrm{Cb}(a,b/de)$, (i) and (ii) hold by similar arguments that were used above to calculate that
$\alpha \in \textrm{bcl}(c_1, \ldots, c_r, d_1, \ldots, d_r)$ and
$U(c_1, \ldots, c_r, d_1, \ldots, d_r)=2r$.

 we see using similar arguments as above, that (i) holds.
 
 For (ii), we will apply (ZL8).
Let $\A$ be a model such that $de \in \A$ and $aba'b' \da_{de} \A$.
Then, $ab \da_{dea'b'} \A$, and since $ab \da_{de} a'b'$, we get by transitivity that $ab \da_{de} a'b' \A$, which implies $ab \da_\A a'b'$.
On the other hand, we have $Lt(ab/de)=Lt(a'b'/de)$, $ab \da_{de} \A$ and $a'b' \da_{de} \A$, so $Lt(ab/\A)=Lt(a'b'/\A)$.
Thus, we may extend $(ab, a'b')$ to a Morley sequence over $\A$.
It follows that $a'b'$ and $ab$ are strongly indiscernible over $de$.
Of course also $ab$ and $ab$ are strongly indiscernible over $de$ (just repeat $ab$ arbitrarily many times to extend the sequence).
Clearly $dea'b' \to deab$ and $deab \to deab$, $\textrm{rk}(de \to de)=0 \le 1$, and $de \to de$ is strongly good.   
Hence, we may apply (ZL8) to get $deaba' b' \to deabab$.
\end{proof}
 
Pick some generic point $e \in E(d)$, and independent generics $a_0, b_0, a, b \in \M$ such that $C^{2}(e; ab, a_0 b_0)$ (after we have $a$ and $b$, we can use Theorem \ref{main} (iv) to find $a_0$ and $b_0$). 
Let $\lambda$ be a cardinal such that $\lambda \ge \beth_{(2^{\omega_1})^+}$, and let $\kappa$
be a cardinal such that $\kappa \ge \beth_{(2^\lambda)^+}$.
Let $a_i, b_j$, $0<i,j <\kappa$ be a sequence of  generic elements of $\M$ independent over $d$ such that $Lt(a_i b_j /d a_0 b_0)=Lt(ab/ d a_0 b_0)$ for all $i,j$.  
For each pair $i,j$, let $f_{ij}$ be an automorphism fixing $a_0, b_0, d$ such that $f_{ij}(a,b)=(a_i,b_j)$.
Denote $e_{ij}=f_{ij}(e)$.
Then, $C^2(e_{ij}; a_i b_j, a_0 b_0)$ holds  for each pair $i,j$.
Let $A_{ij}=(a_i, b_j, e_{ij})$, $A=(A_{ij})_{i,j \ge 1}$.
We will next show that we can find an indiscernible array of size $\o_1 \times \o_1$ such that each one of its finite subarrays is isomorphic to some finite subarray of $A$.
 
For each $i< \kappa$, denote $A_{i, <\lambda}=(A_{ij} | j< \lambda)$.
Using Erd\"os-Rado and an Ehrenfeucht-Mostowski construction, one finds a  sequence $(A'_{i, < \lambda})_{i<\o_1}$ such that every finite permutation of the sequence preserving the order of the indices $i$ extends to 
some $f \in \textrm{Aut}(\M/d a_0 b_0)$.
Moreover, an isomorphic copy of every finite subsequence can be found in the original sequence $(A_{i, < \lambda})_{i<\kappa}$.
This construction is due to Shelah and can be found in e.g. \cite{CatTran}, Proposition 2.13. (for a proof, see \cite{meeri}, paper II, Proposition 2.13).  
There it is done for a sequence of finite tuples (whereas we have a sequence of sequences of length $\lambda$), but the proof is similar in our case.
 
We may now without loss  assume that $(A_{i,< \lambda}')_{i<\o_1}$ are the $\o_1$ first elements in the sequence 
$(A_{i,< \lambda})_{i<\kappa}$.
We may now apply the same argument to $(A_{<\o_1, j}')_{j<\lambda}$ to obtain an array $(A_{<\o_1, j}'')_{j<\o_1}$. 
This is an array of size $\o_1 \times \o_1$, indiscernible over $d a_0 b_0$, and we may assume it is a subarray of the original array $A$.
From now on, we will use $A$ to denote this indiscernible array of size $\o_1 \times \o_1$.
 
We write $x \to*y$ for $(x, d, a_0, b_0) \to (y, d, a_0, b_0)$.
Let $A'_{ij}=A_{i1}$ for $j \ge 1$, and write $A'=(A'_{ij})_{0<i,j<\omega_1}$.

\begin{claim}\label{ekac}
$A \to^*A'$.
\end{claim} 
\begin{proof}
For each $i<\o_1$, consider the the sequence $(A_{ij})_{j<\o_1}$.
By Lemma \ref{fodor}, there is some cofinal set $X_i \subset \o_1$ such that $(A_{ij})_{j \in X_i}$ is Morley, and thus strongly indiscernible, over $d a_0 b_0$.
For each $j$, we have $A_{ij} d a_0 b_0 \to A_{i1} d a_0 b_0$.
Moreover, $\textrm{rk}(d a_0 b_0 \to d a_0 b_0)=0 \le 1$ and $d a_0 b_0 \to d a_0 b_0$ is a strongly good specialization.
Also, $(A'_{ij})_{j \in X_i}$ is strongly indiscernible (since it just repeats the same entry).
Thus, by (ZL8), there is, for each $i$, a specialization 
$(A_{ij})_{j \in X_i} \to^* (A'_{ij})_{j \in X_i}.$
If we enumerate the set $X_i$ again, using the order type of $\o_1$, then we get (we still use the notation with the index set $X_i$ to denote that the sequence so indexed is the Morley one)
$$(A_{ij})_{j \in \o_1}\to^*(A_{ij})_{j \in X_i} \to^*(A'_{ij})_{j \in X_i}\to^*(A_{ij}')_{j \in \o_1},$$
so in particular $(A_{ij})_{j \in \o_1} \to^*(A'_{ij})_{j \in \o_1}$.

To prove that $A \to^*A'$, it suffices to show $(A_{ij})_{i<\o_1, j \in J} \to^*(A_{ij}')_{i<\o_1, j \in J}$ for all finite $J \subset \o_1$.
So, let $J \subset \o_1$ be finite.
Since $(A_{ij})_{j \in \o_1} \to^* (A'_{ij})_{j \in \o_1}$ holds for every $i$, we have  $(A_{ij})_{ j \in J} \to^*(A_{ij}')_{ j \in J}$ for every $i \in \o_1$.
Thus, applying (ZL8) similarly as we did above, we obtain $(A_{ij})_{i<\o_1, j \in J} \to^*(A_{ij}')_{i<\o_1, j \in J}$, as wanted.
It then follows that $A \to^*A'$.  
\end{proof} 

Let $A''_{ij}=(a_0, b_0, e_{i1})$, and write $A''=(A''_{ij})_{0<i,j<\o_1}$.

\begin{claim}\label{tokac}
$A' \to^*A''$.
\end{claim}
\begin{proof} 
As $A'_{ij}$ and $A''_{ij}$ do not depend on $j$ and as specializations respect repeated entries, it suffices to show that $(A'_{i1}:i) \to^*(A''_{i1}:i)$.
By Claim \ref{aclspec} (iii), $d a_0 b_0 b_1 \to d a_0 b_0 b_0$.
By Claim \ref{aclspec} (ii), we have $U(a_0 b_0 a_1 b_1/d)=4$, so $(d, a_0, b_0, b_1)$ is a generic point of $\M^{2r+1}$ (remember that $d$ is an independent tuple), and this is a strongly good specialization.
It is also clearly of rank $1$.
By Claim \ref{aclspec} (iii), $(a_i, b_1, e_{i1}, d, a_0, b_0) \to (a_0, b_0, e_{i1}, d, a_0, b_0)$ for every given $i$.
Thus, we may apply (ZL8) similarly as in the proof of the previous claim.
\end{proof}

\begin{claim}\label{aputulos}
If $(i,j) \neq (i',j')$, then $e_{ij} \downarrow_{d a_0 b_0} e_{i'j'}$.
\end{claim}
\begin{proof}
Suppose not.
By the same arguments that we used to prove Claim \ref{aclspec} (i), $U(e_{ij}/d a_0 b_0)=1$, and $e_{ij} \in \textrm{bcl}(d a_0 b_0 a_i b_j)$.
From the first of these statements it follows that $e_{ij} \in \textrm{bcl}(d a_0 b_0 e_{i'j'})$, since the counterassumption gives us $U(e_{ij}/da_0b_0e_{i'j'})<U(e_{ij}/da_0b_0)$.
From the second statement it follows that
$a_i b_j$ dominates $e_{ij}$ over $d a_0 b_0$.
Similarly, $a_{i'} b_{j'}$ dominates $e_{i'j'}$ over $d a_0 b_0$.
Suppose first $i \neq i'$ and $j \neq j'$.
Then, $a_i b_j \downarrow_{d a_0 b_0} a_{i'} b_{j'}$ (the sequence was chosen to consist of elements independent over $d$),  and by domination $e_{ij} \downarrow_{d a_0 b_0} e_{i'j'}$, a contradiction.
 
Suppose now $i=i'$ and $j \neq j'$ (the other case is symmetric).
Similarly as before, we get that $b_j$ dominates $e_{ij}$ over $d a_0 b_0 a_i$ and $b_{j'}$ dominates $e_{ij'}$ over $d a_0 b_0 a_i$.
As $b_j \downarrow_{d a_0 b_0 a_i} b_{j'}$, we get that $e_{ij} \downarrow_{d a_0 b_0 a_i} e_{ij'}$.
Thus, to get a contradiction it suffices to show that $a_i \downarrow_{a_0 b_0} e_{ij}$, since $e_{ij} \downarrow_{d a_0 b_0} e_{i'j'}$ then follows by transitivity.  
Suppose not.
As $U(e_{ij}/d a_0 b_0)=1$, we must now have $e_{ij} \in \textrm{bcl}(d a_0 b_0 a_i)$.
But then we have $b_j \in \textrm{bcl}(d a_i e_{ij}) \subseteq \textrm{bcl}(d a_0 b_0 a_i)$ which is a contradiction since the sequence $a_0, b_0, a_i, b_j$ was chosen to be independent over $d$.
\end{proof}
 
By claims (\ref{ekac}) and (\ref{tokac}), $A \to^*A''$.
We will apply (ZL9) to this specialization and eventually obtain an infinite rank-indiscernible array $A^*$ such that
$A \to A^* \to A''$.
The array $A^*$ will be of type $m+n-1$ over the parameters $d a_0 b_0$, as desired.

Let now $A^0$ be a finite subarray of $A$ containing the entry $A_{11}$, and let ${A^0}''$ be the corresponding finite subarray of $A''$.
The specialization $A \to^* A''$ induces a specialization $A^0 \to^* {A^0}''$.
After suitably rearranging the indices, we may view these finite arrays as tuples and
assume that the tuple on the left begins with ``$a_0 a_1 \ldots$", whereas the tuple on the right begins with ``$a_0 a_0 \ldots$".
By Remark \ref{ZL9implies}, the dimension theorem holds, and thus there is a finite array ${A^0}^*$
such that $d a_0 b_0 A^0 \to  d' a_0' b_0'  {A^0}^*  \to d a_0 b_0 {A^0}''$ for some $d', a_0', b_0'$, ${A^0}^*_{11}=a_0'b_1^*e_{11}^*$ for some $b_1^*$, $e_{11}^*$, and $U(A^0)-U({A^0}^*) \le 1$.
In particular, we have $d a_0 b_0 \to  d' a_0' b_0' \to d a_0 b_0$.
By (ZL3) this implies that $t^g (d a_0 b_0/\emptyset)=t^g (d' a_0' b_0'/\emptyset)$.
Thus, we may assume that $d' a_0' b_0'=d a_0 b_0$ (if it is not, then just apply to the array ${A^0}^*$ an automorphism taking $d' a_0' b_0' \mapsto d a_0 b_0$).
In particular, we may assume $A^0 \to^* {A^0}^*  \to^*{A^0}''$ and ${A^0}^*_{11}=a_0b_1^*e_{11}^*$ for some $b_1^*$ and $e_{11}^*$.

\begin{claim}
The array ${A^0}^*$ is of type $m+n-1$ over $da_0b_0$.
\end{claim}

\begin{proof}
We will show that the assumptions posed for $A$ and $a$ in Lemma \ref{tekn1} hold for $A^0$ and ${A^0}^*$, respectively, over the parameters $d a_0 b_0$.
The claim then follows from the lemma.

By  Claim \ref{aclspec} (i), $e_{ij} \in \textrm{bcl}(d a_0 b_0 a_i b_j)$.
Thus, as the elements $a_i, b_j$ were chosen to be independent over $d$ for $i,j \ge 0$, we have $U(A; 1, 1/d a_0 b_0)=2$, and it is easy to show by induction that $U(A; m, n/ d a_0  b_0)=m+n$.
Write $C=A_{11} A_{21}$ and $C'=A_{12} A_{22}$.
Now $U(C/d a_0 b_0)=U(C'/d a_0 b_0)=3$, and $U(C \cap C'/d a_0 b_0)=2$.
Thus, 
$$2 \le \textrm{dim}_{\textrm{bcl}}(\textrm{dcl}(Cda_0b_0) \cap \textrm{dcl}(C'da_0b_0)/d a_0 b_0) \le 3.$$ 
Denote $X=\textrm{dcl}(Cda_0b_0) \cap \textrm{dcl}(C'da_0b_0)$, and suppose $\textrm{dim}_{\textrm{bcl}}(X/d a_0 b_0)=3$.
Since $X \subseteq \textrm{bcl}(Cda_0b_0)$,  we must have $\textrm{bcl}(X)=\textrm{bcl}(Cda_0b_0)$.
But this is impossible since $b_1 \in \textrm{bcl}(Cda_0b_0) \setminus \textrm{bcl}(X)$.
Thus, $\textrm{dim}_{\textrm{bcl}}(X/d a_0 b_0)=2$.

Similarly as in the proof of Lemma 6.10 in \cite{HrZi} (see \cite{lisuri} for details), one shows that ${A^0}^*$ satisfies the assumptions posed for $a$  in Lemma \ref{tekn1} over the parameters.
By Lemma \ref{tekn1}, ${A^0}^*$ is of type $m+n-1$ over $da_0b_0$.

\end{proof}

Next, we apply (ZL9) to the specialization $A \to^*A''$ to eventually obtain an infinite indiscernible array of type $m+n-1$ over $da_0b_0$.
Enumerate the elements on the left side of the specialization so that $a_0$ is the element enumerated by $0$ and $a_1$ the element enumerated by $1$, and use a corresponding enumeration on the right side (there, both the element enumerated by $0$ and the element enumerated by $1$ will be $a_0$).
Let  $S$ be a collection of index sets corresponding to all $m \times n$ subarrays of $A$ containing the entry $A_{11}$ for all natural numbers $m,n$.
Moreover, we add $0$ to every $X \in S$.
The set $S$ is unbounded and directed, and by what we just proved,  every $X \in S$ corresponds to an array $A^*_X$ of type $m+n-1$ over $da_0b_0$ (we get the correspondence by removing the element indexed by $0$ from each $X$).
Thus, the conditions of (ZL9) hold for the set $S$, and hence we 
obtain an infinite array $A^*$ where each $m \times n$ -subarray containing the entry $A_{11}^*$ has $U$-rank $m+n-1$ over $da_0 b_0$ (note that each $m \times n$ -subarray of $A$ has rank $m+n$).

We claim that $A^*$ is actually of type $m+n-1$ over $d a_0 b_0$.
To prove this, let ${A^0}^*$ be an arbitrary $m_0 \times n_0$ subarray of $A^*$.
Then, there is some $(m_0+1) \times (n_0+1)$ subarray ${A^1}^{*}$ of $A^*$ such that $A_1^*$ contains the entry $A_{11}^*$ and ${A^0}^*$ is a subarray of ${A^1}^*$.
We have already shown that ${A^1}^*$ is of type $m+n-1$ over $d a_0 b_0$.
Hence, $U(A_0^*/d a_0 b_0)=m_0+n_0-1$, as wanted.

We wish to apply Lemma  \ref{groupexists} to show that there is a $1$-dimensional Galois-definable group in $(\M^{eq})^{eq}$.
For this, we need $A^*$ to be indiscernible.
However, if in the beginning of this proof, when we started to construct the array $A$, we have chosen the cardinals 
$\kappa$ and $\lambda$ to be large enough, then we may assume that $A$ and thus $A^*$ is big enough that we may apply the Shelah trick again.
Thus, we may without loss suppose that $A^*$ is indiscernible.
By Lemma \ref{groupexists}, there is a $1$-dimensional Galois-definable group in $(\M^{eq})^{eq}$.
\end{proof}

\end{document}